\newcommand{\N}{\mathbb{N}}
\newcommand{\R}{\mathbb{R}}
\newcommand{\Z}{\mathbb{Z}}
\newcommand{\h}{\mathfrak{h}}
\newcommand{\g}{\mathfrak{g}}
\newcommand{\D}{\mathcal{D}}
\newcommand{\V}{\mathcal{V}}
\newcommand{\Ndelta}{\mathcal{N}_\delta}
\newcommand{\dcc}{d_{cc}}
\newcommand{\Bcc}{B_{cc}}
\newcommand{\Ncc}{N_{cc}}
\newcommand{\qd}{\operatorname{cd}}
\newcommand{\diam}{\operatorname{diam}}
\newcommand{\Lip}{\operatorname{Lip}}
\renewcommand{\H}{\mathcal{H}}
\renewcommand{\epsilon}{\varepsilon}
\renewcommand{\phi}{\varphi}
\renewcommand{\angle}{\measuredangle}
\newtheorem{theorem}{Theorem}[section]
\newtheorem{lemma}[theorem]{Lemma}
\newtheorem{proposition}[theorem]{Proposition}
\theoremstyle{remark}
\newtheorem{remark}[theorem]{Remark}
\newtheorem{example}[theorem]{Example}
\title{Coarse differentiation and quantitative nonembeddability\\for Carnot groups}
\author{Sean Li}
\address{Courant Institute, New York University, New York NY 10012}
\email{seanli@cims.nyu.edu}
\date{}
\begin{document}

\maketitle
\begin{abstract}
  We give lower bound estimates for the macroscopic scale of coarse differentiability of Lipschitz maps from a Carnot group with the Carnot-Carath\'{e}odory metric $(G,\dcc)$ to a few different classes of metric spaces.  Using this result, we derive lower bound estimates for quantitative nonembeddability of Lipschitz embeddings of $G$ into a metric space $(X,d_X)$ if $X$ is either an Alexandrov space with nonpositive or nonnegative curvature, a superreflexive Banach space, or another Carnot group that does not admit a biLipschitz homomorphic embedding of $G$.  For the same targets, we can further give lower bound estimates for the biLipschitz distortion of every embedding $f : B(n) \to X$, where $B(n)$ is the ball of radius $n$ of a finitely generated nonabelian torsion-free nilpotent group $G$.  We also prove an analogue of Bourgain's discretization theorem for Carnot groups and show that Carnot groups have nontrivial Markov convexity.  These give the first examples of metric spaces that have nontrivial Markov convexity but cannot biLipschitzly embed into Banach spaces of nontrivial Markov convexity.
\end{abstract}

\tableofcontents

\section{Introduction}
Let $G$ be a Carnot group endowed with a Carnot-Carath\'{e}odory metric and unit ball $B_G$ and let $(X,d_X)$ be some other metric space.  Given a prescribed $\epsilon \in (0,1)$, one can ask what is the largest $\rho(\epsilon) > 0$ so that, given any Lipschitz function $f : B_G \to X$, there exists a subball $B(x,r) \subseteq B_G$ of radius $r \geq \rho(\epsilon)$ and a map of canonical form $T : B_G \to X$ (whose form depends on the class of metric space to which $X$ belongs) so that
\begin{align}
  \sup_{z \in B(x,r)} \frac{d_X(f(z),T(z))}{r} \leq \epsilon \|f\|_{lip}. \label{UAAP}
\end{align}

Estimates of the form \eqref{UAAP} originated from the work of \cite{BJLPS} in the setting of normed linear spaces where $T$ is an affine function.  There, the authors named the property of having positive $\rho(\epsilon)$ for any $\epsilon \in (0,1)$ as the Uniform Approximation by Affine Property (or UAAP) and, they showed that Lip($X,Y$), the space of Lipschitz functions from $X$ to $Y$, has the UAAP if and only if one of the spaces $\{X,Y\}$ is finite dimensional and the other is superreflexive.  For the case when $Y$ is superreflexive, estimates of $\rho(\epsilon)$ were given in \cite{LN} where they also used it to prove a restricted case of Bourgain's discretization theorem.  For a similar statement concerning Lipschitz maps of finite dimensional vector spaces to general metric spaces see \cite{AS:12}.  We generalize the results of \cite{LN} to the case when the domain is a Carnot group.  These results belong in a class of methods that can be called quantitative or coarse differentiation.  There is much research being done on this subject and its applications (cf. \cite{CKN,EFW1,EFW2,LP}).

We will provide some quantitative estimates for lower bounds of such $\rho$ when $(X,d)$ is a member of three classes: general metric spaces, superreflexive Banach spaces, and other Carnot groups.  The canonical forms for the metric space classes are maps where horizontal lines are mapped to constant speed geodesics in the first case---this is not quite accurate, but will be made precise in Theorem \ref{UAAP-M}---and group homomorphisms for the last two.  While the actual theorems require some preliminary material to state (which will be done in the following two sections), we can state the relevant consequences.

\begin{theorem} \label{UAAP-M}
  Let $G$ be a Carnot group with unit ball $B_G$, $S^{n-1}$ be the unit sphere of the horizontal layer of its Lie algebra, and $(X,d_X)$ be some metric space.  There exists some $\epsilon_0 > 0$ and $\alpha > 0$ depending only on $G$ so that if $\epsilon \in (0,\epsilon_0)$ and $f : B_G \to X$ is Lipschitz, then there exist some function $w : S^{n-1} \to \R^+$ and some subball $B \subset B_G$ with radius $r \geq e^{-\epsilon^{-\alpha}}$ so that
  \begin{align*}
    \sup \left\{ \left| d_X(f(xe^{tv}),f(xe^{sv})) - (t-s) w(v) \right| : x \in B, v \in S^{n-1}, -3r \leq s < t \leq 3r \right\} \leq \epsilon r \|f\|_{lip}.
  \end{align*}
\end{theorem}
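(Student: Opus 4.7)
The plan is to execute a quantitative differentiation argument in the spirit of Cheeger--Kleiner--Naor and Lee--Naor, adapted to the sub-Riemannian geometry of $G$. The starting observation is that, for every $v\in S^{n-1}$, the horizontal exponential curve $t\mapsto xe^{tv}$ is a unit-speed $\dcc$-geodesic, so that $f$ restricted to any such curve is a $\|f\|_{lip}$-Lipschitz map of an interval into $X$. The goal is to locate a subball on which all of these one-dimensional restrictions are simultaneously close to constant-speed parametrizations whose common speed $w(v)$ depends only on the direction, not on the basepoint.

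For each $v\in S^{n-1}$, point $x$ sufficiently interior in $B_G$, and scale $r>0$, I would introduce the local average speed
\[
\sigma(x,v,r)=\frac{d_X\bigl(f(xe^{-rv}),f(xe^{rv})\bigr)}{2r}
\]
and the midpoint defect
\[
D(x,v,r)=\tfrac12\bigl(\sigma(xe^{-rv/2},v,r/2)+\sigma(xe^{rv/2},v,r/2)\bigr)-\sigma(x,v,r).
\]
Since $e^{\pm rv/2}$ lie on the one-parameter subgroup generated by $v$, the triangle inequality yields $D\geq 0$, while the Lipschitz hypothesis yields $\sigma\leq\|f\|_{lip}$. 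Rewriting the defining identity for $D$ as
\[
\tfrac12\bigl(\sigma(xe^{-rv/2},v,r/2)+\sigma(xe^{rv/2},v,r/2)\bigr)=\sigma(x,v,r)+D(x,v,r),
\]
integrating in $x$ over a ball $B_k\subset B_G$, and using the bi-invariance of the Haar measure on the nilpotent group $G$ to absorb the translations by $e^{\pm rv/2}$ (at the cost of a boundary error of size $O(r\,|\partial B_k|)$), iteration along the dyadic chain $r_k=2^{-k}r_0$ for $k=0,\dots,N-1$ telescopes to
\[
\sum_{k=0}^{N-1}\int_{S^{n-1}}\int_{B_k}D(x,v,r_k)\,dx\,dv \;\leq\; C(G)\,\|f\|_{lip}\,|B_0|,
\]
where $B_k$ is $B_0$ contracted by $O(\sum_{j\leq k}r_j)=O(r_0)$. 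Choosing $N$ of order $\epsilon^{-\alpha}$ and applying pigeonhole then furnishes a scale $r\geq 2^{-N}\geq e^{-\epsilon^{-\alpha}}$ at which the spatial/directional average of $D$ is controlled by a small power of $\epsilon$ times $\|f\|_{lip}$.

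To upgrade this $L^1$-smallness to the pointwise statement of the theorem I would arrange the pigeonhole to select a \emph{block} of $\epsilon^{-O(1)}$ consecutive good scales that together cover $[0,3r]$, then use Chebyshev in $x$ together with a finite $\epsilon$-net in $v\in S^{n-1}$ (exploiting the fact that $v\mapsto\sigma(x,v,r)$ is $\|f\|_{lip}$-Lipschitz in a precise sense) to extract a subball $B$ of radius comparable to $r$ on which $D(x,v,r')$ is pointwise small for every $v\in S^{n-1}$ and every $r'$ in the block. Defining $w(v)$ to be the average of $\sigma(\cdot,v,r)$ over $B$, the smallness of $D$ at all sub-scales, combined with a telescoping comparison of $\sigma(x,v,r')$ across the block, yields $|d_X(f(xe^{tv}),f(xe^{sv}))-(t-s)w(v)|\leq\epsilon r\|f\|_{lip}$ for all $x\in B$, $v\in S^{n-1}$, and $-3r\leq s<t\leq 3r$.

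The main obstacle I anticipate is the clean execution of the telescoping step in the non-abelian setting. Left translation by $e^{\pm rv/2}$ acts simply on the one-parameter subgroup in the direction $v$, but simultaneously controlling the defect for \emph{all} directions on the same subball requires the Baker--Campbell--Hausdorff formula to compare translations in different horizontal directions, producing commutator contributions on the boundary of $B_k$ that must remain subordinate to the main bound across the full chain of $\epsilon^{-\alpha}$ scales. The homogeneous dilation structure of the Carnot group, under which $\sigma$ and $D$ are exactly scale-invariant, is what keeps these errors controlled uniformly across scales and ultimately underwrites the doubly-exponential radius $r\geq e^{-\epsilon^{-\alpha}}$.
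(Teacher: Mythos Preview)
Your overall architecture---a Carleson-type telescoping bound on a midpoint defect, pigeonhole across $\epsilon^{-\alpha}$ scales, then an average-to-sup upgrade---matches the paper's. The paper implements the telescoping via Christ cubes rather than balls: the one-dimensional estimate (Lemma~\ref{carleson-1d}) is integrated over all horizontal lines meeting each cube and summed over the dyadic hierarchy (Proposition~\ref{carleson-cubes}), so that the partition property of the cubes makes the sum telescope cleanly without the boundary-error accounting you describe. Your shrinking-balls approach with a Haar change of variables could probably be made to work, but the cube framework sidesteps the issue entirely.

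There is, however, a genuine gap in your final step. Smallness of $D(x,v,r')$ across a block of scales tells you that along each \emph{fixed} horizontal line $t\mapsto xe^{tv}$ the map is nearly constant-speed, with speed $\sigma(x,v,r)$. It does \emph{not} tell you that this speed is independent of the basepoint $x$. You define $w(v)$ as the spatial average of $\sigma(\cdot,v,r)$ over $B$ and then appeal to ``a telescoping comparison of $\sigma(x,v,r')$ across the block'', but such a comparison only relates speeds at different scales on the \emph{same} line; it says nothing about the speeds on two distinct parallel lines $x\cdot\R v$ and $x'\cdot\R v$. Nothing you have written rules out $\sigma(x,v,r)$ varying substantially with $x\in B$, in which case the conclusion of the theorem fails.

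The paper supplies this missing ingredient in Lemma~\ref{M-geodesic-close}, and it is precisely here---not in the telescoping, where you anticipated the obstacle---that the Carnot geometry is decisive. If $g\in \Bcc(0,C\epsilon^r\rho)$ then the BCH-based perturbation estimate of Lemma~\ref{close-parallel-2} gives $\dcc(e^{2\rho v},ge^{2\rho v})\leq c\,\epsilon\rho$; hence the $f$-images of the far endpoints of the two long parallel segments are within $O(\epsilon\rho\|f\|_{lip})$ of each other, and a short triangle-inequality argument forces the two long-line speeds to agree up to $O(\epsilon\|f\|_{lip})$. This is why the final subball has radius of order $\epsilon^r\rho$ rather than $\rho$: the $\epsilon^r$ shrinkage is exactly the cost of the step-$r$ perturbation lemma, and it is absent from your sketch.
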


\begin{theorem} \label{UAAP-rest}
  Let $G$ be a Carnot group with unit ball $B_G$ and $(X,d_X)$ be a superreflexive Banach space (resp. Carnot group).  There exists some $\epsilon_0 > 0$ and $\alpha > 0$ depending only on $G$ and $X$ so that if $\epsilon \in (0,\epsilon_0)$ and $f : B_G \to X$ is Lipschitz, then there exist some Lipschitz homomorphism $T : G \to X$ and some subball $B \subset B_G$ with radius $r \geq e^{-\epsilon^{-\alpha}}$ (resp. $r \geq e^{-e^{\epsilon^{-\alpha}}}$) so that
  \begin{align*}
    \sup_{x \in B} \frac{d_X(f(x),T(x))}{r} \leq \epsilon \|f\|_{lip}. 
  \end{align*}
\end{theorem}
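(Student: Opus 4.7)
\textbf{Plan for Theorem \ref{UAAP-rest}.} The plan is to bootstrap Theorem \ref{UAAP-M} using the additional structure of the target. That theorem only guarantees that, on some subball, horizontal lines map to near-constant-speed curves with direction-dependent speeds $w(v)$; it does not force those curves to be straight, nor does it force $v \mapsto w(v)$ to assemble into a Lie algebra homomorphism. Both extra ingredients will have to be squeezed out of the specific geometry of $X$.

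For the superreflexive Banach case I would first apply Theorem \ref{UAAP-M} with parameter $\epsilon' \ll \epsilon$ to obtain a subball $B_0$ of radius $r_0 \geq e^{-(\epsilon')^{-\alpha_0}}$ and a speed function $w$. By Pisier's renorming theorem $X$ admits an equivalent norm with nontrivial martingale cotype, and I would apply the cotype inequality to the dyadic midpoint martingale along horizontal chords in $B_0$: since the initial step already makes those chords almost-geodesics, cotype forces them to be almost-affine after pigeonholing on dyadic subscales, yielding a subball $B_1 \subset B_0$ on which $f(xe^{tv}) - f(x) \approx t\, u(v)$ with $\|u(v)\| = w(v)$. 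A second application of the same estimate to horizontal words $e^{tv_1} e^{tv_2} \cdots$, combined with the Baker-Campbell-Hausdorff formula, would force $v \mapsto u(v)$ to be approximately linear on the horizontal layer $V_1$ and approximately zero on all higher layers of $\g$, since those layers enter BCH only as commutators and are therefore killed in the abelian target. The unique linear extension $L : V_1 \to X$ composed with the abelianization map $G \to V_1$ then yields the sought homomorphism $T$; the scale cost of this upgrade is only polynomial in $\epsilon^{-1}$, so the final radius remains of the form $r \geq e^{-\epsilon^{-\alpha}}$.

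For the Carnot case the above argument fails because $X$ is not uniformly convex and commutators in $X$ do not vanish, so instead I would iterate layer by layer. First compose $f$ with the metric quotient $X \to X/[X,X]$, which is a Banach space, apply the Banach case to extract a first-layer Lipschitz homomorphism $T_1$ valid on some subball, then subtract $T_1$ from $f$ on that subball and observe that the remainder takes values approximately in $[X,X]$ — itself a Carnot group of strictly smaller step. Inducting on the step of $X$, each pass through the Banach case multiplies the required scale by an exponential, so stacking $\mathrm{step}(X)$ copies produces the doubly exponential bound $r \geq e^{-e^{\epsilon^{-\alpha}}}$. The main obstacle will be the passage from the scalar speed function $w$ to a genuine linear/homomorphic derivative: one must run the martingale cotype inequality at a scale small enough that the error from Theorem \ref{UAAP-M} is still negligible, while simultaneously controlling the BCH corrections produced when horizontal directions are combined. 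The Carnot iteration is then mostly bookkeeping, but the non-abelian coupling between the layers of $X$ is precisely what forces the extra exponential.
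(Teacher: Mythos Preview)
Your Banach-space plan is not the paper's route, though it is in the right spirit. The paper does \emph{not} first run Theorem~\ref{UAAP-M} and then upgrade; instead it re-runs the entire Carleson-packing machinery of Section~3 with a different deviation functional $\Theta_h(x,y)^p = |x-y|^{-p}\,\big\| \tfrac{h(x)+h(y)}{2}-h(\tfrac{x+y}{2})\big\|^p$, which the $p$-convexity inequality \eqref{p-convexity} bounds by $\partial_h^{(p)}$ directly. The iterated-midpoint lemma (Lemma~\ref{UC-energy}) then gives affine approximation on each horizontal line in one shot, and the homomorphism is built from the resulting direction vectors via Chow's theorem (Lemma~\ref{UC-final}). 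Your two-stage plan (metric differentiation, then cotype) could plausibly be made to work, but it buys nothing: you would still need exactly the same ``almost-affine on every horizontal line $\Rightarrow$ close to a homomorphism on the ball'' step, and bootstrapping through the weaker Theorem~\ref{UAAP-M} just wastes an exponential.

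Your Carnot-target plan has a genuine gap. The step ``subtract $T_1$ from $f$ and observe that the remainder takes values approximately in $[X,X]$'' does not make sense: $X$ is not a vector space, so there is no subtraction, and the group-theoretic substitute $x\mapsto T_1(x)^{-1}f(x)$ is neither Lipschitz in general nor valued in a sub-Carnot-group on which you could induct. More fundamentally, matching $f$ on the abelianization $X/[X,X]$ does \emph{not} produce a homomorphism $G\to X$: a linear map $\V_1(\g)\to\V_1(\h)$ lifts to a Lie-group homomorphism only if it respects all the Lie-bracket relations of $\g$ inside $\h$, and nothing in your scheme enforces that. The paper handles this by an entirely different mechanism: it first proves a uniform-convexity inequality \eqref{C-eqn} for a suitable homogeneous (quasi)norm on $H$ (Proposition~\ref{carnot-convexity}), so that $\Theta_h$ now also penalises the non-horizontal defect $NH(h(x)^{-1}h(y))$. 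The midpoint iteration (Lemma~\ref{C-energy}) then forces $f$ to be close to a \emph{horizontal} line on each direction, and finally Lemma~\ref{C-homomorphism} uses the \L{}ojasiewicz inequality on the polynomial system encoding the bracket relations of $\g$ to perturb the approximate direction vectors to an exact homomorphism. The double exponential arises not from ``stacking $\mathrm{step}(X)$ Banach steps'' but from the $\zeta^{s^{-m}}$ error growth in Lemma~\ref{C-energy} combined with the \L{}ojasiewicz exponent.
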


To prove such bounds, we establish a uniform convexity condition for each of the target spaces and build on the ``iterated midpoint'' technique of \cite{LN} for the case of a Lipschitz mapping of $\R$ to $X$.  However, we cannot use the multidimensional argument of \cite{LN} as the noncommutativity of Carnot groups destroys the grid structure that had been exploited.  Instead, we will use a theorem of Christ about dyadic-like cubes in doubling metric spaces (to which Carnot groups belong) to adapt the averaging and decomposition result of \cite{schul}.

On the way, we prove a result of independent interest showing that, in some sense, Carnot groups are uniformly convex.  This will subsequently show that Carnot groups have nontrivial Markov convexity, a metric invariant that, when restricted to Banach spaces, is equivalent to being isomorphic to a uniformly convex Banach space.  Thus, Carnot groups are the first examples of metric spaces with nontrivial Markov convexity that do not embed into any Banach space with nontrivial Markov convexity.  This has connections to the larger Ribe program, an active research program in functional analysis and metric geometry.  The relevant background material as well as the Markov convexity proof will be given in section 7.1.

The coarse differentiation method will be the key technical tool that we use to derive quantitative estimates for how nonembeddable Carnot groups are into three classes of metric spaces.  To determine if a space does not embed geometrically well into another space, one can try to show that ``soft'' geometric embeddings imply the existence of ``rigid'' embeddings.  If such rigid embeddings cannot exist by other reasonings, then the soft embeddings cannot exist.  As a famous example, Pansu proved in \cite{Pansu} a generalization of the Rademacher differentiation theorm to show that blowups of Lipschitz maps (the ``soft'' maps) between Carnot groups converge to group homomorphisms (the ``rigid'' maps) almost everywhere.  Semmes observed in \cite{Semmes} that this means that blowups of maps from the Heisenberg group to Euclidean spaces must converge to a group homomorphism.  As there are no biLipschitz group homomorphisms from the nonabelian Heisenberg group to Euclidean space, any Lipschitz mapping cannot then be biLipschitz.  See \cite{CK1,CK2,Kir,Pauls} for more examples of differentiability statements.

One problem left unanswered by this method is finding the quantitative rate at which the map degrades from being biLipschitz.  Indeed, there are no guarantees as to how much one must blowup the map before one can start seeing convergence to the derivative.  In general, such an estimate cannot be done without knowledge of the second derivative, which is much too strict a condition given that we are working with non-smooth maps.  The coarse differentiation result will allow us to control the scale to which some kind of approximate convergence happens.  Note that this is fundamentally different from regular differentiation.  First, we do not have any control over where the canonical behavior occurs.  Regular differentiation, on the other hand, looks at the limiting behavior of the map around a specified point.  Secondly---and more importantly---the approximating map does not have to be related to any derivatives of the map.  Indeed, consider the sawtooth map whose derivative is $f'(x) = 2(\lfloor x \rfloor \mod 2) - 1$.  All derivatives of this map have slope either +1 or -1, but, looking at the map from a sufficiently large scale, we see that the best approximating affine map is the constant 0 function.

Using the estimates, we will prove the following theorem:

\begin{theorem} \label{full-nonembed}
  Let $(G,\dcc)$ be a Carnot group that is endowed with the Carnot-Carath\'{e}odory metric and $(X,d_X)$ be either an Alexandrov space of nonnegative or nonpositive curvature or a superreflexive Banach space.  Then there exist $c,C > 0$ depending only on $G$ and $X$ so that for any 1-Lipschitz function $f : B_G \to X$ there exist $x,y \in G$ with $\dcc(x,y)$ arbitrarily small so that
  \begin{align*}
    \frac{d_X(f(x),f(y))}{\dcc(x,y)} \leq C\left( \log \frac{1}{\dcc(x,y)} \right)^{-c}.
  \end{align*}
  If $(X,d_X)$ is another Carnot group that does not admit a biLipschitz homomorphic embedding of $G$, then the same statement holds except we have the estimate
  \begin{align*}
    \frac{d_X(f(x),f(y))}{\dcc(x,y)} \leq C\left( \log \log \frac{1}{\dcc(x,y)} \right)^{-c}.
  \end{align*}
\end{theorem}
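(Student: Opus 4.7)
The plan is to apply the coarse differentiation theorems (Theorem \ref{UAAP-M} or Theorem \ref{UAAP-rest}) with a tunable parameter $\epsilon \in (0,\epsilon_0)$, producing a subball $B \subset B_G$ of radius $r \geq e^{-\epsilon^{-\alpha}}$ (or $e^{-e^{\epsilon^{-\alpha}}}$ for the Carnot target) on which $f$ is $\epsilon r$-close to a canonical map, and then to exhibit within $B$ a pair $x, y$ with $\dcc(x, y) \sim r$ whose images satisfy $d_X(f(x), f(y)) \lesssim \epsilon r$. Solving for $\epsilon$ as a function of $\dcc(x,y)\sim r$ then yields $\epsilon \lesssim (\log(1/\dcc(x,y)))^{-1/\alpha}$ (or the doubly logarithmic analog), which is the claimed bound; letting $\epsilon \downarrow 0$ makes $\dcc(x,y)$ arbitrarily small. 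Since $G$ is nonabelian, I fix horizontal directions $v_1, v_2 \in S^{n-1}$ with $[v_1, v_2] \neq 0$ and, for $t \sim r$, take the pair $x,\, y = x[e^{tv_1}, e^{tv_2}]$; by the ball-box theorem, $\dcc(x, y) \sim t$ while the four horizontal segments making up the commutator word each have length $t$.

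For the superreflexive Banach target, the map $T$ produced by Theorem \ref{UAAP-rest} is a Lipschitz homomorphism into an abelian group and so vanishes on $[G, G]$; thus $T(y) = T(x)$, and $d_X(f(x), f(y)) \leq 2\epsilon r$ by the triangle inequality through $T$. For the Carnot target, a graded Lie algebra argument shows that any Carnot-group homomorphism with trivial kernel is biLipschitz onto its image (the induced map on the horizontal layer has positive least singular value, and Pansu-type scaling propagates this to the full group), so the hypothesis that $X$ admits no biLipschitz homomorphic embedding of $G$ forces $\ker T \neq \{e\}$. Applying Carnot dilations to any fixed nontrivial kernel element then produces $g \in \ker T$ with $\dcc(e, g) \sim r$, and the same argument works with $y = xg$.

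The Alexandrov case is the subtlest, since Theorem \ref{UAAP-M} yields only a speed function $w : S^{n-1} \to \R^+$ rather than a homomorphism. Observe first that $w(-v) = w(v)$ by applying Theorem \ref{UAAP-M} to the time-reversed parametrization of each horizontal line. The image under $f$ of the commutator path is then a nearly-closed geodesic quadrilateral in $X$ with consecutive side lengths $tw(v_1), tw(v_2), tw(v_1), tw(v_2)$ (up to additive error $\epsilon r$). In an Alexandrov space of nonpositive or nonnegative curvature, such a configuration is rigid: applying Reshetnyak or Toponogov triangle comparison to the two diagonals, together with the equality of opposite side lengths, forces the quadrilateral to close up to accuracy $O(\epsilon r)$, yielding $d_X(f(x), f(y)) \lesssim \epsilon r$ as required.

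The main obstacle is precisely this last step: one must convert the ``geodesic on each horizontal line individually'' output of Theorem \ref{UAAP-M} (which a priori says nothing about relationships between different horizontal lines) into a quantitative parallelogram rigidity that is uniform in both curvature signs and in the direction pair $(v_1, v_2) \in (S^{n-1})^2$. A secondary issue is the degenerate case when $w(v_1)$ or $w(v_2)$ is small compared to $\|f\|_{\mathrm{Lip}}$; there the quadrilateral may degenerate, but $f$ is then nearly constant along the offending direction, and a simpler argument using a single horizontal segment of length $\sim r$ supplies the required pair with the same bound.
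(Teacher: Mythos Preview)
Your superreflexive and Carnot-target arguments are essentially the paper's own. For the Banach case the paper (Theorem \ref{UC-nonembed}) takes $y=xg$ with $\pi(g)=0$, i.e.\ $g$ in the kernel of the abelianization, which is your commutator idea; the Carnot case is dispatched in a remark by the same mechanism using a nontrivial kernel element, and your justification that an injective Lipschitz homomorphism between Carnot groups is biLipschitz is correct.

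The Alexandrov case has a real gap, and it is the one you flag yourself. The conclusion of Theorem \ref{UAAP-M} gives only a speed function $w$ and the statement that each individual horizontal segment maps to a near-geodesic of the prescribed speed; it carries no information about how the images of \emph{different} horizontal lines are positioned in $X$. Your five points $P_0,\dots,P_4$ therefore satisfy only $d_X(P_{i-1},P_i)\approx t\,w(v_1)$ or $t\,w(v_2)$, and this cannot force $d_X(P_0,P_4)$ to be small: already in $\R^2$ one may lay four segments of lengths $a,b,a,b$ end to end along a straight line, and no Reshetnyak or Toponogov comparison on the diagonals helps because the diagonals $d_X(P_0,P_2)$, $d_X(P_1,P_3)$ are not controlled by Theorem \ref{UAAP-M} (the point $xe^{tv_1}e^{tv_2}$ is not on any single horizontal line through $x$). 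The symmetry $w(-v)=w(v)$ is purely scalar and gives no directional constraint in $X$.

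The paper's argument (Theorems \ref{CBB-nonembed} and \ref{CAT-nonembed}) bypasses the quadrilateral and instead compares the divergence of a \emph{pair of parallel} horizontal lines. Lemma \ref{sublinear-diverge} produces $u,v\in S^{n-1}$ so that $\gamma_0(t)=e^{tv}$ and $\gamma_1(t)=e^{\lambda u}e^{tv}$ satisfy $\dcc(\gamma_0(t),\gamma_1(t))\asymp \lambda^{1-\beta}|t|^\beta$ with $0<\beta<1$. Assuming distortion $\le D$, the images $f\circ\gamma_0,\ f\circ\gamma_1$ inherit the same sublinear divergence. Theorem \ref{UAAP-M} plus Lemma \ref{CBB-close-geodesic} (resp.\ Lemma \ref{CAT-close-geodesic}) shows each image is $O(\epsilon^{1/2}r)$-close to a genuine geodesic of $X$; but two geodesics in a $CBB(0)$ space with nearby initial points diverge \emph{linearly} up to $O(\epsilon r)$ (Lemma \ref{CBB-linear-diverge}), and in a $CAT(0)$ space the distance between geodesics is convex. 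Evaluating at a time $t\in[\epsilon r,\epsilon^{1/2}r]$ chosen via the exponents of Lemma \ref{sublinear-diverge} produces a numerical contradiction with the sublinear bound. This tension between sublinear divergence in $G$ and linear/convex divergence in $X$ is the missing idea in your sketch, and it is exactly what your ``parallelogram rigidity'' was meant to encode but could not, because Theorem \ref{UAAP-M} does not relate the first and third sides of your quadrilateral beyond the Lipschitz bound of $f$.
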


Alexandrov spaces are spaces of curvature bounded above or below in the sense of the Toponogov comparison theorem.  Examples include simply connected manifolds with curvature bounds.  That Carnot groups do not embed biLipschitzly into Alexandrov spaces of nonnegative or nonpositive curvature was already proven in \cite{Pauls}.  However, an infinitesimal differentiation argument was used that gave no clue as to how the embedding must quantitatively break down.  Theorem \ref{full-nonembed} reproves the result of \cite{Pauls} in a quantitative form.  Sharp estimates for quantitative nonembeddability of Lipschitz maps from the Heisenberg group to superreflexive Banach spaces were obtained in \cite{Lafforgue-Naor} using Paley-Littlewood theory.  Theorem \ref{full-nonembed} is a purely geometric argument that generalizes the result to arbitrary Carnot groups, albeit while losing sharpness in the power of decay.  BiLipschitz nonembeddability into other Carnot groups follows from Semmes' argument using Pansu's differentiation theorem.  Our argument reproves this result in a quantitative form.

Our differentiability result will actually be for a range of maps wider than just Lipschitz.  We will show that the coarse differentiation method can hold for maps that are Lipschitz only at large distances.  This will allow us to extend our results to maps like uniform embeddings and even maps that are not necessarily continuous, like quasi-isometric embeddings.  Unfortunately, as the maps will have a scale above which they are Lipschitz, it breaks the scale invariance of the problem, which will complicate things.  It turns out that we will need the ball that is the domain to be large enough relative to the scale of Lipschitz behavior.  This kind of result was implied by the quantitative directions of \cite{BJLPS} and \cite{LN}, but not explicitly stated.

Given two metric spaces $(X,d_X)$ and $(Y,d_Y)$, recall that the biLipschitz distortion of $X$ into $Y$, denoted $c_Y(X)$, is the infimal $D > 1$ so that there exists some $f : X \to Y$ and some $s > 0$ for which
\begin{align*}
  s \cdot d_X(x,y) \leq d_Y(f(x),f(y)) \leq Ds \cdot d_X(x,y), \qquad \forall x,y \in X.
\end{align*}
A recent theorem of \cite{BLD} gives quantitative bounds on the rate of convergence of rescaled balls of finitely generated torsion-free nilpotent groups to their asymptotic cones, which are Carnot groups.  We chain this result together with coarse differentiation to give lower bounds to the biLipschitz distortion of embeddings of balls of such finitely generated groups into the same target spaces.  Specifically, we have
\begin{theorem} \label{fg-nonembed}
  Let $B_S(n)$ denote the ball of radius $n$ in $G$, a finitely generated nonabelian torsion-free nilpotent group with generating set $S$ (which will define the word metric).  Let $(X,d_X)$ be either a superreflexive Banach space or an Alexandrov space of nonnegative or nonpositive curvature.  Then there exists $c,C > 0$ depending only on $G$, $S$, and $X$ so that
  \begin{align*}
    c_X(B_S(n)) \geq C \left( \log n \right)^c.
  \end{align*}
  If $X$ is a Carnot group that does not admit a biLipschitz homomorphic embedding of $G$, then the same statement holds except we have the estimate
  \begin{align*}
    c_X(B_S(n)) \geq C \left( \log \log n \right)^c.
  \end{align*}
\end{theorem}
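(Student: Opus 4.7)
The strategy is to chain the quantitative asymptotic cone theorem of Breuillard--Le Donne~\cite{BLD} with the large-scale Lipschitz version of Theorem~\ref{full-nonembed} (the extension to maps Lipschitz only at large scales is explicitly flagged in the excerpt). Let $G_\infty$ denote the Carnot group appearing as the asymptotic cone of $G$; because $G$ is nonabelian and torsion-free, $G_\infty$ is a nonabelian Carnot group. When $X$ is a Carnot group not admitting a biLipschitz homomorphic embedding of $G$, it also admits no such embedding of $G_\infty$, so Theorem~\ref{full-nonembed} is applicable with $G_\infty$ in place of $G$ for each of the three target classes.

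Suppose $f : B_S(n) \to X$ realizes the distortion $D := c_X(B_S(n))$, normalized so that $D^{-1} d_S(x,y) \leq d_X(f(x),f(y)) \leq d_S(x,y)$. After rescaling the domain by $1/n$, the result of~\cite{BLD} supplies a map $\Phi : B_{G_\infty} \to (B_S(n), \tfrac{1}{n} d_S)$ that is an $\eta(n)$-approximate isometry with $\eta(n) \leq C_G n^{-\beta}$ for some $\beta = \beta(G,S) > 0$ (the polynomial rate of convergence of rescaled balls to the asymptotic cone). The composition $\tilde f := f \circ \Phi : B_{G_\infty} \to X$ is then $2$-Lipschitz and bounded below by $(2D)^{-1}$ on every pair separated by at least $2\eta(n)$, which is precisely the setting of a map that is Lipschitz at large scales.

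Applying the large-scale Lipschitz version of Theorem~\ref{full-nonembed} to $\tilde f$ produces a pair $x, y \in B_{G_\infty}$ with $\dcc(x,y) \geq \eta(n)$ satisfying
\begin{align*}
  \frac{d_X(\tilde f(x), \tilde f(y))}{\dcc(x,y)} \leq C \left( \log \frac{1}{\dcc(x,y)} \right)^{-c}
\end{align*}
in the Alexandrov or superreflexive cases, and with a double logarithm in the Carnot target case. The lower Lipschitz bound $(2D)^{-1}$ of $\tilde f$ at scale $\dcc(x,y)$ then forces $D \gtrsim (\log(1/\dcc(x,y)))^{c}$. Pulling the pair back through $\Phi$ to $B_S(n)$ introduces only an additive error of order $\eta(n) \ll \dcc(x,y)$, which is absorbed into the constants.

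The principal obstacle is matching scales: the ball of radius $r$ furnished by Theorem~\ref{full-nonembed} satisfies $r \geq e^{-\epsilon^{-\alpha}}$, and this must be made consistent with $r \geq \eta(n) \asymp n^{-\beta}$. Equating the two gives $\epsilon \asymp (\log n)^{-1/\alpha}$, whence $D \gtrsim 1/\epsilon \asymp (\log n)^{c}$, establishing the general claim. In the Carnot target case the analogous constraint $r \geq e^{-e^{\epsilon^{-\alpha}}}$ matched against $\eta(n) \asymp n^{-\beta}$ forces instead $\epsilon \asymp (\log \log n)^{-1/\alpha}$, yielding the improved $(\log \log n)^{c}$ lower bound. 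The delicate point is verifying that the extension of Theorem~\ref{full-nonembed} to large-scale Lipschitz maps indeed produces its output pair at a scale not too far above the threshold $\eta(n)$, so that $\log(1/\dcc(x,y))$ remains of order $\log n$ (respectively $\log \log n$) rather than degenerating.
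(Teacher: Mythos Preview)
Your proposal is correct and follows essentially the same strategy as the paper: compose the given embedding with the Breuillard--Le Donne quasi-isometry from the asymptotic cone, obtain a map that is Lipschitz at large distances on a Carnot group, and feed it into the coarse differentiation machinery. The scale-matching heuristic you give ($\epsilon \asymp (\log n)^{-1/\alpha}$, resp.\ $(\log\log n)^{-1/\alpha}$) is exactly what comes out.

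The one packaging difference worth noting: rather than invoking a large-scale Lipschitz version of Theorem~\ref{full-nonembed} as a black box and then worrying about the ``delicate point'' of scale placement, the paper runs a direct contradiction argument using Theorem~\ref{UC-UAAP} (resp.\ Theorems~\ref{M-UAAP}, \ref{C-UAAP}). It fixes $\epsilon = 1/(4D)$, assumes $n^{\gamma} \ge \tau^{-C_1 D^{\alpha}}$, and uses the Carleson packing bound to locate a cube $Q$ at some level $k \in \{0,\dots,m\}$ with $m \asymp \epsilon^{-\alpha}$ where the map is $\epsilon$-close to a homomorphism. The contradiction with the lower Lipschitz bound then holds at \emph{any} such scale, provided only that $\zeta\epsilon^r a_0 \tau^m \ell(T)$ exceeds the quasi-isometry error $C_0 n^{1-\gamma}$; this is exactly the assumed inequality $n^{\gamma} \ge \tau^{-C_1 D^{\alpha}}$. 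Negating it gives the bound. This organization avoids having to argue separately that the produced pair sits at a scale comparable to $\eta(n)$ rather than near the top---the contradiction fires regardless, and the scale constraint enters only through the size hypothesis in the UAAP theorem.
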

In the proof, we make critical use of the fact that coarse differentiation can work for maps that are Lipschitz only at large distances.

As we can coarsely differentiate maps that are Lipschitz at large distances, we can also then prove an analogue of Bourgain's discretization theorem in the nonabelian setting of Carnot groups.  Following \cite{GNS}, we define the discretization modulus for any two Banach spaces $X$ and $Y$, denoted $\delta_{X \hookrightarrow Y}(\epsilon)$, to be the supremal $\delta \in (0,1)$ for each $\epsilon \in (0,1)$ such that for every $\delta$-net $\Ndelta$ of $B_X$, $c_Y(\Ndelta) \geq (1-\epsilon) c_Y(X)$.  Bourgain proved in \cite{bourgain-discretization} that, for each pair of Banach spaces $X$ and $Y$ where $\dim X = n < \infty$ and $c_Y(X) < \infty$, the discretization modulus is always positive for any $\epsilon > 0$ and gave a lower bound that depends only on $n$.

It is then straightforward to define the same function for Carnot groups.  Given two Carnot groups, $G$ and $H$, we can then define the discretization modulus $\delta_{G \hookrightarrow H}(\epsilon)$ to be the supremal $\delta \in (0,1)$ such that for every $\delta$-net $\Ndelta$ of $B_G$, the unit ball of $G$, satisfies $c_H(\Ndelta) \geq (1-\epsilon) c_H(G)$.   To do so, we will take a near optimal embedding of the $\delta$-net, extend it to a map that is Lipschitz at large distances on all of $B_G$, and differentiate it to produce a homomorphism with the desired distortion.

Section 2 will review and discuss Carnot groups, Alexandrov spaces, and superreflexive Banach spaces and establish notation.  Section 3 through 6 contains the proof of the coarse differentiation result of Lipschitz maps from Carnot groups to three classes of spaces.  Section 7 is devoted to proving a result on convexity of the Carnot-Carath\'{e}odory metric on Carnot group that is required in Section 6 as well as showing the Markov convexity result and giving the relevant background.  Section 8 will be devoted to proving the quantitative nonembeddability and discretization theorems using the differentiability results.  As the methods used do not seem to give sharp bounds, we will not try to optimize parameters and will, in some places, knowingly use suboptimal bounds (usually by a multiplicative or exponential constant). \\

\noindent {\bf Acknowledgements.}  I am grateful to Jonas Azzam for introducing and explaining \cite{schul} to me and to Enrico Le Donne for teaching me the basics of Carnot groups.  I would also like to thank Assaf Naor for many helpful conversations and for pointing out how the proof of Markov convexity depended on Lemma 2.3 of \cite{MN:08}.  Parts of this work were completed while I was attending the Quantitative Geometry program at MSRI and while invited to Purdue University by Ben McReynolds, for which I am also thankful.


\section{Preliminaries and notations} \label{preliminaries}
In this section, we review some basic definitions and results for Carnot groups, Alexandrov spaces, and superreflexive Banach spaces as well as establish some basic notation.

Given a map between two metric spaces $h : (Y,d_Y) \to (X,d_X)$, we can define for $t \in \R^+$
\begin{align*}
  \Lip_h(t) = \sup_{d_Y(x,y) > t} \frac{d_X(h(x),h(y))}{d_Y(x,y)}.
\end{align*}
Clearly $\Lip_h(t) \leq \Lip_h(s)$ if $s \leq t$.  We will say the map is $\psi$-Lipschitz at large distances ($\psi$-LLD) for some $\psi \geq 0$ if $\Lip_h(\psi) < \infty$ and
\begin{align*}
  \sup_{d_Y(x,y) \leq \psi} d_X(h(x),h(y)) \leq \Lip_h(\psi) \cdot \psi.
\end{align*}
The $\Lip_h(\psi) \cdot \psi$ bound simply states that we can use the macroscopic Lipschitz bound to get an absolute bound for microscopic distances.  The form in merely a convenience; the important thing is that there is some finite bound that controls the behavior of $h$ on small scales.  Examples of maps that are Lipschitz at large distances are Lipschitz maps, uniformly continuous maps, and quasi-isometries.  One can see that $h$ need not even be continuous.  Clearly, if a map $h$ is 0-LLD, then it is Lipschitz and $\Lip_h(0) = \|h\|_{lip}$.  

\subsection{Carnot groups}
All Lie groups will be assumed to be simply connected.  Given a Lie algebra $\g$, we can define the decending central sequence $\{\mathcal{G}_j\}$ as follows
\begin{align*}
  \mathcal{G}_1 = \g, \qquad \mathcal{G}_{j+1} = [\mathcal{G}_j,\g].
\end{align*}
If there exists some $r > 0$ so that $\mathcal{G}_{r+1} = 0$ then we say that $\g$ is nilpotent.  If, in addition, $\mathcal{G}_r \neq 0$ then we say that $\g$ has nilpotency step $r$.  A Lie algebra $\g$ is graded if it can be decomposed as
\begin{align*}
  \g = \bigoplus_{j=1}^r \V_j,
\end{align*}
and the subspaces $\V_j$ satisfy
\begin{align*}
  [\V_i,\V_j] \subset \V_{i+j}.
\end{align*}
A Lie group is graded and nilpotent if its associated Lie algebra is so.  A graded nilpotent Lie algebra is stratified if $\V_1$ generates the entire Lie algebra, \textit{i.e.} for any $k \geq 2$ and $v \in \V_k$, there exists $v_1,...,v_k \in \V_1$ so that
\begin{align*}
  [v_1,[v_2,...[v_{k-1},v_k]...]] = v.
\end{align*}
A Carnot group is then a simply connected Lie group with a stratified nilpotent Lie algebra.  We will call the subspace $\V_1 \subset \g$ the horizontal layer.  The horizontal elements of $G$ are all elements of the form $e^{\lambda v}$ where $e$ is the exponential map, $\lambda \in \R$, and $v \in \V_1$.  We will sometimes use $\V_k(\g)$ if we are in a situation with multiple Lie algebras to avoid confusion.

It is known that the exponential map is a diffeomorphism for simply connected nilpotent Lie groups \cite{FS}.  We will use this diffeomorphism to canonically identify elements of the Lie group with elements of the Lie algebra.  Thus we get that a graded nilpotent Lie group $G$ is topologically a Euclidean space.  One can then push forward the coordinate system of the Lie algebra to the Lie group and so we can write elements $g \in G$ as $(g_1,g_2,...,g_r)$ where each $g_i$ is also a vector of dimension $\dim \V_i$.  These are called the exponential coordinates of $G$.  We will use 0 to denote the identity element.  Letting $|\cdot|$ denote the Euclidean norm, it then makes sense to talk about $|g_r|$, $|g_1 - h_1|$, and so forth.  The Baker-Campbell-Hausdorff theorem (BCH) describes how group multiplication in a Lie group is represented on the Lie algebra level.

\begin{theorem}[Baker-Campbell-Hausdorff formula \cite{Dynkin,varad}]
  Let $G$ be a simply connected Lie group with Lie algebra $\g$.  Then given $U,V \in \g$ and the equality
  \begin{align*}
    e^W = e^U e^V,
  \end{align*}
  we can write the formula for $W$ as
  \begin{align*}
    W = \sum_{k > 0} \frac{(-1)^{k-1}}{k} \sum_{\begin{smallmatrix} {r_i + s_i > 0,} \\ {1 \leq i \leq n} \end{smallmatrix}} \frac{ \left( \sum_{j=1}^k (r_i + s_i) \right)^{-1}}{r_1!s_1! \cdots r_k!s_k!} (ad U)^{r_1} (ad V)^{s_1} \cdots (ad U)^{r_k} (ad V)^{s_k-1} V.
  \end{align*}
  where $(ad X)Y = [X,Y]$.
\end{theorem}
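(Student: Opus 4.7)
The plan is to prove the formula first at the level of formal power series in two noncommuting variables, then invoke smoothness of the exponential map (together with nilpotency in our setting) to transfer the identity to the Lie group itself. Working inside the associative algebra of formal power series in noncommuting indeterminates $U$ and $V$, I would define $W = \log(e^U e^V)$ using the formal series $\log(1+x) = \sum_{k \geq 1} (-1)^{k-1} x^k / k$ together with $e^U = \sum_{r \geq 0} U^r/r!$ and $e^V = \sum_{s \geq 0} V^s/s!$. Direct expansion then gives
\begin{align*}
  W = \sum_{k \geq 1} \frac{(-1)^{k-1}}{k} \sum_{\substack{r_i+s_i > 0 \\ 1 \leq i \leq k}} \frac{U^{r_1} V^{s_1} \cdots U^{r_k} V^{s_k}}{r_1! s_1! \cdots r_k! s_k!},
\end{align*}
an infinite sum of noncommutative monomials in $U$ and $V$, but a priori with no obvious Lie-algebraic structure.

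The next step, and the crux of the argument, is to show that $W$ actually lies in the free Lie subalgebra generated by $U$ and $V$, and to convert each monomial into a nested commutator. This uses the Dynkin--Specht--Wever projector $\pi$, defined on monomials of length $m$ by
\begin{align*}
  \pi(X_{i_1} \cdots X_{i_m}) = \frac{1}{m}[X_{i_1},[X_{i_2},[\cdots,[X_{i_{m-1}},X_{i_m}]\cdots]]],
\end{align*}
extended linearly; it fixes every Lie polynomial. To establish that $W$ is a Lie series I would invoke Friedrichs' criterion, which detects Lie elements of the free associative algebra via primitivity under the shuffle coproduct (one checks $\Delta W = W \otimes 1 + 1 \otimes W$ since $e^U$ and $e^V$ are grouplike, hence so is their product, hence $W = \log(e^U e^V)$ is primitive). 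Once $\pi W = W$ is known, applying $\pi$ term by term to the monomial expansion converts each $U^{r_1} V^{s_1} \cdots U^{r_k} V^{s_k}$ of total length $m = \sum_i (r_i + s_i)$ into $\frac{1}{m} (\operatorname{ad} U)^{r_1} (\operatorname{ad} V)^{s_1} \cdots (\operatorname{ad} U)^{r_k} (\operatorname{ad} V)^{s_k - 1} V$ (with the $s_k = 0$ case handled by symmetry or a small bookkeeping step), which combines with the $1/k$ factor and factorials to give precisely the stated expression.

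Finally, to transfer the formal identity back to the Lie group one uses that the exponential map is a local diffeomorphism near $0$, and that for the nilpotent Lie algebras of interest in this paper the descending central series condition $\mathcal{G}_{r+1} = 0$ forces all iterated brackets of length exceeding $r$ to vanish; the displayed series therefore collapses to a finite sum, so convergence is automatic globally on $\g$ and the formula holds in the Lie group after identification via $\exp$. The main obstacle is conceptual rather than computational: showing a priori that $W$ actually lies in the free Lie subalgebra (primitivity under the shuffle coproduct is the cleanest route, but requires carrying some Hopf algebra background), whereas the combinatorial bookkeeping tracking each monomial through $\pi$ to its nested-commutator image and verifying that the $(-1)^{k-1}/k$ prefactor and factorial denominators survive intact is routine but must be executed with care.
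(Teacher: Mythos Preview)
The paper does not prove this theorem at all: it is stated as classical background in the preliminaries section and simply cited to Dynkin and Varadarajan, so there is no proof in the paper to compare your proposal against. Your outline is the standard Dynkin argument (formal expansion of $\log(e^U e^V)$, Friedrichs' criterion for primitivity, then the Dynkin--Specht--Wever projector to rewrite monomials as nested brackets), which is exactly the approach found in the cited references; for the purposes of this paper the formula is used only as a black box, and in the nilpotent setting the relevant takeaway is merely that the series terminates so that group multiplication in exponential coordinates is polynomial.
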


Note that as we are working with nilpotent Lie algebras, the summation will be finite.  Because we are pushing foward the coordinates of $\g$ to $G$, we can use the BCH formula on the level of the coordinates of the Lie group.  Given two elements $(g_1,...,g_r)$ and $(h_1,...,h_r) \in G$, the BCH formula shows that
\begin{align*}
  (g_1,...,g_r) \cdot (h_1,...,h_r) = (g_1 + h_1,g_2 + h_2 + P_2,...,h_r+g_r + P_r)
\end{align*}
where $P_k$ is a polynomial of the coordinates $g_1,...,g_{k-1},h_1,...,h_{k-1}$.  We will call $P_k$ the BCH polynomials.

\begin{example}
  The Heisenberg algebra is a 2-step nilpotent Lie algebra spanned by three vectors $\{X,Y,Z\}$ with the Lie bracket relations $[X,Y] = Z$, $[X,Z] = 0$, $[Y,Z] = 0$.  We have that the first few terms of the BCH formula are
  \begin{align*}
    e^U e^V = e^{U + V + \frac{1}{2} [U,V] + \frac{1}{12} [U,[U,V]] - \frac{1}{12} [V,[U,V]] + ...}.
  \end{align*}
  We then get that
  \begin{align}
    e^{aX + bY + cZ} &e^{a'X + b'Y + c'Z} \notag \\
    &= e^{(a+a')X + (b+b')Y + (c+c')Z + \frac{1}{2} [aX + bY + cZ,a'X + b'Y + c'Z]} \notag \\
    &= e^{(a+a')X + (b+b')Y + (c+c')Z + \frac{1}{2} \left( (ab'-a'b)[X,Y] + (ac'-a'c) [X,Z] + (bc' - b'c)[Y,Z]\right)} \label{lie-bracket-use} \\
    &= e^{(a+a')X + (b+b')Y + \left(c+c'+\frac{1}{2}(ab'-a'b)\right) Z} \label{lie-relation-use}.
  \end{align}
  In \eqref{lie-bracket-use}, we used bilinearity and antisymmetry of the Lie bracket and in \eqref{lie-relation-use}, we used the Lie bracket relations of $X$, $Y$, and $Z$.  We stopped with the first Lie bracket because any further nesting of Lie brackets becomes trivial as the Lie algebra is nilpotent of step 2.
  
  If we use the exponential coordinates to identify $(a,b,c) \in \mathbb{R}^3$ with $e^{aX+bY+cZ}$, we recover the usual Heisenberg product:
  \begin{align*}
    (a,b,c) \cdot (a',b',c') \overset{\eqref{lie-relation-use}}{=} \left(a+a', b+b', c+c'+ \frac{1}{2}(ab'-a'b) \right).
  \end{align*}
  One then sees that $P_2((a,b,c),(a',b',c')) = \frac{1}{2}(ab'-a'b)$. \\
  \qed
\end{example}

Another important property of graded nilpotent Lie groups is that they admit a family of self-similarities.  Let $g \in G$ be an element of the Lie group.  Then $g = e^{g_1 + ... + g_r}$ where $g_i \in \V_i$.  Given $\lambda \geq 0$ we can define the dilation automorphism
\begin{align*}
  \delta_\lambda : G &\to G \\
  e^{g_1 + ... + g_r} &\mapsto e^{\lambda g_1 + \cdots + \lambda^r g_r}.
\end{align*}

A homogeneous norm on a graded nilpotent Lie group $G$ is a continuous nonnegative function $\rho : G \to \R^+$ such that
\begin{align*}
  \rho(g) &= \rho(g^{-1}), \\
  \rho(\delta_\lambda(g)) &= \lambda \rho(g), \\
  \rho(g) &= 0 \Leftrightarrow g = 0.
\end{align*}
A homogeneous norm defines a homogeneous (semi)metric on $G$ by $d(g,h) = \rho(g^{-1}h)$.  Any two homogeneous norms $N,N' : G \to \R^+$ are equivalent, \textit{i.e.} there exists some $C > 0$ (depending only on the two metrics) so that for all $g, h \in G$ we have
\begin{align*}
  \frac{N'(g)}{C} \leq N(g) \leq CN'(g).
\end{align*}
As passing to metrics equivalent to the Carnot-Carath\'{e}odory metric changes the bounds in coarse differentiability and quantitative nonembeddability only by a multiplicative constant, we can and will use different homogeneous metrics in portions of our subsequent analysis.  We will also define another group norm as
\begin{align*}
  N_\infty : G &\to \R^+ \\
  (g_1,...,g_r) &\mapsto \max_i \lambda_i |g_i|^{1/i},
\end{align*}
with the associated homogeneous metric $d_\infty$.  Here, $\lambda_i > 0$ are positive real scalars and $|\cdot|$ is the Euclidean norm.  It is known that for each graded nilpotent Lie group, there exists a configuration of $\{\lambda_i\}$ that makes $d_\infty$ into a true metric with no multiplicative factor in the triangle inequality \cite{Guivarc'h,breuillard}.  We may suppose for simplicity that $\lambda_i = 1$ for all $i$.  Everything that follows goes through in the general case with superficial modifications.

Let $S^{n-1}$ denote the unit sphere of $\V_1$.  Given $x \in G$ and a horizontal unit vector $v \in S^{n-1}$, we can isometrically embed $\R$ into $G$ via {\it horizontal lines} by the map
\begin{align*}
  t \mapsto x e^{tv} =: x \cdot tv, \qquad \forall t \in \R.
\end{align*}
Using these isometries, we can pushforward notions such as dyadic subdivision, midpoints, length, etc. from $\R$ to horizontal lines of $G$.  We also let $G \circleddash v$ denote the exponential image of the subspace of $\g$ orthogonal to $v$.

All graded nilpotent Lie groups admit a path metric on a class of restricted paths that we describe as follows.  We can construct a left invariant subbundle of the tangent bundle by taking, at each point $g \in G$, the fiber to be the left translate of the subspace $\V_1$.  We will denote this subbundle $\H$.  If we place a left invariant field of inner products $\langle\cdot,\cdot\rangle_g$ on $\H$, we can define the Carnot-Carath\'{e}odory metric (CC-metric) between two points $g,h \in G$ to be
\begin{align*}
  \dcc(g,h) = \inf \left\{ \int_0^1 \langle \gamma'(t), \gamma'(t) \rangle^{1/2}_{\gamma(t)} ~dt : \gamma(0) = g, \gamma(1) = h, \gamma'(t) \in \H_{\gamma(t)} \right\}
\end{align*}

If no such path exists, we set $\dcc(g,h) = \infty$.  The set of paths $\gamma : [a,b] \to G$ where $\gamma'(t) \in \H_{\gamma(t)}$ are called horizontal paths.  It is clear that this is a left invariant metric as all the fibers are defined in a left invariant manner.  Because we are taking the Riemannian length of a class of restricted paths, this is also called a sub-Riemannian metric.  It is natural then to ask if there always exists a horizontal path between two points of $G$.  Chow's theorem answers the question in the affirmative when $\V_1$ generates the entire Lie algebra, \textit{i.e.} when $G$ is a Carnot group (see \cite{gro:96,montgomery}).

Instead of taking a scalar product in the definition of the CC-metric, we could have taken a left-invariant field of vector norms instead and defined a sub-Finsler metric in a similar fashion.  We will show below that the CC-metric can be defined in terms of a homogeneous norm.  As the same reasoning works for sub-Finsler metrics, we get that these two metrics are equivalent, and so we will not make an effort to differentiate between the two of them.

Let $\{v_1,...,v_n\}$ be an orthonormal basis for $\mathcal{V}_1$, which we will suppose generates all of $\g$.  By the proof of Chow's theorem, there exists $M_G > 0$ depending only on $G$ so that, for any $r > 0$ and any element $g$ in the CC-unit ball $\delta_r(B_G)$, there exist $j \leq M_G$ and $i : \{1,...,j\} \to \{1,...,n\}$ so that $g$ can be written of the form
\begin{align*}
  g = e^{\lambda_1 v_{i(1)}} e^{\lambda_2 v_{i(2)}} \cdots e^{\lambda_j v_{i(j)}}.
\end{align*}
One can verify that in the case $G$ is a vector space or the $2n+1$ dimensional Heisenberg group, then $M_G$ is $n$ or $2n+4$, respectively.  We also have the bounds $|\lambda_\ell| \leq \lambda r$ for some $\lambda$ depending only on $G$.  The triangle inequality then gives us that
\begin{align*}
  e^{\lambda_1 v_{i(1)}} e^{\lambda_2 v_{i(2)}} \cdots e^{\lambda_\ell v_{i(\ell)}} \in \delta_{M_G\lambda r}B_G, \qquad \forall \ell \leq j.
\end{align*}
For simplicity, we will suppose that $\lambda \leq 1$.  Everything that follows goes through in the general case with superficial modifications.

One can easily verify, by looking at the action of $\delta_\lambda$ and the fact that the CC-metric depends only on the first layer $\V_1$, that the CC-metric is homogeneous with respect to the dilations:
\begin{align*}
  \dcc(\delta_\lambda(g),\delta_\lambda(h)) = \lambda \dcc(g,h).
\end{align*}
Thus, one easily verifies that the function
\begin{align*}
  \Ncc : G &\to \R^+ \\
  g &\mapsto \dcc(0,g)
\end{align*}
is a homogeneous norm on $G$.  Everything with the subscript $cc$ will be in terms of the CC-metric.

Note that the projection to the horizontal coordinate
\begin{align*}
  \pi : G &\to \R^n \\
  g &\mapsto g_1
\end{align*}
is a homomorphism.  This follows from the group product defined by the BCH formula.  Equipping $G$ with a CC-metric and viewing $\R^n$ as a Euclidean space, we get that $\pi$ is a 1-Lipschitz homomorphism that is distance preserving on horizontal lines.

As we've identified graded nilpotent Lie groups with Euclidean spaces, we can make sense of the Lebesgue measure $\mathcal{L}$ of subsets of graded nilpotent Lie groups.  It is known that the Lebesgue measure is a left invariant measure so that if $E \subset G$ is a measurable set, then
\begin{align}
  |\delta_\lambda(E)| = \lambda^N |E|, \label{carnot-set-dilation}
\end{align}
where $N = \sum_{k=1}^r k \dim \V_k$ is the homogeneous dimension of $G$.  This is verifiable by looking at the Jacobian of $\delta_\lambda$.  We can define the metric balls
\begin{align*}
  B(x,r) := \{g \in G : d(x,g) \leq r\}.
\end{align*}
Given $\lambda > 0$ and some ball $B$ with center $x$, we let $\lambda B := x \cdot \delta_\lambda (x^{-1}B)$.  One can then see that $B(x,r) = x \delta_r(B(0,1))$.  Then it is clear that \eqref{carnot-set-dilation} immediately implies that graded nilpotent Lie groups are doubling, \textit{i.e.} there exists some constant $C > 0$ so that for every $x \in G$ and $r > 0$ we have
\begin{align*}
  |B(x,2r)| \leq C|B(x,r)|.
\end{align*}

\subsection{Alexandrov spaces}
Alexandrov spaces are generalizations of Riemannian spaces with curvature bounds.  They are divided into two types, $CAT(k)$ spaces, which are spaces with curvature bounded from above, and $CBB(k)$ spaces, which are spaces with curvature bounded below.  Given $k \in \R$, we let $(M_k^2,d_k)$ be the two dimensional constant curvature $k$ model space (\textit{i.e.} $S_k^2$, $\R^2$, or $H_k^2$).  Let $\diam(k)$ be the diameter of the model space $M_k^2$ with the understanding that it is infinite if $k \leq 0$.  Given a triangle $\triangle abc$ in $X$ with vertices $a,b,c \in X$ and minimizing geodesic as sides, we can construct a comparison triangle $\widetilde{\triangle} abc$ with the same sidelengths in the model space $M_k^2$. 

A complete metric space $(X,d_X)$ is a $CAT(k)$ space if
\begin{itemize}
  \item Every pair $x,y \in X$ with $d_X(x,y) \leq \diam(k)$ is joined by a geodesic segment.
  \item Let $\triangle abc$ be a geodesic triangle in $X$ with $d_X(a,b) + d_X(b,c) + d_X(c,a) < 2 \diam(k)$.  For every two points $x,y \in \triangle$, if we let $x_k,y_k$ be the corresponding points in $\widetilde{\triangle} abc$, then $d_X(x,y) \leq d(x_k,y_k)$.
\end{itemize}

A complete metric space $(X,d_X)$ is a $CBB(k)$ space if
\begin{itemize}
  \item $(X,d_X)$ is a locally compact geodesic space.
  \item Let $\triangle abc$ be a geodesic triangle in $X$ with $d_X(a,b) + d_X(b,c) + d_X(c,a) < 2 \diam(k)$.  For every two points $x,y \in \triangle$, if we let $x_k,y_k$ be the corresponding points in $\widetilde{\triangle} abc$, then $d_X(x,y) \geq d(x_k,y_k)$.
\end{itemize}

The inequalities relating the distances in the geodesic triangles with the comparison triangles are called the triangle comparison properties.  One can visualize these conditions in the following way: geodesic triangles in $CAT(k)$ spaces (resp. $CBB(k)$ spaces) are skinnier (resp. fatter) than their comparison triangles in $M_k^2$.  In the literature, $CAT(0)$ spaces are also called Hadamard spaces.  Good reference for these spaces are \cite{Ballman,BBI,BGP}.  In this paper, we will only focus on $CAT(0)$ and $CBB(0)$ spaces and so the model space will be the Euclidean 2-plane $\R^2$.

One can also define a notion of angles between two geodesic segments in $CAT(0)$ and $CBB(0)$ spaces.  Let $p \in X$ and $\gamma_0,\gamma_1 : [0,1] \to X$ be minimizing geodesic segments where $\gamma_0(0) = \gamma_1(0) = p$.  Then for $s,t \in [0,1]$, one can construct the comparison triangle $\widetilde{\triangle} \gamma_0(s)p\gamma_1(t)$ in $\R^2$.  This triangle is clearly unique up to rigid motion and so we can define the comparison angle $\widetilde{\angle} \gamma_0(s) p \gamma_1(t)$ using the law of cosines
\begin{align*}
  \widetilde{\angle} \gamma_0(s) p \gamma_1(t) = \cos^{-1} \left( \frac{d_X(\gamma_0(s),p)^2 + d_X(p,\gamma_1(s))^2 - d_X(\gamma_0(s),\gamma_0(t))^2}{2 d_X(\gamma_0(s),p) d_X(p,\gamma_1(t))} \right).
\end{align*}
One can then define the angle between the two geodesics as
\begin{align*}
  \angle \gamma_0(1) p \gamma_1(1) = \lim_{s,t \to 0} \widetilde{\angle} \gamma_0(s) p \gamma_1(t).
\end{align*}

It is natural to ask if the limit on the right hand side actually converges.  Define the function $\theta(s,t) = \widetilde{\angle} \gamma_0(s) p \gamma_1(t)$.  Then by the triangle comparison property of $CAT(0)$ spaces (resp. $CBB(0)$ spaces) and the law of cosines, we see that for $s' \leq s$ and $t' \leq t$ that $\theta(s',t') \leq \theta(s,t)$ (resp. $\theta(s',t') \geq \theta(s,t)$).  This is called the monotonicity property.  Thus, the limit does exist and we get $\widetilde{\angle} \gamma_0(1)p\gamma_1(1) \geq \angle \gamma_0(1)p\gamma_1(1)$ (resp. $\widetilde{\angle} \gamma_0(1)p\gamma(1) \leq \angle \gamma_0(1)p\gamma_1(1)$).  This is called the angle property.  In fact, all these properties (triangle comparison, monotonicity, angle) provide equivalent definitions of $CAT(0)$ spaces (resp. $CBB(0)$ spaces).

\subsection{Superreflexive Banach spaces}
Recall that a Banach space $X$ is said to be finitely representable in $Y$ if there exists some $K > 1$ so that, for every finite dimensional subspace $Z \subset X$, there exists a subspace in $Y$ so that the Banach-Mazur distance between $X$ and $Y$ satisifies $d(X,Y) \leq K$.  A Banach space $Y$ is said to be superreflexive if every space that is finitely representable in it is reflexive.  Due to the deep works of James \cite{james64,james72}, Enflo \cite{enflo}, and Pisier \cite{pisier}, we know that a Banach space $X$ is superreflexive if and only if it admits an equivalent norm $\|\cdot\|$ so that there are $p > 1$ and $K > 0$ for which
\begin{align}
  \left\| \frac{x+y}{2} \right\|^p + \left\| \frac{x-y}{2K} \right\|^p \leq \frac{\|x\|^p + \|y\|^p}{2}, \qquad \forall x,y \in X. \label{p-convexity}
\end{align}
That is, $X$ is isomorphic to a Banach space with a uniform convexity modulus of power type $p$ (also known as $p$-convexity).  In fact, all uniformly convex Banach space can be renormed to be uniformly convex of power type and so uniformly convex spaces and superreflexive spaces are the same subclass under the isomorphic category.  We will use the definition of $p$-convex Banach spaces from now on.  As an example, we have that for $p \in (1,\infty)$, the usual norm on $L_p(\mu)$ space satisfies \eqref{p-convexity} with $p = \min\{p,2\}$ and $K = \max\{1 / \sqrt{p-1},1\}$ \cite{ball,figiel}.


\section{A Carleson packing condition for coarse differentiation}
In this section, we consider the metric measure space $(G,\dcc,\mathcal{L})$ where $G$ is a Carnot group of homogeneous dimension $N$ with the Carnot-Carath\'{e}odory metric $\dcc$ and Lebesgue measure $\mathcal{L}$.  Denote its Lie algebra by $\g$ which is stratified by the layers $\{\V_i\}_{i=1}^r$ where $\V_1$ is the $n$-dimensional horizontal layer with orthonormal basis $\{v_1,...,v_n\}$.  As $(G,\dcc,\mathcal{L})$ is doubling, a theorem of Christ says that there exists a collection of partitions of $G$ that behave akin to dyadic cubes.
\begin{theorem}[Christ cubes \cite{Christ}] \label{christ-cubes}
  There exists a collection of subsets $\Delta := \{Q_\omega^k \subset G : k \in \Z, \omega \in I_k\}$, and constant $a_1,a_2 > 0$ and $\tau \in (0,1)$ such that
  \begin{enumerate}[$(a)$]
    \item $\left|G \backslash \bigcup_\omega Q_\omega^k\right| = 0 \qquad \forall k$. \label{christ-cover}
    \item If $j \geq k$ then either $Q_\alpha^j \subset Q_\omega^k$ or $Q_\alpha^j \cap Q_\omega^k = \emptyset$. \label{christ-partition}
    \item For each $(j,\alpha)$ and each $k < j$ there exists a unique $\omega$ such that $Q_\alpha^j \subset Q_\omega^k$. \label{christ-nest}
    \item $\mathrm{diam}(Q_\omega^k) \leq a_1\tau^k$. \label{christ-diam}
    \item Each $Q_\omega^k$ contains some ball $\Bcc(z_\omega,a_0\tau^k)$. \label{christ-ball}
  \end{enumerate}
\end{theorem}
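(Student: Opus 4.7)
The plan is to follow Christ's construction from \cite{Christ}, which uses only the doubling property of $(G, \dcc, \mathcal{L})$ and so applies verbatim here. The cubes $Q_\omega^k$ will be refined Voronoi cells associated to a nested family of maximally $\tau^k$-separated sets, with $\tau \in (0,1)$ chosen small enough that the geometric-series errors in the nesting descent remain controlled.

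First, for each $k \in \Z$, use Zorn's lemma to pick a maximal $\tau^k$-separated set $\{z_\omega^k\}_{\omega \in I_k} \subset G$; maximality gives a covering by the balls $\Bcc(z_\omega^k, \tau^k)$ and a disjoint packing $\Bcc(z_\omega^k, \tau^k/2)$, with local finiteness ensured by doubling. Define a parent map $\pi : I_{k+1} \to I_k$ by assigning each $\omega \in I_{k+1}$ to some $\omega' \in I_k$ with $\dcc(z_\omega^{k+1}, z_{\omega'}^k) < \tau^k$ (which exists by the covering at scale $k$), using any fixed tiebreaking rule. For $x \in G$ and a fine scale $j \geq k$, let $\omega_j(x) \in I_j$ index a center at scale $j$ closest to $x$, and set $\omega_k^{(j)}(x) := \pi^{j-k}(\omega_j(x))$. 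A geometric-series argument in $\tau$ shows this stabilizes as $j \to \infty$ to an address $\omega_k(x)$, and we define $Q_\omega^k := \{x : \omega_k(x) = \omega\}$ up to a measure-zero boundary correction.

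Properties (b) and (c) are automatic from the tree structure. Property (d), the diameter bound, follows from the telescoping estimate $\dcc(x, z_\omega^k) \leq \sum_{j \geq 0} \tau^{k+j} \leq a_1 \tau^k$ along the ancestry chain of $x$. The main obstacle is property (e), the inner ball condition: one must show that every $x \in \Bcc(z_\omega^k, a_0 \tau^k)$ descends to $\omega$ at all finer scales, which requires $\tau$ and $a_0$ chosen small enough that the pairwise separation of $\{z_{\omega'}^j\}$ combined with the triangle inequality forces the closest center to $x$ at each scale to remain a descendant of $\omega$. Finally, property (a), the null boundary, is the subtlest point and is where the measure structure is genuinely needed: it is handled in \cite{Christ} by a Borel--Cantelli style estimate bounding the measure of points that lie within distance $\varepsilon \tau^j$ of a Voronoi boundary at infinitely many scales $j$, exploiting doubling to make this event have measure zero.
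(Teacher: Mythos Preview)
The paper does not prove this theorem; it is quoted directly from \cite{Christ} and used as a black box, so there is no ``paper's own proof'' to compare against. Your sketch is a faithful outline of Christ's original argument in the doubling setting, and the ingredients you list---nested maximal $\tau^k$-nets, a parent map giving the tree structure, geometric-series control for the diameter and inner-ball bounds, and a measure-theoretic argument for the small-boundary property---are exactly the ones Christ uses.

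One small correction: the claim that the address $\omega_k^{(j)}(x)$ ``stabilizes as $j \to \infty$'' for every $x$ is not quite how Christ proceeds, and in fact need not hold pointwise everywhere. Christ instead defines the cubes directly from the tree of centers (roughly, $Q_\omega^k$ is the union of suitably small balls around all descendants of $z_\omega^k$) and then proves the small-boundary estimate separately; the points where your stabilization would fail are absorbed into the measure-zero complement handled in property~(a). Your parenthetical ``up to a measure-zero boundary correction'' acknowledges this, but the logical order is reversed from what you wrote: the cubes are defined first via the tree, and only afterward is the thin-boundary estimate used to show property~(a).
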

We let
\begin{align*}
  \ell : \Delta &\to \R \\
  Q_\omega^k &\mapsto \tau^k
\end{align*}
denote the scale of each cube in $\Delta$ and $\Delta_k := \{Q_\omega^k : \omega \in I_k\}$ for $k \in \Z$.  If $S \in \Delta_j$, then $\Delta_k(S) = \{Q \in \Delta_{j+k} : Q \subseteq S\}$.  For $Q \in \Delta$, we let $B_Q$ denote the ball with center $z_Q$ contained in $Q$ of size $a_0\tau^k$ as guaranteed by property \eqref{christ-ball}.  For an interval $I \subset \R$, we let $\D(I)$ denote the set of dyadic subintervals of $I$ and $\D^k(I)$ the dyadic subintervals of length $2^{-k}|I|$.  We will use the same notation of dyadic subintervals for horizontal line segments of $G$.

By properties \eqref{christ-diam} and \eqref{christ-ball} we see that there exists a constant $C > 0$ depending only on $G$ such that
\begin{align*}
  \frac{1}{C} \ell(Q)^N \leq |Q| \leq C\ell(Q)^N.
\end{align*}
Here, $N$ is the homogeneous dimension of $G$ as defined by the formula
\begin{align*}
  N = \sum_{k=1}^r k \dim \V_k.
\end{align*}

What follows will be largely inspired from \cite{schul}.  Let $f$ be a Lipschitz map from an interval $[a,b]$ to a metric space $(X,d_X)$.  We fix some $p \geq 1$ and define for $x,y \in [a,b]$
\begin{align}
  \partial_f^{(p)}(x,y) &= \frac{1}{2} \left[ \left( \frac{d_X(f(x), f((x+y)/2))}{|y-x|/2} \right)^p + \left( \frac{d_X(f((x+y)/2),f(y))}{|y-x|/2} \right)^p \right] - \left( \frac{d_X(f(x),f(y))}{|y-x|} \right)^p. \label{partial-defn}
\end{align}
By a simple telescoping sum argument, we have
\begin{align}
  \sum_{k=0}^m \sum_{I \in \D^k([a,b])} |I| \partial_f^{(p)}(a(I),b(I)) \leq 2 (b-a) \Lip_h(2^{-m-1}(b-a))^p. \label{telescope}
\end{align}
Given $\epsilon \in [0,1)$, we then define the quantity
\begin{align*}
  \alpha^{(p)}_f([a,b];\epsilon) = (1-\epsilon)^2 (b-a)^{-2} \underset{\begin{smallmatrix} {a \leq x < y \leq b,} \\ {y - x > \epsilon (b-a)} \end{smallmatrix}}{\iint} \partial_f^{(p)}(x,y) ~dx ~dy.
\end{align*}

\begin{lemma} \label{carleson-1d}
  Let $\epsilon \in (0,1)$, $m \in \N$.  Then
  \begin{align*}
    \sum_{k=0}^m \sum_{I \in \D_k([a,b])} \alpha^{(p)}_f(I;\epsilon) |I| \leq 4(b-a) \Lip_f(\epsilon 2^{-m-1}(b-a))^p.
  \end{align*}
\end{lemma}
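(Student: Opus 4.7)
The plan is to combine Fubini's theorem with the telescoping identity \eqref{telescope} applied at an appropriately refined scale. First I would exchange the order of summation and integration to write
\[
\sum_{k=0}^m \sum_{I \in \D_k([a,b])} |I|\alpha_f^{(p)}(I;\epsilon) \;=\; (1-\epsilon)^2 \iint_{\{a \leq x < y \leq b\}} \partial_f^{(p)}(x,y)\, N(x,y)\, dx\, dy,
\]
where $N(x,y) := \sum |I|^{-1}$ runs over all pairs $(k,I)$ with $0 \leq k \leq m$, $I \in \D_k([a,b])$, $x,y \in I$, and $y-x > \epsilon|I|$. For each fixed $(x,y)$, the valid $I$'s form a nested tower of dyadic ancestors of the smallest dyadic ancestor $I^*(x,y)$, so $N(x,y)$ is a geometric progression dominated by twice the inverse of the smallest valid $|I|$, namely $\max(|I^*(x,y)|, 2^{-m}(b-a))$, and vanishes unless this minimum is smaller than $(y-x)/\epsilon$.

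Next, for each pair $(x,y)$ with $N(x,y) \neq 0$, I would apply the telescoping identity internally to the interval $[x,y]$: choosing $L = L(x,y)$ so that $2^{-L}(y-x) \leq \epsilon 2^{-m-1}(b-a)$, the same computation that yields \eqref{telescope} applied to the sub-interval $[x,y]$ gives
\[
(y-x)\partial_f^{(p)}(x,y) + \sum_{j=1}^{L-1}\sum_{J \in \D_j([x,y])} |J|\partial_f^{(p)}(a(J),b(J)) \leq (y-x)\Lip_f\!\left(\epsilon 2^{-m-1}(b-a)\right)^p.
\]

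Finally, I would combine these via a second Fubini swap with \eqref{telescope} applied at the shifted level $m' = m + \lceil\log_2(1/\epsilon)\rceil$, for which $2^{-m'-1}(b-a) = \epsilon 2^{-m-1}(b-a)$, yielding
\[
\sum_{k=0}^{m'} \sum_{I' \in \D^k([a,b])} |I'|\,\partial_f^{(p)}(a(I'), b(I')) \leq 2(b-a) \Lip_f\!\left(\epsilon 2^{-m-1}(b-a)\right)^p.
\]
The factor $4 = 2 \times 2$ in the claim then emerges as the product of the constant arising from the geometric sum bound on $N(x,y)$ and the constant in \eqref{telescope}.

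The hard part is avoiding a spurious factor of $(m+1)$. A naive bound $\partial_f^{(p)}(x,y) \leq \Lip_f(\epsilon 2^{-m-1}(b-a))^p$, valid for all $(x,y)$ with $N(x,y) \neq 0$, when inserted directly into the Fubini expression produces such a factor because $\iint N(x,y)\,dx\,dy = (1-\epsilon)^2(m+1)(b-a)/2$. To avoid this, one must use the internal telescoping to charge each continuous pair-contribution $\partial_f^{(p)}(x,y)$ to dyadic $\partial$-terms at sub-scales, arranged so that every fine-scale dyadic interval receives only $O(1)$ total weight across all pairs $(x,y)$; only then does the sum match a single application of \eqref{telescope} at level $m'$ rather than $(m+1)$ parallel applications.
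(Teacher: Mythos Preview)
You correctly identify the central difficulty: bounding $\partial_f^{(p)}(x,y)$ pointwise and integrating against $N(x,y)$ produces an extra factor of $m+1$, since $\iint N(x,y)\,dx\,dy$ is of order $(m+1)(b-a)$. However, your proposed remedy does not close this gap. The ``internal telescoping'' on $[x,y]$ only yields
\[
(y-x)\,\partial_f^{(p)}(x,y) \;\le\; (y-x)\,\partial_f^{(p)}(x,y) + \sum_{j\ge 1}\sum_{J\in\D^j([x,y])} |J|\,\partial_f^{(p)}(a(J),b(J)) \;\le\; 2(y-x)\,\Lip_f(\cdot)^p,
\]
which is precisely the pointwise bound you already rejected: the nonnegative sub-scale terms sit on the \emph{left} of the inequality and therefore give no slack to redistribute. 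Nor do you supply any link between the dyadic sum $\sum_{k\le m'}\sum_{I'\in\D^k([a,b])}|I'|\,\partial_f^{(p)}(a(I'),b(I'))$ and the continuous integral $\iint \partial_f^{(p)}(x,y)\,N(x,y)\,dx\,dy$; the grids $\D^j([x,y])$ for continuously varying $(x,y)$ bear no simple relation to the fixed grid $\D^k([a,b])$. Your final paragraph states accurately what must be achieved, but offers no mechanism to achieve it.

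The paper's argument supplies exactly this missing mechanism, via a different organization of the sum. In each $\alpha_f^{(p)}(I;\epsilon)$ one changes variables from $(x,y)$ to $(r,v)$ with $r=(y-x)/|I|\in(\epsilon,1]$ and $x=v+ra(I)$, $y=v+rb(I)$ (Jacobian $|I|$, cancelling the prefactor $|I|^{-1}$). One then enlarges the $v$-range to the common interval $[a-rb,\,b-ra]$; the point is that this interval, translated by $ra(I)$, contains $[a,b]$ for \emph{every} $I\in\D^k([a,b])$, so in the enlarged sum each relevant point is covered by all $2^k$ intervals at level $k$, whence the original sum is bounded by $2^{-k}$ times the enlarged one. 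After swapping, for fixed $(r,v)$ the inner sum becomes
\[
\sum_{k=0}^m 2^{-k}\!\!\sum_{J\in\D^k([v+ra,\,v+rb])}\!\!\partial_f^{(p)}(a(J),b(J)) \;=\; \frac{1}{r(b-a)}\sum_{k=0}^m\sum_{J}|J|\,\partial_f^{(p)}(a(J),b(J)),
\]
and a single application of \eqref{telescope} on the rescaled interval $[v+ra,v+rb]$ bounds this by $2\Lip_f(r\,2^{-m-1}(b-a))^p\le 2\Lip_f(\epsilon\,2^{-m-1}(b-a))^p$. Integrating over $r\in(\epsilon,1]$ and the $v$-range of length $(1+r)(b-a)\le 2(b-a)$ produces the constant $4$. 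The key is that grouping by $(r,v)$ aligns the dyadic structure across all scales simultaneously, so the telescoping cancellation is invoked once per $(r,v)$ rather than once per scale; your grouping by the pair $(x,y)$ destroys this alignment.
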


\begin{proof}
  We have that
  \begin{align*}
    \sum_{k=0}^m \sum_{I \in \D^k([a,b])} \alpha^{(p)}_f(I;\epsilon)|I| &\leq \sum_{k=0}^m \sum_{I \in \D^k([a,b])} |I|^{-1} \underset{\begin{smallmatrix} {a(I) \leq x < y \leq b(I),} \\ {y-x > \epsilon|I|} \end{smallmatrix}}{\iint} \partial_f^{(p)}(x,y) ~dx ~dy \\
    &= \sum_{k=0}^m \sum_{I \in \D^k([a,b])} |I|^{-1} \int_\epsilon^1 \int_{a(I)(1-r)}^{b(I)(1-r)} \partial_f^{(p)}(v+ra(I),v+rb(I)) ~dv ~|I|dr \\
    &\leq \sum_{k=0}^m \sum_{I \in \D^k([a,b])} 2^{-k} \int_\epsilon^1 \int_{a-rb}^{b-ra} \partial_f^{(p)}(v+ra(I), v+rb(I)) ~dv ~dr = (*).
  \end{align*}
  Here, we've extended the range of $v$ and so, taking the summation into account, overcounted by $2^k$ at each level.  Continuing, we get
  \begin{align*}
    (*) &= \int_\epsilon^1 \int_{a-rb}^{b-ra} \sum_{k=0}^m \sum_{I \in \D^k([v+ra,v+rb])} 2^{-k} \partial_f^{(p)}(a(I),b(I)) ~dv ~dr \\
    &\overset{\eqref{telescope}}{\leq} \int_\epsilon^1 \int_{a-rb}^{b-ra} 2 \Lip_f(2^{-m-1} r (b-a))^p ~dv ~dr \\
    &\leq \int_\epsilon^1 \int_{a-rb}^{b-ra} 2 \Lip_f(2^{-m-1} \epsilon(b-a))^p ~dv ~dr  \\
    &\leq 4(b-a) \Lip_f(2^{-m-1} \epsilon(b-a))^p,
  \end{align*}
\end{proof}

\begin{lemma} \label{stay-in-ball}
  Let $t > 0$.  Suppose $x,y \in \Bcc(0,t)$ such that $z = x^{-1}y$ is a horizontal element.  Then we have that $x \cdot \delta_\lambda(z) \in \Bcc(0,3t)$ for all $\lambda \in (0,1)$.
\end{lemma}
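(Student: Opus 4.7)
The plan is to use only the left-invariance of $\dcc$, its homogeneity under the dilations $\delta_\lambda$, and two applications of the triangle inequality. The horizontality of $z$ does not actually seem necessary for the stated bound, but it is the natural geometric assumption since for horizontal $z = e^{sv}$ the curve $\lambda \mapsto x\delta_\lambda(z) = xe^{\lambda s v}$ is precisely the horizontal geodesic from $x$ to $y$ reparameterized.

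First I would translate the problem to the origin using left-invariance. Since $\dcc$ is left invariant, we have
\begin{align*}
  \dcc(0,z) \;=\; \dcc(0,x^{-1}y) \;=\; \dcc(x,y) \;\leq\; \dcc(0,x) + \dcc(0,y) \;\leq\; 2t.
\end{align*}
Next I would apply the homogeneity property $\dcc(0,\delta_\lambda(w)) = \lambda\, \dcc(0,w)$ recorded in Section~2.1 (valid for every $w\in G$ and $\lambda\ge 0$), giving
\begin{align*}
  \dcc(0,\delta_\lambda(z)) \;=\; \lambda\, \dcc(0,z) \;\leq\; 2\lambda t \;\leq\; 2t
\end{align*}
for all $\lambda \in (0,1)$.

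Finally, I would combine these two estimates via one more triangle inequality and left-invariance:
\begin{align*}
  \dcc(0, x\cdot \delta_\lambda(z)) \;\leq\; \dcc(0,x) + \dcc(x, x\cdot \delta_\lambda(z)) \;=\; \dcc(0,x) + \dcc(0,\delta_\lambda(z)) \;\leq\; t + 2t \;=\; 3t,
\end{align*}
which is exactly the desired inclusion $x\cdot \delta_\lambda(z) \in \Bcc(0,3t)$.

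There is no substantive obstacle here: everything reduces to the formal metric properties (left-invariance, triangle inequality, and the dilation identity $\dcc(0,\delta_\lambda w) = \lambda \dcc(0,w)$) already established in the preliminaries. The only care needed is to note that the factor of $3$ on the right-hand side is tight for this argument, coming from the $t$ from $\dcc(0,x)$ plus the $2t$ bound on $\dcc(0,z)$ supplied by the two-point triangle inequality at the outset.
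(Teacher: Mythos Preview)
Your proof is correct and essentially identical to the paper's: both bound $\dcc(0,z)\le 2t$ via the triangle inequality, apply homogeneity $\dcc(0,\delta_\lambda z)=\lambda\,\dcc(0,z)$, and finish with one more triangle inequality using left-invariance. Your observation that horizontality of $z$ is not needed for the stated bound is also accurate; the paper uses that hypothesis only in how the lemma is applied downstream.
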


\begin{proof}
  As $x,y \in \Bcc(0,t)$, we have that $\dcc(0,z) \leq \dcc(0,x^{-1}) + \dcc(0,y) = 2t$.  Thus, by the triangle inequality and homogeneity of the CC-metric, we have
  \begin{align*}
    \dcc(0,x \cdot \delta_\lambda (z)) \leq \dcc(0,x) + \lambda \dcc(0,z) \leq 3t.
  \end{align*}
\end{proof}

We now extend the definition of $\alpha$ to Christ cubes.  Given $\epsilon \in [0,1)$, define for a cube $Q \in \Delta$ the quantity
\begin{align}
  \alpha_f^{(p)}(Q;\epsilon) := \ell(Q)^{1-N} \int_{S^{n-1}} \int_{z_Q(G \circleddash v)} \chi_{\{x \cdot \R v \cap 2B_Q \neq \emptyset\}} \alpha_f^{(p)}(x \cdot \R v \cap 6B_Q;\epsilon) ~dx ~d\mu(g).
\end{align}
Here, integration in $x$ is with respect to the $N-1$-dimensional Hausdorff measure $\mathscr{H}^{N-1}$ and $g$ is with respect to the uniform measure on $S^{n-1}$.  We have from \cite{FSS} that the Hausdorff measure is equivalent to all natural notions of measures for the hypersurface $G \circleddash v$, like perimeter measure and spherical Hausdorff measure.  As the Hausdorff measure is left invariant, the translation by $z_Q$ in the domain of integration is actually unnecessary, but it's helpful in keeping things straight.  We can normalize the Hausdorff measure so that $\mathscr{H}^{N-1}(\Bcc(0,1) \cap (G \circleddash v)) = 1$.  We then get by simple homogeneity arguments that $\mathscr{H}^{N-1}(\Bcc(0,\lambda) \cap (G \circleddash v)) = \lambda^{N-1}$.

One point of worry is that there are no guarantees $x \cdot \R v \cap 6B_Q$ is connected.  Thus, we specify that it is the connected subset $I$ containing the subset $x \cdot \R v \cap 2B_Q$, which we know is unique by Lemma \ref{stay-in-ball}.  We first prove that $\alpha^{(p)}_f(Q)$ only evaluates the integrals of horizontal lines that have a significant intersection with $Q$.

\begin{lemma} \label{length-comp}
  Let $\beta \geq 3 \alpha > 0$ and $v \in S^{n-1}$.  Then there exists some constant $C > 0$ depending only on $\alpha$, $\beta$, and $G$ such that if $x \cdot \R v \cap \alpha B_Q \neq \emptyset$, then
  \begin{align*}
    \frac{1}{C} \ell(Q) \leq |x \cdot \R v \cap \beta B_Q| \leq C \ell(Q).
  \end{align*}
\end{lemma}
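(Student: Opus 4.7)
The claim boils down to a pair of triangle-inequality estimates on the horizontal line $x \cdot \R v$, exploiting the fact that the map $t \mapsto x e^{tv}$ is an isometric embedding of $\R$ into $(G,\dcc)$. Recall that $B_Q = \Bcc(z_Q, a_0 \ell(Q))$, so $\alpha B_Q = \Bcc(z_Q, \alpha a_0 \ell(Q))$ and $\beta B_Q = \Bcc(z_Q, \beta a_0 \ell(Q))$, and that $|x \cdot \R v \cap \beta B_Q|$ is the Lebesgue measure of $\{t \in \R : xe^{tv} \in \beta B_Q\}$ via this isometric parametrization.

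For the upper bound, my plan is to take any two parameters $t_1, t_2$ with $xe^{t_i v} \in \beta B_Q$ and use $\dcc(xe^{t_1 v}, xe^{t_2 v}) = |t_1 - t_2|$ together with the triangle inequality through $z_Q$ to conclude $|t_1 - t_2| \leq 2\beta a_0 \ell(Q)$. Hence $\{t : xe^{tv} \in \beta B_Q\}$ lies in an interval of length at most $2\beta a_0 \ell(Q)$, giving the upper bound with constant $2\beta a_0$.

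For the lower bound, choose any $y \in x \cdot \R v \cap \alpha B_Q$ and write $y = xe^{t_0 v}$, so that $\dcc(z_Q, y) \leq \alpha a_0 \ell(Q)$. For every $s$ with $|s| \leq (\beta - \alpha) a_0 \ell(Q)$ the triangle inequality yields
\begin{align*}
\dcc(z_Q, xe^{(t_0+s)v}) \leq \dcc(z_Q, y) + |s| \leq \beta a_0 \ell(Q),
\end{align*}
so the entire segment of horizontal parameters of length $2(\beta - \alpha) a_0 \ell(Q)$ around $t_0$ lies inside $\beta B_Q$. Since $\beta \geq 3\alpha$, this length is at least $4\alpha a_0 \ell(Q) > 0$, which is the required lower bound. (Alternatively, the same conclusion follows by invoking Lemma \ref{stay-in-ball} applied to translates of the two endpoints, since the horizontal segment between any two points of $\beta B_Q$ stays in a controlled enlargement.)

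There is no real obstacle here; the argument is purely the left-invariance and homogeneity of $\dcc$ restricted to horizontal lines, combined with the triangle inequality. The only subtlety is making sure that the measure $|x \cdot \R v \cap \beta B_Q|$ really is computed with respect to the isometric $\R$-parametrization, and that the hypothesis $\beta \geq 3\alpha$ is used only to guarantee a positive gap $\beta - \alpha$ bounded below in terms of $\alpha$, producing a constant $C = C(\alpha, \beta, a_0)$ that depends only on $\alpha$, $\beta$, and $G$.
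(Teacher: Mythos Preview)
Your proof is correct and follows essentially the same approach as the paper: both the upper and lower bounds are obtained by the triangle inequality, using that $t \mapsto xe^{tv}$ is an isometric embedding of $\R$ and that the radius of $B_Q$ is $a_0\ell(Q)$. The paper phrases the lower bound as ``the line must travel from $\alpha B_Q$ to outside $\beta B_Q$'' and the upper bound as ``the endpoints lie in $\beta B_Q$,'' which is exactly your argument with slightly less detail.
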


\begin{proof}
  The lower bound is easy as $x \cdot \R v \cap \beta B_Q$ must go from $\alpha B_Q$ to outside $\beta B_Q$.  The result follows as $\ell(Q)$ is comparable to the radius of $B_Q$ by properties \eqref{christ-diam} and \eqref{christ-ball}.  The upper bound is also easy as the distance between the length of the interval is equal to the distance between endpoints.  As both endpoints are in $\beta B_Q$, their distance apart must be less than the diameter of $\beta B_Q$, which we already know is comparable to $\ell(Q)$.
\end{proof}

We can now prove the main result of this section.

\begin{proposition} \label{carleson-cubes}
  There exist constants $C > 0$ and $\lambda > 0$ depending only on the structure of $G$ such that for any $\epsilon \in [0,1)$, $m \in \N$, and $S \in \Delta$ we have
  \begin{align*}
    \sum_{k=0}^m \sum_{Q \in \Delta_k(S)} \alpha^{(p)}_f(Q;\epsilon) |Q| \leq C |S| \Lip_f(\lambda \epsilon \tau^m \ell(S))^p.
  \end{align*}
\end{proposition}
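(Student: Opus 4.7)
The plan is to unfold the definition of $\alpha_f^{(p)}(Q;\epsilon)$, apply Fubini to reduce to a one-dimensional sum along each horizontal line, and then appeal to Lemma \ref{carleson-1d}.

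\textbf{Step 1 (Fubini).} Substituting the definition of $\alpha_f^{(p)}(Q;\epsilon)$, using $|Q|\asymp\ell(Q)^N$, and using left-invariance of the $(N-1)$-dimensional Hausdorff measure to translate each $x$-integration to the fixed hypersurface $G\circleddash v$, one can write
\begin{align*}
\alpha_f^{(p)}(Q;\epsilon)|Q|\asymp \ell(Q)\int_{S^{n-1}}\int_{G\circleddash v}\chi_{\{x\cdot\R v\cap 2B_Q\neq\emptyset\}}\,\alpha_f^{(p)}(x\cdot\R v\cap 6B_Q;\epsilon)\,dx\,d\mu(v).
\end{align*}
Interchanging the sums and integrals reduces the claim to bounding $\int_{S^{n-1}}\int_{G\circleddash v}\Sigma_{v,x}\,dx\,d\mu(v)$, where, writing $L:=x\cdot\R v$,
\begin{align*}
\Sigma_{v,x}:=\sum_{k=0}^m\sum_{\substack{Q\in\Delta_k(S)\\ L\cap 2B_Q\neq\emptyset}}\ell(Q)\,\alpha_f^{(p)}(L\cap 6B_Q;\epsilon).
\end{align*}

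\textbf{Step 2 (Pointwise one-dimensional bound).} Fix $(v,x)$, and let $I_{v,x}$ be the connected component of $L\cap CB_S$ for a sufficiently large $C=C(G)$ ensuring $6B_Q\subset CB_S$ for every $Q\subset S$; by Lemma \ref{stay-in-ball} this component is well-defined and contains every relevant $L\cap 6B_Q$. Lemma \ref{length-comp} gives $\ell(Q)\asymp|L\cap 6B_Q|$; since at each Christ level $k$ the balls $B_Q$ are pairwise disjoint with radii $\asymp \ell(Q)$, the enlarged balls $6B_Q$ have bounded overlap, and consequently so do the intervals $L\cap 6B_Q$. The plan is to dominate $\Sigma_{v,x}$ by a dyadic Carleson sum on $I_{v,x}$: for each such $Q$, choose a dyadic sub-interval $J_Q\in\D(I_{v,x})$ with $J_Q\supset L\cap 6B_Q$ and $|J_Q|\asymp|L\cap 6B_Q|$; such a $J_Q$ exists at dyadic depth $k''\asymp k\log_2(1/\tau)$. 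A direct comparison of integration domains in the definition of $\alpha_f^{(p)}$ yields
\begin{align*}
\alpha_f^{(p)}(L\cap 6B_Q;\epsilon)\,\ell(Q)\lesssim\alpha_f^{(p)}(J_Q;\lambda_0\epsilon)\,|J_Q|
\end{align*}
for a universal $\lambda_0=\lambda_0(G)$. Combined with bounded overlap and the fact that at any given $k$ only $O(1)$ Christ cubes can share the same $J_Q$, this gives
\begin{align*}
\Sigma_{v,x}\lesssim\sum_{k''=0}^{M}\sum_{J\in\D^{k''}(I_{v,x})}\alpha_f^{(p)}(J;\lambda_0\epsilon)\,|J|,
\end{align*}
with $M=\lceil m\log_2(1/\tau)\rceil$. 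Applying Lemma \ref{carleson-1d} to $I_{v,x}$ at dyadic depth $M$ and noting $2^{-M-1}|I_{v,x}|\asymp\tau^m\ell(S)$ yields
\begin{align*}
\Sigma_{v,x}\lesssim|I_{v,x}|\,\Lip_f(\lambda\epsilon\tau^m\ell(S))^p
\end{align*}
for some $\lambda=\lambda(G)>0$.

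\textbf{Step 3 (Integration over $(v,x)$).} For each $v\in S^{n-1}$, the line $L=x\cdot\R v$ meets $CB_S$ only for $x$ in a subset of $G\circleddash v$ of $\mathscr{H}^{N-1}$-measure $\lesssim\ell(S)^{N-1}$, and $|I_{v,x}|\lesssim\ell(S)$ for each such $x$. Integrating the pointwise bound from Step 2 and using $|S|\asymp\ell(S)^N$ yields the asserted estimate $C|S|\Lip_f(\lambda\epsilon\tau^m\ell(S))^p$.

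\textbf{Main obstacle.} The trickiest step is the pointwise domination $\alpha_f^{(p)}(L\cap 6B_Q;\epsilon)\ell(Q)\lesssim\alpha_f^{(p)}(J_Q;\lambda_0\epsilon)|J_Q|$: one must verify that choosing $J_Q$ to be slightly larger than $L\cap 6B_Q$ and relaxing $\epsilon$ to $\lambda_0\epsilon$ absorbs the mismatch between the Christ-scale and dyadic structures with universal constants, and that passing from the irregular Christ cubes to the dyadic sub-intervals of $I_{v,x}$ costs only a bounded multiplicity factor. Once the approximation $Q\mapsto J_Q$ is fixed, the remaining manipulations are routine comparisons of the domains of integration in the definitions of $\alpha_f^{(p)}$ and $\partial_f^{(p)}$.
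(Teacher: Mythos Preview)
Your overall architecture matches the paper's: Fubini, reduce to a one-dimensional Carleson estimate along each horizontal line, then integrate. The gap is in Step~2, specifically the sentence ``choose a dyadic sub-interval $J_Q\in\D(I_{v,x})$ with $J_Q\supset L\cap 6B_Q$ and $|J_Q|\asymp|L\cap 6B_Q|$; such a $J_Q$ exists at dyadic depth $k''\asymp k\log_2(1/\tau)$.'' This is false for a single dyadic grid. An arbitrary subinterval of $I_{v,x}$ need not be contained in any dyadic interval of comparable length: if $L\cap 6B_Q$ straddles the midpoint of $I_{v,x}$, the only dyadic interval containing it is $I_{v,x}$ itself, no matter how small $L\cap 6B_Q$ is. So your map $Q\mapsto J_Q$ cannot be defined with the claimed size control, and the subsequent Carleson sum bound collapses.

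The paper repairs exactly this point with the $\tfrac{1}{3}$-trick (attributed to Okikiolu): one works with three dyadic systems $\D_0(J)$, $\D_+(J)=\D_0(J)+\tfrac{1}{3}|J|$, $\D_-(J)=\D_0(J)-\tfrac{1}{3}|J|$ on the long interval $J=x\cdot\R v\cap 36a_0^{-1}a_1B_S$. Any subinterval $I\subset J$ is then contained in some $I'$ from one of the three grids with $|I'|\le\gamma|I|$ for a universal $\gamma$. The comparison $\alpha_f^{(p)}(I;\epsilon)\le\gamma^2\alpha_f^{(p)}(I';\epsilon/\gamma)$ and the bounded-multiplicity count go through as you sketched, and one applies Lemma~\ref{carleson-1d} to each of the three dyadic systems separately. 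Your ``main obstacle'' paragraph flags the right neighborhood, but misses that the obstacle is not merely a constant-tracking issue---it is a structural failure of a single dyadic grid that requires an additional device.
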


\begin{proof}
  As there exists a constant $C_0 > 0$ depending only on $G$ such that $|Q| \leq C_0\ell(Q)^N$, it suffices to prove that
  \begin{align*}
    \sum_{k=0}^m \sum_{Q \in \Delta_k, Q \subseteq S} \alpha^{(p)}_f(Q;\epsilon) \ell(Q)^N \leq C |S| \Lip_f(\lambda \epsilon \tau^m \ell(S))^p.
  \end{align*}
  Write $\D_0([a,b])$ to be the standard dyadic decomposition of $[a,b]$ and $\D_\pm([a,b]) = \D_0([a,b]) \pm \frac{1}{3}(b-a)$, where all the intervals of $\D_0([a,b])$ are shifted either to the left or right by $\frac{1}{3}(b-a)$.  We have
  \begin{align*}
    \sum_{k=0}^m &\sum_{Q \in \Delta_k(S)} \alpha^{(p)}_f(Q;\epsilon) \ell(Q)^N \\
    &= \sum_{k=0}^m \sum_{Q \in \Delta_k(S)} \int_{S^{n-1}} \int_{z_Q(G \circleddash v)} \chi_{\{x \cdot \R v \cap 2B_Q \neq \emptyset\}} \alpha^{(p)}_f(x \cdot \R v \cap 6B_Q;\epsilon) \ell(Q) ~dx ~d\mu(v) \\
    &= \int_{S^{n-1}} \int_{z_S(G \circleddash v)} \chi_{\{x \in 12\frac{a_1}{a_0} B_S\}} \sum_{k=0}^m \sum_{Q \in \Delta_k(S)} \chi_{\{x \cdot \R v \cap 2B_Q \neq \emptyset\}} \alpha^{(p)}_f(x \cdot \R v \cap 6B_Q;\epsilon) \ell(Q) ~dx ~d\mu(v) \\
    &= (*).
  \end{align*}
  In the second equality above, we used the fact that, if $x \in z_S(G \circleddash v)$ such that $x \cdot \R v$ intersects $2B_Q$ for some $Q \in \Delta(S)$, then $x \in \frac{12a_1}{a_0} B_S$.  Indeed, let $\eta > 0$ so that $xe^{\eta v} \in x \cdot \R v \cap 2B_Q$.  Remembering that the projection homomorphism $\pi : G \to \R^n$ is 1-Lipschitz and distance-preserving on pairs of points that lie on a horizontal line, we get that
  \begin{align*}
    \dcc(x,xe^{\eta v}) &= |\pi(x) - \pi(xe^{\eta v})| = d_{\R^n}(\pi(z_S(G \circ v)),\pi(xe^{\eta v})) \\
    &\leq |\pi(z_S) - \pi(xe^{\eta v})| \leq \diam_{\R^n}(\pi(S \cup 2B_Q)) \\
    &\leq \diam(S \cup 2B_Q) \leq \diam(S) + 2 \diam(Q).
  \end{align*}
  The last inequality comes from the fact that $B_Q \subseteq S$ as $Q \in \Delta(S)$.  This gives us
  \begin{align*}
    \dcc(z_S,x) &\leq \dcc(z_S,z_Q) + \dcc(z_Q,xe^{\eta v}) + \dcc(xe^{\eta v},x) \\
    &\leq \diam(S) + 2\diam(Q) + \diam(S) + 2 \diam(Q) \\
    &\leq 6\diam(S).
  \end{align*}
  As $\diam(S) \leq \frac{a_1}{a_0} \diam(B_S)$, we then have that
  \begin{align*}
    \dcc(z_S,x) \leq \frac{6a_1}{a_0} \diam(B_S),
  \end{align*}
  which proves the claim.

  Thus, given that $x \cdot \R v \cap 2B_Q \neq \emptyset$ implies that $x \cdot \R v \cap 12a_0^{-1}a_1B_S \neq \emptyset$, we then also have that $I := x \cdot \R v \cap 6B_Q \subset x \cdot \R v \cap 36a_0^{-1}a_1 B_S =: J$.  By the $\frac{1}{3}$-trick (see \cite{Okikiolu}), there exists a universal constant $\gamma \geq 1$ such that $I$ is contained in a subinterval $I' \in \D_0(J) \cup \D_+(J) \cup \D_-(J)$ and $|I'| \leq \gamma|I|$.  It follows that
  \begin{align*}
    \alpha^{(p)}_f(I;\epsilon) &= (1-\epsilon)^2 |I|^{-2} \underset{\begin{smallmatrix} {a(I) \leq x < y \leq b(I),} \\ {y-x > \epsilon|I|} \end{smallmatrix}}{\iint} \partial_f^{(p)}(x,y) ~dx ~dy \\
    &\leq \left(1 - \frac{\epsilon}{\gamma} \right)^2 |I|^{-2} \underset{\begin{smallmatrix} {a(I') \leq x < y \leq b(I'),} \\ {y-x > \epsilon|I'|/\gamma} \end{smallmatrix}}{\iint} \partial_f^{(p)}(x,y) ~dx ~dy \\
    &\leq \gamma^2 \alpha^{(p)}_f\left(I'; \frac{\epsilon}{\gamma} \right).
  \end{align*}

  We have from Lemma \ref{length-comp} that $|J| = |x \cdot \R v \cap 36a_0^{-1}a_1 B_S|$ and $|I| = |x \cdot \R v \cap 6B_Q|$ are comparable to $\ell(S)$ and $\ell(Q) = \tau^k \ell(S)$ for some $k \in \{0,...,m\}$, respectively.  Thus, as $|I| \leq |I'| \leq \gamma |I|$, we have that there is some constant $C_1 \in \N$ so that so that $I' \in \D_0^{k \log 1/\tau + \ell}(J) \cup \D_-^{k \log 1/\tau + \ell}(J) \cup \D_+^{k \log 1/\tau + \ell}(J)$ for some $\ell \leq C_1$.

  For $I \subset J$, we let $I^*$ denote an associated dyadic subinterval from the $\frac{1}{3}$-trick (choosing one arbitrarily if there are multiple associated intervals).  We claim that there is a constant $C_2 > 0$ depending only on $G$ such that, given any $x \in G$, $v \in S^{n-1}$, $k \in \Z$ and $I' \in \D_0^k(J) \cup \D_+^k(J) \cup \D_-^k(J)$, we have
  \begin{align*}
    \left|\{Q \in \Delta : (x \cdot \R v \cap 6B_Q)^* = I' \}\right| \leq C_2.
  \end{align*}
  By Lemma \ref{length-comp}, the length of each interval $I = x \cdot \R v \cap 6B_{Q_\omega^j}$ is comparable to $\ell(Q_\omega^j) = \tau^j$, and so it suffices to only consider $B_{Q_\omega^j}$ when
  \begin{align*}
    \tau^j \in [C_3^{-1}|J|2^{-k}, C_3\gamma |J|2^{-k}]
  \end{align*}
  for some $C_3 > 0$.  The number of $j$ that are possible are only boundedly many over all $k$, and so it suffices to prove the statement in the special case when $Q$ all have the same scale.  We fix such a scale $j$.
  
  It follows from definition that $I'$ is contained in a ball $B$ of radius $|J|2^{-k}$.  Properties \eqref{christ-partition} and \eqref{christ-ball} of Christ cubes give that, for any $k \in \Z$, the center of the balls $B_{Q_\omega^j}$ are $a_0\tau^j$-separated.  As $\tau^j$ is comparable to $|J|2^{-k}$, the number of balls $B_{Q_\omega^j}$ that intersect $B$ is boundedly many by the doubling condition of $G$, which finishes the proof of the claim.
  
  Thus, letting $I_{x,v} := x \cdot \R v \cap 36a_0^{-1}a_1B_S$, there exists constants $C_4,C_5,C_6 > 0$ depending only on $G$ such that
  \begin{align}
    (*) &\leq C_2\gamma^2 \int_{S^{n-1}} \int_{z_S(G \circleddash v)} \chi_{\{x \in 12a_1a_0^{-1} B_S\}} \times \\
    &\qquad \sum_{i \in \{0,+,-\}} \sum_{k=0}^{m \log 1/\tau + C_1} \sum_{I \in \D_i^k(I_{x,v})} |I| \alpha^{(p)}_f\left(I; \frac{\epsilon}{\gamma}\right) ~dx ~d\mu(v) \notag \\
    &\leq C_4 \Lip_f(C_5 \epsilon \tau^m \ell(S))^p \int_{S^{n-1}} \int_{z_S(G \circleddash v)} \chi_{\{x \in 12a_1a_0^{-1} B_S\}} \ell(S) ~dx ~d\mu(v) \label{sum-ineq-1} \\
    &= C_4 \Lip_f(C_5 \epsilon \tau^m \ell(S))^p \int_{S^{n-1}} \mathscr{H}^{N-1}(12 a_1a_0^{-1} B_S \cap z_S(G \circleddash v)) \ell(S) ~d\mu(v) \notag \\
    &\leq C_6 \Lip_f(C_5 \epsilon \tau^m \ell(S))^p |S|. \notag 
  \end{align}
  For \eqref{sum-ineq-1}, we used Lemma \ref{carleson-1d} and the fact that $|I_{x,v}|$ is comparable to $\ell(S)$.  For the last inequality, we used the fact $\mathscr{H}^{N-1}(10 a_1a_0^{-1}B_S \cap z_S (G \circleddash v)) = (10 a_1 \ell(S))^{N-1}$ and $\ell(S)^N \leq C_7|S|$ for some $C_7 > 0$.
\end{proof}

We prove three more preliminary lemmas that will be useful for the sections to come.  All of these lemmas will be concerned with the deviation of lines in Carnot groups.

\begin{lemma} \label{close-parallel}
  Suppose $G$ is a graded nilpotent Lie group of step $r$.  Let $\rho \in (0,1)$ and $\lambda > 0$.  There exists a constant $C > 0$ depending only on $G$ so that if $g,h,u,v \in G$ so that $d_\infty(g,h) \leq \rho \lambda$, $u \in B_\infty(0,1)$, and $d_\infty(u,v) \leq \rho$, then
  \begin{align*}
    \sup_{t \in [0,\lambda]} d_\infty(g\delta_t(u),h\delta_t(v)) \leq C\rho^{1/r} \lambda.
  \end{align*}
\end{lemma}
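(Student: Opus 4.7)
The plan is to reduce to a pure conjugation estimate and then expand via the Baker--Campbell--Hausdorff formula, controlling the contribution to each layer $\V_k$ separately. First, using left-invariance of $d_\infty$ and the triangle inequality,
\begin{align*}
  d_\infty(g\delta_t(u),h\delta_t(v)) \leq d_\infty(g\delta_t(u),h\delta_t(u)) + d_\infty(\delta_t(u),\delta_t(v)) = N_\infty\bigl(\delta_t(u)^{-1}w\delta_t(u)\bigr) + t\,d_\infty(u,v),
\end{align*}
where $w := g^{-1}h$ satisfies $N_\infty(w) \leq \rho\lambda$. The second term is bounded by $\lambda\rho \leq \lambda\rho^{1/r}$ (as $\rho \leq 1$), so the problem reduces to controlling the conjugate $\delta_t(u)^{-1}w\delta_t(u)$ in $N_\infty$.

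Next, I would set $W := \log w = \sum_{j=1}^r w_j$ with $w_j \in \V_j$ and $U_t := \log \delta_t(u) = \sum_{i=1}^r t^i u_i$, and exploit nilpotency to write
\begin{align*}
  \delta_t(u)^{-1}w\delta_t(u) = \exp\bigl(e^{-\mathrm{ad}(U_t)}W\bigr) = \exp\left( \sum_{m=0}^{r-1} \frac{(-1)^m}{m!}(\mathrm{ad}\,U_t)^m W \right),
\end{align*}
which is a finite polynomial in nested brackets. By the grading property $[\V_a,\V_b] \subseteq \V_{a+b}$, the $\V_k$ component of the exponent is a finite linear combination (with universal rational coefficients) of terms $t^{i_1 + \cdots + i_m}[u_{i_1},[\cdots,[u_{i_m},w_j]\cdots]]$ subject to $i_1 + \cdots + i_m + j = k$ and $j \geq 1$. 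Using $|u_i| \leq 1$ (from $u \in B_\infty(0,1)$), $|w_j| \leq (\rho\lambda)^j$ (from $N_\infty(w) \leq \rho\lambda$), and $t \leq \lambda$, each such term has norm at most $C_G\,\lambda^{k-j}(\rho\lambda)^j = C_G\,\lambda^k\rho^j$. Since $\rho \leq 1$ and every contributing $j \geq 1$ (the $m=0$ term is simply $w_k$, bounded by $(\rho\lambda)^k \leq \lambda^k\rho$), summing the finitely many contributions yields
\begin{align*}
  \bigl|\bigl(\delta_t(u)^{-1}w\delta_t(u)\bigr)_k\bigr| \leq C'_G\,\lambda^k\rho,
\end{align*}
where $(\,\cdot\,)_k$ denotes the $\V_k$ exponential coordinate.

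To finish, applying the definition of the homogeneous norm gives
\begin{align*}
  N_\infty\bigl(\delta_t(u)^{-1}w\delta_t(u)\bigr) = \max_{1 \leq k \leq r} \bigl|\bigl(\delta_t(u)^{-1}w\delta_t(u)\bigr)_k\bigr|^{1/k} \leq \max_{1 \leq k \leq r}(C'_G)^{1/k}\lambda\rho^{1/k} \leq C''_G\,\lambda\rho^{1/r},
\end{align*}
where we used $\rho \leq 1$ to conclude that $\rho^{1/k}$ is maximized at $k = r$. Combined with the first step, this completes the proof with $C = C''_G + 1$. The main obstacle will be the BCH bookkeeping in the middle step: one must verify that every iterated bracket contributing to the $\V_k$ layer really scales as $\lambda^k\rho^j$, and that the minimum exponent $j$ over all contributing terms is $1$ (and not $0$), so that after extracting the $k$-th root the exponent of $\rho$ is no worse than $1/r$. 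The loss from $\rho$ to $\rho^{1/r}$ is intrinsic, reflecting the nilpotency step of $G$.
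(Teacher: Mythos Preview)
Your proof is correct, and it takes a genuinely different route from the paper's. The paper assumes $g=0$ by left-invariance and computes $N_\infty\bigl(\delta_t(u)^{-1}\,h\,\delta_t(v)\bigr)$ in a single BCH expansion, then splits the $k$-th layer polynomial $P_k'$ into the piece $Q_k$ coming from $\delta_t(u)^{-1}\delta_t(v)$ (bounded via $d_\infty(u,v)\leq\rho$) and the remainder $P_k'-Q_k$, consisting only of brackets that contain at least one $h$-factor (bounded via $|h_i|\leq(\rho\lambda)^i$). You instead decouple the two error sources at the outset via the triangle inequality, reducing to a dilation term $t\,d_\infty(u,v)\leq\lambda\rho$ and a pure conjugation $N_\infty\bigl(\delta_t(u)^{-1}w\,\delta_t(u)\bigr)$, the latter handled by the closed-form identity $\exp(-X)\exp(W)\exp(X)=\exp\bigl(e^{-\mathrm{ad}\,X}W\bigr)$.

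Your organization is more conceptual and makes the bookkeeping cleaner, since every surviving bracket automatically contains exactly one $w$-factor; the paper's approach is more hands-on but avoids invoking the triangle inequality for $d_\infty$ (which, under the paper's simplifying convention $\lambda_i=1$, is only a quasi-metric---though this costs at most a harmless constant). Both arguments reach the same $\rho^{1/r}$ loss for the same structural reason: every contributing bracket in layer $\V_k$ scales like $\lambda^k\rho^j$ with $j\geq 1$, and extracting the $k$-th root with $k\leq r$ gives $\rho^{1/k}\leq\rho^{1/r}$.
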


Here, $\lambda$ is simply the scale at which we are working on.  The more important quantity is $\rho$, which describes how close the ``unit vectors'' $u$ and $v$ are.

\begin{proof}
  We may suppose that $g = 0$.  Then we get from the fact that $d_\infty(g,h) \leq \rho \lambda$ that
  \begin{align}
    |h_i| \leq (\rho \lambda)^i \label{h-infty-bound}
  \end{align}
  for each $i \in \{1,...,r\}$.  Similarly, we have
  \begin{align*}
    \max_{i \in \{1,...,r\}} |u_i| \leq 1.
  \end{align*}
  As $d_\infty(u,v) \leq \rho \leq 1$, we get that $d_\infty(0,v) \leq 2$ and so
  \begin{align*}
    |v_i| \leq 2^i, \qquad \forall i \in \{1,...,r\}.
  \end{align*}
  We then have
  \begin{align}
    d_\infty&(\delta_t(u), h\delta_t(v)) \notag \\
    &= N_\infty((-tu_1,-t^2 u_2,..., -t^ru_r) \cdot (h_1 + tv_1, h_2 + t^2 v_2 + P_2,..., h_n + t^r v_r + P_n)) \notag \\
    &= N_\infty((h_1 + t(v_1 - u_1), h_2 + t^2(v_2 - u_2) + P_2',..., h_r + t^r(v_r - u_r) + P_r')). \label{gh-drift}
  \end{align}
  For each BCH polynomial $P_k'$, let $Q_k$ denote the sum of the Lie brackets that do not have elements of $h$.  Then one sees that
  \begin{align*}
    \delta_t(u)^{-1} \delta_t(v) = (t(v_1 - u_1), t^2(v_2 - u_2) + Q_2,..., t^r(v_r - u_r) + Q_r).
  \end{align*}
  As $d_\infty(u,v) \leq \rho$, we get that $N_\infty(\delta_t(u)^{-1}\delta_t(v)) \leq \rho t \leq \rho \lambda$ and so
  \begin{align}
    |t^k(v_k - u_k) + Q_k| \leq \rho^k \lambda^k, \qquad \forall k \in \{1,...,r\}. \label{sub-polynomial-bound}
  \end{align}
  Thus, referring to \eqref{gh-drift}, it suffices just to bound $|h_k + P_k' - Q_k|$ by some multiple of $\rho \lambda^k$ for any $k$.  By \eqref{h-infty-bound}, it further reduces to bounding just $|P_k' - Q_k|$.  We fix a $k \in \{2,...,r\}$.  By the BCH formula, we know that $P_k' - Q_k$ is a summation of nested Lie brackets of the form $[x_1,[x_2,...,[x_{j-1},x_j]...]]$ where the number of summands depends only on $G$.  Thus, we further reduce to bounding the norm of the maximum of the nested Lie brackets
  \begin{align*}
    |[x_1,[x_2,...,[x_{j-1},x_j]...]]| \leq \prod_{i=1}^j |x_j|
  \end{align*}
  by $\rho \lambda^k$.  We can define the function $i : \{1,...,j\} \to \{1,...,k-1\}$ to satisfy
  \begin{align}
    x_{i(\ell)} \in \{h_{i(\ell)},t^{i(\ell)} u_{i(\ell)},t^{i(\ell)} v_{i(\ell)}\}. \label{index-function}
  \end{align}
  By the stratified nature of $G$,
  \begin{align*}
    \sum_{\ell = 1}^j i(\ell) = k.
  \end{align*}
  We have that there must be some index (say $\ell$) so that $x_{i(\ell)} = h_{i(\ell)}$ as $P_k' - Q_k$ contains only BCH polynomials where each nested Lie bracket has an element of $h$.  As $\max_{k \in \{1,...,r\}} |u_k| \leq 1$ and $|v_k| \leq 2^k$, by \eqref{h-infty-bound}, \eqref{index-function}, and the fact that $t \in [0,\lambda]$, one gets that
  \begin{align*}
    \prod_{i=1}^j |x_j| \leq |h_{i(\ell)}| \cdot 2^r \lambda^{k-i(\ell)} \leq 2^r \rho^{i(\ell)} \lambda^k.
  \end{align*}
  This is the needed bound to finish the proof.
\end{proof}

We will sometimes need the following, slightly different lemma.

\begin{lemma} \label{close-parallel-2}
  Let $\rho \in (0,1)$ and $\lambda > 0$.  There exists a constant $C > 0$ depending only on $G$ so that if $g,h \in G$ so that $d_\infty(g,h) \leq \rho \lambda$ and $u,v \in \V_1$ so that $|u| \leq 1$ and $|u - v| \leq \rho$, then
  \begin{align*}
    \sup_{t \in [0,\lambda]} d_\infty(ge^{tu},he^{tv}) \leq C\rho^{1/r} \lambda.
  \end{align*}
\end{lemma}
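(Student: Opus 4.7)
The plan is to mimic the proof of Lemma \ref{close-parallel} almost verbatim, the only essential difference being how one controls the ``horizontal'' piece of the BCH expansion. As there, I first reduce to $g = 0$ using left invariance of $d_\infty$, so that the task is to bound $N_\infty(e^{-tu}\cdot h\cdot e^{tv})$ uniformly for $t \in [0,\lambda]$. I then expand this product in exponential coordinates via Baker--Campbell--Hausdorff. Because $u,v \in \V_1$, only the first layer of $e^{-tu}$ and $e^{tv}$ is nonzero, so the $\V_k$ coordinate of the product is a finite sum of nested Lie brackets whose leaves are drawn from $\{-tu,\, tv,\, h_1, \ldots, h_r\}$, subject to the usual layer-degree constraint that the layer indices of the leaves sum to $k$.

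Mirroring the proof of Lemma \ref{close-parallel}, I split this sum into two pieces: $Q_k$, the collection of brackets whose leaves come only from $\{-tu,\, tv\}$, and $P_k' - Q_k$, the brackets containing at least one $h_j$. The estimate on $P_k' - Q_k$ proceeds exactly as in Lemma \ref{close-parallel}: each such bracket has some leaf $h_j$ of layer $j \geq 1$, and the product of leaf magnitudes is at most $C\rho^j\lambda^k$ using $|h_j| \leq (\rho\lambda)^j$ together with the crude bounds $|tu|,|tv| \leq 2\lambda$. Since $j \geq 1$ and $k \leq r$, taking $k$-th roots yields a contribution of at most $C\rho^{1/r}\lambda$ to $N_\infty$.

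The only genuinely new piece is the bound on $Q_k$; in Lemma \ref{close-parallel} this was controlled effortlessly by the scaling identity $\delta_t(u)^{-1}\delta_t(v) = \delta_t(u^{-1}v)$, which is unavailable here because $|u-v|$ is the Euclidean, not the homogeneous, norm in $\V_1$. Instead, I observe that $Q_k$ is the $\V_k$-coordinate of $\log(e^{-tu}\cdot e^{tv})$ and that by homogeneity of BCH it equals $t^k \tilde{Q}_k(u,v)$ for a polynomial $\tilde{Q}_k$ of degree $k$ in the horizontal vectors $u,v$. Crucially, $e^{-tu}\cdot e^{tu} = 0$ in $G$, so $\tilde{Q}_k$ vanishes identically on the diagonal $\{u=v\}$. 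Taylor-expanding $\tilde{Q}_k(u, u+w)$ in $w$ about $w = 0$, with $u$ held fixed and $|u| \leq 1$, $|v| \leq 2$, therefore yields $|Q_k| \leq C|v-u|\, t^k \leq C\rho\lambda^k$ for a constant $C$ depending only on $G$. Taking $k$-th roots gives $|Q_k|^{1/k} \leq C\rho^{1/k}\lambda \leq C\rho^{1/r}\lambda$. Combining these estimates with $|h_k|^{1/k} \leq \rho\lambda$ from the $h_k$ term itself produces the desired bound.

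The Taylor step is the only real obstacle, and it is precisely what forces the weaker $\rho \mapsto \rho^{1/r}$ dependence in the conclusion: in Lemma \ref{close-parallel} one had the much stronger $|Q_k| \leq (\rho\lambda)^k$ available directly from homogeneity of $d_\infty$, but here we inevitably lose a power of $r$ because the hypothesis controls only the Euclidean, not the homogeneous, distance between $u$ and $v$. Once the Taylor estimate is in hand, the remainder of the argument is a routine reprise of Lemma \ref{close-parallel}.
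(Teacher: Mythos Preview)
Your proof is correct and follows the same overall structure as the paper's: reduce to $g=0$, expand via BCH, split the level-$k$ coordinate into the pure-$\{tu,tv\}$ brackets $Q_k$ and the remainder $P_k'-Q_k$ containing some $h_j$, and handle the latter exactly as in Lemma~\ref{close-parallel}. The only difference is in how you bound $Q_k$. The paper observes directly that in any nested bracket built from $u,v\in\V_1$, the innermost factor must be $\pm[u,v]=[u,v-u]$, whence $|[u,v]|\leq |u|\,|v-u|\leq\rho$ and the product bound $|Q_k|\leq C\rho\lambda^k$ follows immediately. Your Taylor/vanishing-on-the-diagonal argument reaches the same estimate $|Q_k|\leq C\rho\lambda^k$ by a slightly softer route; it is perfectly valid, but the paper's one-line bracket identity is the more direct way to see it.
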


\begin{proof}
  The proof is largely the same.  Note that $|v| \leq 1+\rho \leq 2$.  We have that
  \begin{align*}
    d_\infty(e^u,e^v) = N_\infty((-u,0,...,0),(v,0,...,0)) = N_\infty((v-u,Q_2',...,Q_r')).
  \end{align*}
  One can see that each $Q_j'$ is a finite sum of nested Lie bracket of the form $[x_1,[x_2,...,[x_{j-1},x_j]...]]$ where $x_i$ is either $u$ or $v$ and $|[x_{j-1},x_j]| = |[u,v]| \leq \rho$.  Thus, we get that there exists some constant $C_0 > 0$ depending only on $G$ so that for all $j \in \{2,...,r\}$ we have
  \begin{align*}
    |Q_j'| \leq C_0 2^j \rho.
  \end{align*}
  As in the proof of the previous lemma, we can calculate
  \begin{align}
    d_\infty(e^{tu}, he^{tv}) &= N_\infty((-tu,0,..., 0) \cdot (h_1 + tv, P_2,..., P_n)) \notag \\
    &= N_\infty((h_1 + t(v - u), h_2 + P_2',..., h_r + P_r')).
  \end{align}
  Taking $Q_j$ to be the nested Lie brackets of $P_j'$ without elements of $h$, we see then that $Q_j = t^j Q_j'$.  This gives for all $j \in \{2,...,r\}$ that
  \begin{align*}
    |Q_j| \leq C_02^j \rho \lambda^j.
  \end{align*}
  Comparing this with \eqref{sub-polynomial-bound}, we see that this will not change the proof.  The rest of the proof now just follows as before.
\end{proof}

Now we show that two horizontal line segments which are close on the endpoints must be close all throughout.

\begin{lemma} \label{horizontal-lines-bound}
  Let $\rho \in (0,1)$.  There exists a constant $C > 0$ such that if $f,g : [a,b] \to H$ are $\eta$-Lipschitz constant speed horizontal line segments such that
  \begin{align*}
    d_H(f(a),g(a)) \leq \rho |b-a| \eta, \quad d_H(f(b),g(b)) \leq \rho |b-a| \eta,
  \end{align*}
  then
  \begin{align*}
    \sup_{t \in [a,b]} d_H(f(t),g(t)) \leq C\rho^{1/r} |b-a| \eta.
  \end{align*}
\end{lemma}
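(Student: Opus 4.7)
The plan is to reduce to Lemma \ref{close-parallel-2} after a rescaling, together with a short argument showing that the horizontal directions of $f$ and $g$ are themselves $O(\rho)$-close. First I would write the two segments as $f(t) = f(a) e^{(t-a)\eta u}$ and $g(t) = g(a) e^{(t-a)\eta v}$ for unit horizontal vectors $u, v \in \V_1$ (which one can do because $f,g$ are constant-speed $\eta$-Lipschitz and horizontal). Reparametrizing by $s = (t-a)\eta \in [0,\lambda]$ with $\lambda := (b-a)\eta$ turns them into $1$-Lipschitz segments $F(s) := f(a) e^{su}$ and $G(s) := g(a) e^{sv}$, with the endpoint hypotheses becoming $d_H(F(0),G(0)), d_H(F(\lambda), G(\lambda)) \leq \rho \lambda$.

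The key step is to extract closeness of the generators $u$ and $v$. Here I would use the horizontal projection $\pi : G \to \R^n$, which is a $1$-Lipschitz homomorphism that preserves distance along horizontal lines. Then $\pi(F(s)) = \pi(F(0)) + s u$ and $\pi(G(s)) = \pi(G(0)) + s v$, so
\begin{align*}
\lambda |u - v| = \bigl|\bigl(\pi(F(\lambda)) - \pi(G(\lambda))\bigr) - \bigl(\pi(F(0)) - \pi(G(0))\bigr)\bigr| \leq 2\rho \lambda,
\end{align*}
giving $|u - v| \leq 2\rho$. With this in hand, Lemma \ref{close-parallel-2} (applied with base points $F(0), G(0)$, horizontal directions $u,v$, scale $\lambda$, and deviation parameter $2\rho$) immediately yields $\sup_{s \in [0,\lambda]} d_\infty(F(s), G(s)) \leq C_0 (2\rho)^{1/r} \lambda$, and the equivalence of all homogeneous norms converts this back to the desired bound in $d_H$, absorbing the $2^{1/r}$ and norm-equivalence constants into the final $C$.

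For the edge range $\rho \in [1/2, 1)$, where the hypothesis $2\rho < 1$ of Lemma \ref{close-parallel-2} is not available, the triangle inequality together with the $1$-Lipschitz property of $F, G$ gives the crude bound $d_H(F(s), G(s)) \leq 3\lambda$, which is itself dominated by $3 \cdot 2^{1/r} \rho^{1/r} \lambda$; enlarging $C$ accordingly handles this case.

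I do not expect a serious obstacle: the real analytic work has already been bundled into Lemma \ref{close-parallel-2}. The only content beyond that lemma is the projection argument showing that near-coincidence of two horizontal segments at both endpoints forces their generating horizontal vectors to be close, and this relies crucially on the special property that $\pi$ is distance-preserving along horizontal lines, a feature peculiar to the first stratum $\V_1$.
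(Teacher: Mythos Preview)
Your proposal is correct and follows essentially the same approach as the paper: both reduce to Lemma \ref{close-parallel-2} after showing $|u-v|\le 2\rho$, with your projection argument via $\pi$ being exactly the paper's first-coordinate computation in $d_\infty$ (the paper normalizes to $[a,b]=[0,1]$, $f(0)=0$, $\eta=1$ and reads off $|h_1+v_1-u_1|\le\rho$, $|h_1|\le\rho$ directly). Your explicit treatment of the range $\rho\in[1/2,1)$ is a small bit of extra care the paper leaves implicit.
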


\begin{proof}
  By translating and rescaling, we can suppose that $a = 0$, $b = 1$, $f(0) = 0$, and $\eta = 1$.  Let $h$ denote $g(0)$.  Then we get that
  \begin{align*}
    \max_j |h_j|^{1/j} \leq \rho.
  \end{align*}
  As $f$ and $g$ are 1-Lipschitz horizontal line segments, there exists horizontal vectors $u,v \in \V_1$ such that $f(t) = e^{tu}$, $g(t) = h e^{tv}$ and $|u_1| \leq 1$, $|v_1| \leq 1$.  Note that
  \begin{align*}
    N_\infty((-u_1,0,...,0) \cdot (h_1 + v_1, h_2 + P_2, ..., h_s + P_s)) = d_\infty(f(1),g(1)) \leq \rho.
  \end{align*}
  This implies that $|h_1 + v_1 - u_1| \leq \rho$.  As $|h_1| \leq \rho$, we get that $|v_1 - u_1| \leq 2\rho$.  One can now use the statement of Lemma \ref{close-parallel-2} to complete the proof.

\end{proof}

\begin{remark}
  Note that the infinity metric in both Lemma \ref{close-parallel} and \ref{close-parallel-2} can be changed to any homogeneous group metric.  This only changes the constant given by the lemmas, which will now also depend on the new metric.  This is a simple consequence of the equivalence of homogeneous norms.
\end{remark}


\section{Coarse differentiation for maps into metric spaces} \label{M-coarse-diff}
We start with a coarse version of a theorem of \cite{Pauls}.  Given $Q \in \Delta$, $\eta > 0$, and a map $h : G \to (X,d_X)$, define
\begin{multline*}
  \qd_h^M(Q,\eta) :=  \frac{1}{\eta a_0\ell(Q)} \inf \sup \{|d_X(h(xe^{sv}),h(xe^{tu})) - (t-s)w(v)| : \\
  x \in \eta B_Q, v \in S^{n-1}, -3 \eta a_0\ell(Q) \leq s \leq t \leq 3\eta a_0 \ell(Q) \}
\end{multline*}
where the infimum is taken over all functions $w : S^{n-1} \to \R^+$.  Recall that the radius of $B_Q$ is $a_0\ell(Q)$.  In this section, we will prove the following theorem.

\begin{theorem} \label{M-UAAP}
  There exist $\alpha,\zeta,\lambda > 0$ depending only on $G$ so that if $\epsilon \in (0,1/2)$, $m \in \N$, $h : G \to (X,d_X)$ is $\psi$-LLD, and $S \in \Delta$ such that
  \begin{align}
    \ell(S) \geq \lambda \epsilon^{-15(r+1)} \tau^{-m} \psi, \label{M-size-constraint}
  \end{align}
  then
  \begin{align*}
    \sum_{k=0}^m \sum_{Q \in \Delta_k(S)} \left\{|Q| : \qd_h^M(Q,\zeta \epsilon^r) > \epsilon \Lip_h(\psi) \right\} \leq \epsilon^{-\alpha} |S|.
  \end{align*}
\end{theorem}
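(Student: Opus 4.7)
The plan is to apply Proposition \ref{carleson-cubes} with $p=1$ to $h$ and then convert the resulting averaged Carleson bound into a pointwise control on $\qd_h^M$ via a cascade of Markov-type reductions interleaved with the iterated-midpoint argument of \cite{LN}. The $p=1$ choice is what is available for a general metric space target, since $\partial_f^{(1)}\ge 0$ reduces to the triangle inequality.

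First, the LLD hypothesis together with the size constraint \eqref{M-size-constraint} ensure that the smallest scale appearing in Proposition \ref{carleson-cubes} stays above $\psi$, so every occurrence of $\Lip_h$ below can be bounded by $\Lip_h(\psi)$. Applying Proposition \ref{carleson-cubes} with a parameter $\epsilon'$ chosen as an appropriate polynomial in $\epsilon$ yields
\begin{align*}
\sum_{k=0}^m \sum_{Q\in\Delta_k(S)} \alpha_h^{(1)}(Q;\epsilon')|Q| \le C|S|\Lip_h(\psi),
\end{align*}
and Markov's inequality shows that the total measure of cubes on which $\alpha_h^{(1)}(Q;\epsilon')$ exceeds $\epsilon^A \Lip_h(\psi)$ is at most $C\epsilon^{-A}|S|$ for a suitable exponent $A$. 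It therefore suffices to prove that every cube $Q$ with $\alpha_h^{(1)}(Q;\epsilon')\le\epsilon^A\Lip_h(\psi)$ satisfies $\qd_h^M(Q,\zeta\epsilon^r)\le\epsilon\Lip_h(\psi)$.

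For such a good cube, unpacking the definition of $\alpha_h^{(1)}(Q;\cdot)$ as an integral over $v\in S^{n-1}$ and base points $x\in z_Q(G\circleddash v)$ and applying Markov a second time to the inner integrals, I would isolate a measure-theoretically dense set of good pairs $(x,v)$ on which the one-dimensional quantity $\alpha_h^{(1)}(x\cdot\R v\cap 6B_Q;\epsilon')$ is small. On any such good line, the iterated-midpoint technique of \cite{LN}, driven by the telescoping identity \eqref{telescope}, produces a constant speed $s(x,v)\ge 0$ for which $|d_X(h(xe^{sv}),h(xe^{tv}))-(t-s)s(x,v)|$ is controlled by a small multiple of $\Lip_h(\psi)\ell(Q)$, uniformly over $s<t$ in a subinterval covering $2B_Q$.

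The main obstacle is to eliminate the base-point dependence of $s(x,v)$ and to promote the approximation from a dense set of base points to \emph{every} $x$ in the shrunk subball $\zeta\epsilon^r B_Q$. For two good base points $x,x'$ at $G$-distance $\rho\ell(Q)$, Lemma \ref{horizontal-lines-bound} guarantees that the horizontal segments $x\cdot\R v$ and $x'\cdot\R v$ remain within $C\rho^{1/r}\ell(Q)$ of each other throughout $6B_Q$; the LLD property of $h$ then forces $|s(x,v)-s(x',v)|\le C\rho^{1/r}\Lip_h(\psi)$, so that a single $w(v)$ can be defined as the speed on one reference line in each direction. An arbitrary $x\in\zeta\epsilon^r B_Q$ is moved to a nearby good base point via the density produced by the previous Markov step, and Lemma \ref{horizontal-lines-bound} combined with LLD transfers the canonical form from the good line to $x\cdot\R v$. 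The factor $\epsilon^r$ in the ball shrinkage is exactly what is needed to absorb the $\rho^{1/r}$ loss in Lemma \ref{horizontal-lines-bound}, and tracing the chain of Markov thresholds backward fixes both the exponent $\alpha$ and the exponent $15(r+1)$ in \eqref{M-size-constraint}.
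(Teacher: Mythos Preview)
Your choice of $p=1$ is a genuine gap. With $p=1$, the quantity $\partial_h^{(1)}(x,y)$ is the triangle-inequality defect
\[
\partial_h^{(1)}(x,y)=\frac{d_X(h(x),h(m))+d_X(h(m),h(y))-d_X(h(x),h(y))}{|y-x|},\qquad m=\tfrac{x+y}{2},
\]
and smallness of this quantity only forces $h(m)$ to lie near \emph{some} geodesic between $h(x)$ and $h(y)$; it says nothing about \emph{where} on that geodesic. Take $h:[0,1]\to\R$, $h(t)=t^2$: every $\partial_h^{(1)}$ vanishes identically, yet $h$ is nowhere close to constant speed. So the step ``the iterated-midpoint technique \ldots\ produces a constant speed $s(x,v)$'' fails for $p=1$, and no amount of Carleson/Markov bookkeeping repairs it.

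The paper runs the whole argument with $p=2$. The replacement for uniform convexity in a bare metric space is the purely numerical Lemma~\ref{M-numerical}: if $\gamma\le(\alpha+\beta)/2$ and $\tfrac{\alpha^2+\beta^2}{2}-\gamma^2\le\epsilon^2$ then $\max(|\alpha-\gamma|,|\beta-\gamma|)\le 2\epsilon$. Applied with $\alpha,\beta$ the two half-distances and $\gamma$ the full-interval speed, this forces both halves to have nearly the same speed, and iterating gives Lemma~\ref{M-energy}. From there the paper does not use a second Markov step to a dense set of lines; instead Lemma~\ref{M-sup-bound} upgrades the averaged bound on $\alpha_h^{(2)}(Q;\cdot)$ to a \emph{supremum} over all horizontal lines through $B_Q$, by the contrapositive perturbation argument (Lemma~\ref{M-partial-move}): one bad line would contaminate a whole tube of lines and make the average too large. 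The base-point independence of the speed is then handled by Lemma~\ref{M-geodesic-close} (via Lemma~\ref{close-parallel-2}), not Lemma~\ref{horizontal-lines-bound}; the latter concerns horizontal segments in a Carnot \emph{target} and is used only in Section~6. If you switch to $p=2$ and insert Lemma~\ref{M-numerical} at the midpoint step, the rest of your outline can be made to align with the paper's chain Lemma~\ref{M-alpha-geodesic} $\to$ Lemma~\ref{M-sup-bound} $\to$ Lemma~\ref{M-geodesic-close} $\to$ Lemma~\ref{M-final}.
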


\begin{remark}
  Notice that if $f$ is actually Lipschitz, then $\psi = 0$ and the restriction \eqref{M-size-constraint} becomes empty.  One then can even take $m = \infty$.
\end{remark}

We now show that this implies Theorem \ref{UAAP-M}.  The same proof will hold, {\it mutatis mutadis}, in the case of superreflexive and Carnot valued Lipschitz maps of Theorem \ref{UAAP-rest} and so we will not reprove them in the following sections.

\begin{proof}[Proof of Theorem \ref{UAAP-M}]
  Construct the Christ cubes of $G$ and take a cube $S \in \{Q \in \Delta : Q \subset B_G\}$ so that $\ell(S)$ is maximal.  Thus, there is some constant $C > 0$ depending only on $G$ so that
  \begin{align*}
    \frac{1}{C} \leq \ell(S) \leq C.
  \end{align*}
  As $f$ is Lipschitz, it is $0$-LLD and so the condition on the size of $S$ in Theorem \ref{M-UAAP} is empty.  The same theorem then gives that there exist $\zeta,\alpha > 0$ depending only on $G$ so that for $\epsilon \in (0,1/2)$, we get
  \begin{align}
    \sum \left\{|Q| : Q \in \Delta, Q \subseteq S, \qd_f^M(Q,\zeta \epsilon^r) > \epsilon \right\} \leq \epsilon^{-\alpha} |S|. \label{e:M-carleson-bnd}
  \end{align}
  Let $m = \lceil \epsilon^{-\alpha} \rceil$, $A = \{Q \in \Delta : Q \subseteq S, \qd_f^{UC}(Q,\zeta \epsilon^r) > \epsilon \}$, and $\Delta_k \cap S = \{Q \in \Delta_k : Q \subseteq S\}$.  Suppose $\bigcup_{k=0}^m (\Delta_k \cap S) \subseteq A$.  By the partitioning property of $\Delta$, we get for any $k \geq 0$ that
  \begin{align*}
    \sum_{Q \in \Delta_k \cap S} |Q| = |S|.
  \end{align*}
  Thus, we have
  \begin{align*}
    \sum_{k=0}^m \sum_{Q \in (\Delta_k \cap S) \cap A} |Q| = (m+1)|S|.
  \end{align*}
  We get a contradiction of \eqref{e:M-carleson-bnd} from the definition of $m$.  Thus, we have proven that there exists some $k \in \{0,...,m\}$, $Q \in \Delta_k \cap S$, and $w : S^{n-1} \to \R$ so that for $\rho = \zeta \epsilon^r a_0 \ell(Q)$, we have
  \begin{align*}
    \sup \left\{ \left| d_X(f(xe^{tv}),f(xe^{sv})) - (t-s) w(v) \right| : x \in \zeta \epsilon^r B_Q, v \in S^{n-1}, -3\rho \leq s < t \leq 3\rho \right\} \leq \epsilon \rho \|f\|_{lip}.
  \end{align*}
  
  As $Q \in \Delta_k$ for $k \in \{0,...,m\}$ and $B_Q$ has radius $a_0 \ell(Q)$, we have that $\zeta \epsilon^r B_Q$ has radius at least
  \begin{align*}
    \zeta \epsilon^r a_0 \ell(Q) \geq \frac{a_0}{C} \zeta \epsilon^r \tau^m \geq \frac{a_0}{C} \zeta \epsilon^r \tau^{\epsilon^{-\alpha}} \geq e^{\epsilon^{-\beta}}
  \end{align*}
  for $\beta$ sufficiently large.  This proves the theorem.
\end{proof}

We start off with a numerical lemma, which will act as a uniform convexity condition for general metric spaces.

\begin{lemma} \label{M-numerical}
  Let $\alpha,\beta,\gamma, \epsilon \geq 0$ such that $\gamma \leq \frac{1}{2}(\alpha + \beta)$.  If
  \begin{align}
    \frac{\alpha^2 + \beta^2}{2} - \gamma^2 \leq \epsilon^2, \label{M-num-condition}
  \end{align}
  then
  \begin{align*}
    \max\{|\alpha - \gamma|,|\beta - \gamma|\} \leq 2\epsilon.
  \end{align*}
\end{lemma}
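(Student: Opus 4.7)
\medskip

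\noindent\textbf{Proof plan for Lemma \ref{M-numerical}.}

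The plan is to decouple the hypothesis \eqref{M-num-condition} into two nonnegative pieces using a parallelogram-type identity and then compare $\gamma$ separately to the midpoint $m := \frac{\alpha+\beta}{2}$.

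First I would rewrite the left-hand side of \eqref{M-num-condition} as
\begin{align*}
  \frac{\alpha^2+\beta^2}{2} - \gamma^2 \;=\; \left(\frac{\alpha+\beta}{2}\right)^{\!2} - \gamma^2 \;+\; \left(\frac{\alpha-\beta}{2}\right)^{\!2} \;=\; (m^2 - \gamma^2) + \left(\frac{\alpha-\beta}{2}\right)^{\!2}.
\end{align*}
By the hypothesis $\gamma \leq m$ (and $\gamma \geq 0$) we have $m^2 - \gamma^2 \geq 0$, and the square $\left(\frac{\alpha-\beta}{2}\right)^2$ is obviously nonnegative. Since their sum is at most $\epsilon^2$, each summand is at most $\epsilon^2$ individually. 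This immediately gives
\begin{align*}
  \left|\frac{\alpha-\beta}{2}\right| \leq \epsilon \qquad \text{and} \qquad m^2 - \gamma^2 \leq \epsilon^2.
\end{align*}

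Next I would convert the second inequality into a bound on $m - \gamma$. Factor $m^2 - \gamma^2 = (m-\gamma)(m+\gamma)$; both factors are nonnegative, and $\gamma \geq 0$ forces $m+\gamma \geq m-\gamma \geq 0$. Hence $(m-\gamma)^2 \leq (m-\gamma)(m+\gamma) \leq \epsilon^2$, so $0 \leq m - \gamma \leq \epsilon$.

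Finally, the triangle inequality $|\alpha - \gamma| \leq |\alpha - m| + |m - \gamma|$ together with $|\alpha - m| = \left|\frac{\alpha-\beta}{2}\right| \leq \epsilon$ yields $|\alpha-\gamma| \leq 2\epsilon$, and the identical argument bounds $|\beta-\gamma|$. There is essentially no obstacle here; the only observation worth isolating is that $\gamma \geq 0$ is needed to turn the product bound $(m-\gamma)(m+\gamma) \leq \epsilon^2$ into the square bound $(m-\gamma)^2 \leq \epsilon^2$, which is where the hypothesis $\gamma \geq 0$ gets used.
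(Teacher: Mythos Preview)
Your proof is correct and follows essentially the same approach as the paper: both decompose $\frac{\alpha^2+\beta^2}{2}-\gamma^2$ into the two nonnegative pieces $m^2-\gamma^2$ and $\left(\frac{\alpha-\beta}{2}\right)^2$, bound each by $\epsilon^2$, and then use the triangle inequality. The only cosmetic difference is that the paper writes $m=\gamma+\eta$ and expands $(\gamma+\eta)^2-\gamma^2\geq\eta^2$, whereas you factor $m^2-\gamma^2=(m-\gamma)(m+\gamma)\geq(m-\gamma)^2$; these are the same inequality.
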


\begin{proof}
  Let $\frac{1}{2}(\alpha + \beta) = \gamma + \eta$.  We can then let $\alpha = \gamma + \eta + \delta$ and $\beta = \gamma + \eta - \delta$ where $\delta \geq 0$.  Then we have
  \begin{align*}
    \frac{\alpha^2 + \beta^2}{2} - \gamma^2 = \left[ \frac{(\gamma + \eta + \delta)^2 + (\gamma + \eta - \delta)^2}{2} - \gamma^2 \right] = (\gamma + \eta)^2 + \delta^2 - \gamma^2 \geq \delta^2 + \eta^2.
  \end{align*}
  By \eqref{M-num-condition}, we then get that $\max\{\delta, \eta\} \leq \epsilon$.  This gives the result.
\end{proof}

We now prove a lemma that controls how far a function $h$ is from being geodesic with $\partial_h^{(2)}$.

\begin{lemma} \label{M-energy}
  Let $h : [a,b] \to (X,d_X)$.  Then
  \begin{align}
    4^{m-1} (b-a)^2 &\max_{k \in \{0,...,m-1\}} \max_{I \in \D^k([a,b])} \partial_h^{(2)}(a(I),b(I)) \notag \\
    &\geq \sup \left\{ | d_X(h(a + s2^{-m}(b-a)),h(a + t2^{-m}(b-a)))\right. \notag\\
    &\qquad\left.- |t-s|2^{-m}d_X(h(a),h(b)) |^2 : 0 \leq s < t \leq 2^m \right\}. \label{M-energy-eq}
  \end{align}
\end{lemma}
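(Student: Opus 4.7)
The plan is to argue by induction on $m$, using Lemma \ref{M-numerical} at each dyadic bisection as the quantitative ``midpoint'' estimate. After normalizing to $[a,b] = [0,1]$, I write $L = d_X(h(0), h(1))$, $p_j = j 2^{-m}$, and let $E$ denote the maximum of $\partial_h^{(2)}$ appearing on the left-hand side of the lemma. The inductive claim I will establish is that $|d_X(h(p_s), h(p_t)) - (t-s) 2^{-m} L| \leq 2^{m-1}\sqrt{E}$ uniformly in $0 \leq s < t \leq 2^m$; squaring then yields the lemma.

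For the base case $m = 1$, I apply Lemma \ref{M-numerical} with $\alpha = 2 d_X(h(0), h(1/2))$, $\beta = 2 d_X(h(1/2), h(1))$, $\gamma = L$, and $\epsilon^2 = \partial_h^{(2)}(0,1) \leq E$ --- the triangle inequality in $X$ ensures $\gamma \leq (\alpha + \beta)/2$. This yields $|d_X(h(0), h(1/2)) - L/2| \leq \sqrt{E}$, handling both non-trivial pairs.

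For the inductive step, I set $L_1 = d_X(h(0), h(1/2))$ and $L_2 = d_X(h(1/2), h(1))$ and first apply Lemma \ref{M-numerical} at the top level to control $|L_i - L/2| \leq \sqrt{E}$. I then invoke the inductive hypothesis on each half $[0, 1/2]$ and $[1/2, 1]$ at depth $m$ (noting that their local suprema of $\partial_h^{(2)}$ are bounded by the global $E$). When $p_s$ and $p_t$ both lie in the same half, I combine the IH bound (phrased in terms of $L_i$) with $|L_i - L/2| \leq \sqrt{E}$ via the real triangle inequality. When they straddle the midpoint, I derive the upper bound from $d_X(h(p_s), h(p_t)) \leq d_X(h(p_s), h(1/2)) + d_X(h(1/2), h(p_t))$ and the lower bound from the global triangle inequality $L \leq d_X(h(0), h(p_s)) + d_X(h(p_s), h(p_t)) + d_X(h(p_t), h(1))$, applying the IH to each of the four distances involved and then converting $L_i$ to $L/2$ as above.

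The hard part will be the straddling case's lower bound: it crucially relies on the global triangle inequality using the endpoints $0$ and $1$ (whose images are exactly $L$ apart), without which one could only control cross-midpoint distances from above. A careful bookkeeping of the IH error (bounded by $e_m \sqrt{E}$) and of the conversion error (bounded by the coefficient $\mu_1 + \mu_2 = (t-s)/2^m \leq 2$ times $\sqrt{E}$) gives the inductive relation $e_{m+1} \leq e_m + 2$. Since this satisfies the doubling $e_{m+1} \leq 2 e_m$ whenever $e_m \geq 2$, the bound $e_m \leq 2^{m-1}$ propagates inductively from $m = 2$, which I verify directly by exploiting that the worst-case straddling coefficient is strictly less than $2$ at that scale.
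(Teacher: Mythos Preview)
Your argument is correct and takes a genuinely different inductive route from the paper's. The paper fixes the interval $[0,1]$ and passes from the depth-$m$ grid to the depth-$m+1$ grid by inserting midpoints: it uses the inductive hypothesis on the even-indexed pairs and then Lemma~\ref{M-numerical} at the \emph{finest} dyadic scale to control the odd-indexed points, piecing everything together with the triangle inequality. You instead bisect the interval, apply the inductive hypothesis at depth $m$ on each half (where the $(b-a)$ factor contributes the crucial $\tfrac12$ to the IH error), and use Lemma~\ref{M-numerical} only once, at the \emph{top} level, to convert $L_1,L_2$ to $L/2$. Your straddling lower bound via the global triangle inequality through $h(0)$ and $h(1)$ is the natural replacement for the paper's chain through adjacent even grid points. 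A pleasant byproduct is that your recursion $e_{m+1}\le e_m+2$ (with the $e_2\le 2$ base) actually yields the constant $2^{m-1}$ stated in the lemma, whereas the paper's written induction carries the looser $4^{m-1}\epsilon$ (harmless for the applications, but not literally the displayed bound).

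One small point of care: your stated reason for $e_2\le 2$---that the straddling conversion coefficient is ``strictly less than $2$''---is not by itself enough, since for the pair $(s,t)=(1,3)$ the combined IH and conversion errors in both the upper and lower estimates come out to exactly $2\sqrt E$. The direct case-check does give $e_2\le 2$ (with equality), but it relies on the observation that when the conversion coefficient is near its maximum (e.g.\ the pair $(0,3)$ with coefficient $3/2$), one of the two IH contributions is exact and hence costs nothing; you should make this trade-off explicit rather than attributing it to the coefficient alone.
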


\begin{proof}
  We may assume without loss of generality that $[a,b] = [0,1]$.  For the case when $m = 1$, we then have that
  \begin{align*}
    \partial_h^{(2)}(0,1) = \frac{1}{2} \left(\frac{d_X(h(0),h(1/2))^2 + d_X(h(1/2),h(1))^2}{2^{-2}} \right) - d_X(h(0),h(1))^2.
  \end{align*}
  Setting $\alpha = \frac{d_X(h(0),h(1/2))}{1/2}$, $\beta = \frac{d_X(h(1/2),h(1))}{1/2}$, and $\gamma = d_X(h(0),h(1))$, Lemma \ref{M-numerical} gives that if $\partial_h^{(2)}(0,1) \leq \epsilon^2$, then
  \begin{align*}
    \left| d_X(h(0),h(1/2)) - \frac{1}{2} d_X(h(0),h(1)) \right| &\leq \epsilon, \\
    \left| d_X(h(1/2),h(1)) - \frac{1}{2} d_X(h(0),h(1)) \right| &\leq \epsilon.
  \end{align*}
  This completes the proof for $m = 1$.  Now assume that \eqref{M-energy-eq} is satisfied up to $m$ and suppose the next step:
  \begin{align}
    \max_{k \in \{0,...,m\}} \max_{I \in \D^k([0,1])} \partial_h^{(2)}(a(I),b(I)) \leq \epsilon^2. \label{M-step}
  \end{align}
  By the inductive hypothesis, we have that
  \begin{align}
    \left| d_X(h(s2^{-m}),h(t2^{-m})) - \frac{|t-s|}{2^m} d_X(h(0),h(1)) \right| \leq 4^{m-1} \epsilon, \qquad \forall s, t \in \{0,...,2^m\}. \label{M-induct-hyp1}
  \end{align}
  We need to prove for every $s,t \in \{0,...,2^{m+1}\}$ that
  \begin{align*}
    \left| d_X(h(s2^{-m-1}),h(t2^{-m-1})) - \frac{|t-s|}{2^{m+1}} d_X(h(0),h(1)) \right| \leq 4^m \epsilon.
  \end{align*}
  Let $p = 2j + 1$ where $j \in \{0,...,2^m\}$.  Then we have by \eqref{M-induct-hyp1} that
  \begin{align}
    \left| \frac{1}{2} d_X(h(j2^{-m}), h((j+1)2^{-m})) - \frac{1}{2^{m+1}} d_X(h(0), h(1)) \right| \leq \frac{4^{m-1}}{2} \epsilon. \label{M-induct-hyp}
  \end{align}
  Then substituting
  \begin{align*}
    \alpha &= \frac{d_X(h(j2^{-m}),h(p))}{2^{-m-1}},\\
    \beta &= \frac{d_X(h(p),h((j+1)2^{-m})}{2^{-m-1}},\\
    \gamma &= \frac{d_X(h(j2^{-m}),h((j+1)2^{-m}))}{2^{-m}},
  \end{align*}
  into Lemma \ref{M-numerical}, the fact that $\partial_h^{(2)}(j2^{-m},(j+1)2^{-m}) \leq \epsilon^2$ once again gives us that
  \begin{align*}
    \left| d_X(h(j2^{-m}),h(p)) - \frac{1}{2} d_X(h(j2^{-m}),h((j+1)2^{-m})) \right| &\leq 2^{-m}\epsilon, \\
    \left| d_X(h(p),h((j+1)2^{-m})) - \frac{1}{2} d_X(h(j2^{-m}),h((j+1)2^{-m})) \right| &\leq 2^{-m}\epsilon.
  \end{align*}
  This, along with \eqref{M-induct-hyp} gives that
  \begin{align}
    \left| d_X(h(j2^{-m}),h(p)) - \frac{1}{2^{m+1}} d_X(h(0),h(1)) \right| &\leq \left( \frac{4^{m-1}}{2} + 2^{-m} \right) \epsilon, \label{M-odd-1} \\
    \left| d_X(h(p),h((j+1)2^{-m})) - \frac{1}{2^{m+1}} d_X(h(0),h(1)) \right| &\leq \left( \frac{4^{m-1}}{2} + 2^{-m} \right) \epsilon. \label{M-odd-2}
  \end{align}
  Let $s,t \in \{0,...,2^{m+1}\}$ where $s < t$.  If both $s$ and $t$ are even, then the inductive case gives the result.  Thus, we may suppose that one is odd, say $s = 2j + 1$ for some $j \in \{0,...,2^m\}$.  Suppose $t = 2\ell + 1$ is odd for some $\ell \in \{0,...,2^m\}$.  Then we have
  \begin{align*}
    &d_X(h(s2^{-m-1}),h(t2^{-m-1})) - \frac{t-s}{2^{m+1}} d_X(h(0),h(1)) \\
    &\quad \leq d_X(h(s2^{-m-1}),h((j+1)2^{-m})) - \frac{1}{2^{m+1}} d_X(h(0),h(1)) + d_X(h((j+1)2^{-m}),h(\ell2^{-m})) \\
    &\qquad - \frac{\ell - j - 1}{2^m} d_X(h(0),h(1)) + d_X(h(\ell2^{-m}),h(t2^{-m-1})) - \frac{1}{2^{m+1}} d_X(h(0),h(1)) \\
    &\quad \leq 2\left( \frac{4^{m-1}}{2} + 2^{-m} \right)\epsilon + 4^{m-1} \epsilon \leq 4^m \epsilon,
  \end{align*}
  where we've used \eqref{M-induct-hyp1}, \eqref{M-odd-1}, and \eqref{M-odd-2} for the penultimate inequality.  For the other direction, we have
  \begin{align*}
    &d_X(h(s2^{-m-1}),h(t2^{-m-1})) - \frac{t-s}{2^{m+1}} d_X(h(0),h(1)) \\
    &\quad \geq d_X(h(j2^{-m}),h((\ell+1)2^{-m})) - \frac{\ell+1-j}{2^m} d_X(h(0),h(1)) - d_X(h(j2^{-m}),h(s2^{-m-1})) \\
    &\qquad + \frac{1}{2^{m+1}} d_X(h(0),h(1)) - d_X(h((\ell+1)2^{-m}),h(t2^{-m})) + \frac{1}{2^{m+1}} d_X(h(0),h(1)) \\
    &\quad \geq - 4^m \epsilon.
  \end{align*}
  The case when $t$ is even follows similarly.
\end{proof}

We will also use the following lemma many times.  It essentially states that, under small perturbations, $\partial_h^{(2)}$ does not change too much.

\begin{lemma} \label{M-partial-move}
  Let $h : I \to (X,d_X)$ be $\psi$-LLD for some (possibly infinite) interval $I \subseteq \R$ and let $c,d \in I$ so that $\frac{d-c}{4} \geq \psi$ and
  \begin{align}
    \frac{\partial_h^{(2)}(c,d)}{30 \Lip_h(\psi)^2} (d-c) =: \rho \geq \psi. \label{M-large-separation}
  \end{align}
  If we choose $s,t \in I$ so that $|s - c| \leq \rho$ and $|t- d| \leq \rho$, then $\partial_h^{(2)}(s,t) > \frac{1}{5} \partial_h^{(2)}(c,d)$.
\end{lemma}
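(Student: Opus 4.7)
The plan is a direct perturbation estimate. I would write $\partial_h^{(2)}(c,d) = \tfrac12(A_1^2 + A_2^2) - A_3^2$, where
\begin{align*}
A_1 = \frac{d_X(h(c), h(\tfrac{c+d}{2}))}{(d-c)/2}, \qquad A_2 = \frac{d_X(h(\tfrac{c+d}{2}), h(d))}{(d-c)/2}, \qquad A_3 = \frac{d_X(h(c), h(d))}{d-c},
\end{align*}
and introduce the analogues $B_1, B_2, B_3$ for $(s,t)$. Setting $L := \Lip_h(\psi)$ and $P := \partial_h^{(2)}(c,d)$, the goal is to show that $|B_i^2 - A_i^2|$ is a small multiple of $P$ for each $i$, so that the total perturbation of $\partial_h^{(2)}$ stays strictly below $\tfrac{4}{5}P$.

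First I would record three consequences of the hypotheses. Since $(d-c)/4 \geq \psi$, the $\psi$-LLD property gives $A_1, A_2 \leq L$, hence $P \leq L^2$, and hence $\rho \leq (d-c)/30$. This yields $|(t-s) - (d-c)| \leq 2\rho$, so $t - s \geq \tfrac{14}{15}(d-c) \geq \psi$, and thus $B_1, B_2, B_3 \leq L$ as well. Next, the hypothesis $\rho \geq \psi$ ensures that $d_X(h(s), h(c)) \leq L\rho$ regardless of whether $|s-c|$ is above or below $\psi$, because in the latter case the absolute bound $L\psi$ built into the $\psi$-LLD definition is itself at most $L\rho$; the identical bound holds for $d_X(h(t), h(d))$ and $d_X(h(\tfrac{s+t}{2}), h(\tfrac{c+d}{2}))$, since the midpoint shifts by at most $\rho$.

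With these in hand, I would write each $B_i - A_i$ as a single fraction and split its numerator into a distance-perturbation term and an interval-length-perturbation term. A short computation using the bounds above yields $|B_1 - A_1|, |B_2 - A_2| \leq \tfrac{3P}{14L}$ and $|B_3 - A_3| \leq \tfrac{P}{7L}$. Multiplying by $B_i + A_i \leq 2L$ gives $|B_i^2 - A_i^2| \leq \tfrac{3P}{7}$ for $i = 1, 2$ and $|B_3^2 - A_3^2| \leq \tfrac{2P}{7}$. Summing with the correct weights,
\begin{align*}
\left|\partial_h^{(2)}(s,t) - \partial_h^{(2)}(c,d)\right| \leq \tfrac12 \cdot \tfrac{3P}{7} + \tfrac12 \cdot \tfrac{3P}{7} + \tfrac{2P}{7} = \tfrac{5P}{7},
\end{align*}
and therefore $\partial_h^{(2)}(s,t) \geq \tfrac{2P}{7} > \tfrac{P}{5}$, as required.

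The argument is elementary; the one point that requires care is the bookkeeping distinction between Lipschitz bounds at scales $> \psi$ and the absolute bound $L\psi$ at smaller scales. The hypotheses $\rho \geq \psi$ and $(d-c)/4 \geq \psi$ are imposed precisely to make this distinction invisible, and the constant $30$ in the definition of $\rho$ is calibrated so that the final perturbation bound comfortably exceeds $\tfrac{1}{5}$.
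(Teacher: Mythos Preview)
Your proof is correct and follows essentially the same direct perturbation strategy as the paper. The paper organizes the computation slightly differently---it expands the squared distances, factors out $\left(\frac{d-c}{t-s}\right)^2$, and bounds the remaining error terms by $\frac{1}{2}P$---but your termwise bounding of $|B_i^2 - A_i^2|$ via $|B_i - A_i|\cdot(B_i + A_i)\le 2L|B_i - A_i|$ is the same idea with arguably cleaner bookkeeping.
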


This lemma may be a little hard to interpret.  One should think of $\rho$ as a threshhold.  The lemma is saying that as long as the points $s$ and $t$ are not perturbed beyond $\rho$, then $\partial_h^{(2)}$ does not decrease too much.  Of course the threshhold should then depend on how large $\partial_h^{(2)}(s,t)$ is as there is less margin for error when $\partial_h^{(2)}(s,t)$ is small.  That $\rho$ is taken to be larger than $\psi$ will allow us to use the $\psi$-LLD bounds.

\begin{proof}
  As $d-c \geq 4\psi$, we get by looking at the definition of $\partial_h^{(2)}$ that 
  \begin{align}
    \frac{\partial_h^{(2)}(c,d)}{\Lip_h(\psi)^2} \leq 1. \label{M-partial-lip}
  \end{align}
  Thus, $\rho \leq \frac{1}{4}(d-c)$, and so we get that $t-s \leq \frac{3}{2}(d-c)$.  The proof is now a direct computation:
  \begin{align*}
    \partial_h^{(2)}(s,t) &= \frac{1}{2} \left[ \left( \frac{d_X(h(s), h((s+t)/2))}{(t-s)/2} \right)^2 + \left( \frac{d_X(h((s+t)/2),h(t))}{(t-s)/2} \right)^2 \right] - \left( \frac{d_X(h(s),h(t))}{t-s} \right)^2 \\
    &\quad \geq \frac{1}{2} \left[ \left( \frac{d_X(h(c), h((c+d)/2)) - 2 \rho \Lip_h(\psi)}{(t-s)/2} \right)^2 + \left( \frac{d_X(h((c+d)/2),h(d)) - 2\rho \Lip_h(\psi)}{(t-s)/2} \right)^2 \right] \\
    &\qquad - \left( \frac{d_X(h(c),h(d)) + 2 \rho \Lip_h(\psi)}{t-s} \right)^2 = (*),
  \end{align*}
  For the inequality, we used $\eqref{M-large-separation}$ and the fact that one can use $\rho \Lip_h(\psi)$ to bound $d_X(h(x),h(y))$ for all $|x-y| \leq \rho$.  Continuing, we get
  \begin{align}
    (*) &\geq \left( \frac{d-c}{t-s} \right)^2 \partial_h^{(2)}(c,d) - \frac{4\rho\Lip_h(\psi) d_X(h(c),h(d)) }{(t-s)^2} - \left(\frac{2\rho\Lip_h(\psi)}{t-s}\right)^2 \notag \\
    &\qquad - \frac{8\rho \Lip_h(\psi) [d_X(h(c),h((c+d)/2)) + d_X(h((c+d)/2),h(d))]}{(t-s)^2} \notag \\
    &\geq \left( \frac{d-c}{t-s} \right)^2 \left[ \partial_h^{(2)}(c,d) - \frac{8\rho \Lip_h(\psi)^2}{d-c} - \frac{4\rho\Lip_h(\psi)^2}{d-c} - \frac{4\rho^2\Lip_h(\psi)^2}{(d-c)^2} \right] \label{M-lipschitz-use} \\
    &\overset{\eqref{M-large-separation} \wedge \eqref{M-partial-lip}}{>} \left( \frac{d-c}{t-s} \right)^2 \frac{1}{2} \partial_h^{(2)}(c,d) \notag \\
    &\geq \frac{1}{5} \partial_h^{(2)}(c,d). \notag
  \end{align}
  In \eqref{M-lipschitz-use}, we used the fact that $\frac{d-c}{2} \geq \psi$ to show that
  \begin{align*}
    d_X(h(c),h(d)) &\leq d_X\left( h(c), h\left( \frac{c+d}{2} \right) \right) + d_X\left( h\left( \frac{c+d}{2} \right), h(d) \right) \leq \Lip_h(\psi)(d-c).
  \end{align*}
\end{proof}

\begin{remark}
  Note that we never used the fact that $s$ and $t$ are on the same line as $c$ and $d$.  We only used their metric relations.  Thus, the lemma holds for maps from metric spaces that contain isometric copies of lines as long as $s$ and $t$ as well as $c$ and $d$ lie on isometric copies of lines and we have that $d(s,c) \leq \rho$, $d(t,d) \leq \rho$, and $d\left( \frac{s+t}{2},\frac{c+d}{2} \right) \leq \rho$.
\end{remark}

We now translate Lemma \ref{M-energy} into a bound concerning $\alpha_h^{(2)}$.

\begin{lemma} \label{M-alpha-geodesic}
  Fix $\epsilon \in (0,1)$ and let $h : [a,b] \to (X,d_X)$ be $\psi$-LLD.  If $\frac{1}{40000} \epsilon^5 (b-a) > \psi$ and $\alpha_h^{(2)}\left([a,b]; \frac{\epsilon}{16} \right) \leq \frac{\epsilon^{14}}{10^{13}} \Lip_h(\psi)^2$, then
  \begin{align*}
    \left| d_X(h(s),h(t)) - |t-s| \frac{d_X(h(a),h(b))}{b-a} \right| \leq \epsilon (b-a) \Lip_h(\psi), \qquad \forall s,t \in [a,b].
  \end{align*}
\end{lemma}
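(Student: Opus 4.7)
The plan is to convert the averaged smallness of $\partial_h^{(2)}$ encoded in the hypothesis on $\alpha_h^{(2)}$ into a pointwise bound on sufficiently long subintervals of $[a,b]$, then feed this bound into the dyadic energy estimate of Lemma~\ref{M-energy} to obtain the geodesic approximation on a dyadic net, and finally extend to all pairs $s,t \in [a,b]$ using the $\psi$-LLD hypothesis. After rescaling we may assume $[a,b] = [0,1]$ and $\Lip_h(\psi) = 1$; the numerical constants in the hypothesis are calibrated to make this three-step argument work with quantitative slack.

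The first step, which is also the main obstacle, is to show that $\partial_h^{(2)}(c,d) \leq C\epsilon^4$ for some small absolute $C$ whenever $(c,d) \in [0,1]^2$ satisfies $d-c \geq \epsilon/8$. I argue by contradiction using Lemma~\ref{M-partial-move}. If $\partial_h^{(2)}(c,d) = \tau^2$ with $\tau^2 > C\epsilon^4$, then $\rho := \tau^2(d-c)/30 \geq \psi$ (the hypothesis $\psi < \epsilon^5/40000$ is exactly what is needed here), so Lemma~\ref{M-partial-move} gives $\partial_h^{(2)}(s,t) \geq \tau^2/5$ for all $(s,t)$ in the box of radius $\rho$ around $(c,d)$. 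I then truncate to $\rho' := \min(\rho,\epsilon/32)$ so that this box lies in the integration region $\{y-x > \epsilon/16\}$, and even after intersection with $[0,1]^2$ its area is at least a constant times $(\rho')^2$. The resulting contribution $\tau^2(\rho')^2/5$ to the integral defining $\alpha_h^{(2)}$ must fit under $\epsilon^{14}/10^{13}$; splitting into the cases $\rho \leq \epsilon/32$ and $\rho > \epsilon/32$ produces a contradiction whenever $\tau^2$ exceeds the claimed pointwise bound, with the second case being impossible outright for any $\epsilon < 1$ since it would force $\tau^2 \gtrsim \epsilon$ and hence a contribution of order $\epsilon^3$ that dwarfs the $\epsilon^{14}/10^{13}$ budget.

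With this pointwise bound in hand I choose $m$ with $2^{-m} \in [\epsilon/16,\epsilon/8]$. Every dyadic interval $I \in \D^k([0,1])$ at level $k \in \{0,\ldots,m-1\}$ then has length at least $\epsilon/8$, so the max appearing in Lemma~\ref{M-energy} is at most $C\epsilon^4$; combined with $4^{m-1} \leq 64/\epsilon^2$, Lemma~\ref{M-energy} yields the geodesic approximation with error at most a fixed multiple of $\epsilon$ on every pair of dyadic points of level $m$. For arbitrary $s,t \in [0,1]$, each point lies within $\epsilon/16$ of a dyadic point of level $m$, and the $\psi$-LLD condition (with $\psi < \epsilon/16$) bounds $d_X(h(s),h(s'))$ by $\epsilon/16$ regardless of whether $|s-s'|$ falls above or below the LLD threshold $\psi$. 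Two triangle inequalities together with $d_X(h(0),h(1)) \leq 1$ then upgrade the dyadic estimate to the required bound on the whole interval, with the constants $10^{13}$ and $40000$ in the hypothesis leaving exactly enough room to absorb the cumulative losses into a single factor of $\epsilon$.
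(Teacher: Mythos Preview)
Your proposal is correct and follows essentially the same route as the paper: convert the averaged bound on $\alpha_h^{(2)}$ into a pointwise bound on $\partial_h^{(2)}$ by contradiction via Lemma~\ref{M-partial-move}, feed this into Lemma~\ref{M-energy} to get the dyadic geodesic approximation, and extend to all $s,t$ using the $\psi$-LLD property. The only cosmetic differences are that the paper works directly with dyadic intervals rather than all pairs with $d-c\ge\epsilon/8$, and it fixes the perturbation scale at $\nu=\epsilon^5(b-a)/40000$ from the outset (taking the box $[s,s+\nu]\times[t-\nu,t]$ so that it automatically sits inside $[a,b]^2$) instead of truncating to $\rho'=\min(\rho,\epsilon/32)$ and splitting into cases.
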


\begin{proof}
  Let
  \begin{align}
    m = \left\lceil \log_2 \frac{1}{\epsilon} \right\rceil + 2 \label{M-log-eps}
  \end{align}
  and suppose for all $j,k \in \{0,...,2^m\}$ that
  \begin{align}
    \left| d_X(h(a + j2^{-m}(b-a)), h(a + k2^{-m}(b-a))) - \frac{|j-k|}{2^m} d_X(h(a),h(b)) \right| \leq \frac{\epsilon}{3} (b-a)\Lip_h(\psi). \label{M-discrete-geo}
  \end{align}
  Let $s,t \in [a,b]$ such that $s < t$.  There exist $j,k \in \{0,...,2^m\}$ so that
  \begin{align}
    0 &\leq a + j2^{-m}(b-a) - s \leq 2^{-m}(b-a), \notag \\
    0 &\leq t - a - k2^{-m}(b-a) \leq 2^{-m}(b-a). \label{M-eps-net}
  \end{align}
  As $2^{-m}(b-a) \geq \frac{1}{40000} \epsilon^5 (b-a) > \psi$, we then have that
  \begin{align*}
    &d_X(h(s),h(t)) \\
    &\quad \leq d_X(h(s),h(a + j2^{-m}(b-a))) + d_X(h(a + j2^{-m}(b-a)),h(a + k2^{-m}(b-a))) \\
    &\qquad+ d_X(h(a + k2^{-m}(b-a)),h(t)) \\
    &\overset{\eqref{M-discrete-geo} \wedge \eqref{M-eps-net}}{\leq} 2^{-m}(b-a) \Lip_h(\psi) + (k-j)2^{-m} d_X(h(a),h(b)) + \frac{\epsilon}{3}(b-a) \Lip_h(\psi) + 2^{-m}(b-a)\Lip_h(\psi) \\
    &\overset{\eqref{M-log-eps} \wedge \eqref{M-eps-net}}{\leq} (a + j2^{-m}(b-a) - s) \frac{d_X(h(a),h(b))}{b-a} + (k-j)2^{-m} d_X(h(a),h(b)) \\
    &\qquad + (t - a - k2^{-m}(b-a)) \frac{d_X(h(a),h(b))}{b-a} + \epsilon (b-a) \Lip_h(\psi) \\
    &\quad \leq (t - s) \frac{d_X(h(a),h(b))}{b-a} + \epsilon (b-a) \Lip_h(\psi).
  \end{align*}
  That $d_X(h(s),h(t)) \geq \frac{t - s}{b-a}  d_X(h(a),h(b)) - \epsilon (b-a) \Lip_h(\psi)$ is proven similarly.  Thus it suffices to prove \eqref{M-discrete-geo}, which, by Lemma \ref{M-energy}, further reduces to proving that
  \begin{align*}
    \max_{k \in \{0,...,{m-1}\}} \max_{I \in \D^k([a,b])} \partial_h^{(2)}(a(I),b(I)) \leq \frac{\epsilon^4}{144} \Lip_h(\psi)^2 \overset{\eqref{M-log-eps}}{\leq} 4^{-m+1} \frac{\epsilon^2}{9} \Lip_h(\psi)^2 .
  \end{align*}
  Suppose this were not the case.  Then there exists some $k \in \{0,...,m-1\}$ and some $[s,t] \in \D^k([a,b])$ so that
  \begin{align*}
    \partial_h^{(2)}(s,t) > \frac{\epsilon^4}{144} \Lip_h(\psi)^2.
  \end{align*}
  Let $s',t' \in [a,b]$ so that $s' \in [s,s + \frac{1}{40000}\epsilon^5(b-a)]$ and $t' \in [t - \frac{1}{40000}\epsilon^5(b-a), t]$.  As
  \begin{align*}
    \frac{\partial_h^{(2)}(s,t)}{30 \Lip_h(\psi)^2} (t-s) > \frac{\epsilon^4}{5000} 2^{-k} (b-a) \geq \frac{\epsilon^5}{40000} (b-a) \geq \psi,
  \end{align*}
  and
  \begin{align*}
    \frac{t-s}{4} \geq 2^{-m-2}(b-a) \geq \frac{\epsilon}{32}(b-a) \geq \psi,
  \end{align*}
  we get by an application of Lemma \ref{M-partial-move} that
  \begin{align*}
    \partial_h^{(2)}(s',t') > \frac{\epsilon^4}{720} \Lip_h(\psi)^2.
  \end{align*}
  We can also bound
  \begin{align*}
    \left| t - s - 2 \cdot \frac{\epsilon^5}{40000}(b-a) \right| \geq \frac{\epsilon}{16} (b-a).
  \end{align*}
  We can now bound
  \begin{align*}
    \alpha_h^{(2)}\left([a,b]; \frac{\epsilon}{16}\right) &\geq \left(1 - \frac{\epsilon}{16} \right)^2 (b-a)^{-2} \int_s^{s+\epsilon^5(b-a)/40000} \int_{t-\epsilon^5(b-a)/40000}^t \partial_h^{(2)}(x,y) ~dy ~dx \\
    &> \frac{1}{2} \left(\frac{\epsilon^5}{40000} \right)^2 \frac{\epsilon^4}{720} \Lip_h(\psi)^2.
  \end{align*}
  This is a contradiction of the hypothesis of the lemma.
\end{proof}

We now return to maps from Carnot groups to metric spaces.  We prove that we can bootstrap the averaging bound of $\alpha_h^{(2)}(Q)$ to a supremum bound.

\begin{lemma} \label{M-sup-bound}
  Let $Q \in \Delta$, $\epsilon \in (0,1/2)$, and $h : G \to (X,d_X)$ be $\psi$-LLD.  There exist constants $\lambda, \alpha_0, \gamma > 0$ depending only on $G$ so that if $\alpha_h^{(2)}\left(Q; \frac{\epsilon}{\gamma} \right) \leq \epsilon^{\alpha_0} \Lip_h(\psi)^2$ and $\ell(Q) \geq \lambda \epsilon^{-15} \psi$, then 
  \begin{align*}
    \sup \left\{ \alpha^{(2)}_h\left(x \cdot \R v \cap 3B_Q; \frac{\epsilon}{16} \right) : v \in S^{n-1}, x \in z_Q(G \circleddash v), x \cdot \R v \cap B_Q \neq \emptyset \right\} \leq \frac{\epsilon^{14}}{10^{13}} \Lip_h(\psi)^2.
  \end{align*}
\end{lemma}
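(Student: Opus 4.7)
The plan is to argue by contradiction. Suppose there exist $v_0 \in S^{n-1}$ and $x_0 \in z_Q(G \circleddash v_0)$ with $x_0 \cdot \R v_0 \cap B_Q \neq \emptyset$ such that the line segment $L_0 := x_0 \cdot \R v_0 \cap 3B_Q$ satisfies
\[
\alpha_h^{(2)}\!\left(L_0;\, \tfrac{\epsilon}{16}\right) > \tfrac{\epsilon^{14}}{10^{13}}\, \Lip_h(\psi)^2.
\]
The goal is to force $\alpha_h^{(2)}(Q;\epsilon/\gamma)$ to exceed $\epsilon^{\alpha_0}\Lip_h(\psi)^2$ for suitable $\alpha_0$, by showing that the violation on $L_0$ persists on every line through a controlled neighborhood of $(v_0,x_0)$ in $S^{n-1}\times z_Q(G\circleddash v)$, and then integrating.

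First I unpack the hypothesis by Chebyshev. Since $\partial_h^{(2)}(s,t) \leq \Lip_h(\psi)^2$ whenever $|t-s| \geq 4\psi$---a constraint automatically met inside the integration region once $\ell(Q) \geq \lambda\epsilon^{-15}\psi$ with $\lambda$ large enough---the definition of $\alpha_h^{(2)}(L_0;\epsilon/16)$ produces a set $E \subset L_0 \times L_0$ of planar measure at least a multiple of $\epsilon^{14}\ell(Q)^2$, on which $t_0-s_0 \geq (\epsilon/16)|L_0|$ and $\partial_h^{(2)}(s_0,t_0) \gtrsim \epsilon^{14}\Lip_h(\psi)^2$. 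The same size threshold guarantees $(t_0-s_0)/4 \geq \psi$ and that the scale $\rho_0 := \partial_h^{(2)}(s_0,t_0)(t_0-s_0)/(30\Lip_h(\psi)^2)$ appearing in Lemma \ref{M-partial-move} satisfies $\rho_0 \geq \psi$, enabling its later application.

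Next I propagate the lower bound to nearby lines. Fix $(s_0,t_0)\in E$ and set $\delta := c(\rho_0/\ell(Q))^r$ for a small $c>0$ depending only on $G$. By Lemma \ref{close-parallel-2}, whenever $|v-v_0|\leq \delta$ and $d_\infty(x,x_0) \leq \delta\ell(Q)$, the line $x\cdot\R v$ stays within $d_\infty$-distance $\rho_0$ of $x_0\cdot\R v_0$ throughout the parameter interval $[-3a_0\ell(Q),3a_0\ell(Q)]$; in particular $x\cdot\R v$ still meets $B_Q$. For $(s,t)$ with $|s-s_0|,|t-t_0|\leq \rho_0$, the three comparison points $xe^{sv}$, $xe^{((s+t)/2)v}$, $xe^{tv}$ all lie within $\rho_0$ of their $L_0$-counterparts, so Lemma \ref{M-partial-move} together with the remark following it gives $\partial_h^{(2)}(s,t) > \tfrac{1}{5}\partial_h^{(2)}(s_0,t_0) \gtrsim \epsilon^{14}\Lip_h(\psi)^2$.

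To finish, I integrate. For each admissible $(v,x)$ the pairs $(s,t)$ just produced occupy area $\gtrsim \rho_0^{\,2}$ inside $(x\cdot\R v\cap 6B_Q)^2$, and after choosing $\gamma$ so that the condition $t-s > (\epsilon/\gamma)|x\cdot\R v\cap 6B_Q|$ is inherited from $t_0-s_0 > (\epsilon/16)|L_0|$, the contribution to $\alpha_h^{(2)}(x\cdot\R v\cap 6B_Q;\epsilon/\gamma)$ is at least a constant multiple of $\epsilon^{14}\rho_0^{\,2}\ell(Q)^{-2}\Lip_h(\psi)^2$. The perturbation neighborhood in $S^{n-1}\times z_Q(G\circleddash v)$ carries $\mathscr{H}^{N-1}\otimes \mu$-mass at least a constant multiple of $(\delta\ell(Q))^{N-1}\delta^{n-1}$, so plugging into the definition of $\alpha_h^{(2)}(Q;\epsilon/\gamma)$ yields a lower bound of the form $\epsilon^{A}\Lip_h(\psi)^2$ with $A$ depending only on $G$, and choosing $\alpha_0 > A$ produces the contradiction. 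The main obstacle is the exponent bookkeeping: Lemma \ref{close-parallel-2} converts a perturbation of size $\delta$ into a pointwise displacement of only $\delta^{1/r}$, so the perturbation radius must be $(\rho_0/\ell(Q))^r$ rather than $\rho_0$; this compounds with the dimensional factor $(N-1)+(n-1)$ from the base integration and with the polynomial loss in $\epsilon$ inside $\rho_0$, and the threshold $\ell(Q)\geq \lambda\epsilon^{-15}\psi$ is calibrated precisely so that $\rho_0\geq \psi$ survives all these losses, preventing the small-scale fluctuations permitted by the $\psi$-LLD hypothesis from absorbing $\partial_h^{(2)}$.
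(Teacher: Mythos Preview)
Your proposal is correct and follows essentially the same route as the paper: argue by contradiction, extract a single pair $(s_0,t_0)$ on the bad line where $\partial_h^{(2)}$ is large, use Lemma~\ref{close-parallel-2} to show that lines with direction $v$ within $\sim\epsilon^{15r}$ of $v_0$ and base point within $\sim\epsilon^{15r}\ell(Q)$ of $x_0$ stay uniformly $\rho_0$-close, invoke Lemma~\ref{M-partial-move} (with its remark) to propagate the lower bound on $\partial_h^{(2)}$, and then integrate over the perturbation neighborhood in the definition of $\alpha_h^{(2)}(Q;\cdot)$ to force a polynomial-in-$\epsilon$ lower bound contradicting the hypothesis. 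The paper's proof differs only in presentation: it handles the varying hypersurfaces $z_Q(G\circleddash v)$ a bit more explicitly by projecting $g$ to $g_v\in z_Q(G\circleddash v)$ and then taking $g'$ in a CC-ball around $g_v$ on that hypersurface (whose $\mathscr{H}^{N-1}$-measure is known exactly), and it verifies directly that the perturbed segment lands inside $6B_Q$ and meets $2B_Q$ via Lemma~\ref{length-comp}---points you gesture at but do not spell out.
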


\begin{proof}
  Suppose otherwise.  Then there exists some $w \in S^{n-1}$ and $g \in z_Q(G \circleddash w)$ such that $J = g \cdot \R w \cap 3B_Q$, $J \cap B_Q \neq \emptyset$, and $\alpha_h^{(2)}\left(J; \frac{\epsilon}{16} \right) > \frac{\epsilon^{14}}{10^{13}} \Lip_h(\psi)^2$. We parameterize $J = g \cdot [a,b]w$.  Then there exist some $s,t \in [a,b]$ so that $t - s > \frac{\epsilon}{16} (b-a)$ and
  \begin{align}
    \partial_h^{(2)}(ge^{sw},ge^{tw}) > \frac{1}{2} \frac{\epsilon^{14}}{10^{13}} \Lip_h(\psi)^2. \label{M-big-partial}
  \end{align}
  Letting $u \in J \cap B_Q$, we see that
  \begin{align}
    \dcc(z_Q,g) \leq \dcc(z_Q,u) + \dcc(u,g) \leq 2 \dcc(z_Q,u) \leq 2 a_0 \ell(Q). \label{M-small-u}
  \end{align}

  For each $v \in S^{n-1}$, let $g_v \in z_Q(G \circleddash v)$ be such that there exists some $\lambda$ so that $g_v e^{\lambda v} = g$.  If $|v - w| \leq C_1\epsilon^{15r}$ in addition, where $C_1 > 0$ is some constant to be determined, then as $\dcc(z_Q,g) \leq 2a_0\ell(Q)$, we have by Euclidean geometry and the fact that $\pi$ is distance preserving on horizontal lines that
  \begin{align*}
    \dcc(g,g_v) = d_{\R^n}(\pi(g),\pi(z_Q(G \circ v))) = d_{\R^n}(\pi(z_Q^{-1}g),\pi(G \circ w)) \leq 2a_0C_1\epsilon^{15r} \ell(Q).
  \end{align*}
  Here, we used the fact that $z_Q^{-1}g$ lies $\pi(G \circ w)$ and $|\pi(z_Q^{-1}g)| \leq \dcc(z_Q,g) \leq 2a_0 \ell(Q)$.  We now fix such a $v \in S^{n-1}$.

  Now let $g' \in \Bcc(g_v,C_2\epsilon^{15r} \ell(Q)) \cap z_Q(G \circleddash v)$ for some $C_2 > 0$ to be determined later.  We have
  \begin{align*}
    \dcc(g,g') \leq \dcc(g,g_v) + \dcc(g_v,g') \leq (2a_0C_1 + C_2) \epsilon^{15r} \ell(Q).
  \end{align*}
  As $g \cdot [a,b]w = g \cdot \R w \cap 3B_Q$ and $g \cdot [a,b]w \cap B_Q \neq \emptyset$, by Lemma \ref{length-comp} there is some constant $C_3 > 0$ so that $\ell(Q) \leq C_3(b-a)$.  Thus , by selecting the previous $C_1$ and $C_2$ small enough, Lemma \ref{close-parallel-2} gives that
  \begin{align}
    \dcc(ge^{\lambda w},g' e^{\lambda v}) \leq \frac{1}{10^{16}} \epsilon^{15} \min \{(b-a),a_0\ell(Q)\}, \qquad \forall \lambda \in [a,b]. \label{M-close-lines}
  \end{align}
  Let $I = g' \cdot [a,b]v$.  By \eqref{M-close-lines} and the properties of $J$, we have that $I \subset 6B_Q$ and $I \cap 2B_Q \neq \emptyset$.  Given that $t-s > \frac{\epsilon}{16}(b-a)$ and the fact that $b-a$ is comparable to $\ell(Q)$, if we chose $\lambda$ small enough so that
  \begin{align*}
    \frac{\partial_h^{(2)}(s,t)}{30 \Lip_h(\psi)^2} (t-s) \overset{\eqref{M-big-partial}}{\geq} \frac{1}{10^{16}} \epsilon^{15} (b-a) \geq \lambda \epsilon^{15} \ell(Q) \geq \psi,
  \end{align*}
  then Lemma \ref{M-partial-move} (with its following remark) along with \eqref{M-close-lines} gives
  \begin{align}
    \partial_h^{(2)}(g'e^{sv}, g'e^{tv}) > \frac{1}{10} \frac{\epsilon^{14}}{10^{13}} \Lip_h(\psi)^2. \label{M-big-theta}
  \end{align}
  Now let $I' = g' \cdot \R v \cap 6B_Q$. By Lemma \ref{length-comp}, we have that there exists some constant $C_4 > 0$ such that
  \begin{align}
    |I'| \leq C_4|I| = C_4|J|. \label{M-containing-line}
  \end{align}
  By choosing a large enough $C_4$ depending only on $C_1$ and $C_2$, we can have that \eqref{M-containing-line} holds for all possible choices of $v$ and $g'$.  Note then that
  \begin{align*}
    \frac{t-s}{|I'|} \geq \frac{\epsilon}{16} \frac{|I|}{|I'|} \geq \frac{\epsilon}{16C_4}.
  \end{align*}
  Now, using Lemma \ref{M-partial-move} in the same way as we did in the proof of Lemma \ref{M-alpha-geodesic}, if we set $\nu := C_5(b-a) \epsilon^{15}$ for some small enough $C_5$ depending only on $G$, then $\nu \leq \frac{\epsilon}{64C_4} |I'|$ and we get that there exists some $C_6 > 0$ so that
  \begin{align}
    \alpha_h^{(2)}\left(g' \cdot \R v' \cap 6B_Q; \frac{\epsilon}{32C_4} \right) &= \left(1 - \frac{\epsilon}{32 C_4} \right)^2 \frac{1}{|I'|^2} \underset{\begin{smallmatrix} {a(I') \leq x < y \leq b(I'),} \\ {y-x > \frac{\epsilon}{32 C_4} |I'|} \end{smallmatrix}}{\iint} \partial_h^{(2)}(g'e^{xv},g'e^{yv}) ~dy ~dx \notag \\
    &\overset{\eqref{M-containing-line}}{\geq} \frac{1}{2C^2_4|J|^2} \int_{t- \nu}^t \int_s^{s + \nu} \partial_h^{(2)}(g'e^{xv},g'e^{yv}) ~dx ~dy \notag \\
    &\overset{\eqref{M-big-theta}}{\geq} C_6\epsilon^{44} \Lip_h(\psi)^2. \label{M-large-alpha}
  \end{align}
  We have just proven $\eqref{M-large-alpha}$ for all $v \in S^{n-1}$ so that $|v-w| \leq C_1 \epsilon^{15r}$ and $g' \in \Bcc(g_v,C_2\epsilon^{15r}\ell(Q)) \cap (G \circleddash v)$.  Using the fact that $\mathscr{H}^{N-1}(\Bcc(g_v,C_2\epsilon^{15r} \ell(Q)) \cap (G \circleddash v)) = (C_2\epsilon)^{15r(N-1)} \ell(Q)^{N-1}$, we get that there exists some $C_7 > 0$ depending on the previous constants so that
  \begin{align*}
    \alpha_h^{(2)}&\left(Q, \frac{\epsilon}{32 C_4} \right) \\
    &= \ell(Q)^{1-N} \int_{S^{n-1}} \int_{z_Q(G \circleddash v)} \chi_{\{x \cdot \R v \cap 2B_Q \neq \emptyset\}} \alpha_h^{(2)}\left(x \cdot \R v \cap 6B_Q;\frac{\epsilon}{16 C_4}\right) ~dx ~d\mu(v) \\
    &\geq \ell(Q)^{1-N} \int_{|v-w| \leq C_1\epsilon^{15r}} \int_{\Bcc(g_v,C_2\epsilon^{15r}\ell(Q)) \cap (G \circleddash v)} \chi_{\{x \cdot \R v \cap 2B_Q \neq \emptyset\}} \alpha_h^{(2)}\left(x \cdot \R v \cap 6B_Q; \frac{\epsilon}{16 C_4}\right) ~dx ~d\mu(v) \\
    &\overset{\eqref{M-large-alpha}}{\geq} C_7 \epsilon^{15r(N-1)} \epsilon^{15r(N-1)} \epsilon^{44} \Lip_h(\psi)^2.
  \end{align*}
  By choosing $\gamma = 32C_4$ and $\alpha_0 > 0$ large enough in the hypothesis of the lemma and using the fact that $\epsilon < 1/2$, we have that this is a contradiction.
\end{proof}

We now prove that, if all horizontal lines intersecting a ball are close enough to geodesic on a long interval, then on a smaller controlled scale, all lines have ``slopes'' that depend only on the direction $v \in S^{n-1}$ (\textit{i.e.} are left invariant).  We will actually prove a slightly more general theorem than is need right now (the presence of the $\chi$).  Proving this general form will be useful in the next two sections when we need to expand the ball on which we have almost geodesic behavior.  For now, one can just take $\chi = 1$.

\begin{lemma} \label{M-geodesic-close}
  Let $\epsilon \in (0,1)$, $v \in S^{n-1}$, $x \in G$, $\rho > 0$, $\chi \geq 1$, and $h : G \to (X,d_X)$ be $\psi$-LLD.  Suppose $\rho \geq 24 \chi \epsilon^{-1} \psi$.  There exists a constant $C \in (0,1)$ depending on $G$ and $\chi$ so that if every $g \in \Bcc(x, \rho)$ satisfies
  \begin{align}
    \sup_{-2\rho \leq s < t \leq 2\rho} \left| d_X(h(ge^{sv}),h(ge^{tv})) - \frac{t-s}{4\rho} d_X(h(ge^{-2\rho v}),h(ge^{2\rho v})) \right| \leq \frac{1}{2} C \epsilon^{r+1} \rho \Lip_h(\psi), \label{almost-geodesic}
  \end{align}
  then there exists some $L_v \leq \Lip_h(\psi)$ such that, for all $g \in \Bcc(x, C\chi \epsilon^r \rho)$ we have that
  \begin{align*}
    \sup_{-3C\epsilon^r\rho \leq s < t \leq 3C\epsilon^r\rho} | d_X(h(ge^{sv}),h(ge^{tv})) - (t-s) L_v | \leq C \epsilon^{r+1}\rho \Lip_h(\psi).
  \end{align*}
\end{lemma}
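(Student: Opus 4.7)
The plan is to take the reference slope
\[
  L_v := \frac{d_X(h(xe^{-2\rho v}),h(xe^{2\rho v}))}{4\rho}
\]
along the line through $x$; since $4\rho > \psi$, the $\psi$-LLD property immediately gives $L_v \leq \Lip_h(\psi)$. Along this one line the desired conclusion is essentially the hypothesis \eqref{almost-geodesic} itself. The bulk of the work is to propagate this definition to every nearby parallel line: I will show that for every $g \in \Bcc(x, C\chi\epsilon^r \rho)$ the analogous quantity
\[
  L(g) := \frac{d_X(h(ge^{-2\rho v}),h(ge^{2\rho v}))}{4\rho}
\]
differs from $L_v$ by at most $O(\epsilon \Lip_h(\psi))$. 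The conclusion on $s,t \in [-3C\epsilon^r\rho, 3C\epsilon^r\rho]$ then follows by applying \eqref{almost-geodesic} at $g$ (which lies in $\Bcc(x,\rho)$ provided $C \leq 1/\chi$) and invoking the triangle inequality.

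The geometric engine for the slope comparison is Lemma \ref{close-parallel-2}, applied with $u = v$ (so $|u-v|=0$), scale $2\rho$, and closeness parameter proportional to $C\chi\epsilon^r$. After passing between $d_\infty$ and $\dcc$ via their equivalence (and applying the lemma separately on $[0,2\rho]$ and on $[-2\rho,0]$ by flipping $v \mapsto -v$), one obtains
\[
  \sup_{t \in [-2\rho,2\rho]} \dcc(xe^{tv},ge^{tv}) \leq K_1 (C\chi)^{1/r} \epsilon \rho
\]
with $K_1$ depending only on $G$. Since the hypothesis $\rho \geq 24\chi\epsilon^{-1}\psi$ forces $\psi \leq \epsilon\rho/(24\chi)$, feeding this through the $\psi$-LLD property (taking the crude bound $\max\{\delta,\psi\}\leq \delta+\psi$) gives
\[
  d_X(h(xe^{tv}),h(ge^{tv})) \leq K_2(C,\chi)\, \epsilon \rho \Lip_h(\psi), \qquad t \in [-2\rho, 2\rho],
\]
with $K_2(C,\chi) \leq K_1 (C\chi)^{1/r} + 1/(24\chi)$. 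Evaluating at $t = \pm 2\rho$ and applying the triangle inequality to the definitions of $L(g)$ and $L_v$ yields the slope bound $|L(g) - L_v| \leq \tfrac{1}{2} K_2(C,\chi) \epsilon \Lip_h(\psi)$.

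Assembling the pieces, for $g \in \Bcc(x, C\chi\epsilon^r\rho)$ and $s,t \in [-3C\epsilon^r\rho, 3C\epsilon^r\rho] \subseteq [-2\rho,2\rho]$, the hypothesis at $g$ contributes $\tfrac{1}{2} C\epsilon^{r+1}\rho \Lip_h(\psi)$ while the slope discrepancy contributes $|t-s| \cdot |L(g) - L_v| \leq 3 C K_2(C,\chi)\epsilon^{r+1}\rho \Lip_h(\psi)$. The total is $(\tfrac{1}{2} + 3 K_2(C,\chi)) C\epsilon^{r+1}\rho \Lip_h(\psi)$, which is at most $C \epsilon^{r+1}\rho\Lip_h(\psi)$ provided $K_2(C,\chi) \leq 1/6$. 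Since $1/(24\chi) \leq 1/24$ for any $\chi \geq 1$, it suffices to choose $C$ small enough (depending on $G$ and $\chi$) that $K_1(C\chi)^{1/r}$ is at most $1/8$; this and the requirement $C\leq 1/\chi$ together determine the constant $C$.

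The main technical subtlety is the factor $(C\chi)^{1/r}$ produced by Lemma \ref{close-parallel-2}: in a Carnot group of step $r$, two parallel horizontal lines that begin at $\dcc$-distance $d$ can, over a scale $\lambda$, drift apart to distance roughly $d^{1/r}\lambda^{1-1/r}$, which is much larger than $d$. This noncommutative drift is precisely why the conclusion must be stated on the shrunken ball $\Bcc(x, C\chi\epsilon^r\rho)$ rather than on $\Bcc(x,\rho)$: a full factor of $\epsilon^r$ in the radius is needed to absorb the $r$-th root relationship and recover a slope discrepancy that is linear in $\epsilon$.
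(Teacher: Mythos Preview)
Your proof is correct and follows essentially the same strategy as the paper: define $L_v$ as the slope along the line through $x$, use Lemma~\ref{close-parallel-2} (with the $\epsilon^r$ shrinkage absorbing the $1/r$-root loss) to show that the endpoint images $h(ge^{\pm 2\rho v})$ stay close to $h(xe^{\pm 2\rho v})$ and hence $|L(g)-L_v|\lesssim \epsilon\Lip_h(\psi)$, and then combine this with the hypothesis~\eqref{almost-geodesic} at $g$. The only cosmetic difference is that the paper bounds the slope discrepancy via a short contradiction argument routed through the midpoint (using \eqref{almost-geodesic} at $s=0$, $t=2\rho$ and a triangle inequality through $h(0)$ and $h(g)$), whereas you bound it directly by the triangle inequality at both endpoints $t=\pm 2\rho$; your version is in fact slightly more streamlined.
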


\begin{proof}
  We may suppose without loss of generality that $x = 0$.  We choose $C$ to be small enough (while allowing ourselves to choose $C$ again even smaller) so that Lemma \ref{close-parallel-2} gives that
  \begin{align*}
    \dcc(e^{2 \rho v},ge^{2 \rho v}) \leq \frac{1}{13}  \epsilon \rho, \qquad \forall g \in \Bcc(0, C\chi\epsilon^r\rho).
  \end{align*}
  Using the fact that $\frac{1}{13}  \epsilon \rho \geq \psi$, we get that
  \begin{align}
    d_X(h(e^{2 \rho v}),h(ge^{2 \rho v})) \leq \frac{1}{13}  \epsilon \rho \Lip_h(\psi), \qquad \forall g \in \Bcc(0, C\chi\epsilon^r\rho). \label{sublinear}
  \end{align}
  Let $g \in \Bcc(0, C\chi\epsilon^r\rho)$ and suppose
  \begin{align*}
    d_X(h(e^{-2\rho v}),h(e^{2 \rho v})) - d_X(h(ge^{-2 \rho v}),h(ge^{2 \rho v})) > \frac{1}{3}  \rho \Lip_h(\psi).
  \end{align*}
  Then by \eqref{almost-geodesic} we have that
  \begin{align*}
    d_X(h(0),h(e^{2\rho v})) - d_X(h(g),h(ge^{2\rho v})) > \left( \frac{1}{6}  \epsilon - \frac{1}{2} C  \epsilon^{r+1} \right) \rho \Lip_h(\psi).
  \end{align*}
  We get by the triangle inequality
  \begin{align*}
    d_X(h(e^{2\rho v}),h(ge^{2\rho v})) &\geq d_X(h(e^{2\rho v}),h(0)) - d_X(h(g),h(ge^{2\rho v})) - d_X(h(0),h(g)) \\
    &> \left(\frac{1}{6} \epsilon - \frac{1}{2} C\epsilon^{r+1} - \frac{1}{24} \epsilon\right)  \rho \Lip_h(\psi) \geq \frac{1}{12} \epsilon\rho \Lip_h(\psi).
  \end{align*}
  In the second to last inequality, we used the fact that $\dcc(g,0) \leq C \chi\epsilon^r \rho \leq \frac{\epsilon}{24} \rho$ for sufficiently small $C$.  This is a contradiction of \eqref{sublinear}.  Similarly, we can prove $d_X(h(ge^{-2\rho v}),h(ge^{2\rho v})) - d_X(h(e^{-2\rho v}),h(e^{2\rho v})) \leq \frac{1}{3}  \epsilon \rho \Lip_h(\psi)$ and so we get for all $g \in \Bcc(0,C\chi\epsilon^r\rho)$ that
  \begin{align}
    \frac{1}{4} |d_X(h(ge^{-2\rho v}),h(ge^{2\rho v})) - d_X(h(e^{-2\rho v}),h(e^{2\rho v}))| \leq \frac{1}{12}  \epsilon\rho \Lip_h(\psi). \label{M-close-long-lines}
  \end{align}
  Thus, let $L_v = \frac{1}{4\rho} d_X(h(e^{-2\rho v}),h(e^{2\rho v}))$.  Letting $-3C\epsilon^r\rho \leq s < t \leq 3C \epsilon^r\rho$ and $g \in \Bcc(0,C\chi\epsilon^r \rho)$, we get
  \begin{align*}
    |d_X(h(ge^{sv})&,h(ge^{tv})) - (t-s) L_v| \\
    &\leq \left| d_X(h(ge^{sv}),h(ge^{tv})) - \frac{t-s}{4\rho} d_X(h(ge^{-2\rho v}), h(ge^{2\rho v})) \right| \\
    &\qquad + \frac{t-s}{4\rho} \left| d_X(h(ge^{-2\rho v}),h(ge^{2\rho v})) - d_X(h(e^{-2\rho v}),h(e^{2\rho v})) \right| \\
    &\overset{\eqref{almost-geodesic} \wedge \eqref{M-close-long-lines}}{\leq} \frac{1}{2} C  \epsilon^{r+1} \rho\Lip_h(\psi) + \frac{1}{2} C  \epsilon^{r+1} \rho\Lip_h(\psi) \\
    &\leq C  \epsilon^{r+1} \rho\Lip_h(\psi).
  \end{align*}
\end{proof}

\begin{lemma} \label{M-final}
  Let $\alpha_0,\gamma > 0$ be the constants from Lemma \ref{M-sup-bound} and $C \in (0,1)$ be the constant from Lemma \ref{M-geodesic-close} associated to $\chi = 1$.  There exist $\lambda_0 > 0$ depending only on $G$ so that if $\ell(Q) \geq \lambda_0 \epsilon^{15(r+1)} \psi$ and
  \begin{align*}
    \alpha_h^{(2)}\left(Q; \frac{1}{4\gamma} C a_0\epsilon^{r+1} \right) \leq \left(\frac{1}{4} C a_0\epsilon^{r+1} \right)^{\alpha_0} \Lip_h(\psi)^2,
  \end{align*}
  then there is a function $w : S^{n-1} \to \R^+$ so that, for all $x \in C \epsilon^r B_Q$ and $v \in S^{n-1}$, we have
  \begin{align*}
    | d_X(h(xe^{sv}),h(xe^{tv})) - |t-s| w(v) | \leq \epsilon \cdot C \epsilon^r a_0 \ell(Q) \Lip_h(\psi), \qquad \forall s,t \in [-3C \epsilon^r a_0\ell(Q),3C \epsilon^r a_0\ell(Q)].
  \end{align*}
\end{lemma}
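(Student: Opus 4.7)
The plan is to chain together the previous three lemmas of this section. Set $\epsilon' := \tfrac{1}{4} C a_0 \epsilon^{r+1}$, which is exactly the parameter so that the hypothesis on $\alpha_h^{(2)}(Q;\epsilon'/\gamma)$ coincides with Lemma~\ref{M-sup-bound} applied with parameter $\epsilon'$. By choosing $\lambda_0$ large enough, the size condition $\ell(Q) \geq \lambda_0 \epsilon^{-15(r+1)}\psi$ absorbs the requirement $\ell(Q) \geq \lambda (\epsilon')^{-15}\psi$ of Lemma~\ref{M-sup-bound}. Its conclusion gives that for every $v \in S^{n-1}$ and every $x \in z_Q(G \circleddash v)$ with $x \cdot \R v \cap B_Q \neq \emptyset$,
\[
\alpha_h^{(2)}\bigl(x \cdot \R v \cap 3B_Q;\, \epsilon'/16\bigr) \leq \tfrac{(\epsilon')^{14}}{10^{13}}\, \Lip_h(\psi)^2.
\]

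Next, I would parametrize each line $x \cdot \R v \cap 3B_Q$ as an interval $[a_{x,v},b_{x,v}] \subset \R$ of length comparable to $a_0 \ell(Q)$ by Lemma~\ref{length-comp}. Taking $\lambda_0$ still larger makes $(\epsilon')^5(b-a)/40000 > \psi$, so Lemma~\ref{M-alpha-geodesic} applies on every such line and yields approximate affinity:
\[
\bigl| d_X(h(s),h(t)) - |t-s|\,\tfrac{d_X(h(a_{x,v}),h(b_{x,v}))}{b_{x,v}-a_{x,v}} \bigr| \leq \epsilon'(b_{x,v}-a_{x,v})\Lip_h(\psi),\qquad s,t\in[a_{x,v},b_{x,v}].
\]

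Finally, I would feed this into Lemma~\ref{M-geodesic-close} with $\chi=1$, base point $z_Q$, and scale $\rho := a_0\ell(Q)$. For any $g \in \Bcc(z_Q,\rho) = B_Q$, the arc $g\cdot[-2\rho,2\rho]v$ stays in $3B_Q$ by the triangle inequality for $\dcc$, so the previous approximate-affinity estimate applies on it. Replacing the global slope $d_X(h(a_{x,v}),h(b_{x,v}))/(b_{x,v}-a_{x,v})$ by the local slope $d_X(h(ge^{-2\rho v}),h(ge^{2\rho v}))/(4\rho)$ costs at most a factor of $2$ and a uniform multiple of $(b_{x,v}-a_{x,v})/\rho$ in the error, so calibrating the constants $\gamma$ and $a_0$ built into $\epsilon'$ gives hypothesis \eqref{almost-geodesic} with the required bound $\tfrac{1}{2}C\epsilon^{r+1}\rho\Lip_h(\psi)$. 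The LLD threshold $\rho \geq 24\epsilon^{-1}\psi$ is again subsumed by the standing size condition. Lemma~\ref{M-geodesic-close} then produces, for each $v$, a number $L_v \leq \Lip_h(\psi)$ such that on $\Bcc(z_Q, C\epsilon^r\rho) = C\epsilon^r B_Q$ and for $|s|,|t| \leq 3C\epsilon^r\rho = 3C\epsilon^r a_0\ell(Q)$,
\[
\bigl| d_X(h(ge^{sv}),h(ge^{tv})) - (t-s)L_v \bigr| \leq C\epsilon^{r+1}\rho\Lip_h(\psi) = \epsilon\cdot C\epsilon^r a_0\ell(Q)\Lip_h(\psi).
\]
Setting $w(v) := L_v$ is the desired function.

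The main obstacle is bookkeeping of the constants rather than new ideas: one must pick $\epsilon' = \epsilon'(\epsilon,r,G)$ of the right polynomial form so that simultaneously (i) the hypothesis matches Lemma~\ref{M-sup-bound}, (ii) the resulting error $\epsilon'(b_{x,v}-a_{x,v})\Lip_h(\psi)$ from Lemma~\ref{M-alpha-geodesic} is at most $\tfrac{1}{2}C\epsilon^{r+1}\rho\Lip_h(\psi)$ after compensating for the length discrepancy, and (iii) the single size hypothesis $\ell(Q) \geq \lambda_0\epsilon^{-15(r+1)}\psi$ dominates all the LLD thresholds $(\epsilon')^{-15}\psi$, $(\epsilon')^{-5}\psi$, $24\epsilon^{-1}\psi$ that appear along the way. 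Once these constants are aligned, the three lemmas compose to give precisely the stated estimate.
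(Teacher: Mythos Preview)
Your proposal is correct and follows essentially the same route as the paper: apply Lemma~\ref{M-sup-bound} with parameter $\epsilon' = \tfrac14 C a_0 \epsilon^{r+1}$, then Lemma~\ref{M-alpha-geodesic} on each line, then Lemma~\ref{M-geodesic-close} with $\chi=1$ and $\rho=a_0\ell(Q)$, absorbing all LLD thresholds into a single choice of $\lambda_0$. The paper handles the slope-replacement step exactly as you anticipate, by noting that for $x\in B_Q$ the segment $x\cdot[-2\rho,2\rho]v$ lies inside $x\cdot\R v\cap 3B_Q$ and using the estimate from Lemma~\ref{M-alpha-geodesic} twice, which is where the factor of $2$ you mention appears.
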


\begin{proof}
  Let $R = a_0\ell(Q)$, the radius of $B_Q$.  By choosing $\lambda_0$ large enough, we get by applying Lemma \ref{M-sup-bound} to the hypothesis that
  \begin{multline*}
    \sup \left\{ \alpha^{(p)}_h\left(x \cdot \R v \cap 3B_Q; \frac{1}{64} C a_0\epsilon^{r+1} \right) : v \in S^{n-1}, x \in G \circleddash v, x \cdot \R v \cap B_Q \neq \emptyset \right\} \\
    \leq \frac{1}{10^{13}} \left( \frac{1}{4} C a_0\epsilon^{r+1} \right)^{14} \Lip_h(\psi)^2.
  \end{multline*}
  Then, by Lemma \ref{M-alpha-geodesic}, choosing $\lambda_0$ sufficiently large again, we get that for all $v \in S^{n-1}$ and $x \in G \circleddash v$ where $x \cdot \R v \cap B_Q \neq \emptyset$ that
  \begin{align}
    \sup_{a \leq s \leq t \leq b} \left| d_X(h(xe^{tv}),h(xe^{sv})) - \frac{t-s}{b-a} d_X(h(xe^{bv}),h(xe^{av})) \right| \leq \frac{1}{4} C \epsilon^{r+1} R \Lip_h(\psi), \label{eq:M-initial-close-geodesic}
  \end{align}
  where $x \cdot [a,b]v = x \cdot \R v \cap 3B_Q$.  Note that for any $x \in B_Q$, we have that $x \cdot [-2R,2R] v \subset y \cdot \R v \cap 3B_Q$ where $y \in G \circleddash v$.  Thus, for all $v \in S^{n-1}$ and $x \in B_Q$, we get that
  \begin{align*}
    \sup_{-2R \leq s \leq t \leq 2R} \left|d_X(h(xe^{tv}),h(xe^{sv})) - \frac{t-s}{4R} d_X(h(xe^{-2Rv}),h(xe^{2Rv})) \right| \leq \frac{1}{2} C \epsilon^{r+1} R \Lip_h(\psi).
  \end{align*}
  Here, we lost a factor of $\frac{1}{2}$ from using \eqref{eq:M-initial-close-geodesic} twice.  We then get from Lemma \ref{M-geodesic-close} that for each $v \in S^{n-1}$ there exists a $w(v) \in \R$ so that for all $x \in C\epsilon^r B_Q$ we have
  \begin{align*}
    \sup_{-3C \epsilon^rR \leq s \leq t \leq 3C \epsilon^rR} \left| d_X(h(xe^{tv}),h(xe^{sv})) - (t-s) w(v) \right| \leq C \epsilon^{r+1} R \Lip_h(\psi).
  \end{align*}
\end{proof}

\begin{proof}[Proof of Theorem \ref{M-UAAP}]
  Lemma \ref{M-final} shows that there exists some $\rho,\lambda_0,\zeta,\alpha_1 > 0$ so that if $\epsilon \in (0,1/2)$, $\qd_h^M(Q,\zeta \epsilon^r) > \epsilon \Lip_h(\psi)$ and $\ell(Q) \geq \lambda_0\epsilon^{-15(r+1)} \psi$, then $\alpha_h^{(p)}\left(Q; \rho \epsilon^{r+1} \right) > \epsilon^{-\alpha_1} \Lip_h(\psi)$.  Thus, if $\ell(S) \geq \lambda \epsilon^{-15(r+1)} \tau^{-m} \psi$ for some $\lambda \geq \lambda_0$ to be determined, then
  \begin{align*}
    \sum_{k=0}^m \sum_{Q \in \Delta_k(S)} \left\{|Q| : \qd_h^M(Q,\zeta \epsilon^r) > \epsilon \Lip_h(\psi) \right\} &\leq \epsilon^{-\alpha_1} \Lip_h(\psi)^{-2} \sum_{k=0}^m \sum_{Q \in \Delta_k(S)} \alpha_h^{(p)}\left( Q; \rho\epsilon^{r+1}\right)|Q| \\
    &\leq \epsilon^{-\alpha} |S|.
  \end{align*}
  For the last inequality, we chose a sufficiently large $\alpha > 0$ and used Proposition \ref{carleson-cubes} for the last inequality and the fact that $\Lip_h(C \rho \epsilon^{r+1} \tau^m \ell(S)) \leq \Lip_h(\psi)$ if we choose $\lambda$ large enough.  Here, $C$ is the constant inside the $\Lip$ from Lemma \ref{carleson-cubes}.
\end{proof}


\section{Coarse differentiation for maps into $p$-convex spaces}
We continue with a generalization of a theorem of \cite{LN}.  The arguments in this section are similar to the arguments in the following section, but are conceptually easier.  We can treat this section as a warm up.  Recall that a Banach space $(Y,\|\cdot\|)$ is uniformly convex if there exists a $p \geq 2$ and $K \geq 1$ such that
\begin{align}
  \forall x,y \in Y, \qquad \frac{\|x-y\|^p + \|y-z\|^p}{2} \geq \left\| \frac{x-z}{2} \right\|^p + \left\| \frac{x+z-2y}{2K} \right\|^p. \label{UC-eqn}
\end{align}

Given $Q \in \Delta$ and $\eta > 0$, define
\begin{align*}
  \qd_h^{UC}(Q,\eta) := \frac{1}{\eta a_0\ell(Q)} \inf \sup \{\|h(x) - T(x) - v\| : x \in \eta B_Q \}
\end{align*}
where the infimum is taken over all homomorphisms $T : G \to Y$ and $v \in Y$.  In this section, we will prove the following theorem.

\begin{theorem} \label{UC-UAAP}
  There exist $\alpha,\zeta,\lambda > 0$ depending only on $G$ and $Y$ so that if $\epsilon \in (0,1/2)$, $m \in \N$, $h : G \to Y$ is $\psi$-LLD, and $S \in \Delta$ so that
  \begin{align*}
    \ell(S) \geq \lambda \epsilon^{-(r+1)(3p+1)} \tau^{-m} \psi,
  \end{align*}
  then
  \begin{align*}
    \sum_{k=0}^m \sum_{Q \in \Delta_k(S)} \left\{|Q| : \qd_h^{UC}(Q,\zeta \epsilon^r) > \epsilon \Lip_h(\psi) \right\} \leq \epsilon^{-\alpha} |S|.
  \end{align*}
\end{theorem}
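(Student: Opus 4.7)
My plan is to follow the four-stage architecture of the proof of Theorem~\ref{M-UAAP} in Section~\ref{M-coarse-diff}, replacing the metric numerical input (Lemma~\ref{M-numerical}) by the $p$-uniform convexity inequality \eqref{p-convexity}, and upgrading the fibrewise conclusion from ``constant-speed along horizontal lines'' to ``genuinely affine along horizontal lines''. The Carleson packing of Proposition~\ref{carleson-cubes}, together with the horizontal-perturbation Lemmas~\ref{close-parallel-2} and~\ref{horizontal-lines-bound}, applies verbatim since neither depends on the target, so only the fibrewise analysis and the final assembly step require real modification.

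For the fibrewise step, substituting $x = h(a)-h(m)$, $y = h(b)-h(m)$ with $m=(a+b)/2$ into \eqref{p-convexity} produces a midpoint inequality bounding $\|h(m)-(h(a)+h(b))/2\|^p$ by a combination of $\partial_h^{(p)}(a,b)(b-a)^p$ and $(1-K^{-p})\|h(a)-h(b)\|^p$. The chord term is absorbed iteratively via a telescoping down a dyadic tree, in the spirit of Lemma~\ref{M-energy}, but now with $\ell_p$-summable rather than $\ell_\infty$-accumulated errors (this is a classical Pisier-type martingale argument). The output is the Banach analogue of Lemma~\ref{M-alpha-geodesic}: if $\alpha_h^{(p)}([a,b];\eta)$ is polynomially small in $\eta$ with an explicit exponent depending on $p$ and $K$, then $h|_{[a,b]}$ is $\epsilon(b-a)\Lip_h(\psi)$-close in $Y$ to an honest affine map $t\mapsto v_0+tw$. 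The line-to-cube promotion is then a direct transcription of Lemmas~\ref{M-sup-bound} and~\ref{M-geodesic-close}, whose proofs invoke only Lemma~\ref{M-partial-move}, Lemma~\ref{close-parallel-2}, and triangle inequalities, none of them target-specific; this produces, on a concentric subball of radius $\zeta\epsilon^r a_0\ell(Q)$, a slope $w(v)\in Y$ for each $v\in S^{n-1}$ satisfying $\|h(ge^{tv})-h(g)-tw(v)\| \le \epsilon^{r+1}\zeta a_0\ell(Q)\Lip_h(\psi)$ uniformly in $g$ and in $|t|\lesssim\epsilon^r\ell(Q)$.

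The genuinely new step, and the main obstacle, is to promote these per-direction affine controls into approximation by a single Lipschitz group homomorphism $T:G\to Y$. Because $Y$ is abelian, every such $T$ factors as $T=L\circ\pi$, with $L:\V_1\to Y$ linear and $\pi:G\to\V_1$ the abelianization projection, so it suffices to show approximate linearity of $v\mapsto w(v)$ on $\V_1$. I would establish this by comparing, on the good subball, the two horizontal polygons $g\mapsto ge^{tu}e^{tv}$ and $g\mapsto ge^{t(u+v)}$: by BCH they terminate at points whose CC-distance is $O(t^2)$, so the $\psi$-LLD hypothesis forces $\|w(u+v)-w(u)-w(v)\|$ to be small once $t$ is chosen above $\psi$ but well below $\ell(Q)$. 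This quadratic BCH loss, iterated through the $r$ strata of $G$ via Chow's theorem and Lemma~\ref{close-parallel}, is exactly what forces the exponent $(r+1)(3p+1)$ in the size hypothesis: the factor $(r+1)$ absorbs the Chow path length $M_G$ and the $r$-fold commutator bounds, while the factor $3p+1$ absorbs the $K$-dependent losses incurred in extracting the affine approximation from the $\alpha^{(p)}$-average. Given approximate linearity of $w$, I extend to a linear $L:\V_1\to Y$, set $T=L\circ\pi$, and compare $h(x)$ with $h(z_Q)+T(z_Q^{-1}x)$ by walking from $z_Q$ to $x$ along an $M_G$-step Chow horizontal polygon, accumulating at most $M_G$ copies of the per-segment error. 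This yields $\qd_h^{UC}(Q,\zeta\epsilon^r)\le\epsilon\Lip_h(\psi)$ whenever $\alpha_h^{(p)}(Q;\cdot)$ is polynomially small, and summing over $Q\in\Delta_k(S)$ via Proposition~\ref{carleson-cubes} produces the Carleson packing bound stated in Theorem~\ref{UC-UAAP} exactly as in the endgame of Theorem~\ref{M-UAAP}.
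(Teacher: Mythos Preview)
Your overall architecture matches the paper's, but the step you flag as ``the genuinely new step, and the main obstacle'' --- establishing approximate linearity of $v \mapsto w(v)$ --- is both unnecessary and incorrectly argued.

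It is unnecessary because $Y$ is abelian: once you have slopes $w(v_1), \ldots, w(v_n)$ for a fixed orthonormal basis of $\V_1$, the assignment $e^{tv_i} \mapsto tw(v_i)$ already extends to a homomorphism $T: G \to Y$ with no relations to check (every homomorphism into an abelian group factors through $\pi$, and any linear map $\V_1 \to Y$ defines one). Since Chow's theorem writes an arbitrary $g \in \zeta\epsilon^r B_Q$ as a product $e^{\lambda_1 v_{i_1}} \cdots e^{\lambda_{M_G} v_{i_{M_G}}}$ in \emph{basis} directions, your final ``walk along a Chow polygon'' only ever invokes the per-direction estimate for the $v_i$, never for a general $v \in S^{n-1}$. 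This is precisely what Lemma~\ref{UC-final} does: it applies Lemma~\ref{UC-affine-close} with $\chi = M_G$ so that the good subball has room for the full Chow path, defines $T$ on the basis by \eqref{UC-affine-def}, and telescopes $\|h(g^{(j)}) - T(g^{(j)})\|$ along the partial products $g^{(\ell)}$.

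Your argument for approximate linearity is also wrong as stated: the endpoints $ge^{tu}e^{tv}$ and $ge^{t(u+v)}$ are at CC-distance of order $t$, not $t^2$. By BCH the element $e^{-t(u+v)}e^{tu}e^{tv}$ has vanishing horizontal part and second-layer part $\frac{t^2}{2}[u,v]$, whose homogeneous norm (say $N_\infty$) is $\asymp t\,|[u,v]|^{1/2}$. Hence the $\psi$-LLD bound gives only $\|h(ge^{tu}e^{tv}) - h(ge^{t(u+v)})\| \lesssim t\,\Lip_h(\psi)$, which is the same order as the quantity $t\,\|w(u+v)-w(u)-w(v)\|$ you want to control and therefore yields nothing.

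A smaller point on the fibrewise step: your substitution into \eqref{p-convexity} leaves a residual chord term $(1-K^{-p})\|(h(a)-h(b))/2\|^p$ that you then propose to reabsorb by telescoping. The paper avoids this entirely by using the equivalent three-point form \eqref{UC-eqn} with $x=h(u)$, $y=h\bigl(\tfrac{u+v}{2}\bigr)$, $z=h(v)$, which gives $\Theta_h(u,v)^p \le (K/2)^p\,\partial_h^{(p)}(u,v)$ cleanly (see \eqref{partial-midpoint}), and then invokes the iterated-midpoint Lemma~\ref{UC-energy} from \cite{LN}.
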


Given a function $h : \R \to Y$, if we plug in $h(u)$, $h\left( \frac{u+v}{2} \right)$, and $h(v)$ for $x,y,z$ respectively, we have that \eqref{UC-eqn} gives that
\begin{align}
  \frac{|v-u|^p}{2^p} \partial_h^{(p)}(u,v) &= \frac{1}{2} \left( \left\| h(u) - h\left( \frac{u+v}{2} \right) \right\|^p + \left\| h\left( \frac{u+v}{2} \right) - h(v) \right\|^p \right) - \left\| \frac{h(u)-h(v)}{2} \right\|^p \notag \\
  &\geq \frac{1}{K^p} \left\| \frac{h(u)+h(v)}{2} - h\left( \frac{u+v}{2} \right) \right\|^p. \label{partial-midpoint}
\end{align}
We define
\begin{align*}
  \Theta_h(x,y) = \frac{1}{|x-y|} \left\| \frac{h(x) + h(y)}{2} - h\left( \frac{x+y}{2} \right) \right\|.
\end{align*}
From \eqref{partial-midpoint}, we get that
\begin{align*}
  \Theta_h(x,y)^p \leq \frac{K^p}{2^p} \partial_h^{(p)}(x,y).
\end{align*}
We can then similarly define analogues to $\alpha_h^{(p)}$.  For $[a,b] \subset \R$, we set
\begin{align*}
  \beta_h^{(p)}([a,b];\epsilon) &= (1-\epsilon)^2 (b-a)^{-2} \underset{\begin{smallmatrix} {a \leq x < y \leq b,} \\ {y - x > \epsilon(b-a)} \end{smallmatrix}}{\iint} \Theta_h(x,y)^p dy~dx,
\end{align*}
and for $Q \in \Delta$ we set
\begin{align*}
  \beta_h^{(p)}(Q;\epsilon) &= \ell(Q)^{N-1} \int_{S^{n-1}} \int_{z_Q \cdot (G \circleddash v)} \chi_{\{x \cdot \R v \cap 2B_Q \neq \emptyset\}} \beta_h^{(p)}(x \cdot \R v \cap 6B_Q;\epsilon) ~dx ~d\mu(v).
\end{align*}
We also have that
\begin{align}
  \beta_h^{(p)} \leq \frac{K^p}{2^p}\alpha_h^{(p)}. \label{UC-alpha-beta}
\end{align}

We prove the following analogue of Lemma \ref{M-partial-move}.

\begin{lemma} \label{UC-theta-move}
  Let $h : I \to (Y,\|\cdot\|)$ be $\psi$-LLD for some (possibly infinite) interval $I \subseteq \R$, and let $c,d \in I$ so that $\frac{d-c}{4} \geq \psi$ and
  \begin{align}
    \frac{\Theta_h(c,d)}{4 \Lip_h(\psi)} (d-c) =: \rho \geq \psi. \label{UC-large-separation}
  \end{align}
  If we choose $s,t \in I$ so that $|s - c| \leq \rho$ and $|t- d| \leq \rho$, then $\Theta_h(s,t) > \frac{1}{3} \Theta_h(c,d)$.
\end{lemma}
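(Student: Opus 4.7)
The plan is to mimic the proof of Lemma \ref{M-partial-move}, swapping the $L^2$-type energy quantity $\partial_h^{(2)}$ for the midpoint displacement $\Theta_h$. Since $\Theta_h$ is linear in the vector $\frac{h(x)+h(y)}{2} - h\left(\frac{x+y}{2}\right)$, a direct triangle inequality applied to the three points $h(s),h(t),h\left(\frac{s+t}{2}\right)$ compared against $h(c),h(d),h\left(\frac{c+d}{2}\right)$ will be cleaner than the squared computation in the metric-space case.

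First I will observe the basic bound $\Theta_h(c,d)\leq \tfrac{1}{2}\Lip_h(\psi)$, which follows from the triangle inequality together with the fact that $\tfrac{d-c}{2}\geq 2\psi\geq\psi$, so that $\Lip_h(\psi)$ controls each of the two half-segment differences. This forces $\rho\leq \tfrac{1}{8}(d-c)$ and therefore $|t-s|\leq (d-c)+2\rho\leq \tfrac{5}{4}(d-c)$. Next, using $|s-c|\leq\rho$, $|t-d|\leq\rho$, and the derived bound $\left|\tfrac{s+t}{2}-\tfrac{c+d}{2}\right|\leq\rho$, together with the hypothesis $\rho\geq\psi$, the $\psi$-LLD property yields uniformly
\begin{align*}
  \max\left\{\|h(s)-h(c)\|,\ \|h(t)-h(d)\|,\ \left\|h\!\left(\tfrac{s+t}{2}\right)-h\!\left(\tfrac{c+d}{2}\right)\right\|\right\}\leq \rho\,\Lip_h(\psi),
\end{align*}
where in the regime $|\cdot|\leq\psi$ one invokes the absolute bound $\Lip_h(\psi)\cdot\psi\leq\rho\,\Lip_h(\psi)$ built into the definition of $\psi$-LLD.

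Then I will apply the triangle inequality to the vector identity
\begin{align*}
  \tfrac{h(s)+h(t)}{2}-h\!\left(\tfrac{s+t}{2}\right) = \tfrac{h(c)+h(d)}{2}-h\!\left(\tfrac{c+d}{2}\right) + \tfrac{h(s)-h(c)}{2} + \tfrac{h(t)-h(d)}{2} + \left(h\!\left(\tfrac{c+d}{2}\right)-h\!\left(\tfrac{s+t}{2}\right)\right),
\end{align*}
which gives
\begin{align*}
  \left\|\tfrac{h(s)+h(t)}{2}-h\!\left(\tfrac{s+t}{2}\right)\right\| \geq \Theta_h(c,d)(d-c) - 2\rho\,\Lip_h(\psi).
\end{align*}
The choice $\rho=\tfrac{\Theta_h(c,d)(d-c)}{4\Lip_h(\psi)}$ in \eqref{UC-large-separation} makes the error equal to $\tfrac{1}{2}\Theta_h(c,d)(d-c)$, leaving at least $\tfrac{1}{2}\Theta_h(c,d)(d-c)$. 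Dividing by the upper bound $|t-s|\leq\tfrac{5}{4}(d-c)$ produces $\Theta_h(s,t)\geq \tfrac{2}{5}\Theta_h(c,d)>\tfrac{1}{3}\Theta_h(c,d)$, which is the claim.

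There is really no serious obstacle here: the only subtlety is the same one that appears in Lemma \ref{M-partial-move}, namely that when the perturbations $|s-c|,|t-d|$ fall below the Lipschitz scale $\psi$ one cannot invoke the linear bound $|s-c|\Lip_h(\psi)$, and must instead use the ambient microscopic bound $\Lip_h(\psi)\cdot\psi$ built into the $\psi$-LLD definition. The hypothesis $\rho\geq\psi$ is precisely what makes both regimes give the same estimate $\rho\,\Lip_h(\psi)$, so no splitting into cases is needed once this is noted.
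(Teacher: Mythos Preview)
Your proof is correct and follows essentially the same route as the paper's: both bound $\Theta_h(c,d)$ by $\Lip_h(\psi)$ to control $\rho$ relative to $d-c$, then apply the triangle inequality to the midpoint-displacement vector to absorb the three perturbation errors of size $\rho\,\Lip_h(\psi)$, and finally divide by the upper bound on $|t-s|$. Your preliminary bound $\Theta_h(c,d)\leq\tfrac12\Lip_h(\psi)$ is slightly sharper than the paper's $\Theta_h(c,d)\leq\Lip_h(\psi)$, yielding $\tfrac25$ instead of $\tfrac13$, but the argument is otherwise identical.
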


\begin{proof}
  As $d-c \geq 4\psi$, we get that $\Theta_h(c,d) \leq \Lip_h(\psi)$ and so $\rho \leq \frac{1}{4} (d-c)$.  This further gives us that $t-s \leq \frac{3}{2}(d-c)$.  The proof is a direct computation:
  \begin{align*}
    \Theta_h(s,t) &= \frac{1}{t-s} \left\| \frac{h(s) + h(t)}{2} - h\left( \frac{s+t}{2} \right) \right\| \\
    &\geq \frac{1}{t-s} \left( \left\| \frac{h(c) + h(d)}{2} - h\left( \frac{c+d}{2} \right) \right\| - 2 \Lip_h(\psi) \rho \right) \\
    &= \frac{1}{t-s} \left( (d-c) \Theta_h(c,d) - 2 \Lip_h(\psi) \rho \right) \\
    &\overset{\eqref{UC-large-separation}}{\geq} \frac{1}{2} \frac{d-c}{t-s} \Theta_h(c,d) \\
    &\geq \frac{1}{3} \Theta_h(c,d).
  \end{align*}
\end{proof}

By reading the proof of Lemma 2.1 of \cite{LN} (referring to equation (17) in particular), we get the following lemma

\begin{lemma}\label{UC-energy}
  Fix $p\in [2,\infty)$. Suppose that $(Y,\|\cdot\|_Y)$ is a Banach space satisfying the uniform convexity condition \eqref{UC-eqn}. Fix $a,b\in \R$ with $a<b$ and $h:[a,b]\to Y$. Then
  \begin{multline}\label{eq:lower E}
    \sum_{k=0}^{m-1} 2^{-k} \max_{I \in \D^k([a,b])} \Theta_h(a(I),b(I))^p \ge \frac{1}{(4K)^p}\max_{k\in \{0,\ldots,2^m\}}
    \frac{\left\|h\left(a+\frac{k}{2^m}(b-a)\right)-L_h^{a,b}\left(a+\frac{k}{2^m}(b-a)\right)\right\|_Y^p}
    {(b-a)^p},
  \end{multline}
  where $K\in (0,\infty)$ is the constant appearing in $\eqref{UC-eqn}$ and $L_h^{a,b}:[a,b]\to Y$ is the linear interpolation of the values of $h$ on the endpoints of the interval $[a,b]$, \textit{i.e.},
  \begin{equation}\label{eq:def linear interpolation}
    \forall t\in \R,\quad L_h^{a,b}(t) := \frac{t-a}{b-a}h(b)+\frac{b-t}{b-a}h(a).
  \end{equation}
\end{lemma}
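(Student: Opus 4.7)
The plan is to telescope across dyadic scales: introduce the successive dyadic piecewise-linear interpolants of $h$, bound each single-scale increment by the midpoint displacement $\Theta_h$ at that scale, and then convert the resulting $\ell^1$-type sum into the $\ell^p$ bound \eqref{eq:lower E} via one application of Jensen's inequality.

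For each $k \in \{0,1,\ldots,m\}$ I would introduce the piecewise-linear interpolant $L^k \colon [a,b] \to Y$ defined as the unique continuous function that is affine on every $I \in \D^k([a,b])$ and agrees with $h$ at each dyadic point $t_j^k := a + j\,2^{-k}(b-a)$. Then $L^0 = L_h^{a,b}$ is the stated global affine interpolant, while $L^m$ coincides with $h$ at every point of the form $t_j^m$ for $j \in \{0,\dots,2^m\}$.

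The key single-scale estimate is immediate from the definition of $\Theta_h$. Fix $I \in \D^k([a,b])$ with endpoints $u,v$ and midpoint $w := (u+v)/2$. Affineness of $L^k$ on $I$ forces $L^k(w) = (h(u)+h(v))/2$, whereas $L^{k+1}(w) = h(w)$. Hence $L^{k+1} - L^k$ is a tent on $I$: it vanishes at $u,v$, takes the value $h(w) - (h(u)+h(v))/2$ at $w$, and is affine on each half. The norm of that midpoint vector is $|I|\,\Theta_h(u,v)$ by definition, and since the tent's sup-norm on $I$ equals its height, one concludes
\begin{equation*}
\sup_{t \in [a,b]} \|L^{k+1}(t) - L^k(t)\|_Y \leq 2^{-k}(b-a) \max_{I \in \D^k([a,b])} \Theta_h(a(I),b(I)).
\end{equation*}

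To finish, I would telescope and apply Jensen. For any $t = t_j^m$ we have $L^m(t) = h(t)$ and $L^0(t) = L_h^{a,b}(t)$, so the triangle inequality yields
\begin{equation*}
\|h(t) - L_h^{a,b}(t)\|_Y \leq (b-a)\sum_{k=0}^{m-1} 2^{-k} \max_{I \in \D^k([a,b])} \Theta_h(a(I),b(I)).
\end{equation*}
Jensen's inequality applied to the convex map $x \mapsto x^p$ with weights $w_k := 2^{-k}$ (of total mass at most $2$) bounds the $p$-th power of the right-hand side by $2^{p-1}(b-a)^p \sum_{k=0}^{m-1} 2^{-k} \max_I \Theta_h^p$. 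Since $K \geq 1$ and $p \geq 2$ give $(4K)^p \geq 2^{p-1}$, dividing by $(b-a)^p$ recovers exactly \eqref{eq:lower E}. There is no substantive obstacle here; the proof is the standard dyadic telescoping of \cite{LN} followed by one Jensen step. It is worth noting that $p$-uniform convexity of $Y$ is not actually invoked in this particular lemma — the factor $K$ is generous, present only to match the form in which the estimate will subsequently be combined with the inequality $\Theta_h^p \leq (K/2)^p \partial_h^{(p)}$ from \eqref{partial-midpoint}, where uniform convexity genuinely enters and makes $\Theta_h^p$ amenable to the Carleson packing of Proposition \ref{carleson-cubes}.
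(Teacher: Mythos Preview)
Your proof is correct and is exactly the standard dyadic telescoping argument that the paper defers to (it cites Lemma~2.1 and equation~(17) of \cite{LN} without reproducing the argument). Your observation that uniform convexity plays no role in this particular lemma --- the bound holds in any Banach space with the generous constant $(4K)^p \geq 2^{p-1}$ --- is accurate and worth keeping.
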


We can prove a similar bound using $\beta_h^{(p)}(I;\epsilon)$.  The proof is the same as that of Lemma \ref{M-alpha-geodesic}.  The next few lemmas will also follow from superficial alterations of the proofs of their analogues in Section \ref{M-coarse-diff}.  Thus, we will only provide the full proof for this next lemma as an example, and refer the reader to the previous proofs for the others.

\begin{lemma} \label{UC-beta-affine}
  Suppose $Y$ is a Banach space satisfying the uniform convexity condition $\eqref{UC-eqn}$.  Let $h : [a,b] \to Y$ be $\psi$-LLD.  If $\frac{1}{512K} \epsilon^2 (b-a) \geq \psi$ and $\beta_h^{(p)}\left([a,b]; \frac{\epsilon}{16}\right) \leq \left(\frac{\epsilon}{200K}\right)^{3p} \Lip_h(\psi)^p$ then
  \begin{align*}
    \sup_{t \in [a,b]} \left\|h(t) - L_h^{a,b}(t)\right\|_Y \leq \epsilon (b-a) \Lip_h(\psi).
  \end{align*}
\end{lemma}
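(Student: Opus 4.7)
The plan is to mirror the proof of Lemma \ref{M-alpha-geodesic}, replacing Lemma \ref{M-energy} by Lemma \ref{UC-energy} and Lemma \ref{M-partial-move} by Lemma \ref{UC-theta-move}.

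First, I would set $m := \lceil \log_2(1/\epsilon)\rceil + 2$, so that $2^{-m}(b-a) \in [\epsilon(b-a)/8,\epsilon(b-a)/4]$. The hypothesis $\epsilon^2(b-a)/(512K) \geq \psi$ gives $2^{-m}(b-a) \geq \psi$, so the $\psi$-LLD property ensures that both $h$ and the linear interpolant $L_h^{a,b}$ oscillate by at most $(\epsilon/4)(b-a)\Lip_h(\psi)$ between consecutive grid points $g_k := a + k2^{-m}(b-a)$ (here for $L_h^{a,b}$ we use $\|h(b)-h(a)\|\leq \Lip_h(\psi)(b-a)$). By the triangle inequality, the conclusion of the lemma thus reduces to the discrete bound $\|h(g_k) - L_h^{a,b}(g_k)\| \leq (\epsilon/2)(b-a)\Lip_h(\psi)$ for every $k \in \{0,\ldots,2^m\}$.

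By Lemma \ref{UC-energy}, this discrete bound is implied by $\sum_{k=0}^{m-1} 2^{-k}\max_{I \in \D^k([a,b])}\Theta_h(a(I),b(I))^p \leq (\epsilon/(8K))^p\Lip_h(\psi)^p$. Since the geometric factor $2^{-k}$ makes this sum at most twice its largest term, it suffices to show $\Theta_h(a(I),b(I)) \leq (\epsilon/(10K))\Lip_h(\psi)$ for every $I \in \bigcup_{k < m}\D^k([a,b])$. I would argue by contradiction: suppose some $I = [s,t] \in \D^k([a,b])$, $k < m$, violates this bound, and set $\Theta := \Theta_h(s,t)$ and $\rho := \Theta(t-s)/(4\Lip_h(\psi))$. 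From $t-s \geq 2^{-(m-1)}(b-a) \geq (\epsilon/4)(b-a)$ and the hypothesis on $\psi$, one checks $(t-s)/4 \geq (\epsilon/16)(b-a) \geq \psi$ and $\rho \geq \epsilon^2(b-a)/(160K) \geq \psi$, so Lemma \ref{UC-theta-move} applies and yields $\Theta_h(s',t') > \Theta/3$ for all $(s',t') \in [s,s+\rho] \times [t,t+\rho]$.

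The square $[s,s+\rho] \times [t,t+\rho]$ lies inside the admissible region of the defining integral of $\beta_h^{(p)}([a,b];\epsilon/16)$ because its minimum separation $(t-s) - \rho \geq 3(t-s)/4 > \epsilon(b-a)/16$ (using $\Theta \leq \Lip_h(\psi)$, hence $\rho \leq (t-s)/4$). Integrating the preceding perturbation estimate over this square yields
\begin{align*}
\beta_h^{(p)}([a,b];\epsilon/16) \gtrsim \frac{\rho^2 \Theta^p}{(b-a)^2 \cdot 3^p} \gtrsim \frac{\epsilon^2 \Theta^{p+2}}{3^p\Lip_h(\psi)^2} \gtrsim \frac{\epsilon^{p+4}}{(10K)^{p+2}\cdot 3^p \cdot 512}\,\Lip_h(\psi)^p.
\end{align*}
For $p \geq 2$ we have $3p \geq p+4$, and for $K \geq 1$ the factor $(200K)^{3p}$ dominates $(10K)^{p+2}\cdot 3^p \cdot 512$ uniformly in $p$ and $K$, so this lower bound strictly exceeds the hypothesis cap $(\epsilon/(200K))^{3p}\Lip_h(\psi)^p$, producing the desired contradiction. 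The main obstacle is the bookkeeping of constants: the choice of $m$, the threshold $(\epsilon/(10K))\Lip_h(\psi)$ on $\Theta$, and the dimensions of the integration square must be tuned so that both threshold conditions in Lemma \ref{UC-theta-move} hold simultaneously and so that the final numerical inequality is strict uniformly in $p \in [2,\infty)$; it is precisely this uniformity that forces the generous hypothesis exponent $3p$ against the emerging exponent $p+4$.
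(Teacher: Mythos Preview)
Your proposal is essentially correct and follows the same route as the paper's own proof: reduce to a discrete grid via the $\psi$-LLD bound, invoke Lemma~\ref{UC-energy} to control the grid values by a uniform bound on $\Theta_h$ over dyadic subintervals, then derive a contradiction by perturbing a bad interval via Lemma~\ref{UC-theta-move} and integrating. Two cosmetic slips are worth noting: your integration square $[s,s+\rho]\times[t,t+\rho]$ may leave $[a,b]$ when $t=b$ (the paper uses $[u,u+\delta]\times[v-\delta,v]$, pushing inward), and the threshold $\epsilon/(10K)$ on $\Theta_h$ gives a discrete bound just over $\epsilon/2$ via Lemma~\ref{UC-energy} (the paper uses $\epsilon/(32K)$); both are fixed by trivially adjusting constants and do not affect the argument.
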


\begin{proof}
  Let $m = \lceil\log \frac{1}{\epsilon}\rceil + 2$.  It suffices to prove
  \begin{align*}
    \max_{k\in \{0,\ldots,2^m\}} \left\|h\left(a + \frac{k}{2^m}(b-a)\right) - L_h^{a,b}\left(a + \frac{k}{2^m}(b-a)\right)\right\|_Y \leq \frac{\epsilon}{4}(b-a) \Lip_h(\psi).
  \end{align*}
  Indeed, let $t \in [a,b]$.  Then there exists $k \in \{0,...,2^m\}$ so that $|t - a - k2^{-m}(b-a)| \leq 2^{-m}(b-a)$.  As $h$ is $\psi$-LLD and $b-a \geq 2^{-m} (b-a) \geq \psi$, $L_h^{a,b}$ is $\Lip_h(\psi)$-Lipschitz, and we get
  \begin{align*}
    \|h(t) - L_h^{a,b}(t)\| &\leq \left\|h\left(a + \frac{k}{2^m}(b-a)\right) - L_h^{a,b}\left(a + \frac{k}{2^m}(b-a)\right)\right\| + \left\|h(t) - h\left(a + \frac{k}{2^m}(b-a)\right)\right\| \\
    &\qquad + \left\|L_h^{a,b}\left(a + \frac{k}{2^m}(b-a)\right) - L_h^{a,b}(t)\right\| \\
    &\leq \left(\frac{\epsilon}{4} + 2^{-m} + 2^{-m}\right) (b-a) \Lip_h(\psi) < \epsilon (b-a) \Lip_h(\psi).
  \end{align*}
  If
  \begin{align*}
    \max_{k \in \{0,...,m-1\}} \max_{I \in \D^k([a,b])} \Theta_h(a(I),b(I)) \leq \frac{\epsilon}{32K} \Lip_h(\psi),
  \end{align*}
  then \eqref{eq:lower E} give our result.  Thus, we may assume that there is a subinterval $I = [u,v] \in \bigcup_{k=0}^{m-1} \D^k([a,b])$ where
  \begin{align*}
    \Theta_h(u,v) \geq \frac{\epsilon}{32K} \Lip_h(\psi).
  \end{align*}
  Let $u',v' \in \R$ be such that $|u - u'| < \frac{1}{128K} \epsilon |I|$ and $|v - v'| < \frac{1}{128K} \epsilon |I|$.  Then as $h$ is $\psi$-LLD and
  \begin{align*}
    \frac{\Theta_h(u,v)}{4\Lip_h(\psi)} (v-u) \geq \frac{1}{128} \epsilon |I| \geq \frac{1}{64} \epsilon 2^{-m} (b-a) \geq \psi,
  \end{align*}
  we get by Lemma \ref{UC-theta-move} that
  \begin{align*}
    \Theta_h(u',v') \geq \frac{1}{96K} \epsilon \Lip_h(\psi).
  \end{align*}
  Thus,
  \begin{align*}
    \beta_h^{(p)}\left([a,b]; \frac{\epsilon}{16} \right) &\geq \left(1- \frac{\epsilon}{16} \right)^2 (b-a)^{-2} \int_{v-\epsilon |I|/128K}^v \int_u^{u+ \epsilon |I|/128K} \Theta_h(x,y)^p dy ~dx \\
    &> \frac{1}{2} \left(\frac{\epsilon |I|}{128K(b-a)}\right)^2 \left(\frac{\epsilon}{96K} \Lip_h(\psi)\right)^p \\
    &\geq \frac{1}{(200K)^{2+p}} 2^{-2m-4} \epsilon^{p+2} \Lip_h(\psi)^p.
  \end{align*}
  Remembering that $2^{-m-2} \geq \epsilon$, we get a contradiction of the hypothesis.
\end{proof}

We now return to maps from Carnot groups to uniformly convex Banach spaces.  We prove that we can bootstrap the averaging bound of $\beta_h^{(p)}(Q)$ to a supremum bound.

\begin{lemma} \label{UC-sup-bound}
  Let $Q \in \Delta$ and $h : G \to Y$ be $\psi$-LLD.  There exists $\alpha_0,\gamma,\lambda_0 > 0$ depending only on $G$ and $Y$ so that if $\beta_h^{(p)}\left(Q; \frac{\epsilon}{\gamma}\right) \leq \epsilon^{\alpha_0} \Lip_h(\psi)^p$ and $\ell(Q) > \lambda_0 \epsilon^{3p+1} \psi$, then 
  \begin{align*}
    \sup \left\{ \beta^{(p)}_h\left(x \cdot \R v \cap 3B_Q; \frac{\epsilon}{16} \right) : v \in S^{n-1}, x \in G \circleddash v, x \cdot \R v \cap B_Q \neq \emptyset \right\} \leq \left( \frac{\epsilon}{200K} \right)^{3p} \Lip_h(\psi)^p.
  \end{align*}
\end{lemma}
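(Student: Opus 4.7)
I will argue by contradiction, following almost verbatim the template of the proof of Lemma \ref{M-sup-bound}, but with $\Theta_h^p$ in the role of $\partial_h^{(2)}$, with $\beta_h^{(p)}$ in the role of $\alpha_h^{(2)}$, and with Lemma \ref{UC-theta-move} in place of Lemma \ref{M-partial-move}.  As in the remark following Lemma \ref{M-partial-move}, the proof of Lemma \ref{UC-theta-move} uses only $\psi$-LLD bounds on the three displacements at $s$, $t$, and $(s+t)/2$, never the fact that these points lie on the same horizontal line; I will freely use this extended version to compare $\Theta_h$ on two different parallel horizontal lines that happen to be metrically close.

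Suppose there exist $w \in S^{n-1}$ and $g \in z_Q(G \circleddash w)$ such that $J := g \cdot \R w \cap 3B_Q$ meets $B_Q$ and $\beta_h^{(p)}(J;\epsilon/16) > (\epsilon/(200K))^{3p}\Lip_h(\psi)^p$.  Parameterize $J = g \cdot [a,b]w$, noting that $|b-a|$ is comparable to $\ell(Q)$ by Lemma \ref{length-comp}.  Reading $\beta_h^{(p)}(J;\epsilon/16)$ as an integral average, there exist $s < t$ in $[a,b]$ with $t-s \geq (\epsilon/16)(b-a)$ and $\Theta_h(ge^{sw},ge^{tw}) \gtrsim \epsilon^{3}\Lip_h(\psi)/K$.

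Now I introduce the same perturbation set-up as in Lemma \ref{M-sup-bound}.  For small universal constants $C_1,C_2 > 0$ to be chosen, consider directions $v \in S^{n-1}$ with $|v-w| \leq C_1 \epsilon^{3r}$ and base points $g' \in \Bcc(g_v, C_2 \epsilon^{3r}\ell(Q)) \cap z_Q(G \circleddash v)$, where $g_v$ is obtained by sliding $g$ along the $v$-direction into $z_Q(G \circleddash v)$.  Lemmas \ref{close-parallel-2} and \ref{horizontal-lines-bound}, after shrinking $C_1,C_2$, ensure $\dcc(ge^{\lambda w}, g'e^{\lambda v}) \lesssim \epsilon^{3}(b-a)$ for every $\lambda \in [a,b]$, and in particular at $\lambda \in \{s,t,(s+t)/2\}$.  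Under the hypothesis $\ell(Q) > \lambda_0 \epsilon^{-(3p+1)}\psi$ with $\lambda_0$ large, the threshold
\[
\rho := \frac{\Theta_h(ge^{sw}, ge^{tw})}{4\Lip_h(\psi)}(t-s) \;\gtrsim\; \frac{\epsilon^{4}}{K}(b-a)
\]
exceeds both $\psi$ and the drift above, so the extended Lemma \ref{UC-theta-move} applies and yields $\Theta_h(g'e^{sv}, g'e^{tv}) \geq \tfrac{1}{3}\Theta_h(ge^{sw},ge^{tw})$ for every admissible $(v,g')$.

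Finally, let $I' = g' \cdot \R v \cap 6B_Q$; by Lemma \ref{length-comp} its length is comparable to $\ell(Q)$ and hence to $b-a$, so $(t-s)/|I'|$ is bounded below by a constant multiple of $\epsilon$.  Applying the extended Lemma \ref{UC-theta-move} once more to perturbations of $(s,t)$ within a window $\nu = c_0\epsilon^{3}(b-a)$ shows that $\Theta_h$ remains $\gtrsim \epsilon^{3}\Lip_h(\psi)/K$ on a rectangle of area $\gtrsim \nu^{2}$, giving
\[
\beta_h^{(p)}\!\left(I';\tfrac{\epsilon}{16C_4}\right) \gtrsim \epsilon^{\,2+3p}\Lip_h(\psi)^{p}
\]
uniformly over admissible $(v,g')$, for a universal $C_4$ chosen as in Lemma \ref{M-sup-bound}.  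Integrating this pointwise bound against the measure $\mu \otimes \mathscr{H}^{N-1}$ on the spherical cap $\{|v-w| \leq C_1\epsilon^{3r}\}$ and the lateral slab $\Bcc(g_v, C_2\epsilon^{3r}\ell(Q)) \cap (G \circleddash v)$, and inserting the normalization $\ell(Q)^{1-N}$ from the definition of $\beta_h^{(p)}(Q;\cdot)$, produces $\beta_h^{(p)}(Q;\epsilon/\gamma) \gtrsim \epsilon^{\,A}\Lip_h(\psi)^{p}$ for an explicit exponent $A = A(p,r,N)$ and universal $\gamma = 16 C_4$, contradicting the assumed bound once $\alpha_0 > A$.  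The bookkeeping of $\epsilon$-powers across (i) the $K$-dependent lower bound on $\Theta_h$, (ii) the sizes $C_1 \epsilon^{3r}$ and $C_2 \epsilon^{3r}\ell(Q)$ of the perturbation domain, and (iii) the inner window $\nu$, all while ensuring $\rho \geq \psi$, is the only real obstacle; the geometric content of the argument is identical to that of Lemma \ref{M-sup-bound}.
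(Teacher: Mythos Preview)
Your approach is correct and is precisely what the paper does: its proof is a single sentence stating that the argument of Lemma~\ref{M-sup-bound} goes through with $\Theta_h^p$, $\beta_h^{(p)}$, and Lemma~\ref{UC-theta-move} in place of $\partial_h^{(2)}$, $\alpha_h^{(2)}$, and Lemma~\ref{M-partial-move}. One minor correction to the bookkeeping you already flag: since $\rho \gtrsim \epsilon^{4}(b-a)$ while a perturbation scale of $\epsilon^{3r}$ gives drift only $\lesssim \epsilon^{3}(b-a)$, you should take the perturbation at order $\epsilon^{4r}$ (or finer) so that the claim ``$\rho$ exceeds the drift'' actually holds.
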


\begin{proof}
  The proof is largely identical to that of Lemma \ref{M-sup-bound} with mostly superficial modifications ({\it e.g.} $\Theta_h(x,y)^p$ for $\partial_h^{(p)}(x,y)$, $\beta_h^{(p)}$ for $\alpha_h^{(p)}$, Lemma \ref{C-theta-move} for Lemma \ref{M-partial-move}).
\end{proof}

\begin{lemma} \label{UC-affine-close}
  Let $\epsilon \in (0,1)$, $v \in S^{n-1}$, $\rho > 0$, $\chi \geq 1$, $x \in G$, and $h : G \to Y$ be $\psi$-LLD.  Suppose $\frac{\epsilon}{24} \rho \geq \psi$.  There exists a constant $C \in (0,1)$ depending on $\chi$ such that if all $g \in \Bcc(x,\rho)$ satisfies
  \begin{align}
    \sup_{t \in [-2\rho,2\rho]} \left\| h(ge^{tv}) - L_{h|g \cdot \R v}^{-2\rho,2\rho}(ge^{tv}) \right\| \leq \frac{1}{4} C\epsilon^{r+1} \rho\Lip_h(\psi), \label{almost-affine}
  \end{align}
  then there exists $w_v \in Y$ such that $\|w_v\| \leq \Lip_h(\psi)$ and for all $g \in \Bcc(x, C\chi\epsilon^r \rho)$ we have that,
  \begin{align*}
    \sup_{s,t \in [-3 C\epsilon^r \rho,3 C\epsilon^r \rho]} \left\| h(ge^{sv}) - h(ge^{tv}) - (s-t)w_v \right\| \leq C\epsilon^{r+1} \rho\Lip_h(\psi).
  \end{align*}
\end{lemma}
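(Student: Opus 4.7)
The structure is parallel to Lemma \ref{M-geodesic-close}, but the target is now a Banach space, so instead of extracting a common ``speed'' $L_v \in \R$ we need to extract a common ``velocity vector'' $w_v \in Y$. The natural candidate is the discrete slope along the horizontal line through the base point: after reducing to $x = 0$ by left invariance, set
\begin{equation*}
  w_v := \frac{h(e^{2\rho v}) - h(e^{-2\rho v})}{4\rho}.
\end{equation*}
The bound $\|w_v\| \leq \Lip_h(\psi)$ is then immediate from the $\psi$-LLD hypothesis, since $4\rho \geq \psi$.

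The heart of the argument is to show that if we define the analogous slope $w_g := (h(ge^{2\rho v}) - h(ge^{-2\rho v}))/(4\rho)$ at an arbitrary point $g \in \Bcc(0, C\chi\epsilon^r\rho)$, then $w_g$ is very close to $w_v$. Applying Lemma \ref{close-parallel-2} with $u = v$ (so $|u - v| = 0$) and $\lambda = 2\rho$ produces a constant $C_0 > 0$ depending only on $G$ such that
\begin{equation*}
  \dcc(e^{\pm 2\rho v}, g e^{\pm 2\rho v}) \leq C_0 (C\chi)^{1/r} \epsilon \rho,
\end{equation*}
and by choosing $C$ small enough in terms of $\chi$ (and of $C_0$ and $r$) this can be made at most $\frac{\epsilon}{24}\rho$. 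The hypothesis $\frac{\epsilon}{24}\rho \geq \psi$ then lets us invoke the $\psi$-LLD bound to obtain $\|h(e^{\pm 2\rho v}) - h(ge^{\pm 2\rho v})\| \leq \frac{\epsilon}{24}\rho\,\Lip_h(\psi)$, and hence $\|w_g - w_v\| \leq \frac{\epsilon}{48}\Lip_h(\psi)$ by the triangle inequality.

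Now for $g \in \Bcc(0, C\chi\epsilon^r\rho) \subseteq \Bcc(0, \rho)$ and $s, t \in [-3C\epsilon^r\rho, 3C\epsilon^r\rho] \subseteq [-2\rho, 2\rho]$, the almost-affine hypothesis is applicable at $g$. Because the linear interpolant $L := L_{h|g \cdot \R v}^{-2\rho, 2\rho}$ satisfies $L(ge^{sv}) - L(ge^{tv}) = (s - t) w_g$ identically, two applications of the hypothesis give
\begin{equation*}
  \|h(ge^{sv}) - h(ge^{tv}) - (s-t)w_g\| \leq \frac{1}{2} C \epsilon^{r+1} \rho \Lip_h(\psi).
\end{equation*}
Combining with $|s - t| \leq 6 C \epsilon^r \rho$ and the slope comparison $\|w_g - w_v\| \leq \frac{\epsilon}{48}\Lip_h(\psi)$ yields
\begin{equation*}
  \|h(ge^{sv}) - h(ge^{tv}) - (s-t)w_v\| \leq \frac{1}{2} C \epsilon^{r+1} \rho \Lip_h(\psi) + \frac{1}{8} C \epsilon^{r+1} \rho \Lip_h(\psi) \leq C \epsilon^{r+1} \rho \Lip_h(\psi),
\end{equation*}
which is the desired conclusion.

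The only real technical obstacle is bookkeeping the constants: the $r$-th root appearing in Lemma \ref{close-parallel-2} forces the smallness requirement on $C$ to be of the form $C \leq c(G)/\chi$, which is precisely why the statement of the lemma permits $C$ to depend on $\chi$. There is no new analytic content beyond that of Lemma \ref{M-geodesic-close}; the Banach-space setting is in fact notationally cleaner because we can manipulate $w_v$ as a vector directly, rather than work with pairwise distances and the numerical comparison Lemma \ref{M-numerical}.
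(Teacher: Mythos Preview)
Your proof is correct and follows essentially the same route the paper indicates: reduce to the situation of Lemma~\ref{M-geodesic-close}, use Lemma~\ref{close-parallel-2} to control the drift of endpoints of long horizontal segments, and compare slopes. The paper's own proof is just a pointer to Lemma~\ref{M-geodesic-close} after rewriting \eqref{almost-affine} in the increment form; your write-up actually carries this out, and your observation that the Banach-space setting permits a direct slope comparison $\|w_g - w_v\| \leq \frac{\epsilon}{48}\Lip_h(\psi)$ (bypassing the contradiction argument that Lemma~\ref{M-geodesic-close} needs in the pure metric case) is correct and makes the bookkeeping cleaner.
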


\begin{proof}
  Note that \eqref{almost-affine} implies that, for each element $g \in \Bcc(x,\rho)$, we get
  \begin{align*}
    \sup_{s,t \in [-2\rho,2\rho]} \left\| h(ge^{sv}) - h(ge^{tv}) - \frac{s-t}{4} (h(ge^{2\rho v}) - h(ge^{-2\rho v})) \right\| \leq \frac{1}{2} C\chi\epsilon^{r+1} \rho\Lip_f(\psi).
  \end{align*}
  One then sees that the proof is largely identical to that of Lemma \ref{M-geodesic-close} with superficial modifications.
\end{proof}

\begin{lemma} \label{UC-final}
  Let $\alpha_0,\gamma > 0$ be the constants from Lemma \ref{UC-sup-bound} and $C \in (0,1)$ be the constant from Lemma \ref{UC-affine-close} associated to $\chi = M_G$.  There exist $\lambda > 0$ depending only on $G$ so that if $\epsilon \in (0,1/2)$, $\ell(Q) > \lambda \epsilon^{-(r+1)(3p+1)}\psi$, and
  \begin{align*}
    \beta_h^{(p)}\left(Q; \frac{1}{8\gamma} C a_0 \left( \frac{\epsilon}{M_G} \right)^{r+1} \right) \leq \left( \frac{1}{8} C a_0 \left( \frac{\epsilon}{M_G} \right)^{r+1} \right)^{\alpha_0} \Lip_h(\psi)^p 
  \end{align*}
  then there exists an affine function $A : G \to Y$ so that
  \begin{align*}
    \sup_{x \in C(\epsilon/M_G)^r B_Q} \frac{\| h(x) - A(x)\|}{C (\epsilon/M_G)^r a_0\ell(Q)} \leq \epsilon \Lip_h(\psi).
  \end{align*}
\end{lemma}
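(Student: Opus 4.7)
The plan is to follow the three-stage arc of Lemma \ref{M-final}---sup bound on $\beta_h^{(p)}$, line-wise affine control, ball-wise affine control in each horizontal direction---and then to assemble the per-direction ``slopes'' into a single affine map $A$ via Chow's theorem. Throughout, set $\rho := a_0 \ell(Q)$, $\tilde{\epsilon} := \epsilon/M_G$, and $\epsilon_1 := \tfrac{Ca_0}{8}\tilde{\epsilon}^{r+1}$, so the hypothesis of Lemma \ref{UC-final} reads exactly as $\beta_h^{(p)}(Q; \epsilon_1/\gamma) \leq \epsilon_1^{\alpha_0}\Lip_h(\psi)^p$, and the scale hypothesis $\ell(Q) \geq \lambda \epsilon^{-(r+1)(3p+1)}\psi$ is (for $p \geq 2$) strong enough to dominate every lower bound on $\ell(Q)$ that appears in the argument below.

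First I would apply Lemma \ref{UC-sup-bound} at parameter $\epsilon_1$ to promote the averaged bound on $\beta_h^{(p)}(Q;\cdot)$ to a sup bound $\beta_h^{(p)}(x\cdot \R v \cap 3B_Q; \epsilon_1/16) \leq (\epsilon_1/(200K))^{3p}\Lip_h(\psi)^p$ for every $v \in S^{n-1}$ and every $x \in G \circleddash v$ whose line meets $B_Q$. Feeding this into Lemma \ref{UC-beta-affine} yields $\|h(y) - L_h^{a,b}(y)\| \leq \epsilon_1(b-a)\Lip_h(\psi)$ on every such segment $x \cdot [a,b]v = x\cdot \R v \cap 3B_Q$. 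For any $g \in B_Q$, the subsegment $g\cdot[-2\rho,2\rho]v$ sits inside $3B_Q$; since two affine interpolations of the same function on nested intervals differ by at most the interpolation error at the endpoints of the shorter interval, we obtain
\[
  \sup_{t \in [-2\rho,2\rho]} \bigl\| h(ge^{tv}) - L_{h|_{g\cdot \R v}}^{-2\rho,2\rho}(ge^{tv}) \bigr\| \leq \tfrac{1}{4} C \tilde{\epsilon}^{r+1}\rho \Lip_h(\psi),
\]
which is exactly the hypothesis \eqref{almost-affine} of Lemma \ref{UC-affine-close} at parameters $\tilde{\epsilon}$ and $\chi = M_G$ (the intermediate multiplicative constants are absorbed into $\epsilon_1$). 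That lemma then produces, for each $v \in S^{n-1}$, a slope $w_v \in Y$ with $\|w_v\| \leq \Lip_h(\psi)$ such that on the enlarged ball $\Bcc(z_Q, CM_G\tilde{\epsilon}^r \rho) = \Bcc(z_Q, C M_G^{1-r}\epsilon^r \rho)$ and segments of length $3C\tilde{\epsilon}^r \rho$, $h$ is affine in direction $v$ with slope $w_v$ up to error $C\tilde{\epsilon}^{r+1}\rho \Lip_h(\psi) = CM_G^{-r-1}\epsilon^{r+1}\rho \Lip_h(\psi)$.

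Next I assemble $A$. Let $\{v_1,\dots,v_n\}$ be the orthonormal basis of $\V_1$ fixed at the outset, and let $T_0 \colon \V_1 \to Y$ be the unique linear extension of $v_i \mapsto w_{v_i}$. Since $Y$ is abelian, the horizontal projection $\pi \colon G \to \V_1$ is a homomorphism, so $T := T_0 \circ \pi \colon G \to Y$ is a group homomorphism and $A(x) := h(z_Q) + T(z_Q^{-1}x)$ is affine. Given $x \in C\tilde{\epsilon}^r B_Q$, Chow's theorem (as recalled in the preliminaries, with the constant $\lambda \leq 1$) furnishes a decomposition $z_Q^{-1}x = e^{\lambda_1 v_{i(1)}}\cdots e^{\lambda_j v_{i(j)}}$ with $j \leq M_G$ and $|\lambda_\ell| \leq C\tilde{\epsilon}^r \rho$, whose partial products $g_\ell := z_Q \prod_{k \leq \ell} e^{\lambda_k v_{i(k)}}$ all lie in $\Bcc(z_Q, M_G \cdot C\tilde{\epsilon}^r \rho) = \Bcc(z_Q, CM_G^{1-r}\epsilon^r \rho)$, precisely the ball on which the preceding affine estimate is valid. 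Applying that estimate to each consecutive pair $(g_{\ell-1},g_\ell)$ and telescoping,
\[
  \Bigl\| h(x) - h(z_Q) - \sum_{\ell=1}^j \lambda_\ell w_{v_{i(\ell)}} \Bigr\| \leq j \cdot C M_G^{-r-1}\epsilon^{r+1}\rho \Lip_h(\psi) \leq C M_G^{-r}\epsilon^{r+1}\rho \Lip_h(\psi).
\]
Since $\pi$ is a homomorphism with $\pi(e^{\lambda v}) = \lambda v$, $\pi(z_Q^{-1}x) = \sum_\ell \lambda_\ell v_{i(\ell)}$, and linearity of $T_0$ gives $\sum_\ell \lambda_\ell w_{v_{i(\ell)}} = T(z_Q^{-1}x)$. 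Hence $\|h(x) - A(x)\| \leq \epsilon \cdot C\tilde{\epsilon}^r \rho \Lip_h(\psi)$, which is the desired bound.

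The main obstacle is the constant bookkeeping rather than any new geometric input. The choice $\chi = M_G$ in Lemma \ref{UC-affine-close} enlarges the ball of affine control by exactly the factor needed to accommodate all $j \leq M_G$ partial products of a Chow factorization of an element of $C\tilde{\epsilon}^r B_Q$, while the rescaling $\tilde{\epsilon} = \epsilon/M_G$ cancels the $M_G^{r+1}$ that the telescope over $j$ terms and the comparison between the output scale of Lemma \ref{UC-affine-close} and the target scale $C\tilde{\epsilon}^r \rho$ would otherwise reintroduce. The length hypothesis $\ell(Q) \gtrsim \epsilon^{-(r+1)(3p+1)}\psi$ in the statement is dictated by pushing the length hypothesis of Lemma \ref{UC-sup-bound} through the substitution $\epsilon \mapsto \epsilon_1 \sim \tilde{\epsilon}^{r+1}$, and it dominates the cruder bounds needed by Lemmas \ref{UC-beta-affine} and \ref{UC-affine-close} at the same parameter.
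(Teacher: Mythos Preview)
Your proposal is correct and follows essentially the same approach as the paper: apply Lemma~\ref{UC-sup-bound} at the rescaled parameter, feed into Lemma~\ref{UC-beta-affine}, restrict to the subsegment $[-2\rho,2\rho]$ (losing a factor of $2$), invoke Lemma~\ref{UC-affine-close} with $\chi=M_G$, then define the homomorphism on the basis $\{v_i\}$ and telescope along a Chow factorization. The only cosmetic differences are that the paper normalizes $z_Q=0$, $h(0)=0$ rather than carrying the translation in $A(x)=h(z_Q)+T(z_Q^{-1}x)$, and defines $T$ directly on generators rather than via $T_0\circ\pi$; these are equivalent formulations.
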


\begin{proof}
  By translation, we may suppose $z_Q = 0$ and $h(0) = 0$.  Applying Lemma \ref{UC-sup-bound} to the hypothesis and setting $\lambda$ large enough, we get that for all $v \in S^{n-1}$ and $x \in G \circleddash v$ where $x \cdot \R v \cap B_Q \neq \emptyset$ that
  \begin{align*}
    \beta^{(p)}_h\left(x \cdot \R v \cap 3B_Q; \frac{1}{128} C a_0 \left( \frac{\epsilon}{M_G} \right)^{r+1} \right) \leq \left( \frac{C a_0 (\epsilon/M_G)^{r+1}}{1600K} \right)^{3p} \Lip_h(\psi)^p.
  \end{align*}
  Then, by Lemma \ref{UC-beta-affine} and setting $\lambda$ large enough, we get that for all $v \in S^{n-1}$ and $x \in G \circleddash v$ where $x \cdot \R v \cap B_Q \neq \emptyset$ and $x \cdot [a,b]v = x \cdot \R v \cap 3B_Q$ that
  \begin{align}
    \sup_{t \in [a,b]} \left\|h(xe^{tv}) - L_{h|x \cdot \R v}^{a,b}(t)\right\| \leq \frac{1}{8} C \left( \frac{\epsilon}{M_G} \right)^{r+1} a_0 \ell(Q) \Lip_h(\psi). \label{eq:UC-almost-affine}
  \end{align}
  Notice that if $y \in B_Q$, then $y \cdot [-2a_0\ell(Q),2a_0\ell(Q)] v \subset x \cdot \R v \cap 3B_Q$.  Thus, for all $v \in S^{n-1}$ and $x \in B_Q$, we get
  \begin{align*}
    \sup_{t \in [-2a_0\ell(Q),2a_0\ell(Q)]} \left\|h(xe^{tv}) - L_{h|x \cdot \R v}^{-2a_0\ell(Q),2a_0\ell(Q)}(t)\right\| \leq \frac{1}{4} C \left( \frac{\epsilon}{M_G} \right)^{r+1} a_0 \ell(Q) \Lip_h(\psi),
  \end{align*}
  where we lost a factor of $\frac{1}{2}$ from using \eqref{eq:UC-almost-affine} twice.  We get from Lemma \ref{UC-affine-close} that for each $v \in S^{n-1}$, there exists a $w(v) \in Y$ so that for all $x \in M_G C (\epsilon/M_G)^r B_Q$, we have
  \begin{multline}
    \sup_{s,t \in [-3C (\epsilon/M_G)^r a_0\ell(Q),3C (\epsilon/M_G)^r a_0\ell(Q)]} \left\|h(xe^{tv}) - h(xe^{sv}) - (t-s) w(v)\right\| \\
    \leq C \left( \frac{\epsilon}{M_G} \right)^{r+1} a_0 \ell(Q) \Lip_h(\psi). \label{UC-group-inv}
  \end{multline}
  Take an orthonormal basis $\{v_i\}_{i=1}^n$ of $\V_1$ and define the homomorphism by the action on the generators
  \begin{align}
    T : G &\to Y \notag \\
    e^{t v_i} &\mapsto t w(v_i). \label{UC-affine-def}
  \end{align}
  Take an arbitrary element $g = e^{C_1 v_{i(1)}} e^{C_2 v_{i(2)}} \cdots e^{C_j v_{i(j)}} \in C (\epsilon/M_G)^r B_Q$ written using the Chow theorem.  Let
  \begin{align*}
    g^{(\ell)} = e^{C_1 v_{i(1)}} e^{C_2 v_{i(2)}} \cdots e^{C_\ell v_{i(\ell)}}, \qquad \forall \ell \leq j.
  \end{align*}
  By the assumptions we made in Section \ref{preliminaries}, we have that $j \leq M_G$ and $|C_i| \leq C(\epsilon/M_G)^r a_0\ell(Q)$.  As $g^{(\ell)} \in M_G C (\epsilon/M_G)^r B_Q$, we get that
  \begin{align*}
    \| h(g^{(j)}) &- T(g^{(j)})\| \\
    &= \| h(g^{(j)}) - h(g^{(j-1)}) + h(g^{(j-1)}) - T(g^{(j)}) + T(g^{(j-1)}) - T(g^{(j-1)}) \| \\
    &\leq \| h(g^{(j-1)}) - T(g^{(j-1)}) \| + \| h(g^{(j)}) - h(g^{(j-1)}) - (T(g^{(j)}) - T(g^{(j-1)})) \| \\
    &\overset{\eqref{UC-affine-def}}{=} \| h(g^{(j-1)}) - T(g^{(j-1)}) \| + \| h(g^{(j-1)}e^{C_j v_{i(j)}}) - h(g^{(j-1)}) - C_j w(v_{i(j)}) \| \\
    &\leq \sum_{\ell=2}^j \| h(g^{(\ell-1)}e^{C_\ell v_{i(\ell)}}) - h(g^{(\ell-1)}) - C_\ell w(v_{i(\ell)}) \| \\
    &\overset{\eqref{UC-group-inv}}{\leq} M_G C \left( \frac{\epsilon}{M_G} \right)^{r+1} a_0\ell(Q) \Lip_h(\psi).
  \end{align*}
  Thus, we have that for all $x \in C (\epsilon/M_G)^r B_Q$ that
  \begin{align*}
    \| h(x) - T(x)\| \leq \epsilon \cdot C(\epsilon/M_G)^r a_0 \ell(Q) \Lip_h(\psi).
  \end{align*}
\end{proof}

\begin{proof}[Proof of Theorem \ref{UC-UAAP}]
  Lemma \ref{UC-final} shows that there exists some $\zeta,\rho,\lambda_0,\alpha_1 > 0$ so that if $\epsilon \in (0,1/2)$, $\qd_h^{UC}(Q,\zeta \epsilon^r) > \epsilon \Lip_h(\psi)$ and $\ell(Q) \geq \lambda_0\epsilon^{(r+1)(3p+1)} \psi$, then $\beta_h^{(p)}\left(Q;\rho \epsilon^{r+1} \right) > \epsilon^{\alpha_1} \Lip_h(\psi)^p$.  Thus, if $\ell(S) \geq \lambda\epsilon^{-(r+1)(3p+1)} \tau^{-m} \psi$ for some sufficiently large $\lambda$, then
  \begin{align*}
    \sum_{k=0}^m \sum_{Q \in \Delta_k(S)} \left\{|Q| : \qd_h^{UC}(Q,\zeta \epsilon^r) > \epsilon \right\} &\leq \epsilon^{-\alpha_1} \Lip_h(\psi)^{-p} \sum_{k=0}^m \sum_{Q \in \Delta_k(S)} \beta_h^{(p)}\left(Q; \rho\epsilon^{r+1} \right)|Q| \\
    &\overset{\eqref{UC-alpha-beta}}{\leq} \epsilon^{-\alpha_1} \frac{K^p}{2^p} \Lip_h(\psi)^{-p} \sum_{k=0}^m \sum_{Q \in \Delta_k(S)} \alpha_h^{(p)}\left(Q; \rho\epsilon^{r+1} \right)|Q| \\
    &\leq \epsilon^{-\alpha} |S|,
  \end{align*}
  where we used Proposition \ref{carleson-cubes} as in the proof of Theorem \ref{M-UAAP} for the last inequality and chose some sufficiently large $\alpha > 0$.
\end{proof}


\section{Coarse differentiation for maps into Carnot groups}
In this section, we will study Lipschitz at large distances maps from $G$, a Carnot group of step $r$ endowed with a CC-metric, to $H$, a Carnot group of step $s$ endowed with a specific homogeneous metric $d_H$ to be described.  The Lie algebras of $G$ and $H$ will be $\g$ and $\h$, respectively.  Here, the horizontal layer of $\g$ has dimension $n$ and the horizontal layer of $\h$ has dimension $m$.  Let $\tilde{\pi} : H \to H$ be the function that maps elements of $H$ to their corresponding horizontal elements (\textit{i.e.} $\tilde{\pi}(g_1,...,g_s) = (g_1,0,...,0)$).  We will suppose that that there exist constants $K > 0$ and $p > 1$ so that
\begin{align}
  \frac{d_H(x,y)^p + d_H(y,z)^p}{2} \geq \left( \frac{d_H(x,z)}{2} \right)^p + \frac{1}{K}\left(\left| \frac{x_1 + z_1}{2} - y_1\right|^p + NH(x^{-1} z)^p\right), \label{C-eqn}
\end{align}
where, for an element $h \in H$, we let $h_1$ be shorthand for the image of $h$ under the 1-Lipschitz projection map $\pi : H \to \R^m$.  Here, we defined the map
\begin{align*}
  NH : H &\to \R^+ \\
  g &\mapsto d_H(\tilde{\pi}(g),g)
\end{align*}
to measure how nonhorizontal an element of $H$ is.  In the next section, we will show that such a metric always exists for graded nilpotent Lie groups.  We will not suppose that $d_H$ satisfies the triangle inequality.  Thus, we can only suppose that there exists some $C_Q \geq 1$ so that $d_H$ satisfies the quasi-triangle inequality:
\begin{align*}
  d_H(x,z) \leq C_Q(d_H(x,y) + d_H(y,z)).
\end{align*}
The quasi-triangle inequality constant $C_Q$ will depend only on the group $H$.

Given $Q \in \Delta$ and $\eta > 0$, define
\begin{align*}
  \qd_h^C(Q,\eta) := \frac{1}{\eta a_0\ell(Q)} \inf \sup \{d_H(h(x), g \cdot T(x)) : x \in \eta B_Q \}
\end{align*}
where the infimum is taken over all Lipschitz homomorphisms $T : G \to H$ and $g \in H$.  In this section, we will prove the following theorem.

\begin{theorem} \label{C-UAAP}
  There exist constants $\alpha,\beta,\zeta > 0$ depending only on $G$ and $H$ so that if $\epsilon \in (0,1/2)$, $m \in \N$, $h : G \to H$ is $\psi$-LLD, and $S \in \Delta$ so that
  \begin{align*}
    \ell(S) \geq e^{\epsilon^{-\alpha}} \tau^{-m} \psi,
  \end{align*}
  then
  \begin{align}
    \sum_{k=0}^m \sum_{Q \in \Delta_k(S)} \left\{|Q| : \qd_h^C(Q,\zeta \epsilon^\beta) > \epsilon \Lip_h(\psi) \right\} \leq e^{\epsilon^{-\alpha}} |S|. \label{C-UAAP-eqn}
  \end{align}
\end{theorem}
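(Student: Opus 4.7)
The plan is to parallel Section 5 closely, but with \emph{two} midpoint-type defect functionals reflecting the two non-negative terms on the right side of \eqref{C-eqn}. For a map $h$ from an interval $[a,b]$ (or a horizontal line segment in $G$) into $H$, set
\begin{align*}
  \Theta_h^{\mathrm{hor}}(u,v) &:= \frac{1}{|u-v|}\left|\frac{h(u)_1+h(v)_1}{2}-h\!\left(\tfrac{u+v}{2}\right)_1\right|, \\
  \Theta_h^{\mathrm{NH}}(u,v) &:= \frac{NH\!\left(h(u)^{-1}h(v)\right)}{|u-v|}.
\end{align*}
Substituting $x=h(u)$, $y=h((u+v)/2)$, $z=h(v)$ into \eqref{C-eqn} gives exactly as in the derivation of \eqref{partial-midpoint},
\begin{align*}
  \Theta_h^{\mathrm{hor}}(u,v)^p + \Theta_h^{\mathrm{NH}}(u,v)^p \leq \frac{K}{2^p}\,\partial_h^{(p)}(u,v).
\end{align*}
Define $\beta_h^{\mathrm{hor},(p)}$ and $\beta_h^{\mathrm{NH},(p)}$ on intervals and on Christ cubes exactly as $\beta_h^{(p)}$ was built from $\Theta_h$ in Section 5. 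The pointwise inequality above, together with Proposition \ref{carleson-cubes}, immediately yields the Carleson packing estimate $\sum_{k,Q}(\beta_h^{\mathrm{hor},(p)}(Q;\epsilon)+\beta_h^{\mathrm{NH},(p)}(Q;\epsilon))|Q|\lesssim |S|\Lip_h(\psi)^p$, which is the engine that drives \eqref{C-UAAP-eqn}.

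The next step is a ``small $\beta$ on an interval implies close to a translate of a horizontal geodesic'' lemma. The horizontal piece $\beta_h^{\mathrm{hor},(p)}$ is literally the Section 5 quantity applied to the $1$-Lipschitz projection $h\mapsto h_1$ from $H$ to $\R^m$, so Lemma \ref{UC-beta-affine} yields a vector $w\in\R^m$, $|w|\leq \Lip_h(\psi)$, such that along a horizontal line $\gamma(t)=xe^{tv}$ the projected map $t\mapsto h(\gamma(t))_1$ is within $\epsilon(b-a)\Lip_h(\psi)$ of the affine map $t\mapsto h(\gamma(a))_1+(t-a)w$. For $\beta_h^{\mathrm{NH},(p)}$ I would prove analogs of Lemmas \ref{UC-theta-move} and \ref{UC-beta-affine} for $\Theta_h^{\mathrm{NH}}$; the ``perturbation-stability'' lemma uses the quasi-triangle inequality with constant $C_Q$ and the $\psi$-LLD property, and the second lemma concludes that $NH(h(\gamma(a))^{-1}h(\gamma(t)))$ is uniformly small in $t$ on a dyadic grid, hence everywhere after invoking $\psi$-LLD once more. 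Combining this with the horizontal affine approximation and lifting via $w(v):=w\in\V_1(\h)$ (which is canonical because $\pi$ is distance-preserving on horizontal lines), we conclude that $h\circ\gamma$ is $\epsilon(b-a)\Lip_h(\psi)$-close in $d_H$ to the horizontal curve $t\mapsto h(\gamma(a))e^{tw(v)}$. Bootstrapping the averaged estimate $\beta_h^{\mathrm{hor},(p)}(Q;\cdot)+\beta_h^{\mathrm{NH},(p)}(Q;\cdot)$ to a supremum over horizontal segments meeting $B_Q$ then proceeds by direct transcription of Lemma \ref{UC-sup-bound}, using Lemma \ref{close-parallel-2} to transfer the affine/geodesic behavior between nearby parallel horizontal lines.

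The final stage combines the horizontal-line approximations into a single homomorphism, following the template of Lemma \ref{UC-final}. First, the analog of Lemma \ref{UC-affine-close} shows that on a ball of radius comparable to $\epsilon^{r+1}a_0\ell(Q)$, the direction $w(v)\in\V_1(\h)$ depends only on $v\in S^{n-1}$, not on the basepoint. Define $T:G\to H$ on the horizontal generators $\{v_1,\ldots,v_n\}$ of $\g$ by $T(e^{tv_i}):=e^{tw(v_i)}$ and extend uniquely to a Lie-algebra homomorphism; since $\g$ is stratified, $T$ is a Lipschitz homomorphism with $\|T\|_{\mathrm{lip}}\leq \Lip_h(\psi)$. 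To estimate $d_H(h(x),h(z_Q)T(z_Q^{-1}x))$ on $\zeta\epsilon^\beta B_Q$ one writes $z_Q^{-1}x$ as a Chow chain $e^{C_1v_{i(1)}}\cdots e^{C_jv_{i(j)}}$ with $j\leq M_G$, iterates the per-segment horizontal-geodesic approximation, and repeatedly applies the quasi-triangle inequality. The main obstacle — and the reason the size constraint is $e^{\epsilon^{-\alpha}}\tau^{-m}\psi$ rather than polynomial in $\epsilon^{-1}$ — is the control of $\Theta_h^{\mathrm{NH}}$: $NH$ is not additive, because concatenating two nearly-horizontal elements in $H$ can create non-horizontal mass of order $\mathrm{(defect)}^{1/s}$ via BCH brackets, so the cost of repeated concatenation along a Chow chain of length $M_G$ and along parallel-transport comparisons at $r$ nested scales (each costing $\rho^{1/r}$ via Lemmas \ref{close-parallel} and \ref{close-parallel-2}) compounds to a $\beta=\beta(r,s,M_G)$ that is large, and ultimately to the exponential size exponent $\alpha$. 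Once these losses are absorbed into $\alpha$ and $\beta$, the contrapositive of the supremum lemma translates the Carleson bound on $\beta_h^{\mathrm{hor},(p)}+\beta_h^{\mathrm{NH},(p)}$ into \eqref{C-UAAP-eqn} exactly as in the proofs of Theorems \ref{M-UAAP} and \ref{UC-UAAP}.
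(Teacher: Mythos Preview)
Your outline follows the template of Sections 4--5 reasonably well, but there is a genuine gap at the homomorphism-construction step, and your explanation of where the exponential loss originates is off.

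\textbf{The homomorphism extension is not automatic.} You write: ``Define $T:G\to H$ on the horizontal generators $\{v_1,\ldots,v_n\}$ of $\g$ by $T(e^{tv_i}):=e^{tw(v_i)}$ and extend uniquely to a Lie-algebra homomorphism; since $\g$ is stratified, $T$ is a Lipschitz homomorphism.'' Stratification only tells you that a Lie-algebra homomorphism is \emph{determined} by its restriction to $\V_1(\g)$; it does not say that an arbitrary linear map $\V_1(\g)\to\V_1(\h)$ extends to one. The images $w_i:=w(v_i)$ must satisfy in $\h$ every Lie-bracket relation that the $v_i$ satisfy in $\g$ (including all relations of the form ``nested brackets of depth $>r$ vanish'' when $r<s$). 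There is no reason the $w_i$ produced by your per-direction line estimates satisfy these relations exactly. The paper handles this with Lemma~\ref{C-homomorphism}: first, using bracket-independent Chow words (Lemma~\ref{bracket-independent}) together with the horizontal-line estimate, one shows each relation polynomial $P_i(w_1,\dots,w_n)$ is \emph{small}; then the \L{}ojasiewicz inequality (Lemma~\ref{loja}) produces nearby $w_i'$ in the common zero locus, and $T$ is defined via the $w_i'$. This entire mechanism is absent from your sketch, and without it the map $T$ is simply not defined.

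\textbf{Source of the exponential.} The $e^{\epsilon^{-\alpha}}$ does not come from the Chow chain of length $M_G$ or from ``$r$ nested scales'' --- those contribute only polynomial losses, as in Section~5. It comes from the iterated-midpoint lemma (Lemma~\ref{C-energy}): passing from control on dyadic intervals at level $m$ to closeness to the horizontal interpolant $L_h^{a,b}$ uses Lemma~\ref{horizontal-lines-bound} at each halving, and that lemma costs a $1/s$-th power each time. After $m\approx\log(1/\epsilon)$ levels the needed smallness of $\Theta_h$ is $(\epsilon/C)^{s^m}$, i.e.\ doubly exponential in $1/\epsilon$. Relatedly, your plan to deduce ``$NH(h(\gamma(a))^{-1}h(\gamma(t)))$ is uniformly small'' directly from smallness of $\Theta_h^{\mathrm{NH}}$ on dyadic subintervals does not work by a simple telescoping, precisely because $NH$ is not subadditive; the paper's inductive argument via Lemma~\ref{collinear-bound} and Lemma~\ref{horizontal-lines-bound} is what makes this step go through, and it is where the $s^m$ loss enters.
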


Given a function $h : \R \to H$, if we plug in $h(u)$, $h\left( \frac{u+v}{2} \right)$, and $h(v)$ for $x,y,z$ respectively, we have that \eqref{C-eqn} gives that
\begin{align}
  \frac{|v-u|^p}{2^p} \partial_h^{(p)}(u,v) &= \frac{1}{2} \left( d_H\left(h(u), h\left( \frac{u+v}{2} \right)\right)^p + d_H\left( h\left( \frac{u+v}{2} \right), h(v)\right)^p \right) - \frac{d_H(h(u),h(v))^p}{2} \notag \\
  &\geq \frac{1}{K} \left( \left| \frac{h(u)_1 + h(v)_1}{2} - h\left( \frac{u+v}{2} \right)_1 \right|^p + NH(h(u)^{-1}h(w))^p\right). \label{C-partial-midpoint}
\end{align}
For $p > 1$, we define
\begin{align}
  \Theta_h(x,y)^p = \frac{1}{|y-x|^p}\left( \left| \frac{h(x)_1 + h(y)_1}{2} - h\left( \frac{x+y}{2} \right)_1 \right|^p + NH(h(x)^{-1}h(y))^p\right). \label{C-theta-defn}
\end{align}
From \eqref{C-partial-midpoint}, we get that
\begin{align*}
  \Theta_h(x,y)^p \leq \frac{K}{2^p} \partial_h^{(p)}(x,y).
\end{align*}
We can then similarly define analogues to $\alpha_h^{(p)}$ as in the previous section.  For $[a,b] \subset \R$, we define
\begin{align*}
  \beta_h^{(p)}([a,b];\epsilon) &= (1-\epsilon)^2 (b-a)^{-2} \underset{\begin{smallmatrix} {a \leq x < y \leq b,} \\ {y - x > \epsilon(b-a)} \end{smallmatrix}}{\iint} \Theta_h(x,y)^p dy~dx,
\end{align*}
and for $Q \in \Delta$, we define
\begin{align*}
  \beta_h^{(p)}(Q;\epsilon) &= \ell(Q)^{N-1} \int_{S^{n-1}} \int_{z_Q \cdot (G \circleddash v)} \chi_{\{x \cdot \R v \cap 2B_Q \neq \emptyset\}} \beta_h^{(p)}(x \cdot \R v \cap 6B_Q;\epsilon) ~dx ~d\mu(v).
\end{align*}
We also have that
\begin{align}
  \beta_h^{(p)} \leq \frac{K}{2^p}\alpha_h^{(p)}. \label{C-alpha-beta}
\end{align}

We first prove some preliminary lemmas.  We first show that, like before, $\Theta_h^p$ does not change much under perturbations.

\begin{lemma} \label{C-theta-move}
  Let $h : I \to H$ be $\psi$-LLD for some (possible infinite) interval $I \subseteq \R$ and let $c,d \in I$ so that $\frac{d-c}{4} \geq \psi$.  There exist constants $\zeta \in (0,1/4)$ and $C \in (0,1)$ so that if
  \begin{align}
    \zeta \left(\frac{\Theta_h(c,d)}{\Lip_h(\psi)}\right)^s (d-c) =: \rho \geq \psi. \label{C-large-separation}
  \end{align}
  then for any $u,v \in I$ so that $|u - c| \leq \rho$ and $|v- d| \leq \rho$ we get $\Theta_h(u,v) > C\Theta_h(c,d)$.
\end{lemma}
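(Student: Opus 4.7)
\emph{Proof proposal.} The structure follows Lemmas \ref{M-partial-move} and \ref{UC-theta-move}, with the new feature that $\Theta_h^p$ now decomposes as $|F|^p + NH^p$, where $F(x,y) := \tfrac{h(x)_1+h(y)_1}{2} - h(\tfrac{x+y}{2})_1$, so each summand must be controlled separately under the perturbation $(c,d)\leadsto(u,v)$. Setting $T := \Theta_h(c,d)(d-c)$ and using $\Theta_h(c,d) \leq C_0 \Lip_h(\psi)$ (from the $\psi$-LLD hypothesis and $1$-Lipschitzness of $\pi$), the hypothesis forces $\rho \leq \zeta C_0^s(d-c)$, so for $\zeta$ sufficiently small one has $\rho \leq (d-c)/4$ and hence $v-u \in [(d-c)/2, 3(d-c)/2]$. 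It therefore suffices to prove $|F(u,v)|^p + NH(h(u)^{-1}h(v))^p \geq C^p T^p$ for some absolute $C > 0$, since that will give $\Theta_h(u,v) \geq (2C/3)\Theta_h(c,d)$.

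The midpoint piece is the easy one: $\pi : H \to \R^m$ is $1$-Lipschitz with respect to $d_H$, and the $\psi$-LLD property (applicable because $\rho \geq \psi$) bounds each of $|h(u)_1 - h(c)_1|$, $|h(v)_1 - h(d)_1|$, and $|h(\tfrac{u+v}{2})_1 - h(\tfrac{c+d}{2})_1|$ by $\Lip_h(\psi)\rho$, giving
\begin{align*}
  |F(u,v) - F(c,d)| \leq 2\Lip_h(\psi)\rho = 2\zeta\left(\frac{\Theta_h(c,d)}{\Lip_h(\psi)}\right)^{s-1} T \leq 2\zeta C_0^{s-1} T,
\end{align*}
an arbitrarily small fraction of $T$ when $\zeta$ is small.

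For the nonhorizontality piece, set $a := h(u)^{-1}h(c)$, $b := h(d)^{-1}h(v)$, $g := h(c)^{-1}h(d)$, so $h(u)^{-1}h(v) = agb$, with $d_H(\mathrm{id},a), d_H(\mathrm{id},b) \leq M := C\Lip_h(\psi)\rho$ and $d_H(\mathrm{id},g) \leq D := C'\Lip_h(\psi)(d-c)$ (invoking LLD on each pair). Passing to the equivalent homogeneous metric $d_\infty$ on $H$ and expanding via Baker--Campbell--Hausdorff (in the spirit of Lemma \ref{close-parallel}), we estimate the two transport errors $d_H(g,agb)$ and $d_H(\tilde{\pi}(g), \tilde{\pi}(agb))$ by sums of cross products $M^w D^{k-w}$ with $w \geq 1$ and $k \leq s$, coming respectively from the conjugation $d_H(g,ag) = d_H(\mathrm{id}, g^{-1}ag)$ and from the BCH expansion of $e^{-g_1} e^{(agb)_1}$ in the horizontal layer. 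The defining formula $\rho = \zeta(\Theta_h(c,d)/\Lip_h(\psi))^s(d-c)$ is calibrated precisely so that the worst cross term $M \cdot D^{s-1}$ satisfies $M\cdot D^{s-1} \leq C_1\zeta T^s$; the same substitution together with $\Theta_h(c,d)/\Lip_h(\psi) \leq C_0$ gives the uniform bound $M^w D^{k-w} \leq C_2^k \zeta T^k$ for every $1 \leq w \leq k \leq s$ (since $sw-k \geq 0$). Taking $k$-th roots and summing finitely many contributions yields
\begin{align*}
  d_H(g, agb) + d_H(\tilde{\pi}(g), \tilde{\pi}(agb)) \leq C_3 \zeta^{1/s} T,
\end{align*}
and then the quasi-triangle inequality applied to $NH(agb) = d_H(\tilde{\pi}(agb), agb)$ versus $NH(g) = d_H(\tilde{\pi}(g), g)$ produces $NH(agb) \geq C_Q^{-2} NH(g) - C_4 \zeta^{1/s} T$.

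To conclude, since $T^p = |F(c,d)|^p + NH(g)^p$, at least one summand is $\geq T^p/2$; the matching perturbation bound above then delivers either $|F(u,v)| \gtrsim T$ or $NH(h(u)^{-1}h(v)) \gtrsim T$, where in the latter case we use that $NH(g) \geq T/2^{1/p}$ allows the additive $C_4 \zeta^{1/s} T$ correction to be absorbed. Thus $|F(u,v)|^p + NH(h(u)^{-1}h(v))^p \gtrsim T^p$ and the lemma follows. The main obstacle is the BCH bookkeeping in the nonhorizontality piece: one must verify that the exponent $s$ in the definition of $\rho$ is exactly what is needed to absorb the critical cross term $M \cdot D^{s-1}$ appearing at the topmost layer $k=s$ with a single $a$- or $b$-factor, and simultaneously that both the ``positional'' error $d_H(g, agb)$ and the ``projection'' error $d_H(\tilde{\pi}(g), \tilde{\pi}(agb))$ are genuinely of size $\zeta^{1/s} T$ rather than the naive $\zeta T$ — this discrepancy is the quantitative footprint of working in a step-$s$ Carnot target.
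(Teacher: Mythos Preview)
Your proposal is correct and follows essentially the same route as the paper: split into the midpoint piece $|F|$ and the nonhorizontality piece $NH$, handle $|F|$ via the $1$-Lipschitz projection $\pi$, and control the $NH$ perturbation by a BCH cross-term estimate yielding errors of size $\zeta^{1/s}T$, then conclude by the case split on which summand dominates. The only organizational difference is that where you bound $d_H(g,agb)$ and $d_H(\tilde\pi(g),\tilde\pi(agb))$ directly via BCH bookkeeping, the paper instead applies the quasi-triangle inequality in a slightly different arrangement (isolating $d_H(h(d),h(v))$, $d_H(\tilde\pi(h(u)^{-1}h(v)),\tilde\pi(h(c)^{-1}h(d)))$, and $d_H(h(u)\tilde\pi(g),h(c)\tilde\pi(g))$) and invokes the pre-packaged Lemma~\ref{close-parallel-2} twice to obtain the same $\rho^{1/s}(d-c)^{1-1/s}\Lip_h(\psi)=\zeta^{1/s}T$ bound; your explicit identification of $M\cdot D^{s-1}$ as the critical cross term is exactly the content hidden inside that lemma.
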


\begin{proof}
  As $d-c \geq 4\psi$, we get by looking at the definition of $\Theta_h$ that
  \begin{align*}
    \frac{\Theta_h(c,d)}{\Lip_h(\psi)} \leq 1.
  \end{align*}
  Thus, $\rho \leq \frac{1}{4} (d-c)$ and so we get that $v-u \leq \frac{3}{2}(d-c)$.  We also have that
  \begin{align}
    \left(\frac{\Theta_h(c,d)}{\Lip_h(\psi)}\right)^s \leq \frac{\Theta_h(c,d)}{\Lip_h(\psi)}. \label{C-power-s-bnd}
  \end{align}
  By \eqref{C-theta-defn}, we must have that
  \begin{align*}
    \max \left\{\left| \frac{h(c)_1 + h(d)_1}{2} - h\left( \frac{c+d}{2} \right)_1 \right|^p, NH(h(c)^{-1}h(d))^p \right\} \geq \frac{(d-c)^p}{2} \Theta_h(c,d)^p.
  \end{align*}
  Suppose
  \begin{align}
    \frac{1}{(d-c)^p} \left| \frac{h(c)_1 + h(d)_1}{2} - h\left( \frac{c+d}{2} \right)_1 \right|^p \geq \frac{1}{2} \Theta_h(c,d)^p. \label{theta-1}
  \end{align}
  Then by a direct computation, we get
  \begin{align}
    \Theta_h(u,v) &\geq \frac{1}{v-u} \left| \frac{h(u)_1 + h(v)_1}{2} - h\left( \frac{u+v}{2} \right)_1 \right| \notag \\
    &\geq \frac{1}{v-u} \left( \left| \frac{h(c)_1 + h(d)_1}{2} - h\left( \frac{c+d}{2} \right)_1 \right| - 2 \rho \Lip_h(\psi) \right) \notag \\
    &\overset{\eqref{C-large-separation} \wedge \eqref{C-power-s-bnd}}{\geq} \frac{1}{v-u} \left( \left| \frac{h(c)_1 + h(d)_1}{2} - h\left( \frac{c+d}{2} \right)_1 \right| - \frac{1}{4} \Theta_h(c,d) (d-c) \right) \label{C-NH-theta-use} \\
    &\overset{\eqref{theta-1}}{\geq} \frac{d-c}{2(v-u)} \frac{1}{d-c} \left| \frac{h(c)_1 + h(d)_1}{2} - h\left( \frac{c+d}{2} \right)_1 \right| \notag \\
    &\geq \frac{1}{3(d-c)} \left| \frac{h(c)_1 + h(d)_1}{2} - h\left( \frac{c+d}{2} \right)_1 \right|. \label{C-perturb1}
  \end{align}
  In \eqref{C-NH-theta-use}, we used the fact that $\zeta < 1/4$.  Thus, we get by \eqref{theta-1} that
  \begin{align*}
    \Theta_h(u,v)^p \geq \frac{1}{2 \cdot 3^p} \Theta_h(c,d)^p. 
  \end{align*}
  Now suppose
  \begin{align}
    \frac{1}{(d-c)^p} NH(h(c)^{-1}h(d))^p = \frac{1}{(d-c)^p} d_H(h(d),h(c)\tilde{\pi}(h(c)^{-1}h(d)))^p \geq \frac{1}{2} \Theta_h(c,d)^p. \label{theta-2}
  \end{align}
  As we've only assumed that $d_H$ satisfies a quasi-triangle inequality with constant $C_Q$, we have
  \begin{align*}
    d_H(h&(d),h(c) \tilde{\pi}(h(c)^{-1}h(d))) \\
    &\leq C_Q\left(d_H(h(d),h(v)) + d_H(h(v),h(c) \tilde{\pi}(h(c)^{-1} h(d))) \right) \\
    &\leq C_Qd_H(h(d),h(v)) + C_Q^2 d_H(h(v),h(u) \tilde{\pi}(h(c)^{-1} h(d))) \\
    &\qquad + C_Q^2 d_H(h(u)\tilde{\pi}(h(c)^{-1}h(d)), h(c)\tilde{\pi}(h(c)^{-1}h(d))) \\
    &\leq C_Qd_H(h(d),h(v)) + C_Q^3 d_H(h(v),h(u) \tilde{\pi}(h(u)^{-1} h(v))) \\
    &\qquad + C_Q^3 d_H(\tilde{\pi}(h(u)^{-1} h(v)), \tilde{\pi}(h(c)^{-1} h(d))) \\
    &\qquad + C_Q^2 d_H(h(u)\tilde{\pi}(h(c)^{-1}h(d)), h(c)\tilde{\pi}(h(c)^{-1}h(d))).
  \end{align*}
  Thus, we get
  \begin{align}
    NH(h(u)^{-1}h(v)) &= d_H(h(v),h(u) \tilde{\pi}(h(u)^{-1} h(v))) \notag \\
    &\geq C_Q^{-3} NH(h(c)^{-1}h(d)) - C_Q^{-2} d_H(h(d),h(v)) - d_H(\tilde{\pi}(h(u)^{-1} h(v)), \tilde{\pi}(h(c)^{-1} h(d))) \notag \\
    &\qquad -  C_Q^{-1} d_H(h(u)\tilde{\pi}(h(c)^{-1}h(d)), h(c)\tilde{\pi}(h(c)^{-1}h(d))). \label{C-move-NH}
  \end{align}
  We bound all the negative terms on the right hand side individually.  As $\tilde{\pi}$ maps to horizontal elements, we have that $\tilde{\pi}(h(u)^{-1} h(v)) = e^{\lambda w_0}$ and $\tilde{\pi}(h(c)^{-1} h(d)) = e^{\lambda w_1}$ for some $\lambda \leq \frac{3}{2}(d-c) \Lip_h(\psi)$ and $w_0,w_1 \in \V_1$ so that $w_0 \in S^{m-1}$.  Note that as $|u-c|,|v-d| \leq \rho$, we get 
  \begin{align}
     d_H(h(c),h(u)) \leq \rho \Lip_h(\psi), \notag \\
     d_H(h(d),h(v)) \leq \rho \Lip_h(\psi). \label{C-move-1}
  \end{align}
  As $\pi : (H,d_H) \to \R^m$ is 1-Lipschitz, this gives us that
  \begin{align*}
    |h(c)_1 - h(u)_1| \leq \rho\Lip_h(\psi), \\
    |h(d)_1 - h(v)_1| \leq \rho\Lip_h(\psi).
  \end{align*}
  As $w_0$ and $w_1$ depend only on the first coordinates of $h(u),h(v),h(c)$, and $h(d)$, we get by simple Euclidean geometry that
  \begin{align*}
    |w_0 - w_1| \leq \frac{2\rho\Lip_h(\psi)}{\lambda}.
  \end{align*}
  Remembering that $\lambda \leq \frac{3}{2} (d-c)\Lip_h(\psi)$, we get by Lemma \ref{close-parallel-2} that there exists some $C_1 > 0$ so that
  \begin{multline}
    d_H(\tilde{\pi}(h(u)^{-1} h(v)), \tilde{\pi}(h(c)^{-1} h(d))) \leq C_1 \left( \frac{2\rho\Lip_h(\psi)}{\lambda}\right)^{1/s}\lambda \leq C_1 \rho^{1/s} (d-c)^{1-\frac{1}{s}} \Lip_h(\psi) \\
    \overset{\eqref{C-large-separation}}{\leq} C_1 \zeta^{1/s} (d-c) \Theta_h(c,d). \label{C-move-2}
  \end{multline}
  Using the fact that $d_H(h(u),h(c)) \leq \rho \Lip_h(\psi)$, we can use Lemma \ref{close-parallel-2} again to show that there exists some constants $C_2 > 0$ so that
  \begin{align}
    d_H(h(u)\tilde{\pi}(h(c)^{-1}h(d)), h(c)\tilde{\pi}(h(c)^{-1}h(d))) &\leq C_2 \rho^{1/s} (d-c)^{1-\frac{1}{s}} \Lip_h(\psi) \notag \\
    &\overset{\eqref{C-large-separation}}{\leq} C_2 \zeta^{1/s} (d-c) \Theta_h(c,d). \label{C-move-3}
  \end{align}
  By using \eqref{C-power-s-bnd} and \eqref{C-move-1} for the first term and choosing $\zeta$ sufficiently small, we can force
  \begin{align}
    &C_Q^{-2} d_H(h(d),h(v)) + d_H(\tilde{\pi}(h(u)^{-1} h(v)), \tilde{\pi}(h(c)^{-1} h(d))) \notag \\
    &\qquad + C_Q^{-1} d_H(h(u)\tilde{\pi}(h(c)^{-1}h(d)), h(c)\tilde{\pi}(h(c)^{-1}h(d))) \notag \\
    &\overset{\eqref{C-large-separation} \wedge \eqref{C-power-s-bnd} \wedge \eqref{C-move-1} \wedge \eqref{C-move-2} \wedge \eqref{C-move-3}}\leq \frac{C_Q^{-3}}{4} (d-c) \Theta_h(c,d). \label{C-move-4}
  \end{align}
  We can now bound
  \begin{align*}
    \Theta(u,v) &\geq \frac{1}{v-u} NH(h(u)^{-1}h(v)) \\
    &\overset{\eqref{C-move-NH} \wedge \eqref{C-move-4}}{\geq} \frac{1}{v-u} \left( C_Q^{-3} NH(h(c)^{-1}h(d)) - \frac{C_Q^{-3}}{4} (d-c) \Theta_h(c,d) \right) \\
    &\overset{\eqref{theta-2}}{\geq} \frac{1}{2C_Q^3(v-u)} NH(h(c)^{-1}h(d)) \\
    &\overset{\eqref{theta-2}}{\geq} \frac{d-c}{4C_Q^3(v-u)} \Theta_h(c,d) \\
    &\geq \frac{1}{6C_Q^3} \Theta_h(c,d).
  \end{align*}
\end{proof}

Our next result is to show that if an element has a small non-horizontal amount, then the nonhorizontal coordinates are also small.

\begin{lemma} \label{horizontal-comparison}
  Let $\rho \in (0,1)$ and $\eta > 0$.  There exists a constant $C > 0$ depending only on the structure of the group and the group norm such that if $NH(g) \leq \rho \eta$ and $|g_1| < \eta$, then we have
  \begin{align*}
    \sup_{i \in \{2,...,n\}} |g_i| \leq C\rho^2 \eta^i.
  \end{align*}
\end{lemma}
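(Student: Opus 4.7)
The plan is to analyze the coordinates of $\tilde\pi(g)^{-1} g$ via the Baker--Campbell--Hausdorff formula. Write $g=(g_1,g_2,\ldots,g_s)$ in exponential coordinates, so that $\tilde\pi(g)^{-1}g = (-g_1,0,\ldots,0)\cdot (g_1,g_2,\ldots,g_s)$, whose first coordinate is $0$ and whose $k$-th coordinate for $k\geq 2$ equals $g_k+P_k'$, where $P_k' := P_k(-g_1,0,\ldots,0,\,g_1,g_2,\ldots,g_{k-1})$ is the $k$-th BCH polynomial specialized to this factorization. Since all homogeneous norms on $H$ are equivalent (see the preliminaries), the hypothesis $NH(g)\leq\rho\eta$ immediately yields a coordinate estimate $|g_k+P_k'|\leq C_0(\rho\eta)^k$ for every $k\in\{2,\ldots,s\}$, with $C_0$ depending only on $H$ and $d_H$.

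The key structural observation, which is the heart of the argument, is that every nonvanishing monomial appearing in $P_k'$ must contain at least one factor $g_{j^*}$ with $j^*\geq 2$. Indeed, $P_k$ is a finite sum of iterated Lie brackets of the form $[x_1,[x_2,\ldots,[x_{j-1},x_j]\ldots]]$, each of which by the Dynkin form of BCH contains at least one entry from the first factor and at least one from the second. After our specialization the only nonzero entries available from the first factor are multiples of $g_1$, and the weight-one entry from the second factor is $g_1$ itself. If every $x_\ell$ in a given monomial drew only from these weight-one choices, then every $x_\ell$ would lie in $\operatorname{span}(g_1)\subset\V_1$, so the innermost bracket $[x_{j-1},x_j]$ would be the bracket of two parallel horizontal vectors and would vanish, killing the entire monomial. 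Hence any surviving monomial of $P_k'$ must involve some $g_{j^*}$ with $j^*\in\{2,\ldots,k-1\}$; as a sanity check, $P_2' = \tfrac{1}{2}[-g_1,g_1]=0$.

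Granted this observation, I would close the argument by induction on $k$. The base case $k=2$ is immediate from $P_2'=0$ and the coordinate estimate, which give $|g_2|\leq C_0\rho^2\eta^2$. For the inductive step, any monomial in $P_k'$ has total weight $k$ and, by the structural observation above, factors as $g_1^a\cdot g_{i_1}\cdots g_{i_b}$ with $a+\sum i_\ell = k$ and at least one $i_\ell\geq 2$; combining $|g_1|\leq\eta$ with the inductive bound $|g_j|\leq C_j\rho^2\eta^j$ for $2\leq j<k$, every such monomial is dominated by $C\rho^2\eta^k$, so $|P_k'|\leq C\rho^2\eta^k$ and therefore
$$|g_k|\leq C_0(\rho\eta)^k + C\rho^2\eta^k \leq C_k\rho^2\eta^k,$$
using $\rho\in(0,1)$ and $k\geq 2$ to absorb $\rho^k$ into $\rho^2$. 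The main obstacle is really just verifying the structural observation in the second paragraph---once one sees that monomials made entirely of weight-one entries die because the innermost bracket becomes $[g_1,g_1]=0$, everything else is routine bookkeeping powered by the equivalence of homogeneous norms. This vanishing is precisely what sharpens the final bound from the weaker $\rho^k$ one would read directly off the coordinate estimate to the required $\rho^2$.
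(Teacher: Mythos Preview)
Your proof is correct and follows essentially the same approach as the paper's: both compute $\tilde\pi(g)^{-1}g$ in coordinates, use equivalence of homogeneous norms to get $|g_k+P_k'|\lesssim(\rho\eta)^k$, and then run an induction on $k$ powered by the observation that every surviving monomial in $P_k'$ must contain a factor $g_j$ with $j\ge 2$. The paper phrases the induction contrapositively (choosing thresholds $\lambda_k$ so that $|g_k|\ge\lambda_k\rho^2\eta^k$ forces $NH(g)>\rho\eta$) and simply asserts that the ``largest'' monomial is $[g_1,[g_1,\ldots,[g_1,g_2]\ldots]]$, whereas you run the induction directly and make explicit why pure-$g_1$ monomials vanish; these are cosmetic differences only.
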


\begin{proof}
  If we set $P_k$ to be the BCH polynomial of the product $\pi(g)^{-1} g$ for the $k$th level, we have that there exists some $C_0 > 0$ so that
  \begin{align}
    N_H(\tilde{\pi}(g)^{-1} g) = N_H(0,g_2 + P_2,...,g_s + P_s) \geq C_0 \max_{2 \leq i \leq s} |g_i + P_i|^{1/i}. \label{NH-max}
  \end{align}
  This follows from the equivalence of homogeneous norms.  We prove that there exists some sequence of numbers $\lambda_2,...,\lambda_r$ depending only on the group structure and norm such that if $|g_i| \geq \lambda_i \rho^2 \eta^i$ for some $i \in \{2,...,n\}$, then the right hand side of \eqref{NH-max} is greater than $\rho \eta$, which contradicts our assumption.  This is easily seen to be true if $|g_2|^{1/2} \geq C_0^{-2} \rho \eta$ as $P_2 = 0$.  Now assume we have shown that there exist $\lambda_2,...,\lambda_{k-1}$ such that for all $i \in \{2,...,k-1\}$ we have
  \begin{align*}
    |g_i| \leq \lambda_i \rho^2 \eta^i.
  \end{align*}
  Suppose $|g_k| \geq \lambda_k \rho^2 \eta^k$ where $\lambda_k$ is some constant to be determined later.  Note then that the largest possible value for a nested Lie bracket in $P_k$ (modulo already chosen multiplicative coefficients that were dependent only on the group structure) is $|[g_1,[g_1,...,[g_1,g_2]...]]| \leq \lambda_2 \rho^2 \eta^k$ as the presence of any $g_i$ with higher indices would only add to the power of $\rho$.  By the BCH formula there then exists some $C_1 > 0$ depending only on $H$ and $\lambda_2,...,\lambda_{k-1}$ such that
  \begin{align*}
    |P_k| \leq C_1 \rho^2 \eta^k.
  \end{align*}
  Thus, we have
  \begin{align*}
    |g_k + P_k| \geq |g_k| - |P_k| \geq \lambda_k \rho^2 \eta^k - C_1 \rho^2 \eta^k = (\lambda_k-C_1) \rho^2 \eta^k.
  \end{align*}
  We then get a contradiction if we choose $\lambda_k$ large enough.
\end{proof}

Next, we show that, given a product of two elements $gh$, if $g$ has a small non-horizontal amount and $g_1$ and $h_1$ are close, then $g$ is close to the midpoint of the line segment from the origin to $\tilde{\pi}(gh)$.

\begin{lemma} \label{collinear-bound}
  Let $\rho \in (0,1)$ and $\eta > 0$.  There exists a constant $C > 0$ depending only on the group structure such that if $g,h \in H$ such that
  \begin{align*}
    NH(g) \leq \rho \eta, \quad |g_1| \leq \eta, \quad \left| g_1 - h_1 \right| &\leq \rho \eta,
  \end{align*}
  then
  \begin{align*}
    d_H\left(g, \delta_{1/2}\left(\tilde{\pi}(gh)\right)\right) \leq C\rho^{1/s} \eta.
  \end{align*}
\end{lemma}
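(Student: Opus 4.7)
The plan is to reduce the claim to a bound on the homogeneous norm of $m^{-1} g$, where $m := \delta_{1/2}(\tilde{\pi}(gh))$. Since the projection onto the first layer is a homomorphism to $\R^m$, we have $\tilde{\pi}(gh) = (g_1 + h_1, 0, \ldots, 0)$, and hence $m = (M, 0, \ldots, 0)$ where $M := \frac{g_1 + h_1}{2} \in \V_1$. By equivalence of homogeneous norms, it suffices to bound $N_\infty(m^{-1} g)$ by a constant multiple of $\rho^{1/s} \eta$. Applying Lemma \ref{horizontal-comparison} to $g$ (valid since $NH(g) \leq \rho\eta$ and $|g_1| \leq \eta$) yields $|g_i| \leq C \rho^2 \eta^i$ for $2 \leq i \leq s$, and the triangle inequality gives $|h_1| \leq (\rho+1)\eta \leq 2\eta$, so $|M| \leq \tfrac{3}{2}\eta$.

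Next, write $m^{-1} g = e^{-M} e^{G}$ with $G = g_1 + g_2 + \cdots + g_s$, and expand via the BCH formula. At layer $1$ the outcome is $-M + g_1 = \tfrac{g_1 - h_1}{2}$, of magnitude at most $\rho\eta/2$, which is comfortably below $\rho^{1/s}\eta$ since $\rho \leq 1$. At each layer $k \geq 2$, the outcome is $g_k + Q_k$, where $|g_k| \leq C\rho^2 \eta^k$ and $Q_k$ is a finite sum of nested Lie brackets: each bracket in the BCH expansion contains at least one $-M$-factor and at least one $G$-factor, and upon expanding $G = \sum g_i$ multilinearly, every summand is a bracket of total weight $k$ in letters drawn from $\{-M, g_1, \ldots, g_s\}$ containing at least one $-M$ and at least one $g_i$. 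To conclude it suffices to bound each such bracket by $C\rho\eta^k$.

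The main obstacle is that the naive estimate $|g_1| \leq \eta$ supplies no $\rho$ factor, so Case~2 below requires extra work. I split into two cases according to whether the bracket contains some $g_i$ with $i \geq 2$. If it does, then $|g_i| \leq C \rho^2 \eta^i$ already yields a contribution of order $\rho^2 \eta^k$. If it does not---so the non-$M$ factors are all $g_1$---then substitute $g_1 = M + \epsilon$ with $\epsilon := g_1 - M = \tfrac{g_1 - h_1}{2}$ satisfying $|\epsilon| \leq \rho\eta/2$, and expand multilinearly; any resulting sub-bracket containing no $\epsilon$ is a nested bracket in the single element $M$ (up to signs) of length $\geq 2$, which vanishes, so every surviving term has at least one $\epsilon$-factor and is bounded by $|\epsilon| \cdot |M|^{k-1} \leq C\rho\eta^k$. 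Combining gives $|(m^{-1} g)_k| \leq C\rho\eta^k$ for every $k \in \{1, \ldots, s\}$, hence $|(m^{-1} g)_k|^{1/k} \leq C\rho^{1/k}\eta \leq C\rho^{1/s}\eta$ using $\rho \leq 1$ and $k \leq s$. This yields $N_\infty(m^{-1} g) \leq C\rho^{1/s}\eta$, and the claim follows from the equivalence of homogeneous norms.
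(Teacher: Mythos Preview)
Your proof is correct and follows essentially the same route as the paper. Both arguments compute the BCH expansion of $\delta_{1/2}(\tilde{\pi}(gh))^{-1} g$, invoke Lemma~\ref{horizontal-comparison} to handle the $g_i$ with $i\ge 2$, and then isolate the delicate case where the bracket involves only the horizontal letters $-M$ and $g_1$. Your substitution $g_1 = M + \epsilon$ with $|\epsilon|\le \rho\eta/2$, followed by the observation that pure-$M$ brackets vanish, is exactly the paper's trick $[g_1,h_1]=[g_1,-(g_1-h_1)]$ rephrased: since $[-M,g_1]=\tfrac12[g_1,h_1]$, both extract the same $\rho$-factor from the innermost commutator.
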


\begin{proof}
  By the equivalence of norms, we have that there exists a constant $C_0 > 0$ such that
  \begin{align*}
    d_H\left(g,\delta_{1/2}\left(\tilde{\pi}(gh)\right) \right) &= N_H\left( \left( \frac{-g_1-h_1}{2},0,...,0 \right) (g_1,...,g_s) \right) \\
    &= N_H\left( \left( \frac{g_1 - h_1}{2} , g_2 + P_2,..., g_s + P_s \right)\right) \\
    &\leq C_0 \left[ \left| \frac{g_1 - h_1}{2} \right| \vee \max_{i \in \{2,...,s\}} \left(|g_i|^{1/i} + |P_i|^{1/i}\right) \right].
  \end{align*}
  Thus, it suffices to bound each term on the right hand side by some constant multiple of $\rho^{1/s} \eta$.  The term $|g_1 - h_1|$ already satisfies the conclusion.  By Lemma \ref{horizontal-comparison}, we have that there exists some constant $C_1 > 0$ such that for each $i \in \{2,...,s\}$ we have
  \begin{align*}
    |g_i| \leq C_1 \rho^2 \eta^i.
  \end{align*}
  Thus, it suffices to bound the $P_k$, which we will do so by bounding the individual nested Lie brackets that make up its summation (losing only another multiplicative constant).  Let $[x_1,[x_2,...[x_{j-1},x_j]...]]$ be a nested Lie bracket in $P_k$ where $x_\ell$ is either $g_{i(\ell)}$ or $h_1$.  Notice that $|h_1| \leq |h_1 - g_1| + |g_1| \leq (1 + \rho)\eta \leq 2\eta$.  Suppose that $[x_{j-1},x_j] = \pm [g_1,h_1]$.  Then as $[g_1,g_1] = 0$, we get
  \begin{align*}
    |[g_1,h_1]| = |[g_1,g_1-h_1]| \leq |g_1| |g_1 - h_1| \leq \rho \eta^2,
  \end{align*}
  and so we have that there exists some $C_2 > 0$ depending only on the group so that
  \begin{align*}
    |[x_1,[x_2,...[x_{j-1},x_j]...]]| \leq |[x_{j-1},x_j]| \prod_{i=1}^{j-2} |x_i| \leq C_2 \rho \eta^k.
  \end{align*}
  Otherwise, then either $x_{j-1} \in \{g_2,...,g_s\}$ or $x_j \in \{g_2,...,g_s\}$ and so there exists some $C_3 > 0$ depending only on the group so that
  \begin{align*}
    |[x_1,[x_2,...[x_{j-1},x_j]...]]| \leq \prod_{i=1}^j |x_i| \leq C_3 \rho^2 \eta^k.
  \end{align*}
\end{proof}

We can now prove that coarse differentiability of a Lipschitz map $\R \to H$ can be controlled by the $\Theta_h$.

\begin{lemma} \label{C-energy}
  There exists some constant $C > 0$ depending only on $H$ so that, for each $m \in \N \backslash \{0\}$ and map $h : [a,b] \to H$ that is $\psi$-LLD, if $2^{-m-3} (b-a) \geq \psi$ and
  \begin{align}
    \max_{k \in \{0,...,m\}} \max_{I \in \D^k([a,b])} \Theta_h(a(I),b(I))^p \leq \left( \frac{\zeta}{C} \right)^{ps^m} \Lip_h(\psi)^p.
  \end{align}
  then
  \begin{align*}
    \max_{k \in \{0,...,2^m\}} d_H\left( h\left( a + \frac{k}{2^m}(b-a) \right), L_h^{a,b}\left( a + \frac{k}{2^m}(b-a) \right) \right) \leq \zeta (b-a) \Lip_h(\psi).
  \end{align*}
  Here, for $a \leq u < v \leq b$, we define $L^{s,t}$ to be the one sided horizontal interpolant
  \begin{align*}
    L_h^{u,v}(t) := h(u) \delta_{\frac{t-u}{v-u}} \tilde{\pi}(h(v)^{-1}h(u)).
  \end{align*}
\end{lemma}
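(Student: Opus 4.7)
The plan is to induct on the dyadic depth $j \in \{0,1,\ldots,m\}$, tracking the normalized error
\[ \tilde E_j := \max_{k \in \{0,\ldots,2^j\}} \frac{d_H\bigl(h(a + k 2^{-j}(b-a)),\, L_h^{a,b}(a + k 2^{-j}(b-a))\bigr)}{(b-a)\Lip_h(\psi)}. \]
Writing $\epsilon_0 := (\zeta/C)^{s^m}$, the base case $j=0$ is immediate: $L_h^{a,b}(a) = h(a)$ by definition, while left-invariance identifies $d_H(h(b), L_h^{a,b}(b))$ with $NH(h(a)^{-1}h(b))$, which is at most $(b-a)\Theta_h(a,b) \leq \epsilon_0(b-a)\Lip_h(\psi)$; thus $\tilde E_0 \leq \epsilon_0$.

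For the inductive step from depth $j-1$ to $j$, I analyze each new midpoint $p := (u+v)/2$ between adjacent depth-$(j-1)$ points $u,v$. Setting $g := h(u)^{-1}h(p)$ and $g' := h(p)^{-1}h(v)$ so that $gg' = h(u)^{-1}h(v)$, the $\Theta_h$ bound at depths $j$ and $j-1$ (both $\leq m$) gives $NH(g) \leq \tfrac{v-u}{2}\Theta_h(u,p) \leq \epsilon_0(v-u)\Lip_h(\psi)$ and
\[ |g_1 - g'_1| = 2\left|\tfrac{h(u)_1 + h(v)_1}{2} - h(p)_1\right| \leq 2(v-u)\Theta_h(u,v) \leq 2\epsilon_0(v-u)\Lip_h(\psi). \]
Taking $\eta := (v-u)\Lip_h(\psi)$ and $\rho := 2\epsilon_0$ in Lemma \ref{collinear-bound} yields
\[ d_H\bigl(h(p),\, h(u)\delta_{1/2}\tilde{\pi}(h(u)^{-1}h(v))\bigr) \leq C_1 \epsilon_0^{1/s}(v-u)\Lip_h(\psi). \]

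Next, I compare this horizontal midpoint with $L_h^{a,b}(p) = L_h^{a,b}(u)\delta_{1/2}\tilde{\pi}(L_h^{a,b}(u)^{-1}L_h^{a,b}(v))$, the identity holding because $L_h^{a,b}$ traces a horizontal line. The inductive hypothesis controls $d_H(h(u), L_h^{a,b}(u))$ and $d_H(h(v), L_h^{a,b}(v))$ by $\tilde E_{j-1}(b-a)\Lip_h(\psi)$, and since $\pi$ is $1$-Lipschitz the horizontal displacements $\tilde{\pi}(h(u)^{-1}h(v))$ and $\tilde{\pi}(L_h^{a,b}(u)^{-1}L_h^{a,b}(v))$ differ in Euclidean norm by at most $2\tilde E_{j-1}(b-a)\Lip_h(\psi)$. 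Applying Lemma \ref{close-parallel-2} to the pair of horizontal segments emanating from $h(u)$ and $L_h^{a,b}(u)$ (in $H$, which has step $s$) then gives
\[ d_H\bigl(h(u)\delta_{1/2}\tilde{\pi}(h(u)^{-1}h(v)),\, L_h^{a,b}(p)\bigr) \leq C_2 (2^{j}\tilde E_{j-1})^{1/s}(v-u)\Lip_h(\psi). \]
Combining via the quasi-triangle inequality and absorbing the $2^{-j}$ factors yields the recurrence $\tilde E_j \leq C_3 \tilde E_{j-1}^{1/s}$, where the $\tilde E_{j-1}^{1/s}$ term dominates $\epsilon_0^{1/s}$ from the first step onward. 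Iterating $m$ times from $\tilde E_0 \leq \epsilon_0$ gives $\tilde E_m \leq C_4 \epsilon_0^{1/s^m} = C_4 \zeta/C$, which is bounded by $\zeta$ provided the constant $C$ in the hypothesis is at least $C_4$.

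The main obstacle is the careful bookkeeping of error propagation through the horizontal midpoint operation: the $1/s$-power loss in both Lemma \ref{collinear-bound} and Lemma \ref{close-parallel-2} compounds across $m$ refinements, necessitating the super-exponential starting precision $\epsilon_0 = (\zeta/C)^{s^m}$. One must also verify that the perturbation $\rho$ fed into Lemma \ref{close-parallel-2} stays in $(0,1)$ at every level, which follows inductively from the geometric smallness of $\tilde E_{j-1}$. The macroscopic-scale hypothesis $2^{-m-3}(b-a) \geq \psi$ ensures that all subintervals encountered---including halves produced at the midpoint refinement step---have length exceeding $\psi$, so that $\Lip_h(\psi)$-bounds apply uniformly throughout the induction.
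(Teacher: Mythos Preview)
Your proof is correct and follows essentially the same route as the paper's: an induction on dyadic depth, using Lemma~\ref{collinear-bound} to place $h(p)$ near the local horizontal midpoint $h(u)\delta_{1/2}\tilde\pi(h(u)^{-1}h(v))$, and then a horizontal-segment comparison (you invoke Lemma~\ref{close-parallel-2} directly, while the paper uses its corollary Lemma~\ref{horizontal-lines-bound}) to pass from the local midpoint to $L_h^{a,b}(p)$. Your recurrence $\tilde E_j \le C_3\tilde E_{j-1}^{1/s}$ is exactly the mechanism behind the paper's inductive bound $C\zeta^{s^{-m}}$; the paper just packages the same computation inside the inductive hypothesis rather than isolating the recursion.
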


\begin{proof}
  It will be easier to prove that there exists some $C > 0$ so that if
  \begin{align}
    \max_{k \in \{0,...,m\}} \max_{I \in \D^k([a,b])} \Theta_h(x,y)^p \leq \zeta^p \Lip_h(\psi)^p, \label{step}
  \end{align}
  then
  \begin{align*}
    \max_{k \in \{0,...,2^m\}} d_H\left( h\left( a + \frac{k}{2^m}(b-a) \right), L_h^{a,b}\left( a + \frac{k}{2^m}(b-a) \right) \right) \leq C \zeta^{s^{-m}} (b-a) \Lip_h(\psi).
  \end{align*}
  We may suppose by translation that $[a,b] = [0,L]$ and let $L_h := L_h^{a,b}$ for convenience.  We proceed with induction.  For the first case when $m = 0$, \eqref{step} gives that
  \begin{align*}
    \left| \frac{h(0)_1 + h(L)_1}{2} - h\left( \frac{L}{2} \right)_1 \right|^p + NH(h(0)^{-1}h(L))^p \leq \zeta^p L^p \Lip_h(\psi)^p.
  \end{align*}
  Thus,
  \begin{align*}
    d_H(h(L),L_h(L)) = d(h(L),h(0)\tilde{\pi}(h(0)^{-1}h(L))) = NH(h(0)^{-1}h(L)) \leq \zeta L \Lip_h(\psi),
  \end{align*}
  and
  \begin{align*}
    \left| \frac{h(0)_1 + h(L)_1}{2} - h\left( \frac{L}{2} \right)_1 \right| \leq \zeta L \Lip_h(\psi).
  \end{align*}
  By definition, we have that $h(0) = L_h(0)$.  Now suppose that we have shown for $m \in \N \cup \{0\}$ that
  \begin{align}
    \max_{k \in \{0,...,2^m\}} d_H\left( h\left( \frac{k}{2^m}L  \right), L_h\left( \frac{k}{2^m}L \right) \right) &\leq C\zeta^{s^{-m}} L \Lip_h(\psi), \label{left-bound} \\
    \max_{k \in \{0,...,2^m-1\}} \left| \frac{h\left( \frac{k}{2^m} L \right)_1 + h\left( \frac{k+1}{2^m} L \right)_1}{2} - h\left( \frac{k+1/2}{2^m} L \right)_1 \right| &\leq \zeta 2^{-m} L \Lip_h(\psi), \label{induct-hyp-midpoint} \\
    \max_{k \in \{0,...,2^m-1\}} NH\left( h\left( \frac{k}{2^m} L \right)^{-1} h\left( \frac{k+1}{2^m}L \right) \right) &\leq \zeta 2^{-m}L \Lip_h(\psi). \label{induct-hyp-NH}
  \end{align}
  Applying \eqref{step} again gives us
  \begin{align*}
    \max_{k \in \{0,...,2^{m+1}-1\}} \Theta_h\left( \frac{k}{2^{m+1}}L, \frac{k+1}{2^{m+1}}L \right)^p \leq \zeta^p \Lip_h(\psi)^p,
  \end{align*}
  which, taking into account the definition of $\Theta_h(x,y)^p$, also gives
  \begin{align}
    \max_{k \in \{0,...,2^{m+1}-1\}} NH\left( h\left( \frac{k}{2^{m+1}} L \right)^{-1} h\left( \frac{k+1}{2^{m+1}} L \right) \right) &\leq \zeta 2^{-m-1} L \Lip_h(\psi), \label{induct-NH} \\
    \max_{k \in \{0,...,2^{m+1}-1\}} \left| \frac{h\left( \frac{k}{2^{m+1}} L \right)_1 + h\left( \frac{k+1}{2^{m+1}} L \right)_1}{2} - h\left( \frac{k + 1/2}{2^{m+1}} L \right)_1 \right| &\leq \zeta 2^{-m-1} L \Lip_h(\psi). \label{induct-midpoint}
  \end{align}
  Applying Lemma \ref{collinear-bound} to \eqref{induct-hyp-midpoint} and \eqref{induct-NH} with $\eta = 2^{-m+1} L \Lip_h(\psi)$, $\rho = \zeta$, $g = h\left(\frac{k}{2^m}L\right)^{-1} h\left( \frac{k+1/2}{2^m}L\right)$, and $h = h\left(\frac{k+1/2}{2^m} L\right)^{-1}h\left( \frac{k+1}{2^m} L \right)$, we get that there exists some $C_0 > 0$ so that
  \begin{align*}
    \max_{k \in \{0,...,2^m-1\}} d_H\left( h\left( \frac{k+1/2}{2^m} L \right), L_h^{k2^{-m}L,(k+1)2^{-m}L}\left( \frac{k+1/2}{2^m} L \right) \right) \leq C_0\zeta^{1/s} 2^{-m+1} L \Lip_h(\psi).
  \end{align*}
  We now show that $L_h^{k2^{-m}L,(k+1)2^{-m}L}((k+1)2^{-m}L)$ is close to $L_h((k+1)2^{-m}L)$ in order to set up Lemma \ref{horizontal-lines-bound}.  By \eqref{induct-hyp-NH} we have that for each $k \in \{0,...,2^m-1\}$
  \begin{align*}
    d_H\left( h\left( \frac{k+1}{2^m} L\right),L_h^{k2^{-m}L,(k+1)2^{-m}L}\left( \frac{k+1}{2^m} L \right) \right) &= NH\left(h\left( \frac{k}{2^m} L \right)^{-1} h\left( \frac{k+1}{2^m} L \right) \right) \\
    &\leq \zeta 2^{-m} L \Lip_h(\psi).
  \end{align*}
  Thus, the quasi-triangle inequality of $H$ gives
  \begin{align}
    d_H&\left( L_h^{k2^{-m}L,(k+1)2^{-m}L}\left( \frac{k+1}{2^m} L \right), L_h\left( \frac{k+1}{2^m} L \right) \right) \notag \\
    &\quad \leq C_Qd_H\left( L_h^{k2^{-m}L,(k+1)2^{-m}L}\left( \frac{k+1}{2^m} L \right),h\left( \frac{k+1}{2^m} L \right) \right) + C_Qd_H\left( h\left( \frac{k+1}{2^m} L \right), L_h\left( \frac{k+1}{2^m} L \right) \right) \notag \\
    &\quad\leq C_Q\left(\zeta 2^{-m} + C\zeta^{s^{-m}}\right) L \Lip_h(\psi). \label{right-bound}
  \end{align}
  Now taking \eqref{left-bound} and \eqref{right-bound} in consideration, applying Lemma \ref{horizontal-lines-bound} with $\rho= C_Q(\zeta + C2^m \zeta^{s^{-m}})$, $\eta = \Lip_h(\psi)$, and $[a,b] = [k2^{-m}L,(k+1)2^{-m}L]$ gives that there exists some $C_1 > 0$ so that
  \begin{align*}
    d_H\left( L_h^{k2^{-m}L,(k+1)2^{-m}L} \left( \frac{k+1/2}{2^m} L \right), L_h\left( \frac{k+1/2}{2^m} L \right) \right) \leq C_1 \left( \zeta + C2^m \zeta^{s^{-m}} \right)^{1/s} 2^{-m}L \Lip_h(\psi).
  \end{align*}
  Finally, applying to quasi-triangle inequality again, we have for all $k \in \{0,...,2^m-1\}$ that
  \begin{align*}
    d_H\left(h \left( \frac{k+1/2}{2^m} L\right), L_h\left( \frac{k+1/2}{2^m} L \right) \right) &\leq C_Q d_H\left( L_h^{k2^{-m}L,(k+1)2^{-m}K} \left( \frac{k+1/2}{2^m} L \right), L_h\left( \frac{k+1/2}{2^m} L \right) \right) \\
    &\qquad +C_Qd_H\left( h\left( \frac{k+1/2}{2^m} L \right), L_h^{k2^{-m}L,(k+1)2^{-m+1}L)}\left( \frac{k+1/2}{2^m} L \right) \right) \\
    &\quad\leq C_Q\left( C_1 \left( \zeta + C 2^m \zeta^{s^{-m}} \right)^{1/s} 2^{-m} + C_0\zeta^{1/s} 2^{-m+1}\right) L \Lip_h(\psi) \\
    &\quad\leq C\zeta^{s^{-m}} L \Lip_h(\psi).
  \end{align*}
  In the last inequality, we require that $C$ be large enough.
\end{proof}

As in the previous sections, we will show that this bound can be translated into a statment concerning bounds of $\beta_h^{(p)}$.  As before, this proof is similar to Lemma \ref{M-alpha-geodesic}, and we will give the proof as an example for the next few lemmas, whose proofs we will not give.  The only tricky point is that $d_H$ is guaranteed only to be a semimetric.  But this will not change the proofs, just the constants.

\begin{lemma} \label{C-beta-horizontal}
  Suppose $H$ is a Carnot group that satisfies the convexity condition.  Let $h : [a,b] \to H$ be $\psi$-LLD and $\epsilon \in (0,1/2)$.  There exists a constant $\alpha_0 > 0$ depending only on $H$ so that if $\beta_h^{(p)}\left([a,b]; \frac{\epsilon}{8} \right) \leq e^{-\epsilon^{-\alpha_0}} \Lip_h(\psi)^p$ and $b-a \geq e^{\epsilon^{-\alpha_0}} \psi$, then
  \begin{align*}
    \max_{t \in [a,b]} d_H(h(t),L_h^{a,b}(t)) \leq \epsilon (b-a) \Lip_h(\psi).
  \end{align*}
\end{lemma}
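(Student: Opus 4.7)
The plan is to mimic the proof of Lemma~\ref{UC-beta-affine} (the superreflexive analogue), with Lemma~\ref{C-energy} replacing Lemma~\ref{UC-energy}. The crucial structural change is that Lemma~\ref{C-energy} only converts $\Theta_h^p$-bounds at dyadic subintervals into dyadic pointwise closeness to the horizontal interpolant $L_h^{a,b}$ when the former are of order $(\zeta/C)^{ps^m}\Lip_h(\psi)^p$ rather than a fixed polynomial in $\zeta$; this superexponential-in-$m$ contraction, paired with the inevitable choice $m \sim \log_2(1/\epsilon)$, is what forces the hypothesis to take the double-exponential form $e^{-\epsilon^{-\alpha_0}}$.

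Concretely, I would fix $m = \lceil\log_2(1/\epsilon)\rceil + O(1)$ (the $O(1)$ absorbing the quasi-triangle constant $C_Q$ of $H$) and apply Lemma~\ref{C-energy} with $\zeta$ of order $\epsilon$. The dyadic pointwise closeness produced by that lemma, combined with $\psi$-LLD control on $h$, the fact that $L_h^{a,b}$ is a horizontal Lipschitz segment of speed at most $\Lip_h(\psi)$, and the quasi-triangle inequality in $H$, then yields the desired pointwise bound on all of $[a,b]$; the $2^{-m-3}(b-a) \geq \psi$ side condition of Lemma~\ref{C-energy} is comfortably implied by $b-a \geq e^{\epsilon^{-\alpha_0}}\psi$ for $\alpha_0$ large enough. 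It thus suffices to verify $\Theta_h(a(I),b(I))^p \leq (\zeta/C)^{ps^m}\Lip_h(\psi)^p$ at every dyadic subinterval $I \in \D^k([a,b])$ with $k \leq m$. Suppose for contradiction that some such $I=[u,v]$ violates this, and set $\theta := \Theta_h(u,v)$. Lemma~\ref{C-theta-move} propagates $\Theta_h(u',v') \geq C_0\theta$ to every $(u',v')$ in the product box $\{|u'-u|,|v'-v| \leq \rho_0\}$ with $\rho_0 := \zeta_0(\theta/\Lip_h(\psi))^s (v-u)$, where $\zeta_0$ and $C_0$ are the constants from that lemma; the $\rho_0 \geq \psi$ and $(v-u)/4 \geq \psi$ preconditions again follow from the lower bound on $b-a$. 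Placing this box so that $v'-u' \geq v-u \geq (\epsilon/8)(b-a)$ (which fits inside the integration domain of $\beta_h^{(p)}(\cdot;\epsilon/8)$), integration yields
\begin{equation*}
\beta_h^{(p)}\bigl([a,b];\tfrac{\epsilon}{8}\bigr) \;\gtrsim\; \frac{\rho_0^2 (C_0\theta)^p}{(b-a)^2} \;\gtrsim\; 4^{-m}\,(\zeta/C)^{(2s+p)s^m}\,\Lip_h(\psi)^p.
\end{equation*}
Since $\zeta \sim \epsilon$, $s^m \sim \epsilon^{-\log_2 s}$, and $m \sim \log_2(1/\epsilon)$, the right-hand side is at least $\exp(-C'\epsilon^{-\log_2 s}\log(1/\epsilon))\Lip_h(\psi)^p$, which for $\alpha_0 > \log_2 s$ strictly exceeds the hypothesized $e^{-\epsilon^{-\alpha_0}}\Lip_h(\psi)^p$, producing the desired contradiction.

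The main obstacle, relative to the superreflexive case, is the compound superexponential deterioration arising from the $s$-step nilpotent horizontal-projection losses: Lemma~\ref{C-energy}'s threshold contracts by an extra $s$-th power at each dyadic refinement, and Lemma~\ref{C-theta-move}'s perturbation radius scales with $(\theta/\Lip_h(\psi))^s$. These two $s$-powers together lift the polynomial-in-$\epsilon$ hypothesis of Lemma~\ref{UC-beta-affine} to the double-exponential form here, and they fix $\alpha_0$ to be any constant exceeding $\log_2 s$ plus enough room to absorb the $H$-dependent constants $C_Q$, $\zeta_0$, $C_0$, $C$, together with the multiplicative errors incurred by the dyadic interpolation calibration.
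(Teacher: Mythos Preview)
Your proposal is correct and follows essentially the same route as the paper's proof: both choose $m \sim \log_2(C_Q/\epsilon)$, reduce to dyadic pointwise closeness via Lemma~\ref{C-energy} with threshold $(\zeta/C)^{ps^m}\Lip_h(\psi)^p$, assume a violating dyadic interval $[u,v]$ exists, propagate the large $\Theta_h$ value over a box of side $\sim (\Theta_h(u,v)/\Lip_h(\psi))^s(v-u)$ using Lemma~\ref{C-theta-move}, and integrate to contradict the $\beta_h^{(p)}$ hypothesis. Your identification of $\alpha_0 > \log_2 s$ as the governing threshold matches the paper's implicit choice (it writes $(\epsilon/4C)^{s^m} \geq \epsilon^{\epsilon^{-\kappa}}$ for some $\kappa$). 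One cosmetic point: placing the perturbation box so that $v'-u' \geq v-u$ can push $(u',v')$ outside $[a,b]$ when $[u,v]$ abuts an endpoint; the paper instead takes $u' \in [u,u+\delta]$, $v' \in [v-\delta,v]$, which stays inside and still satisfies the $\epsilon/8$ gap condition since $\delta$ is superexponentially small relative to $v-u$.
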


\begin{proof}
  Let $m = \lceil \log \frac{C_Q}{\epsilon} \rceil + 2$.  It suffices to prove
  \begin{align*}
    \max_{k\in \{0,\ldots,2^m\}} d_H\left(h\left(a + \frac{k}{2^m}(b-a)\right), L_h^{a,b}\left(a + \frac{k}{2^m}(b-a)\right)\right) \leq \frac{\epsilon}{4C_Q} (b-a) \Lip_h(\psi).
  \end{align*}
  where $C_Q > 0$ is the quasitriangle constant.  Indeed, let $t \in [a,b]$.  Then there exists $k \in \{0,...,2^m\}$ so that $|t - a + k2^{-m}(b-a)| \leq 2^{-m}(b-a)$.  As $h$ is $\psi$-LDD and $b-a \geq \psi$, $L_h^{a,b}$ is $\Lip_h(\psi)$-Lipschitz.  Thus, as $2^{-m}(b-a) \geq \frac{1}{8} \epsilon(b-a) \geq \psi$, we get
  \begin{align*}
    d_H(h(t), L_h^{a,b}(t)) &\leq C_Qd_H\left(h(t), h\left(a + \frac{k}{2^m}(b-a)\right)\right) + C_Qd_H\left(L_h^{a,b}\left(a + \frac{k}{2^m}(b-a) \right), L_h^{a,b}(t)\right) \\
    &\qquad + C_Qd_H\left(h\left(a + \frac{k}{2^m}(b-a)\right), L_h^{a,b}\left(a + \frac{k}{2^m}(b-a)\right)\right) \\
    &\leq C_Q\left(2^{-m} + 2^{-m} + \frac{\epsilon}{4C_Q} \right) (b-a) \Lip_h(\psi) \\
    &< \epsilon(b-a)\Lip_h(\psi).
  \end{align*}

  If
  \begin{align*}
    \max_{k \in \{0,...,m\}} \max_{I \in \D^k([a,b])} \Theta_h(a(I),b(I))^p \leq \left( \frac{\epsilon}{4C} \right)^{ps^m} \Lip_h(\psi)^p,
  \end{align*}
  for some sufficiently large $C > 0$ depending only on $H$, then Lemma \ref{C-energy} gives the result.  Thus, we may assume that there is a subinterval $I = [u,v] \in \bigcup_{k=0}^m \D^k([a,b])$ such that
  \begin{align}
     \Theta_h(u,v)^p > \left( \frac{\epsilon}{4C} \right)^{ps^m} \Lip_h(\psi)^p. \label{e:large-theta-h}
  \end{align}
  Remembering that $m$ is approximately $\log \frac{1}{\epsilon}$, as $\epsilon < 1/2$, we get that there exists some constant $\kappa > 0$ so that we have
  \begin{align}
    \left( \frac{\epsilon}{4C} \right)^{s^m} \geq \epsilon^{\epsilon^{-\kappa}}. \label{gamma-m-simplify}
  \end{align}
  Let $\delta := \zeta \epsilon^{s\epsilon^{-\kappa}} 2^{-m}(b-a)$ and $s,t \in [a,b]$ so that $s \in [u,u + \delta]$ and $t \in [v - \delta,v]$ where $\zeta$ is the constant from Lemma \ref{C-theta-move}.  As $v-u \geq 2^{-m}(b-a)$, we then have that
  \begin{align*}
    \left( \frac{\Theta_h(u,v)}{\Lip_h(\psi)} \right)^s \zeta (v-u) \overset{\eqref{e:large-theta-h} \wedge \eqref{gamma-m-simplify}}{\geq} \zeta \epsilon^{s\epsilon^{-\kappa}} (v-u) \geq \delta.
  \end{align*}
  If we require $\alpha_0$ to be sufficiently large (while allowing us to define it even larger later), then $\zeta \epsilon^{s\epsilon^{-\kappa}} 2^{-m} \geq e^{-\epsilon^{-\alpha_0}}$.  As $e^{-\epsilon^{-\alpha_0}}(b-a) \geq \psi$, the same lemma gives that there exists some constant $C_0 > 0$ so that
  \begin{align}
    \Theta_h(s,t) \geq C_0 \Theta_h(u,v). \label{e:theta-preserved}
  \end{align}

  Thus,
  \begin{align*}
    \beta_h^{(p)}\left([a,b]; \frac{\epsilon}{8} \right) &\geq \left(1 - \frac{\epsilon}{8} \right)^2 (b-a)^{-2} \int_{v-\delta}^v \int_u^{u+ \delta} \Theta_h(x,y)^p dy ~dx \\
    &\overset{\eqref{e:large-theta-h} \wedge \eqref{e:theta-preserved}}{>} \frac{C_0^p}{2} \left( \frac{\delta}{b-a} \right)^2 \left( \frac{\epsilon}{4C} \right)^{ps^m} \Lip_h(\psi)^p  \\
    &\geq \frac{C_0^p}{2} \zeta^2 2^{-2m} \epsilon^{2s\epsilon^{-\kappa} + p\epsilon^{-\kappa}} \Lip_h(\psi)^p.
  \end{align*}
  Remembering that $C_Q2^{-m+2} \geq \epsilon$, we see that if we define $\alpha_0$ sufficiently large enough one last time, we get a contradiction of the hypothesis.
\end{proof}

We now return to maps from Carnot groups to Carnot groups.  We prove that we can bootstrap the averaging bound of $\beta_h^{(p)}(Q)$ to a supremum bound.

\begin{lemma} \label{C-sup-bound}
  Let $Q \in \Delta$, $\epsilon \in (0,1/2)$, $\alpha_0 > 0$ be the constant from Lemma \ref{C-beta-horizontal}, and $h : G \to H$ be $\psi$-LLD.  There exist constants $\gamma > 0$ and $\alpha_1 > 0$ so that if $\beta_h^{(p)}\left(Q; \frac{\epsilon}{\gamma}\right) \leq e^{-\epsilon^{-\alpha_1}} \Lip_h(\psi)^p$ and $\ell(Q) \geq e^{\epsilon^{-\alpha_1}} \psi$, then 
  \begin{align*}
    \sup \left\{ \beta^{(p)}_h\left(x \cdot \R v \cap 3B_Q; \frac{\epsilon}{8}\right) : v \in S^{n-1}, x \in G \circleddash v, x \cdot \R v \cap B_Q \neq \emptyset \right\} \leq e^{-\epsilon^{-\alpha_0}} \Lip_h(\psi)^p.
  \end{align*}
\end{lemma}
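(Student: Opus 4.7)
The plan is to mimic the proofs of Lemma \ref{M-sup-bound} and Lemma \ref{UC-sup-bound}, but track carefully the extra losses coming from Lemma \ref{C-theta-move}, whose perturbation threshold is $\zeta(\Theta_h/\Lip_h(\psi))^s(d-c)$ (rather than the square/linear thresholds that appear in $\partial_h^{(p)}$ or in the uniformly convex setting). I would argue by contradiction: assume there is some $w \in S^{n-1}$ and some $g \in z_Q(G \circleddash w)$ such that $J := g \cdot \R w \cap 3B_Q$ meets $B_Q$ and
$\beta_h^{(p)}\!\left(J; \epsilon/8\right) > e^{-\epsilon^{-\alpha_0}}\Lip_h(\psi)^p$. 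Parametrize $J = g\cdot [a,b] w$; by definition of $\beta_h^{(p)}$ there exist $s < t$ in $[a,b]$ with $t - s > (\epsilon/8)(b-a)$ and
$\Theta_h(ge^{sw},ge^{tw})^p > \tfrac12 e^{-\epsilon^{-\alpha_0}}\Lip_h(\psi)^p$.

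Next, for each $v \in S^{n-1}$ let $g_v \in z_Q(G \circleddash v)$ be the unique point with $g \in g_v \cdot \R v$. As in the proof of Lemma \ref{M-sup-bound}, the fact that $\pi$ is distance-preserving on horizontal lines gives $\dcc(g,g_v) \lesssim \ell(Q)|v-w|$ whenever $|v-w|$ is small. For a parameter $\sigma > 0$ to be tuned, I would restrict to $v$ with $|v - w| \le \sigma$ and to $g' \in \Bcc(g_v, \sigma \ell(Q)) \cap z_Q(G \circleddash v)$. Applying Lemma \ref{close-parallel-2} (and using that $b-a$ is comparable to $\ell(Q)$ by Lemma \ref{length-comp}), I get
\[
\dcc(ge^{\lambda w}, g' e^{\lambda v}) \le C\sigma^{1/r}(b-a), \qquad \lambda \in [a,b].
\]
The idea is to choose $\sigma$ so that $C\sigma^{1/r}(b-a)$ lies below the perturbation threshold
\[
\rho := \zeta\!\left(\frac{\Theta_h(ge^{sw},ge^{tw})}{\Lip_h(\psi)}\right)^{\!s}(t-s) \;\gtrsim\; \zeta\, e^{-s\epsilon^{-\alpha_0}/p}\,(t-s)
\]
coming from Lemma \ref{C-theta-move}. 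Concretely, taking $\sigma = c_0\,e^{-r s \epsilon^{-\alpha_0}/p}$ for a small absolute constant $c_0$ suffices. Lemma \ref{C-theta-move} then yields
$\Theta_h(g' e^{sv}, g' e^{tv})^p \ge C_1 \Theta_h(ge^{sw},ge^{tw})^p$, provided $\rho \ge \psi$, which is where the hypothesis $\ell(Q) \ge e^{\epsilon^{-\alpha_1}}\psi$ with $\alpha_1$ chosen somewhat larger than $\alpha_0$ comes in (the factor $e^{-s\epsilon^{-\alpha_0}/p}$ must be absorbed into $\ell(Q)/\psi$).

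With that lower bound on $\Theta_h$ along the line through $g'$ in direction $v$, I would follow the integration step in Lemma \ref{M-sup-bound}: let $I' = g'\cdot \R v \cap 6B_Q$, note $|I'| \le C_4|I|$ (Lemma \ref{length-comp}), so $(t-s)/|I'| \gtrsim \epsilon$, and by integrating $\Theta_h^p$ over an $\epsilon$-controlled product of intervals near $(s,t)$ (again invoking Lemma \ref{C-theta-move} to preserve $\Theta_h$ under small perturbations in $x,y$), I obtain
$\beta_h^{(p)}(I'; \epsilon/\gamma) \gtrsim \epsilon^{O(1)}\Theta_h(ge^{sw},ge^{tw})^p$ for a suitably chosen $\gamma$. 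Finally, integrating this bound over $v \in S^{n-1}$ with $|v-w| \le \sigma$ and $g' \in \Bcc(g_v, \sigma \ell(Q)) \cap z_Q(G \circleddash v)$ contributes a factor comparable to $\sigma^{(r-1) + (N-1)}$, producing
\[
\beta_h^{(p)}\!\left(Q;\epsilon/\gamma\right) \;\gtrsim\; \sigma^{O(1)} \epsilon^{O(1)} e^{-\epsilon^{-\alpha_0}} \Lip_h(\psi)^p \;\ge\; e^{-C\epsilon^{-\alpha_0} - O(\log(1/\epsilon))}\Lip_h(\psi)^p.
\]
Choosing $\alpha_1$ strictly larger than $\alpha_0$ (the precise choice dictated by the constants $s$, $r$, $N$, $p$) makes this incompatible with the hypothesis $\beta_h^{(p)}(Q;\epsilon/\gamma) \le e^{-\epsilon^{-\alpha_1}}\Lip_h(\psi)^p$, giving the contradiction.

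The main obstacle is bookkeeping: unlike the metric/$p$-convex cases, the Carnot target forces an exponential-in-$\epsilon^{-1}$ smallness, so the perturbation threshold $\rho$ depends on $\Theta_h$ through a power $s$ and is itself exponentially small; one must simultaneously keep $\rho \ge \psi$ (forcing $\ell(Q) \ge e^{\epsilon^{-\alpha_1}}\psi$ with $\alpha_1 > \alpha_0$), keep the $v$-neighborhood $\sigma$ large enough that the measure factor $\sigma^{O(1)}$ does not swamp the $e^{-\epsilon^{-\alpha_0}}$ lower bound, and keep $\sigma$ small enough that Lemma \ref{close-parallel-2} places the perturbed line within the threshold. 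Verifying that there is a viable window for all of these simultaneously, and then selecting $\alpha_1$ strictly larger than $\alpha_0$ to produce the contradiction, is the real content of the proof.
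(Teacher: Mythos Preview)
Your proposal is correct and follows precisely the approach the paper intends: the paper's own proof is simply ``largely identical to that of Lemma \ref{M-sup-bound} with superficial modifications ($\Theta_h(x,y)^p$ for $\partial_h^{(p)}(x,y)$, $\beta_h^{(p)}$ for $\alpha_h^{(p)}$, Lemma \ref{C-theta-move} for Lemma \ref{M-partial-move}).'' You have carried out exactly that substitution, and moreover made explicit the one nontrivial bookkeeping point the paper leaves to the reader---namely that the perturbation threshold from Lemma \ref{C-theta-move} is now exponentially small in $\epsilon^{-\alpha_0}$, which is what forces the hypotheses to take the form $e^{\pm\epsilon^{-\alpha_1}}$ with $\alpha_1 > \alpha_0$ rather than the polynomial form in the metric and $p$-convex cases.
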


\begin{proof}
  The proof is largely identical to that of Lemma \ref{M-sup-bound} with superficial modifications ($\Theta_h(x,y)^p$ for $\partial_h^{(p)}(x,y)$, $\beta_h^{(p)}$ for $\alpha_h^{(p)}$, Lemma \ref{C-theta-move} for Lemma \ref{M-partial-move}).
\end{proof}

As before, we also need to prove that, if we have a subball on which $h$ acts approximately as a horizontal homomorphism on all horizontal lines, then one can choose a representative ``slope'' in each direction so that, on a controlled subball, $h$ is approximately a horizontal homomorphism of this slope for all horizontal lines of the corresponding direction.

\begin{lemma} \label{C-horizontal-close}
  Let $\epsilon \in (0,1/2)$, $v \in S^{n-1}$, $\rho > 0$, $\chi \geq 1$, $x \in G$, and $h : G \to H$ be $\psi$-LLD.  There exists a constant $\Gamma \in (0,1)$ depending on $\chi$ and $\zeta \in (0,1)$ depending on $G$ and $H$ such that if $\frac{\epsilon}{24} \rho \geq \psi$ and all $g \in \Bcc(x,\rho)$ satisfy
  \begin{align}
    \sup_{t \in [-2\rho,2\rho]} d_H(h(ge^{tv}), L_{h|g \cdot \R v}^{-2\rho,2\rho}(ge^{tv})) \leq \zeta \Gamma \epsilon^{r+1} \rho\Lip_h(\psi), \label{almost-horizontal}
  \end{align}
  then there exists a horizontal element $w_v \in H$ such that $N_H(w_v) \leq \Lip_h(\psi)$ and for all $g \in \Bcc(x, \Gamma \chi \epsilon^r\rho)$ we have that,
  \begin{align*}
    \sup_{s,t \in [-3\lambda\epsilon^r\rho, 3\lambda\epsilon^r\rho]} d_H(h(ge^{sv})^{-1} h(ge^{tv}), \delta_{t-s}w_v ) \leq \lambda \epsilon^{r+1} \rho \Lip_h(\psi).
  \end{align*}
\end{lemma}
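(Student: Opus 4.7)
The plan is to mirror the proof of Lemma~\ref{UC-affine-close}, substituting the horizontal interpolant $L_h^{u,v}$ for the affine interpolant, and exercising extra care because $d_H$ is only left-invariant and satisfies merely a quasi-triangle inequality with constant $C_Q$. Assume $x=0$ without loss of generality. First I would apply Lemma~\ref{close-parallel-2} with both horizontal directions taken equal to $v$ (so that the $|u-v|$ term vanishes) in order to choose $\Gamma \in (0,1)$ depending on $\chi$ small enough that $\dcc(e^{\pm 2\rho v}, g e^{\pm 2\rho v}) \leq \tfrac{\epsilon}{24}\rho$ for every $g \in \Bcc(0, \Gamma \chi \epsilon^r \rho)$. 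Since $\tfrac{\epsilon}{24}\rho \geq \psi$, the $\psi$-LLD property yields $d_H(h(e^{\pm 2\rho v}), h(ge^{\pm 2\rho v})) \leq \tfrac{\epsilon}{24}\rho \Lip_h(\psi)$; composing with the $1$-Lipschitz projection $\pi$ gives the analogous Euclidean bound on the horizontal coordinates.

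Next, for $g \in \Bcc(0,\rho)$ I would introduce the horizontal element $\xi_g := \tilde{\pi}(h(ge^{-2\rho v})^{-1}h(ge^{2\rho v})) \in H$, and observe that, since horizontal elements along a common direction commute and compose by addition of first-layer coordinates, the interpolant satisfies the exact identity $L_h^{-2\rho,2\rho}(ge^{sv})^{-1} L_h^{-2\rho,2\rho}(ge^{tv}) = \delta_{(t-s)/(4\rho)} \xi_g$ for every $s, t \in [-2\rho, 2\rho]$. Set $w_v := \delta_{1/(4\rho)}\xi_0$; the bound $N_H(w_v) \leq \Lip_h(\psi)$ follows from the $1$-Lipschitz property of $\pi$ together with $4\rho \geq \psi$. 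Combining left-invariance of $d_H$, the quasi-triangle inequality, and hypothesis~\eqref{almost-horizontal} applied at the two points $ge^{sv}$ and $ge^{tv}$, exactly as in the Banach analogue, I would deduce
\begin{equation*}
  d_H\bigl(h(ge^{sv})^{-1} h(ge^{tv}),\, \delta_{(t-s)/(4\rho)} \xi_g \bigr) \leq 2 C_Q \zeta \Gamma \epsilon^{r+1} \rho \Lip_h(\psi).
\end{equation*}

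The main obstacle is the final comparison of $\delta_{(t-s)/(4\rho)} \xi_g$ with $\delta_{t-s} w_v = \delta_{(t-s)/(4\rho)} \xi_0$. In the Banach setting this is immediate by linearity, but here the BCH expansion of $(\delta_\lambda \xi_g)^{-1}(\delta_\lambda \xi_0)$ contributes to every stratum of $\h$ up to nilpotency step $s$: writing $a = \xi_g^{(1)}$ and $b = \xi_0^{(1)}$ for the first-layer coordinates, antisymmetry $[a,b] = [a, b-a]$ controls the $k$-th stratum by $C_k \lambda^k |a|^{k-1}|b-a|$, so the homogeneous norm of the product is at most $C\lambda \max_{1 \leq k \leq s}\bigl(|a|^{(k-1)/k} |b-a|^{1/k}\bigr)$. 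Inserting the Step~1 bounds $|a| \leq 4\rho\Lip_h(\psi)$, $|b-a| \leq \tfrac{\epsilon}{12}\rho\Lip_h(\psi)$, and $|\lambda| \leq \tfrac{3}{2}\Gamma\epsilon^r$ on the range $s,t \in [-3\Gamma\epsilon^r\rho, 3\Gamma\epsilon^r\rho]$, then choosing $\zeta$ small enough (depending on $G$, $H$, and $C_Q$) to absorb all BCH constants and higher-stratum factors exactly as in the proof of Lemma~\ref{C-theta-move}, yields $d_H(\delta_{(t-s)/(4\rho)}\xi_g, \delta_{t-s}w_v) \leq \tfrac{1}{2}\Gamma \epsilon^{r+1}\rho\Lip_h(\psi)$. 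One further application of the quasi-triangle inequality combining this bound with the previous display completes the proof.
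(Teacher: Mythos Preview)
Your strategy mirrors the paper's intended route, but there is a real gap in both Step~5 and Step~6, and it is the same gap in disguise: you are implicitly using that \emph{right}-translation preserves $d_H$, which fails whenever $H$ is non-abelian.

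In Step~5, after one left-translation and one quasi-triangle inequality you are left with $d_H\bigl(L_g(s)\,c,\; h(ge^{sv})\,c\bigr)$, where $c=\delta_{(t-s)/(4\rho)}\xi_g$. This is a right translate, not a left one; writing $h(ge^{sv})=L_g(s)\,a$ with $N_H(a)\le\epsilon_0$, the quantity equals $N_H(c^{-1}a\,c)$, and conjugation by a horizontal element of norm $R=N_H(c)$ can inflate $N_H(a)$ to order $R^{(s-1)/s}\epsilon_0^{1/s}$. (Already in the Heisenberg group: conjugating $(0,\epsilon_0,0)$ by $(R,0,0)$ produces $(0,\epsilon_0,-R\epsilon_0)$, of norm $(R\epsilon_0)^{1/2}$.) On the restricted range $|t-s|\le 6\Gamma\epsilon^r\rho$ one has $R\le 6\Gamma\epsilon^r\rho\Lip_h(\psi)$, so the best this yields is $C\zeta^{1/s}\Gamma\,\epsilon^{\,r+1/s}\rho\Lip_h(\psi)$, not $2C_Q\zeta\Gamma\,\epsilon^{r+1}\rho\Lip_h(\psi)$. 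The Banach analogue works only because there left and right translation coincide.

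In Step~6 the same loss reappears explicitly in your own formula: $C\lambda\max_{k}|a|^{(k-1)/k}|b-a|^{1/k}$ evaluated at $k=s$ with the bounds you quote gives $C'\Gamma\,\epsilon^{\,r+1/s}\rho\Lip_h(\psi)$, not $\tfrac12\Gamma\,\epsilon^{r+1}\rho\Lip_h(\psi)$. The parameter $\zeta$ cannot absorb the missing factor $\epsilon^{\,1-1/s}$, because the bound $|b-a|\le\tfrac{\epsilon}{12}\rho\Lip_h(\psi)$ comes entirely from Step~1 (Lemma~\ref{close-parallel-2} in $G$ together with the $\psi$-LLD property) and contains no $\zeta$ at all. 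No constant depending only on $G$, $H$, $C_Q$ can kill a factor that blows up as $\epsilon\to 0$.

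Carried through honestly, your argument proves the conclusion with exponent $r+1/s$ in place of $r+1$; that weaker statement is in fact enough for the downstream Lemma~\ref{C-final}, since the constant $\beta_0$ there has room to absorb the difference. But the two displayed bounds you claim are not correct as written.
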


\begin{proof}
  Note that \eqref{almost-horizontal} implies that, for each element $g \in \Bcc(x,\rho)$, we get
  \begin{align*}
    \sup_{s,t \in [-2\rho,2\rho]} d_H(h(ge^{sv})^{-1} h(ge^{tv}), \delta_{(t-s)/4} \tilde{\pi}(h(ge^{-2v})^{-1} h(ge^{2v}))) \leq \frac{\zeta}{2} \Gamma \epsilon^{r+1} \rho\Lip_h(\psi).
  \end{align*}
  One then sees that the proof is largely identical to that of Lemma \ref{M-geodesic-close} with mostly superficial modifications as in Lemma \ref{UC-affine-close}.  The $\zeta$ is used to account for the quasi-triangle inequality in the proof.
\end{proof}

Before we prove the final lemma relating all these bounds on $\beta_h^{(p)}$, we need to prove another lemma stating that, if we show that there is a subball on which along horizontal lines $h$ is close to being horizontal of the same ``slope'', then $h$ is close to a real homomorphism.  We will first need the following lemma, which is a group-independent variant of Chow's theorem.  We omit the proof as it essentially follows from applying Chow's theorem to the ``free'' $r$-step stratified nilpotent Lie group where there are no non-axiomatic Lie bracket relations.

\begin{lemma} \label{bracket-independent}
  Let $\alpha,n,r > 0$.  There exists constants $N = N(n,r) > 0$ and $\lambda = \lambda(\alpha,n,r) > 0$ so that, for any $k > 0$, if
  \begin{align*}
    P(X_1,...,X_n) = \sum_{j=1}^k \alpha_j [X_{i(j,1)},[X_{i(j,2)},...,[X_{i(j,k_j-1)},X_{i(j,k_j)}]...]],
  \end{align*}
  is an abstract polynomial of Lie brackets where for each $j$, $|\alpha_j| \leq \alpha$, $|k_j| \leq r$, and $i(j,\cdot) : \{1,...,k_j\} \to \{1,...,n\}$, then there exist two sequences $\{\lambda_\ell\}_{\ell=1}^N$ and $\{i_\ell\}_{\ell=1}^N$ so that, for any set of horizontal elements $v_1,...,v_n$ of any graded nilpotent Lie algebra $\g$,
  \begin{align*}
    e^{\lambda_1 v_{i_1}} e^{\lambda_2 v_{i_2}} \cdots e^{\lambda_N v_{i_N}} = e^{P(v_1,...,v_n) + Z}.
  \end{align*}
  where $Z \in \bigoplus_{j > r} \V_j(\g)$.  Furthermore, we have $|\lambda_i| \leq \lambda(\alpha,n,r)$ for all $i$.
\end{lemma}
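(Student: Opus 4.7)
The plan is to reduce the identity to a single application of Chow's theorem in a universal finite-dimensional model, then to use naturality of the Baker--Campbell--Hausdorff formula to transport the resulting identity to every graded nilpotent Lie algebra. The quantitative bounds emerge once and for all from the universal model, which is why they are independent of the target $\g$.

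Concretely, let $\mathfrak{f} = \mathfrak{f}_{n,r}$ denote the free $r$-step stratified nilpotent Lie algebra on $n$ generators $X_1,\ldots,X_n$, and let $F$ be the associated simply connected Carnot group. The dimension of $\mathfrak{f}$ depends only on $n$ and $r$, and since $P$ has bracket depth at most $r$ the evaluation $P(X_1,\ldots,X_n) \in \mathfrak{f}$ is a well-defined element of bounded norm (in any fixed basis) because the number of linearly independent monomials in the $X_i$ of depth $\leq r$ is finite and each coefficient is at most $\alpha$. Equip $F$ with a Carnot-Carath\'{e}odory metric whose horizontal layer is $\V_1(\mathfrak{f}) = \operatorname{span}\{X_1,\ldots,X_n\}$. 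Applying the version of Chow's theorem recalled in Section~\ref{preliminaries} to the element $e^{P(X_1,\ldots,X_n)} \in F$ produces an integer $N = N(n,r)$, a bound $\lambda(\alpha,n,r) > 0$, and sequences $\{\lambda_\ell\}_{\ell=1}^N$ with $|\lambda_\ell| \leq \lambda(\alpha,n,r)$ and $\{i_\ell\}_{\ell=1}^N$ with $i_\ell \in \{1,\ldots,n\}$ such that
\begin{equation*}
e^{\lambda_1 X_{i_1}} e^{\lambda_2 X_{i_2}} \cdots e^{\lambda_N X_{i_N}} = e^{P(X_1,\ldots,X_n)} \qquad \text{in } F.
\end{equation*}

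To transfer this identity to an arbitrary graded nilpotent Lie algebra $\g$ of step $s$ with horizontal elements $v_1,\ldots,v_n \in \V_1(\g)$, I exploit that the Dynkin series expressing $\log(e^{\lambda_1 Y_{i_1}} \cdots e^{\lambda_N Y_{i_N}})$ is a single formal sum of weighted nested Lie brackets in the abstract symbols $Y_i$, independent of the ambient Lie algebra. Evaluating in $\mathfrak{f}$ kills every nested bracket of depth exceeding $r$, and by the displayed identity the remaining depth-$\leq r$ terms assemble into $P(X_1,\ldots,X_n)$. Since the free algebra $\mathfrak{f}$ satisfies no relations beyond antisymmetry, Jacobi, and vanishing of depth-$>r$ brackets, the same equality of depth-$\leq r$ Lie polynomials holds in $\g$ after substituting $v_i$ for $X_i$, giving $P(v_1,\ldots,v_n)$. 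Evaluating the Dynkin series in $\g$ also contributes the additional terms of bracket depth in $(r,s]$, each of which lies in $\V_j(\g)$ for some $j > r$ by the stratification; let $Z$ denote their sum. This yields the desired identity $e^{\lambda_1 v_{i_1}} \cdots e^{\lambda_N v_{i_N}} = e^{P(v_1,\ldots,v_n) + Z}$ with $Z \in \bigoplus_{j>r} \V_j(\g)$, and the $|\lambda_\ell|$-bound obtained in $F$ applies uniformly because the same numerical data is reused in every $\g$. The only subtle bookkeeping point is verifying that no depth-$\leq r$ contribution leaks into the higher layers under the substitution, which is immediate from the fact that substitution of horizontal generators preserves the grading of each homogeneous bracket.
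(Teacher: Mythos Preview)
Your proof is correct and follows exactly the approach the paper sketches: apply Chow's theorem in the free $r$-step nilpotent Lie group on $n$ generators to obtain the sequences $\{\lambda_\ell\}$, $\{i_\ell\}$ with the required bounds, then use naturality of the Baker--Campbell--Hausdorff polynomial to transport the identity to any graded nilpotent $\g$, with the depth-$>r$ BCH terms producing $Z\in\bigoplus_{j>r}\V_j(\g)$. The only loose phrase is your claim that $P(X_1,\dots,X_n)$ has bounded norm ``because \dots\ each coefficient is at most $\alpha$'': as stated the lemma allows arbitrary $k$, so repeated monomials could inflate the basis coefficients, but this is an imprecision in the lemma itself (in the paper's application the polynomials $P_i$ are fixed by $G$, so $k$ is bounded) rather than a defect in your argument.
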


Such sequences are clearly not necessarily unique.  We will call such paths bracket-independent as no Lie bracket relationships are used.

We will also need the following lemma, which easily follows from an inequality of \L{}ojasiewicz.

\begin{lemma} \label{loja}
  Let $\{f_i : \R^n \to \R\}_{i=1}^m$ be a finite collection of polynomials and $Z = \bigcap_i f^{-1}(0)$.  Then for any compact set $K$, there exist constants $C > 0$ and $\alpha > 0$ such that
  \begin{align*}
    d_{\R^n}(x,Z)^\alpha \leq C\max_i |f_i(x)|, \qquad \forall x \in K.
  \end{align*}
\end{lemma}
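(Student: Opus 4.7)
The plan is to reduce the multi-function version to the classical single-function Łojasiewicz inequality, which states that if $f : U \to \R$ is real analytic on an open set $U \subseteq \R^n$ and $Z = f^{-1}(0)$, then for every compact $K \subset U$ there exist $C', \alpha' > 0$ such that $d_{\R^n}(x,Z)^{\alpha'} \leq C' |f(x)|$ for all $x \in K$.

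The natural trick is to collapse the collection $\{f_i\}_{i=1}^m$ into a single polynomial by setting
\begin{align*}
  f(x) := \sum_{i=1}^m f_i(x)^2.
\end{align*}
This $f$ is a polynomial (hence real analytic on all of $\R^n$), is nonnegative, and vanishes precisely where every $f_i$ vanishes, so $f^{-1}(0) = \bigcap_{i=1}^m f_i^{-1}(0) = Z$.

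Applying the classical Łojasiewicz inequality to $f$ on $K$ gives constants $C', \alpha' > 0$ with
\begin{align*}
  d_{\R^n}(x,Z)^{\alpha'} \leq C' |f(x)| = C' \sum_{i=1}^m f_i(x)^2 \leq C' m \left( \max_i |f_i(x)| \right)^2, \qquad \forall x \in K.
\end{align*}
Taking square roots and setting $\alpha = \alpha'/2$ and $C = \sqrt{C'm}$ yields the desired estimate $d_{\R^n}(x,Z)^\alpha \leq C \max_i |f_i(x)|$ on $K$.

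There is essentially no obstacle: the only real content is the classical Łojasiewicz inequality itself, which is taken as a black box, and the one nontrivial (but routine) observation is that the sum-of-squares construction preserves the common zero set and is dominated by the maximum squared up to the constant $m$. No compactness issues arise because $f$ is defined on all of $\R^n$ and $K$ is already assumed compact.
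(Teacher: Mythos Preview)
Your proof is correct and essentially identical to the paper's: both reduce to the single-function \L{}ojasiewicz inequality via the sum-of-squares polynomial $\sum_i f_i^2$, then use $\bigl(\sum_i f_i^2\bigr)^{1/2} \leq \sqrt{m}\,\max_i |f_i|$ to conclude.
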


\begin{proof}
  The case when $m = 1$ is a special case of the original \L{}ojasiewicz inequality \cite{Lojasiewicz,malgrange}.  To get the general case, define $P(x) = \sum_{i=1}^m f(x)^2$ and apply the original \L{}ojasiewicz inequality to $P$.  It is clear that $P^{-1}(0) = Z$, and so there exists some $\alpha,C > 0$ so that $d_{\R^n}(x,Z)^\alpha \leq C|P(x)|$ for $x \in K$.  The lemma follows as $P(x)^{1/2} \leq \sqrt{m} \max_i |f_i(x)|$.
\end{proof}

We are now ready to prove the following lemma.

\begin{lemma} \label{C-homomorphism}
  Let $v_1,...,v_n$ be an orthonormal basis of $\V_1(\g)$ and $h : G \to T$ be $\psi$-LLD.  There exist constants $\nu,\beta_0,\Lambda,M > 0$ depending only on $G$ and $H$ so that, if there exist some $x \in G$, $\rho > 0$, and $w_1,...,w_n \in \V_1(\h)$ so that $|w_i| \leq \Lip_h(\psi)$, $\Lambda \rho \geq \psi$, and
  \begin{align}
    \sup\left\{ \sup_{s,t \in [-\Lambda\rho,\Lambda\rho]} d_H(h(ge^{sv_i})^{-1}h(ge^{tv_i}), e^{(t-s)w_i}) : g \in \Lambda M \Bcc(x,\rho) \right\} \leq (\nu \epsilon)^{\beta_0} \Lambda \rho \Lip_h(\psi), \label{close-horizontal-assumption}
  \end{align}
  for every $i \in \{1,...,n\}$, then there exists a Lipschitz homomorphism $T : G \to H$ so that
  \begin{align*}
    \sup_{z \in \Bcc(x,\rho)} \frac{d_H(h(z),h(x) \cdot T(z))}{\rho} \leq \epsilon \Lip_h(\psi).
  \end{align*}
\end{lemma}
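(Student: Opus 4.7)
The plan is to use Chow's theorem to decompose $x^{-1}z$ for $z \in \Bcc(x,\rho)$ as a bounded-length product of horizontal exponentials $e^{\mu_1 v_{i_1}} \cdots e^{\mu_j v_{i_j}}$ with $j \leq M_G$ and $|\mu_\ell| \leq \rho$, and then compare $h(z)$ to $h(x)\,e^{\mu_1 w_{i_1}} \cdots e^{\mu_j w_{i_j}}$ by telescoping the hypothesis \eqref{close-horizontal-assumption}. All intermediate products stay in $M_G\Bcc(x,\rho)$, so by taking $M \geq M_G$ and $\Lambda$ at least $1$, the hypothesis is applicable at every step, yielding (after $j \leq M_G$ uses of the quasi-triangle inequality)
\begin{align*}
d_H\!\big(h(z),\ h(x)\cdot e^{\mu_1 w_{i_1}} \cdots e^{\mu_j w_{i_j}}\big)\ \lesssim\ C_Q^{M_G}\,M_G\,(\nu\epsilon)^{\beta_0}\,\Lambda\rho\,\Lip_h(\psi).
\end{align*}
The difficulty is that the naive assignment $v_i \mapsto w_i$ generally does \emph{not} extend to a Lie algebra homomorphism $\g \to \h$, because the $w_i$ must respect the Lie relations of $\g$; my goal is to show the hypothesis forces the $w_i$ to \emph{approximately} satisfy all such relations, and then to perturb to an honest one via Lemma \ref{loja}.

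For the approximate relations, let $P(X_1,\dots,X_n)$ be any Lie polynomial of complexity bounded by the step $r$ of $\g$ that vanishes identically when evaluated on $v_1,\dots,v_n$. Lemma \ref{bracket-independent} furnishes a bracket-independent path $e^{\lambda_1 v_{i_1}} \cdots e^{\lambda_N v_{i_N}} = e^{P(v_1,\dots,v_n)}$ (with the higher-layer remainder vanishing since $\g$ has step $r$), which is therefore a closed loop at the basepoint in $G$. Applying the telescoping of the previous paragraph to this loop bounds $d_H(h(x),\,h(x)\cdot e^{\lambda_1 w_{i_1}} \cdots e^{\lambda_N w_{i_N}})$, and Lemma \ref{bracket-independent} identifies the product in $\h$ as $e^{P(w_1,\dots,w_n) + Z'}$ with $Z' \in \bigoplus_{k>r}\V_k(\h)$. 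Since $P(w_1,\dots,w_n) \in \bigoplus_{k\leq r}\V_k(\h)$ lives in different graded components than $Z'$, the homogeneous norm estimate separates into each layer and yields $|P(w_1,\dots,w_n)| \lesssim (\nu\epsilon)^{\beta_0}\Lip_h(\psi)$. The set $V \subset (\V_1(\h))^n$ of tuples for which $v_i \mapsto w_i$ extends to a Lie algebra homomorphism is the zero set of finitely many such polynomial equations (a finite generating set of the ideal of Lie relations of $\g$ inside the free step-$r$ stratified Lie algebra on $n$ generators). After normalizing by $\Lip_h(\psi)$ so that $(w_1,\dots,w_n)$ lies in the compact set $\{|w_i|\leq 1\}^n$, Lemma \ref{loja} produces $(w_1',\dots,w_n') \in V$ with $\max_i |w_i - w_i'| \leq C(\nu\epsilon)^{\beta_0/\alpha}\Lip_h(\psi)$ for some $\alpha>0$ depending only on $\g$ and $\h$. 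Define $T:G \to H$ to be the homomorphism induced by $v_i \mapsto w_i'$, which is Lipschitz since $|w_i'|$ is comparable to $\Lip_h(\psi)$. Iterating Lemma \ref{close-parallel-2} along the Chow factorization gives
\begin{align*}
d_H\!\big(e^{\mu_1 w_{i_1}} \cdots e^{\mu_j w_{i_j}},\, T(x^{-1}z)\big)\ \lesssim\ (\nu\epsilon)^{\beta_0/(\alpha s)}\,\rho\,\Lip_h(\psi),
\end{align*}
and combining this with the telescoping bound finishes the proof, provided $\beta_0$ is chosen large enough (in terms of $\alpha$ and $s$) and $\nu$ small enough to absorb all the implicit constants into $\epsilon$.

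The main obstacle is the Łojasiewicz step: one must identify the correct finite system of polynomial defining equations for $V$, verify these are exactly the obstructions witnessed by closed bracket-independent loops (so that the approximate vanishing established above really implies approximate membership in $V$), and control the Łojasiewicz exponent $\alpha$ well enough to guarantee that the resulting perturbation of the $w_i$ is much smaller than $\epsilon\Lip_h(\psi)$. Everything else — Chow's theorem, telescoping the hypothesis, and propagating closeness of horizontal parameters through BCH via Lemma \ref{close-parallel-2} — is a routine bookkeeping of constants $\nu,\beta_0,\Lambda,M$.
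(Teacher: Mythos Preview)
Your approach is essentially identical to the paper's: both use bracket-independent loops (Lemma~\ref{bracket-independent}) to show the $w_i$ approximately satisfy the Lie relations of $\g$, apply \L{}ojasiewicz (Lemma~\ref{loja}) to perturb to an exact solution $(w_i')$, define $T$ via $v_i\mapsto w_i'$, and then propagate along a Chow factorization.

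There is one technical inaccuracy worth flagging. You claim the telescoping bound
\[
d_H\!\big(h(z),\ h(x)\cdot e^{\mu_1 w_{i_1}} \cdots e^{\mu_j w_{i_j}}\big)\ \lesssim\ C_Q^{M_G}\,M_G\,(\nu\epsilon)^{\beta_0}\,\Lambda\rho\,\Lip_h(\psi)
\]
follows from ``$j\le M_G$ uses of the quasi-triangle inequality.'' This does not work: $d_H$ is only \emph{left}-invariant, so at each step you are left with a term of the form $d_H(u\cdot c,\,v\cdot c)$ (right multiplication by a fixed element), which is \emph{not} controlled by $d_H(u,v)$ alone. The paper handles this by invoking Lemma~\ref{close-parallel} at each inductive step, which introduces a $1/s$ power loss; after $M$ steps the error is $\asymp(\nu\epsilon)^{\beta_0/s^{M-1}}$ rather than $(\nu\epsilon)^{\beta_0}$. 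You already use exactly this mechanism in your final comparison $d_H\big(\prod_\ell e^{\mu_\ell w_{i_\ell}},\,T(x^{-1}z)\big)$; the same device is needed in the first telescoping and in the loop argument bounding $|P(w_1,\dots,w_n)|$. Since you allow $\beta_0$ to be chosen large in terms of $s$ and $M$, this is easily absorbed and the overall strategy goes through unchanged---but the stated bound and its justification (``quasi-triangle inequality'') need this correction.
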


\begin{proof}
  We will suppose without loss of generality that $x = 0$ and $h(0) = 0$.  As the $\{v_i\}$ vectors generate the group under the exponential map, it reduces to defining $T(e^{\lambda v_i})$ for each $i$.  As both $G$ and $H$ are simply connected Lie groups, we can instead pass to homomorphisms of their Lie algebras (which we still denote $T$ by abuse of notation).  If we define $T(v_i) = u_i \in \V_1(\h)$, then for this to pass to a homomorphism of Lie groups, we must have that $\{u_i\}$ satisfy all the same Lie bracket relations as $\{v_i\}$.  Note that we only need to verify the relations up to $s$, the rank of $\h$ as all Lie brackets of higher order vanish and are satisfied vacously.  Thus, we may as well suppose that $r \leq s$.  If $r < s$, then we also have to add the Lie bracket relations that any nested Lie bracket of depth larger than $r$ is 0.

  Each Lie bracket relation can be expressed as a sum of nested Lie bracket that must add up to zero, which we can think of as polynomials.  As the Lie algebras are graded and we are only dealing with horizontal vectors, we get that, for each equation, all the Lie brackets must be nested of the same depth, giving us that the polynomials are homogeneous.  Indeed, given any layer $\V_k(\g)$, the number of possible nested Lie bracket monomials composed of $\{v_1,...,v_n\}$ of degree $k$ is, with overcounting, at most $n^k$.  After choosing a maximal linearly independent subset of these monomials, one can then define the polynomials as just the linear dependence relations of the other monomials with respect to this basis.
  
  Let $\{P_i\}$ denote these polynomials of Lie brackets as defined over $\V_1(\h)^n$.  Thus, all feasible configuration of targets for $T(v_i)$ must evalute each $P_i$ to 0.  We first claim that there exists some $C_0 > 0$ (to be determined) depending on $G$ and $H$ so that
  \begin{align}
    |P_i(w_1,...,w_n)| \leq C_0 \nu (\nu\epsilon)^{\beta_0/s^{M-1}} \Lip_h(\psi)^{\deg(P_i)}, \qquad \forall i. \label{small-P}
  \end{align}
  Suppose that $\deg(P) = k$ and express $P(X_1,...,X_n)$ as the homogeneous nested Lie bracket polynomial
  \begin{align*}
    P(X_1,...,X_n) = \sum_{j=1}^\ell \alpha_j [X_{i(j,1)},[X_{i(j,2)},...,[X_{i(j,k-1)},X_{i(j,k)}]...]]
  \end{align*}
  where $i : \{1,...,\ell\} \times \{1,...,k\} \to \{1,...,n\}$ and $\alpha_j \in \R$.  Then using Lemma \ref{bracket-independent} and subsequently rescaling by $\rho$, we can find $M,\Lambda > 0$ and a minimal bracket-independent path $\{\lambda_j\}_{j=1}^M$, $\{i_j\}_{j=1}^M$ so that
  \begin{align*}
    e^{\lambda_1 X_{i_1}} e^{\lambda_2 X_{i_2}} \cdots e^{\lambda_M X_{i_M}} = e^{\rho^k P(X_1,...,X_n)}, \qquad \max_{i \in \{1,...,M\}} |\lambda_i| \leq \rho\Lambda,
  \end{align*}
  for any set of horizontal vectors $X_1,...,X_n$ in any graded Lie algebra of step at most $r$.  Note that as there exist only finitely many polynomials, all of which are determined by $G$, we can then choose a single $M \geq M_G$ to work for all $P_i$.  We will also take a single $\Lambda \geq 1$ to work for all $P_i$ as all the coefficients of the polynomials depend only on $G$.  These will be the $M$ and $\Lambda$ of the statement of the lemma.
  
  Let $S_\ell = e^{\lambda_1 v_1} \cdots e^{\lambda_\ell v_\ell}$ denote the partial path up to $\ell$ in $G$ with the understanding that $S_0 = 0$.  Note that $S_\ell \in \Lambda M \rho B_G$ for all $\ell \in \{1,...,M\}$.  We have
  \begin{align*}
    h(e^{\rho^k P(v_1,...,v_n)}) = \left[h(S_0)^{-1}h(S_1)\right] \cdots \left[h(S_{M-1})^{-1}h(S_M)\right].
  \end{align*}
  Define for $\ell \in \{1,...,n\}$
  \begin{align*}
    h^{(\ell)} = e^{\lambda_1 w_1} \cdots e^{\lambda_\ell w_\ell}.
  \end{align*}
  By the bracket-independent nature of $\{\lambda_j\}$ and $\{i_j\}$, we then that we have
  \begin{align}
    h^{(M)} = e^{\rho^k P(w_1,...,w_n) + Z} \label{PZ-return}
  \end{align}
  where $Z \in \bigoplus_{j > r} \V_j(\h)$.  We have by the hypothesis that
  \begin{align*}
    d_H(h(S_1),h^{(1)}) = d_H(h(e^0 e^{\lambda_1 v_1}),e^{\lambda_1 w_{i_1}}) \overset{\eqref{close-horizontal-assumption}}{\leq} (\nu \epsilon)^{\beta_0} \Lambda \rho \Lip_h(\psi).
  \end{align*}
  Now suppose by induction that for up to $\ell-1$ we have
  \begin{align*}
    d_H(h(S_{\ell-1}),h^{(\ell-1)}) \leq C^{1 + s^{-1} + ... + s^{-\ell+2}} (\nu \epsilon)^{\beta_0/s^{\ell-2}} \Lambda \rho \Lip_h(\psi).
  \end{align*}
  where $C > 0$ is the constant from Lemma \ref{close-parallel}.  Using the notation of the same lemma, we define $\lambda = \Lambda \rho \Lip_h(\psi)$, $u = \delta_{\lambda^{-1}}(h(S_{\ell-1})^{-1}h(S_\ell))$, $v = e^{\lambda^{-1}\lambda_\ell w_{i_\ell}}$, and $g = h(S_{\ell-1})$, $h = h^{(\ell-1)}$.  As $d_{cc}(S_{\ell-1},S_\ell) \leq \Lambda \rho$ and $\Lambda \rho \geq \psi$, we get that
  \begin{align*}
    d_H(u,0) = \lambda^{-1} d_H(h(S_{\ell-1}),h(S_\ell)) \leq \lambda^{-1} \Lambda \rho \Lip_h(\psi) \leq 1.
  \end{align*}
  We also have
  \begin{align*}
    d_H(u,v) = \lambda^{-1} d_H(h(S_{j-1})^{-1}h(S_j),e^{\lambda_\ell w_{i_\ell}}) \overset{\eqref{close-horizontal-assumption}}{\leq} C^{1+s^{-1} + ... + s^{-\ell+2}} (\nu\epsilon)^{\beta_0/s^{\ell-2}}.
  \end{align*}
  Lemma \ref{close-parallel} then gives
  \begin{align*}
    d_H(h(S_\ell),h^{(\ell)}) = d_H(h(S_{\ell-1}) \cdot \delta_\lambda(u),h^{(\ell-1)} \cdot \delta_\lambda(v)) \leq C^{1+s^{-1} + ... + s^{-\ell+1}} (\nu\epsilon)^{\beta_0/s^{\ell-1}} \Lambda \rho \Lip_h(\psi),
  \end{align*}
  and so we see that
  \begin{align}
    d_H(0,e^{\rho^k P(w_1,...,w_n) + Z}) \overset{\eqref{PZ-return}}{=} d_H(h(S_M),h^{(M)}) \leq C^{1+s^{-1} + ... + s^{-M+1}} (\nu \epsilon)^{\beta_0/s^{M-1}} \Lambda \rho \Lip_h(\psi). \label{P-return}
  \end{align}
  Note that there exist some constant $C_1 > 0$ so that $d_H(0,e^{\rho^k P(w_1,...,w_n)} + Z) \geq C_1 \rho |P(w_1,...,w_n)|^{1/k}$.  Combining this with \eqref{P-return} gives the needed inequality
  \begin{align*}
    |P(w_1,...,w_n)| \leq C_0 \nu (\nu\epsilon)^{\beta_0/s^{M-1}} \Lip_h(\psi)^k.
  \end{align*}
  Here, we've shed the $\deg(P_i)$ in the exponent to account for the $\nu$ term.

  Let $\{f_i\}_{i=1}^m$ be an orthonormal basis of the horizontal layer of $\h$.  If, $u = \sum_{i=1}^m \alpha_i f_i$ and $v = \sum_{i=1}^m \beta_i f_i$, we can compute the Lie bracket
  \begin{align*}
    [u,v] = \sum_{i < j} (\alpha_i \beta_j - \alpha_j \beta_i) [f_i,f_j].
  \end{align*}
  Note that this is a quadratic homogeneous vector value polynomial.  It easily follows that a nested Lie bracket of horizontal elements $[u_{i_1},[u_{i_2},...,[u_{i_{j-1}},u_{i_j}]...]]$ can be expressed as a homogeneous vector-valued polynomial of degree $k$.  Thus, each of the Lie bracket polynomials $P_i$ can be thought of as a vector of real valued polynomials.  From this, we get a collection of real valued polynomials $\{Q_i\}$ on $\R^{mn}$, thought of as $n$ copies of $\R^m$, such that any collection of $n$ horizontal vectors of $H$ are in the zero locus of $\{P_i\}$ if and only if its coordinates are in the zero locus of $\{Q_i\}$.

  As $P_i(w_1,...,w_n) = v_i$ such that $|v_i| \leq C_0\nu(\nu\epsilon)^{\beta_0/s^{M-1}} \Lip_h(\psi)^{\deg(P_i)}$ for all $i$, we get that
  \begin{align*}
    |Q_i(\tilde{w}_1,...,\tilde{w}_n)| \leq C_0 \nu (\nu\epsilon)^{\beta_0/s^{M-1}}, \qquad \forall i,
  \end{align*}
  where $\tilde{w}_i = \Lip_h(\psi)^{-1}w_i$.  Lemma \ref{loja} with $K = B(0,n)$ then gives a set of vectors $(\tilde{w}_1',...,\tilde{w}_n') \in \R^{mn}$ so that for all $i$ we have $Q_i(\tilde{w}_1',...,\tilde{w}_n') = 0$ and
  \begin{align}
    |\tilde{w}_i - \tilde{w}_i'| \leq C_2 \nu^{1/\alpha} (\nu\epsilon)^{\beta_0/(\alpha s^{M-1})}
  \end{align}
  for some $\alpha > 0$ depending on $G$ and $C_2 > 0$ depending on $C_0$.  Setting $w_i' = \Lip_h(\psi) \tilde{w}_i'$, we get that $Q_i(w_1,...,w_n) = 0$ and
  \begin{align}
    |w_i - w_i'| \leq C_2 \nu^{1/\alpha} (\nu\epsilon)^{\beta_0/(\alpha s^{M-1})} \Lip_h(\psi). \label{close-vectors}
  \end{align}
  Thus, the homomorphism defined via generators
  \begin{align*}
    T : G &\to H \\
        e^{\lambda v_i} &\mapsto e^{\lambda w_i'}
  \end{align*}
  is well defined.  That $T$ is Lipschitz follows from the fact that $\{e^{v_i}\}_i$ generate $G$ and $T$ is Lipschitz on this set of generators.  By \eqref{close-vectors} and Lemma \ref{close-parallel-2} and remembering that $|w_i| \leq \Lip_h(\psi)$, we get that there exist some $C_3$ depending on $C_2$ so that
  \begin{align*}
    \sup_{t \in [-\Lambda\rho,\Lambda\rho]} d_H(e^{\lambda w_i},e^{\lambda w_i'}) \leq C_3 \nu^{1/(\alpha s)} (\nu\epsilon)^{\beta_0/(\alpha s^M)} \Lambda \rho \Lip_h(\psi).
  \end{align*}
  This, together with \eqref{close-horizontal-assumption}, the quasitriangle inequality of $d_H$, and specifying $\nu$ to be sufficiently small and $\beta_0 = \alpha s^{2M}$, we get that
  \begin{align}
    \sup\left\{ \sup_{s,t \in [-\Lambda\rho,\Lambda\rho]} d_H(h(ge^{sv_i})^{-1}h(ge^{tv_i}), e^{(t-s)w_i'}) : g \in \Lambda M \Bcc(x,\rho) \right\} \leq (\nu\epsilon)^{s^M} \rho \Lip_h(\psi) \label{close-horizontal-assumption-2}
  \end{align}
  for every $i \in \{1,...,n\}$.
  
  Take $g \in \Bcc(x,\rho)$.  By Chow's theorem, we can write it as a product
  \begin{align*}
    g = e^{\lambda_1 v_{i_1}} \cdots e^{\lambda_{M_G} v_{i_{M_G}}}
  \end{align*}
  so that $|\lambda_1| \leq \rho$.  We can form the partial products $S_j = e^{\lambda_1 v_{i_1}} \cdots e^{\lambda_j v_{i_j}}$ as before.  Note then that $T(S_{j-1}^{-1} S_j) = e^{\lambda_j w_{i_j}'}$.  As $M_G \leq M$, we can repeat the process before that yielded \eqref{P-return} to get that there exists some $C_4 > 0$ so that
  \begin{align*}
    d_H(h(g),T(g)) \leq C_4 \nu \epsilon \rho \Lip_h(\psi).
  \end{align*}
  By specifying $\nu$ small enough, we finish the proof of the lemma.
\end{proof}

We are now ready to prove our final lemma, which resembles the final lemmas from the previous sections.

\begin{lemma} \label{C-final}
  Let $\alpha_1,\gamma > 0$ be the constants from Lemma \ref{C-sup-bound}, $\Lambda,M,\nu,\beta_0 > 0$ be the constants from Lemma \ref{C-homomorphism}, and $\lambda \in (0,1)$ be the constant from Lemma \ref{C-horizontal-close} associated to $M$.  Suppose $h : G \to H$ is $\psi$-LLD.  There exist $C > 0$ depending only on $G$ so that if $\epsilon \in (0,1/2)$, $\eta = C\epsilon^{\beta_0(r+1)s}$, $\ell(Q) \geq e^{\eta^{-\alpha_1}} \psi$ and
  \begin{align*}
    \beta_h^{(p)}\left(Q; \frac{\eta}{\gamma} \right) \leq e^{-\eta^{-\alpha_1}} \Lip_h(\psi)^p
  \end{align*}
  then there exists a Lipschitz homomorphism $T : G \to H$ and $g \in H$ so that for all $x \in \Lambda^{-1} \lambda(\nu\epsilon)^{\beta_0 r} B_Q$ we have
  \begin{align*}
    d_H(h(x), g \cdot T(x)) \leq \epsilon \cdot \Lambda^{-1} \lambda \left( \nu\epsilon \right)^{\beta_0 r} a_0 \ell(Q) \Lip_h(\psi).
  \end{align*}
\end{lemma}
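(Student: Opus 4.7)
The strategy is to run the same four-step template used for Lemma \ref{UC-final}, but with the Carnot-target versions of the intermediate estimates. Set $R = a_0\ell(Q)$. First, apply Lemma \ref{C-sup-bound} to the hypothesis $\beta_h^{(p)}(Q;\eta/\gamma) \le e^{-\eta^{-\alpha_1}}\Lip_h(\psi)^p$ (after choosing $\alpha_1$ sufficiently large relative to $\alpha_0$) to obtain the supremum bound
\begin{equation*}
  \beta_h^{(p)}\bigl(x\cdot\R v\cap 3B_Q;\tfrac{\eta}{8}\bigr) \le e^{-\eta^{-\alpha_0}}\Lip_h(\psi)^p
\end{equation*}
for every $v\in S^{n-1}$ and $x\in G\circleddash v$ with $x\cdot\R v\cap B_Q\ne\emptyset$. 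Second, apply Lemma \ref{C-beta-horizontal} on each such intersection segment $x\cdot[a,b]v = x\cdot\R v\cap 3B_Q$; since $b-a$ is comparable to $R$ and $\ell(Q)\ge e^{\eta^{-\alpha_1}}\psi$, this yields
\begin{equation*}
  \sup_{t\in[a,b]} d_H\bigl(h(xe^{tv}), L_h^{a,b}(xe^{tv})\bigr) \le \eta(b-a)\Lip_h(\psi).
\end{equation*}

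Third, I transfer this estimate from the ``maximal'' horizontal segment in $3B_Q$ to the centered segment $y\cdot[-2R,2R]v$ for arbitrary $y\in B_Q$, which lies inside $3B_Q$. Both segments are horizontal lines, and the previous step pins down their endpoints up to $C\eta R\Lip_h(\psi)$. Applying Lemma \ref{horizontal-lines-bound} (together with the quasi-triangle inequality of $d_H$ and Lemma \ref{close-parallel-2} to compare the two slightly different horizontal directions), I obtain a constant $C_2>0$ such that
\begin{equation*}
  \sup_{t\in[-2R,2R]} d_H\bigl(h(ye^{tv}), L_{h|y\cdot\R v}^{-2R,2R}(ye^{tv})\bigr) \le C_2\,\eta^{1/s}\,R\Lip_h(\psi)
\end{equation*}
for all $y\in B_Q$ and $v\in S^{n-1}$. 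This is where one loses the $1/s$ power; it dictates the exponent $\beta_0(r+1)s$ appearing in the definition of $\eta$.

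Fourth, I feed this into Lemma \ref{C-horizontal-close} with $\chi=M$, $\rho=R$, and effective parameter $\epsilon_{\mathrm{HC}}=(\nu\epsilon)^{\beta_0}$, so that its hypothesis $C_2\eta^{1/s}\le\zeta\lambda\epsilon_{\mathrm{HC}}^{r+1}$ is exactly arranged by taking $\eta=C\epsilon^{\beta_0(r+1)s}$ for a suitable constant $C$ depending on $\nu,\zeta,\lambda,C_2$. The conclusion produces, for each $v\in S^{n-1}$, a horizontal vector $w_v\in\V_1(\h)$ with $|w_v|\le\Lip_h(\psi)$ such that for every $g\in\Bcc(z_Q,\lambda M(\nu\epsilon)^{\beta_0 r}R)$,
\begin{equation*}
  \sup_{s,t\in[-3\lambda(\nu\epsilon)^{\beta_0 r}R,\,3\lambda(\nu\epsilon)^{\beta_0 r}R]} d_H\bigl(h(ge^{sv})^{-1}h(ge^{tv}),\, e^{(t-s)w_v}\bigr) \le \lambda(\nu\epsilon)^{\beta_0(r+1)} R\Lip_h(\psi).
\end{equation*}

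Finally, take an orthonormal basis $v_1,\dots,v_n$ of $\V_1(\g)$ and set $\rho' = \Lambda^{-1}\lambda(\nu\epsilon)^{\beta_0 r}R$. The previous bound on the ball of radius $\lambda M(\nu\epsilon)^{\beta_0 r}R = \Lambda M\rho'$, together with the inclusion $[-\Lambda\rho',\Lambda\rho']\subseteq[-3\lambda(\nu\epsilon)^{\beta_0 r}R,3\lambda(\nu\epsilon)^{\beta_0 r}R]$, verifies the hypothesis of Lemma \ref{C-homomorphism} with $x=z_Q$ and the slopes $w_{v_1},\dots,w_{v_n}$, since $\lambda(\nu\epsilon)^{\beta_0(r+1)}R\Lip_h(\psi) \le (\nu\epsilon)^{\beta_0}\Lambda\rho'\Lip_h(\psi)$ by our choice of $\rho'$. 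Applying that lemma produces a Lipschitz homomorphism $T:G\to H$ with $d_H(h(z),h(z_Q)\cdot T(z))\le\epsilon\rho'\Lip_h(\psi)$ on $\Bcc(z_Q,\rho')=\Lambda^{-1}\lambda(\nu\epsilon)^{\beta_0 r}B_Q$, and setting $g=h(z_Q)$ completes the proof. The main obstacle is the bookkeeping in step three: one must carefully account for the quasi-triangle constant of $d_H$ and track the $1/s$ loss through Lemma \ref{horizontal-lines-bound}, which is precisely what forces the factor $s$ into the exponent of $\eta$ and the double-exponential lower bound on $\ell(S)$ in Theorem \ref{C-UAAP}.
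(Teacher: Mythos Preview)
Your proposal is correct and follows essentially the same approach as the paper: apply Lemma \ref{C-sup-bound}, then Lemma \ref{C-beta-horizontal} on each horizontal segment, transfer to the centered segment $[-2R,2R]$ via Lemma \ref{horizontal-lines-bound} (absorbing the $1/s$ loss into the exponent of $\eta$), feed into Lemma \ref{C-horizontal-close} with $\chi=M$, and finish with Lemma \ref{C-homomorphism} at radius $\rho'=\Lambda^{-1}\lambda(\nu\epsilon)^{\beta_0 r}R$. The only minor deviation is your invocation of Lemma \ref{close-parallel-2} in step three, which the paper does not need there since the two interpolants share the same direction $v$ and one simply compares their endpoints directly before applying Lemma \ref{horizontal-lines-bound}.
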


\begin{proof}
  We will define $\eta$ as $\zeta \lambda^s a_0 \left( \nu \epsilon \right)^{\beta_0(r+1)s}$ and prove the statement for sufficiently small enough $\zeta$.  By translation, we may suppose $z_Q = 0$ and $h(0) = 0$.  Applying Lemma \ref{C-sup-bound} to the hypothesis, we get that
  \begin{align*}
    \sup \left\{ \beta^{(p)}_h\left(x \cdot \R v \cap 3B_Q; \frac{\eta}{8} \right) : v \in S^{n-1}, x \in G \circleddash v, x \cdot \R v \cap B_Q \neq \emptyset \right\} \leq e^{-\eta^{-\alpha_0}} \Lip_h(\psi)^p.
  \end{align*}
  Then, by Lemma \ref{C-beta-horizontal}, we get that for all $v \in S^{n-1}$ and $x \in G \circleddash v$ where $x \cdot \R v \cap B_Q \neq \emptyset$ and $x \cdot [a,b]v = x \cdot \R v \cap 3B_Q$, we have
  \begin{align}
    \sup_{t \in [a,b]} d_H(h(xe^{tv}), L_{h|x \cdot \R v}^{a,b}(t)) \leq \zeta \lambda^s \left( \nu \epsilon \right)^{\beta_0(r+1)s} a_0 \ell(Q) \Lip_h(\psi). \label{C-final-init}
  \end{align}
  Note that if $x \in B_Q$ then $x \cdot [-2a_0\ell(Q),2a_0\ell(Q)]v \subseteq x \cdot \R v \cap 3B_Q$.  Thus, we have for $x \in B_Q$ that
  \begin{align*}
    d_H(L_{h|x \cdot \R v}^{-2a_0\ell(Q),2a_0\ell(Q)}(-2a_0\ell(Q)), L_{h|x \cdot \R v}^{a,b}(-2a_0\ell(Q))) &= d_H(h(xe^{-2a_0\ell(Q)v}), L_{h|x \cdot \R v}^{a,b}(-2a_0\ell(Q))) \\
    &\overset{\eqref{C-final-init}}{\leq} \zeta \lambda^s \left( \nu \epsilon \right)^{\beta_0(r+1)s} a_0 \ell(Q) \Lip_h(\psi).
  \end{align*}
  Here, we've shifted the domain of the $L^{a,b}$ term so that the endpoint matches up.  The other endpoint can be bounded similarly.  Thus, as $L|_{x \cdot \R v}$ is $\Lip_h(\psi)$-Lipschitz, we get from Lemma \ref{horizontal-lines-bound} that there exist some $C_0 > 0$ so that
  \begin{align*}
    \sup_{t \in [-2a_0 \ell(Q),2a_0\ell(Q)]} d_H(L_{h|x \cdot \R v}^{-2a_0\ell(Q),2a_0\ell(Q)}(t), L_{h|x \cdot \R v}^{a,b}(t)) \leq C_0 \zeta^{1/s} \lambda \left( \nu \epsilon \right)^{\beta_0(r+1)} a_0 \ell(Q) \Lip_h(\psi).
  \end{align*}
  Here, we require $\zeta \leq 1$ so that $\zeta \lambda^s (\nu \epsilon)^{\beta_0(r+1)s} \leq 1$.  This, along with \eqref{C-final-init} and the quasi-triangle inequality gives that there exist some $C_1 > 0$ so that
  \begin{multline*}
    \sup \left\{ \sup_{t \in [-2a_0\ell(Q),2a_0\ell(Q)]} d_H(h(xe^{tv}), L_{h|x \cdot \R v}^{-2a_0\ell(Q),2a_0\ell(Q)}(t)) : v \in S^{n-1}, x \in B_Q \right\} \\
    \leq C_1\zeta^{1/s} \lambda \left( \nu \epsilon \right)^{\beta_0(r+1)} a_0 \ell(Q) \Lip_h(\psi).
  \end{multline*}
  Choosing $\zeta$ to be small enough, we get from Lemma \ref{C-horizontal-close} that for each $v \in S^{n-1}$, there exists a horizontal element $w(v) \in H$ so that for all $x \in M \lambda \left( \nu\epsilon \right)^{\beta_0 r} B_Q$ we have
  \begin{align}
    \sup_{s,t \in [-3\lambda (\nu\epsilon)^{\beta_0 r} a_0\ell(Q),3\lambda (\nu\epsilon)^{\beta_0 r} a_0\ell(Q)]} d_H(h(xe^{sv})^{-1}h(xe^{tv}), \delta_{t-s}w(v)) \leq \lambda \left( \nu \epsilon \right)^{\beta_0 (r+1)} a_0 \ell(Q) \Lip_h(\psi). \label{C-group-inv}
  \end{align}
  We finish the proof by applying Lemma \ref{C-homomorphism}.
\end{proof}

\begin{proof}[Proof of Theorem \ref{C-UAAP}]
  Lemma \ref{C-final} shows that there exists some $\alpha_2,\gamma,\zeta,\beta_0 > 0$ so that, setting $\eta = \epsilon^{\beta_0(r+1)s}$, if $\qd_h^{C}(Q,\zeta \epsilon^{\beta_0 r}) > \epsilon \Lip_h(\psi)$ and $\ell(Q) \geq e^{\eta^{-\alpha_2}} \psi$ then $\beta_h^{(p)}\left(Q; \eta/\gamma \right) > e^{-\eta^{-\alpha_2}} \Lip_h(\psi)$.  Thus, if $\ell(S) \geq \lambda e^{\eta^{-\alpha_2}} \tau^{-m} \psi$ for some sufficiently large $\lambda > 0$, then
  \begin{align*}
    \sum_{k=0}^m \sum_{Q \in \Delta_k(S)} \left\{|Q| : \qd_h^{UC}(Q,\zeta \epsilon^{\beta_0 r}) > \epsilon \Lip_h(\psi) \right\} &\leq e^{\eta^{-\alpha_2}} \Lip_h(\psi)^{-p} \sum_{k=0}^m \sum_{Q \in \Delta_k(S)} \beta_h^{(p)}(Q; \eta/\gamma)|Q| \\
    &\overset{\eqref{C-alpha-beta}}{\leq} C_0 e^{\eta^{-\alpha_2}} \Lip_h(\psi)^{-p} \sum_{k=0}^m \sum_{Q \in \Delta_k(S)} \alpha_h^{(p)}(Q; \eta/\gamma)|Q| \\
    &\leq C_1 e^{\eta^{-\alpha_2}} |S|,
  \end{align*}
  where we used Proposition \ref{carleson-cubes} for the last inequality as in the proof of Lemma \ref{M-UAAP}.  Setting $\beta = \beta_0r$ and setting $\alpha = \alpha_2 \beta_0 (r+1)s$, we get the statement of the theorem.
\end{proof}


\section{Uniform convexity of graded nilpotent Lie groups}

We now show that every graded nilpotent Lie group has a homogeneous metric (possibly a semimetric) that satisfies \eqref{C-eqn}.  We will let $H$ be a graded nilpotent Lie group of step $s$.  We also remind the reader that, for convenience, we have supposed that the constants in the Lie bracket structure of $H$ are all 1.  All the proofs that follow go through with superficial modifications in the general case and the results all differ by constants depending on the structure of the group.  Given a sequence of numbers $\lambda_2,...,\lambda_s > 0$, we can inductively construct a group norm via a sequence of group seminorms as follows.  Let
\begin{align*}
  N_2(x) = \left( |x_1|^4 + \lambda_2 |x_2|^2 \right)^{1/4}.
\end{align*}
Having defined $N_{k-1} : G \to \R$, we define
\begin{align*}
  N_k(x) = \left( N_{k-1}(x)^{2k!} + \lambda_k |x_k|^{2(k-1)!} \right)^{1/2k!}
\end{align*}
We take $N = N_s$ to be the group norm from this construction and $d(g,h) = N(g^{-1}h)$ to be its metric.  Note that $N_j \leq N_k$ if $j \leq k$.  We see that it is indeed a homogeneous norm by the anisotropic scaling of the norm.  As before, we let $\tilde{\pi} : H \to H$ denote the projection of $H$ onto its horizontal component and
\begin{align*}
  NH(x) = d(\tilde{\pi}(x),x) = N(\tilde{\pi}(x)^{-1} x)
\end{align*}
denote how non-horizontal $x$ is.

We will first need the following numerical lemma.

\begin{lemma} \label{num-bound-1}
  For each $C > 0$ and $k \geq 2$ there exists a $\lambda = \lambda(C,k)$ such that the following inequality holds for all $a,b \geq 0$:
  \begin{align*}
      \frac{\lambda}{2^k} (a + b)^{2(k-1)!} - C b^{2(k-1)!} \leq \frac{\lambda}{2} a^{2(k-1)!}
    \end{align*}
\end{lemma}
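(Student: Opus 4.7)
The plan is to set $m := 2(k-1)!$ and rewrite the inequality in the equivalent form
\begin{align*}
  \frac{\lambda}{2^k}(a+b)^{m} \leq \frac{\lambda}{2}a^{m} + Cb^{m},
\end{align*}
and then prove it by splitting into two cases according to the ratio $b/a$, interpolating between "$(a+b)^m$ is essentially $a^m$'' and "$(a+b)^m$ is essentially $b^m$.''

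The first step is to choose a threshold $\epsilon = \epsilon(k) \in (0,1)$ small enough that $(1+\epsilon)^{m} \leq 2^{k-1}$; this is possible because $m$ is a finite positive integer and $2^{k-1} > 1$ since $k \geq 2$. In the regime $b \leq \epsilon a$ one has $(a+b)^m \leq (1+\epsilon)^m a^m \leq 2^{k-1}a^m$, whence $\frac{\lambda}{2^k}(a+b)^m \leq \frac{\lambda}{2}a^m$ for any $\lambda$. So in this regime the inequality holds trivially and the choice is independent of $\lambda$.

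In the complementary regime $b > \epsilon a$ one writes $a + b \leq (1 + 1/\epsilon)\,b$, giving $(a+b)^m \leq (1+1/\epsilon)^m b^m$. Hence it suffices to arrange
\begin{align*}
  \frac{\lambda}{2^{k}}(1 + 1/\epsilon)^{m}\,b^{m} \leq C b^{m},
\end{align*}
which forces the choice $\lambda := 2^{k}C/(1+1/\epsilon)^{m}$. With $\epsilon$ depending only on $k$ and $\lambda$ depending on $C$ and $\epsilon$ (hence only on $C$ and $k$), both cases simultaneously give the bound. Since the case analysis exhausts $a,b \geq 0$, the lemma follows.

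There is no genuine obstacle: the lemma is a purely elementary convexity manipulation, and the only point requiring care is the ordering of the choices, namely picking $\epsilon$ first (small enough to absorb the $(a+b)^m$ term into $\frac{\lambda}{2}a^m$ when $b$ is small relative to $a$), and then picking $\lambda$ in terms of $\epsilon$ and $C$ to handle the remaining regime.
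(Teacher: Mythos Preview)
Your proof is correct and follows essentially the same approach as the paper: both choose a threshold $\epsilon$ so that $(1+\epsilon)^{2(k-1)!} \leq 2^{k-1}$, handle the regime $b \leq \epsilon a$ by absorbing $(a+b)^m$ into $2^{k-1}a^m$, and then choose $\lambda$ small enough (depending on $\epsilon$ and $C$) so that in the regime $b > \epsilon a$ the bound $(a+b)^m \leq (1+1/\epsilon)^m b^m$ suffices. The arguments are virtually identical up to presentation.
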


\begin{proof}
  Choose $\epsilon$ small enough so that $2^{-k} (1 + \epsilon)^{2(k-1)!} \leq 1/2$.  Then if $b \leq \epsilon a$ we have
  \begin{align*}
    \frac{\lambda}{2^k} (a + b)^{2(k-1)!} - Cb^{2(k-1)!} \leq \frac{\lambda}{2^k} (1+\epsilon)^{2(k-1)!} a^{2(k-1)!} \leq \frac{\lambda}{2} a^{2(k-1)!}.
  \end{align*}
  So far we haven't chosen $\lambda$.  Now let $b = \eta a > \epsilon a$.  Choose $\lambda$ so that
  \begin{align*}
    \frac{\lambda}{2^k} \left( \frac{1}{\epsilon} + 1 \right)^{2(k-1)!} \leq C.
  \end{align*}
  We have that
  \begin{align*}
    \frac{\lambda}{2^k} (a + b)^{2(k-1)!} - Cb^{2(k-1)!} = \left( \frac{\lambda}{2^k} (1+\eta)^{2(k-1)!} - C\eta^{2(k-1)!} \right) a^{2(k-1)!}.
  \end{align*}
  Dividing through by $\eta^{2(k-1)!}$, we get that the right hand side is
  \begin{align*}
    \left( \frac{\lambda}{2^k} \left( \frac{1}{\eta} + 1 \right)^{2(k-1)!} - C \right) a^{2(k-1)!} \leq \left( \frac{\lambda}{2^k} \left( \frac{1}{\epsilon} + 1 \right)^{2(k-1)!} - C\right) a^{2(k-1)!} \leq 0.
  \end{align*}
\end{proof}

We can now prove our uniform convexity result.

\begin{proposition} \label{carnot-convexity}
  Let $H$ be a graded nilpotent Lie group of step $s$.  Then there exists $\lambda_2,...,\lambda_s > 0$ and $C > 0$ depending only on $H$ such that, if we construct the group norm as above, then for all $g,h \in H$ we have
  \begin{align}
    \frac{1}{2} \left( N(g)^{2s!} + N(h)^{2s!} \right) \geq \left( \frac{N(gh)}{2} \right)^{2s!} + C 2^{-2s!} \left( | g_1 - h_1|^{2s!} + NH(gh)^{2s!} \right).
  \end{align}
\end{proposition}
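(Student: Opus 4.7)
I would prove the proposition by induction on the nilpotency step $s$, simultaneously choosing the weights $\lambda_2,\ldots,\lambda_s$ and the convexity constant $C$. The base case $s=1$ is abelian (so $NH\equiv 0$, $N=|\cdot|$, and $(gh)_1=g_1+h_1$), and the claim reduces to the Hilbert-space Clarkson inequality $|\frac{g_1+h_1}{2}|^2 + |\frac{g_1-h_1}{2}|^2 \leq \frac{|g_1|^2+|h_1|^2}{2}$, and its $p$-th-power analogues obtained by iterating. For the inductive step, assume the statement is established for $N_{s-1}$ (viewed as a seminorm on $H$, or equivalently a homogeneous norm on the $(s-1)$-step quotient $H/\exp(\V_s(\h))$) with some constant $C_{s-1}$ and already-chosen $\lambda_2,\ldots,\lambda_{s-1}$.

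The key algebraic split is
\begin{align*}
\frac{N_s(g)^{2s!}+N_s(h)^{2s!}}{2} = \frac{N_{s-1}(g)^{2s!}+N_{s-1}(h)^{2s!}}{2} + \frac{\lambda_s}{2}\bigl(|g_s|^{2(s-1)!}+|h_s|^{2(s-1)!}\bigr),
\end{align*}
and the analogous split of $(N_s(gh)/2)^{2s!}$. For the lower-layer piece I would use $N_{s-1}^{2s!}=(N_{s-1}^{2(s-1)!})^s$ together with the convexity of $t\mapsto t^s$ to get $\frac{A^s+B^s}{2}\geq \bigl(\frac{A+B}{2}\bigr)^s$ with $A=N_{s-1}(g)^{2(s-1)!}$ and $B=N_{s-1}(h)^{2(s-1)!}$; applying the inductive hypothesis inside and then the elementary bound $(X+Y)^s\geq X^s+Y^s$ for $X,Y\geq 0$ separates out the term $(N_{s-1}(gh)/2)^{2s!}$ together with a ``surplus'' of order $C_{s-1}^s\bigl(|g_1-h_1|^{2s!}+NH_{s-1}(gh)^{2s!}\bigr)$.

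For the top-layer piece, the BCH formula gives $(gh)_s=g_s+h_s+P_s$ with $P_s$ a polynomial of weighted degree $s$ in the lower-layer components. I would apply Lemma~\ref{num-bound-1} with $a$ an appropriate combination of $g_s,h_s$ and $b=|P_s|$, choosing $\lambda_s=\lambda(C',s)$ so that
\begin{align*}
\frac{\lambda_s|(gh)_s|^{2(s-1)!}}{2^{2s!}} \leq \frac{\lambda_s\bigl(|g_s|^{2(s-1)!}+|h_s|^{2(s-1)!}\bigr)}{2\cdot 2} + C'|P_s|^{2(s-1)!}.
\end{align*}
The $|P_s|^{2(s-1)!}$ error is then controlled by expanding each nested bracket in $P_s$ via the antisymmetry identity $[u,v]=-\tfrac12[u+v,u-v]$ (and its iterates for deeper brackets), which produces a factor of $|g_i-h_i|$ from each elementary bracket; combined with multilinearity of the Lie bracket and Young's inequality, this bounds $|P_s|^{2(s-1)!}$ by a constant multiple of $|g_1-h_1|^{2s!}$ plus polynomial expressions in $|g_j|,|h_j|$ $(j<s)$ that get absorbed into the strict slack left by the induction in the lower-layer piece (namely, the inductive gap $\frac{N_{s-1}(g)^{2s!}+N_{s-1}(h)^{2s!}}{2}-(N_{s-1}(gh)/2)^{2s!}$ before we extract the $C_{s-1}$-extras). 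The $NH(gh)^{2s!}$ contribution on the right is handled analogously, since $N(\tilde\pi(gh)^{-1}gh)^{2s!}$ decomposes as a sum of $\lambda_k|(gh)_k+Q_k|^{2s!/k}$ with $Q_k$ built from the lower layers in the same BCH-polynomial manner, so it is dominated by $NH_{s-1}(gh)^{2s!}$ plus an admissible $|P_s|$-type error.

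Putting the two pieces together closes the induction with a new constant $C_s>0$ depending on $C_{s-1}$, $\lambda_s$, and the structure constants of $\h$. The hierarchy $\lambda_2\ll \lambda_3\ll\cdots\ll\lambda_s$ is forced by the successive applications of Lemma~\ref{num-bound-1}, each of which fixes $\lambda_k$ as a function of the error tolerance needed at that stage. The main obstacle will be the combinatorial bookkeeping of $P_s$: it contains boundedly many nested Lie brackets of depth up to $s$ and weights summing to $s$, and producing the bound $|P_s|^{2(s-1)!}\leq$ (admissible extras) requires applying the antisymmetry trick carefully to every bracket so as to always peel off at least one $|g_i-h_i|$ factor and then use Young's inequality with exponents tuned to $2s!$. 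Once that absorption is in place, Lemma~\ref{num-bound-1} supplies precisely the right numerical inequality to complete the inductive step.
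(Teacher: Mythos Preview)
Your proposal has a genuine gap at the heart of the inductive step. When you pass from $N_{s-1}$ to $N_s$ you use $(X+Y)^s\ge X^s+Y^s$, keeping only a surplus of order $C_{s-1}^s\bigl(|g_1-h_1|^{2s!}+NH_{s-1}(gh)^{2s!}\bigr)$. This surplus is far too small to absorb $|P_s|^{2(s-1)!}$. Concretely, take $|g_1|\sim 1$ and $|g_1-h_1|\sim\epsilon$ (with higher coordinates zero, say). The BCH monomial $[g_1,[g_1,\ldots,[g_1,h_1]\ldots]]$ of depth $s$ has magnitude $\lesssim |g_1|^{s-2}|[g_1,h_1]|\lesssim\epsilon$, so $|P_s|^{2(s-1)!}\sim\epsilon^{2(s-1)!}$. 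Your retained surplus is only $|g_1-h_1|^{2s!}\sim\epsilon^{2s!}$, which is of strictly higher order in $\epsilon$ and cannot control it. There is no additional ``strict slack before extracting the $C_{s-1}$-extras'': once you apply the inductive hypothesis, the entire gap $\tfrac{A+B}{2}-(\tfrac{N_{s-1}(gh)}{2})^{2(s-1)!}$ \emph{is} the $C_{s-1}$-extra, and after $(X+Y)^s\ge X^s+Y^s$ the remainder is precisely the cross terms you discarded.

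The paper fixes this by carrying a stronger induction hypothesis: in addition to the pure power $|g_1-h_1|^{2k!}$ it keeps (i) a term $\alpha_{k,k}$ containing $|[g_1,h_1]|^{k!}$ and $|g_i|^{2k!/i}$, $|h_i|^{2k!/i}$ for $i\ge 2$, and (ii) the \emph{full} binomial cross terms $\sum_{j=1}^{k}\binom{k}{j}\alpha_{k-1,k-1}^j\,|g_1+h_1|^{2(k-1)!(k-j)}$ obtained by expanding $(X+Y)^k$ completely rather than truncating. The $j=1$ cross term is exactly of size $\sim\epsilon^{2(k-1)!}$ in the regime above, matching $|P_k|^{2(k-1)!}$. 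The bound on $|P_k|$ then goes bracket by bracket: if the innermost bracket is $\pm[g_1,h_1]$ one uses $|[g_1,h_1]|^{(k-1)!}\le\alpha_{k-1,k-1}$, otherwise some factor has layer index $\ge 2$ and is again dominated by $\alpha_{k-1,k-1}$; the remaining $|g_1|$ or $|h_1|$ factors are handled via $|g_1|^{2(k-1)!}\le\alpha_{k-1,k-1}\vee|g_1+h_1|^{2(k-1)!}$. Your antisymmetry trick is the right idea for the innermost bracket, but you must retain the cross terms and load the induction hypothesis accordingly; Lemma~\ref{num-bound-1} is then applied exactly as you describe to choose $\lambda_k$.
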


\begin{proof}
  For brevity, we write
  \begin{align*}
    \alpha_{j,k} = |[g_1,h_1]|^{k!} + |g_1 - h_1|^{2k!} + \sum_{i=2}^j \left(|g_i|^{2k!/i} + |h_i|^{2k!/i} \right).
  \end{align*}
  We first prove that for all $k \in \{2,...,s\}$ we can find a sequence $\lambda_2,...,\lambda_k$ and a $C_0 > 0$ depending on the structure of $H$ such that
  \begin{align}
    \frac{N_k(g)^{2k!} + N_k(h)^{2k!}}{2} \geq \left( \frac{N_k(gh)}{2} \right)^{2k!} + \sum_{i=3}^k \left[ \sum_{j=1}^i \binom{i}{j} C_0^j \alpha_{i-1,i-1}^j |g_1 + h_1|^{2(i-1)! \cdot (i-j)}\right]^{k!/i!} + C_0\alpha_{k,k}, \label{induct-result}
  \end{align}
  where the summation doesn't appear for $k = 2$.  The proof will be by induction on the steps.  The constant $C_0$ may change from induction, but as there are only finitely many steps, we can just choose the minimal constant.  Thus, we will not have to be too careful with $C_0$.  For the base case, we have
  \begin{align*}
    \left( \frac{N_2(gh)}{2} \right)^4 = \left| \frac{g_1 + h_1}{2} \right|^4 + \frac{\lambda_2}{2^4} \left| g_2 + h_2 + \frac{1}{2} [g_1,h_1] \right|^2.
  \end{align*}
  By the parallelogram law of $|\cdot|$, we have that
  \begin{align*}
    \left| \frac{g_1+h_1}{2} \right|^4 &= \left( \frac{|g_1|^2 + |h_1|^2}{2} - \left| \frac{g_1 - h_1}{2} \right|^2 \right)^2 \\
    &= \left(\frac{|g_1|^2 + |h_1|^2}{2}\right)^2 - 2 \left( \frac{|g_1|^2 + |h_1|^2}{2} - \left| \frac{g_1 - h_1}{2} \right|^2 \right) \left| \frac{g_1 - h_1}{2} \right|^2 - \left| \frac{g_1 - h_1}{2} \right|^4 \\
    &= \left(\frac{|g_1|^2 + |h_1|^2}{2}\right)^2 - 2 \left| \frac{g_1 + h_1}{2} \right|^2 \left| \frac{g_1 - h_1}{2} \right|^2 - \left| \frac{g_1 - h_1}{2} \right|^4
  \end{align*}
  Note that
  \begin{align*}
    |[g_1,h_1]|^2 = \frac{1}{4} |[g_1 - h_1, g_1 + h_1]|^2 \leq \frac{1}{4} |g_1+h_1|^2 |g_1-h_1|^2.
  \end{align*}
  Thus,
  \begin{align}
    \left( \frac{N_2(gh)}{2} \right)^4  \leq \frac{|g_1|^4 + |h_1|^4}{2} - \left| \frac{g_1 - h_1}{2} \right|^4 - \frac{1}{2} |[g_1,h_1]|^2  + \frac{\lambda_2}{4} \left| \frac{g_2 + h_2}{2} + \frac{1}{4}[g_1,h_1]\right|^2 \label{1st-step}
  \end{align}
  Using the inequality $|a + b|^p - 2^{p-1} |b|^p \leq 2^{p-1} |a|^p$, we see that if we set $\lambda_2 = 1$, we have that
  \begin{align*}
    - 8 \left| \frac{[g_1,h_1]}{4} \right|^2 + \frac{1}{4} \left| \frac{g_2 + h_2}{2} + \frac{1}{4}[g_1,h_1]\right|^2 &\leq -\frac{15}{32} |[g_1,h_1]|^2  + \frac{1}{2} \left| \frac{g_2 + h_2}{2} \right|^2 \\
    &= -\frac{15}{32} |[g_1,h_1]|^2  + \frac{1}{2} \left(\frac{|g_2|^2 + |h_2|^2}{2} - \left| \frac{g_2 - h_2}{2} \right|^2 \right).
  \end{align*}
  Now combining with \eqref{1st-step} we have
  \begin{align*}
    \left( \frac{N_2(gh)}{2} \right)^4 \leq \frac{N_2(g)^4 + N_2(h)^4}{2} - \left| \frac{g_1 - h_1}{2} \right|^4 - \frac{15}{32} |[g_1,h_1]|^2 - \frac{|g_2|^2 + |h_2|^2}{4},
  \end{align*}
  which finishes the base case of the induction.  By the inductive hypothesis, we have that
  \begin{multline}
    \left( \frac{N_{k-1}(gh)}{2} \right)^{2(k-1)!} \leq \frac{N_{k-1}(g)^{2(k-1)!} + N_{k-1}(h)^{2(k-1)!}}{2} - C_0 \alpha_{k-1,k-1} \\
    - \sum_{i=3}^{k-1} \left[ \sum_{j=1}^i \binom{i}{j} C_0^j \alpha_{i-1,i-1}^j |g_1 + h_1|^{2(i-1)! \cdot (i-j)}\right]^{(k-1)!/i!}. \label{induct-hyp}
  \end{multline}
  By the construction of the norm, we have that
  \begin{align}
    \left( \frac{N_k(gh)}{2} \right)^{2k!} = \left( \frac{N_{k-1}(gh)}{2} \right)^{2k!} + \frac{\lambda_k}{2^k} \left| \frac{g_k + h_k}{2} + P_k \right|^{2(k-1)!} \label{norm-step}
  \end{align}
  where $P_k$ is a BCH polynomial of $gh$ at level $k$.  Suppose there exists a constant $C_1 > 0$ depending only on the group structure such that
  \begin{multline}
    \left( \frac{N_{k-1}(gh)}{2} \right)^{2k!} \leq \frac{N_{k-1}(g)^{2k!} + N_{k-1}(h)^{2k!}}{2} - C_1 \left( |P_k|^{2(k-1)!} + \alpha_{k-1,k} \right) \\
    - \sum_{i=3}^k \left[ \sum_{j=1}^i \binom{i}{j} C_1^j \alpha_{i-1,i-1}^j |g_1 + h_1|^{2(i-1)! \cdot (i-j)}\right]^{k!/i!}. \label{induct-1}
  \end{multline}
  We first finish the induction.  Combining \eqref{norm-step} and \eqref{induct-1} gives
  \begin{multline}
    \left( \frac{N_k(gh)}{2} \right)^{2k!} \leq \frac{N_{k-1}(g)^{2k!} + N_{k-1}(h)^{2k!}}{2} - C_1 \alpha_{k-1,k} + \frac{\lambda_k}{2^k} \left| \frac{g_k + h_k}{2} + P_k \right|^{2(k-1)!} \\
    - C_1|P_k|^{2(k-1)!} - \sum_{i=3}^k \left[ \sum_{j=1}^i \binom{i}{j} C_1^j \alpha_{i-1,i-1}^j |g_1 + h_1|^{2(i-1)! \cdot (i-j)}\right]^{k!/i!}. \label{induct-2}
  \end{multline}
  Having fixed $C_1$, an immediate consequence of Lemma \ref{num-bound-1} when $a = |g_k+h_1|/2$ and $b = |P_k|$ is that there exists a $\lambda_k > 0$ such that
  \begin{align}
    \frac{\lambda_k}{2^k} \left| \frac{g_k + h_k}{2} + P_k \right|^{2(k-1)!} - C_1|P_k|^{2(k-1)!} &\leq \frac{\lambda_k}{2} \left| \frac{g_k+h_k}{2} \right|^{2(k-1)!} \notag \\
    &\leq \lambda_k \frac{|g_k|^{2(k-1)!} + |h_k|^{2(k-1)!}}{4}. \label{induct-3}
  \end{align}
  Using \eqref{induct-2} and \eqref{induct-3}, we have that there exists a constant $C_2$ depending on $C_1$ and $\lambda_k$ such that
  \begin{align*}
    \left( \frac{N_k(gh)}{2} \right)^{2k!} &\leq \frac{N_k(g)^{2k!} + N_k(h)^{2k!}}{2} - C_1\alpha_{k-1,k}  - \frac{\lambda_k(|g_k|^{2(k-1)!} + |h_k|^{2(k-1)!})}{4} \\
    &\qquad- \sum_{i=3}^k \left[ \sum_{j=1}^i \binom{i}{j} C_1^j \alpha_{i-1,i-1}^j |g_1 + h_1|^{2(i-1)! \cdot (i-j)}\right]^{k!/i!}\\
    &\leq \frac{N_k(g)^{2k!} + N_k(h)^{2k!}}{2} - C_2\alpha_{k,k} \\
    &\qquad- \sum_{i=3}^k \left[ \sum_{j=1}^i \binom{i}{j} C_2^j \alpha_{i-1,i-1}^j |g_1 + h_1|^{2(i-1)! \cdot (i-j)}\right]^{k!/i!}.
  \end{align*}
  This completes the proof of \eqref{induct-result} with $C_0 = C_2$.
  
  We now prove \eqref{induct-1} before we finish the proof of the Proposition.  By $2(k-1)!$-convexity of the Euclidean norm, we have
  \begin{align*}
    | g_1+h_1 |^{2(k-1)!} + | g_1-h_1 |^{2(k-1)!} \geq 2 |g_1|^{2(k-1)!} + 2|h_1|^{2(k-1)!},
  \end{align*}
  which gives
  \begin{align}
    |g_1 + h_1|^{2(k-1)!} \vee |g_1-h_1|^{2(k-1)!} \geq |g_1|^{2(k-1)!} + |h_1|^{2(k-1)!}. \label{sum-diff-ineq}
  \end{align}
  Let
  \begin{align*}
    \beta := N_{k-1}(gh)^{2(k-1)!}.
  \end{align*}
  By construction of $N_{k-1}(gh)$, we then have that
  \begin{align*}
    \beta \geq N_2(gh)^{2(k-1)!} \geq |g_1 + h_1|^{2(k-1)!}.
  \end{align*}
  We list the following properties which are straightforward from the definition of $\alpha_{k-1,k-1}$ and $\beta$.
  \begin{align}
    \alpha_{k-1,k-1} \vee \beta &\overset{\eqref{sum-diff-ineq}}{\geq} |g_1|^{2(k-1)!} \vee |h_1|^{2(k-1)!}, \label{alpha-v-beta} \\
    \alpha_{k-1,k-1} &\geq |[g_1,h_1]|^{(k-1)!}, \label{alpha1}\\
    \alpha_{k-1,k-1} &\geq |g_i|^{2(k-1)!/i} \vee |h_i|^{2(k-1)!/i}, \label{alpha-geq-1} ~~~~~2 \leq i \leq k-1.
  \end{align}
  Rearranging \eqref{induct-hyp}, we have
  \begin{multline}
    \frac{\beta}{2^{2(k-1)!}} + C_0 \alpha_{k-1,k-1} \leq \frac{N_{k-1}(g)^{2(k-1)!} + N_{k-1}(h)^{2(k-1)!}}{2} \\
    - \sum_{i=3}^{k-1} \left[ \sum_{j=1}^i \binom{i}{j} C_0^j \alpha_{i-1,i-1}^j |g_1 + h_1|^{2(i-1)! \cdot (i-j)}\right]^{(k-1)!/i!}. \label{induct-hyp-rearrange}
  \end{multline}
  Note that the right hand side of \eqref{induct-hyp-rearrange} must be positive as the left hand side is obviously so.  Thus, raising both sides to the power of $k$, and using the fact that $(a - b)^k \leq a^k - b^k$ when $a \geq b$ on the right hand side, we get
  \begin{multline*}
    \left(\frac{\beta}{2^{2(k-1)!}}\right)^k + \sum_{j=1}^k \binom{k}{j} C_0^k \alpha_{k-1,k-1}^k \left( \frac{\beta}{2^{2(k-1)!}} \right)^{k-j} \leq \left(\frac{N_{k-1}(g)^{2(k-1)!} + N_{k-1}(h)^{2(k-1)!}}{2}\right)^k \\
    - \left( \sum_{i=3}^{k-1} \left( \sum_{j=1}^i \binom{i}{j} C_0^j \alpha_{i-1,i-1}^j |g_1 + h_1|^{2(i-1)! \cdot (i-j)}\right)^{(k-1)!/i!} \right)^k.
  \end{multline*}
  We then have that there exists some constant $C_3 > 0$ depending on $C_0$ such that
  \begin{align*}
    \left( \frac{N_{k-1}(gh)}{2} \right)^{2k!} &\leq \frac{N_{k-1}(g)^{2k!} + N_{k-1}(h)^{2k!}}{2} - \sum_{j=1}^k \binom{k}{j} C_0^j \alpha_{k-1,k-1}^j \left( \frac{\beta}{2^{2(k-1)!}} \right)^{k-j} \\
    &\qquad- \sum_{i=3}^{k-1} \left( \sum_{j=1}^i \binom{i}{j} C_0^j \alpha_{i-1,i-1}^j |g_1 + h_1|^{2(i-1)! \cdot (i-j)}\right)^{k!/i!} \\
    &\leq \frac{N_{k-1}(g)^{2k!} + N_{k-1}(h)^{2k!}}{2} - \sum_{j=1}^k \binom{k}{j} C_3^j \alpha_{k-1,k-1}^j \beta^{k-j} \\
    &\qquad- \sum_{i=3}^k \left( \sum_{j=1}^i \binom{i}{j} C_3^j \alpha_{i-1,i-1}^j |g_1 + h_1|^{2(i-1)! \cdot (i-j)}\right)^{k!/i!}. \\
  \end{align*}
  The last inequality (in which the range of the second summation changes) comes from the fact that $\beta \geq |g_1 + h_1|^{2(k-1)!}$ and the $2^{-2(k-1)!(k-j)}$ terms.  It now suffices to show that there exists some $C_4 > 0$ such that
  \begin{align*}
    \alpha_{k-1,k} + |P_k|^{2(k-1)!} \leq \sum_{j=1}^k \binom{k}{j} C_4^j \alpha_{k-1,k-1}^j \beta^{k-j}
  \end{align*}
  That $\alpha_{k-1,k} \leq \alpha_{k-1,k-1}^k$ is straightforward from the definition and so it remains to bound $|P_k|^{2(k-1)!}$.  By the BCH formula, we have that $P_k$ is finite summation of nested Lie brackets $[x_1,[x_2,...,[x_{j-1},x_j]...]]$ where $x_l$ is either $g_{i(l)}$ or $h_{i(l)}$ and
  \begin{align*}
    \sum_{l=1}^j i(l) = k.
  \end{align*}
  At the loss of some multiplicative constant depending only on $H$, it then suffices to bound each Lie bracket raised to the power $2(k-1)!$.  We have two cases.  Suppose that $[x_{j-1},x_j] = \pm [g_1,h_1]$.  Then $|[x_{j-1},x_j]|^{2(k-1)!} \leq \alpha_{k-1,k-1}^2$ by \eqref{alpha1}.  We also have $|x_l|^{2(k-1)!} \leq \alpha_{k-1,k-1}^{i(l)}$ if $i(l) > 1$ or $|x_l|^{2(k-1)!} \leq \alpha_{k-1,k-1} \vee \beta$ if $i(l) = 1$ by \eqref{alpha-v-beta} and \eqref{alpha1}, respectively.  Putting everything together, we get that there exists a constant $C_5 > 0$ depending on the group structure such that
  \begin{align*}
    |[x_1,[x_2,...,[x_{j-1},x_j]...]]|^{2(k-1)!} &\leq C_5 \alpha_{k-1,k-1}^2 \prod_{l=1}^{j-2} (\alpha_{k-1,k-1} \vee \beta)^{i(l)} \\
    &\leq C_5 \sum_{j=1}^k \binom{k}{j} \alpha_{k-1,k-1}^j \beta^{k-j}.
  \end{align*}
  If $[x_{j-1},x_j] \neq \pm [g_1,h_1]$ then $i(j-1) \vee i(j) > 1$, and so we have the simple bound
  \begin{align*}
    |[x_1,[x_2,...,[x_{j-1},x_j]...]]|^{2(k-1)!} &\leq C_5 \prod_{l : i(l) > 1} \alpha_{k-1,k-1}^{i(l)} \cdot \prod_{l : i(l) = 1} (\alpha_{k-1,k-1} \vee \beta) \\
    &\leq C_5 \sum_{j=1}^k \binom{k}{j} \alpha_{k-1,k-1}^j \beta^{k-j}.
  \end{align*}
  This ends the proof of \eqref{induct-1} and completes the proof of \eqref{induct-result}.

  Having shown \eqref{induct-result} for $k = s$, to prove the statement of the proposition, we need to show that there exists a $C_6 > 0$ such that
  \begin{align*}
    C_6\left(|g_1 - h_1|^{2s!} + NH(gh)^{2s!}\right) \leq \sum_{k=3}^s \left[ \sum_{j=1}^k \binom{k}{j} C_0^j \alpha_{k-1,k-1}^j |g_1 + h_1|^{2(i-1)! \cdot (k-j)}\right]^{s!/k!} + C_0\alpha_{s,s}.
  \end{align*}
  That $|g_1 - h_1|^{2s!} \leq \alpha_{s,s}$ follows from definition.  We have
  \begin{align*}
    NH(gh) = N((-g_1-h_1,0,...,0)(g_1+h_1,g_2+h_2+P_2,...,g_s+h_s+P_s)).
  \end{align*}
  By expanding out all the BCH Lie brackets, we see that, to bound $NH(gh)$, it suffices to bound for each $k \in \{1,...,s\}$ Lie brackets of the form
  \begin{align*}
    |[x_1,[x_2,...,[x_{j-1},x_j]...]]|^{2s!/k}
  \end{align*}
  where $x_l$ is either $g_{i(l)}$ or $h_{i(l)}$ and
  \begin{align*}
    \sum_{l=1}^j i(l) = k.
  \end{align*}
  We first suppose $j \geq 2$.  If $k \geq 3$ then by the argument before, we have for some $C_7 > 0$ that
  \begin{align*}
    |[x_1,[x_2,...,[x_{j-1},x_j]...]]|^{2s!/k} \leq \left[\sum_{j=1}^k \binom{k}{j} C_7^j \alpha_{k-1,k-1}^j |g_1 + h_1|^{2(i-1)! \cdot (k-j)}\right]^{s!/k!}.
  \end{align*}
  If $k = 2$ then the only Lie bracket is $[x_1,x_2] = [g_1,h_1]$ and so
  \begin{align*}
    |[x_1,x_2]|^{2s!/2} \leq \alpha_{s,s}.
  \end{align*}
  Now if $j = 1$ then we have that $x_1 = g_k$ for some $k \geq 2$.  Then we have the bound
  \begin{align*}
    |g_i|^{2s!/k} \leq \alpha_{s,s}.
  \end{align*}
  This finishes the proof.
\end{proof}

\begin{remark}
  This proposition says that the norm is midpoint ``uniformly convex''.  Indeed, by taking $N(g) = 1$ and $N(h) = 1$, we get that
  \begin{align*}
    N(\delta_{1/2}(g) \delta_{1/2}(h)) \leq \left[1 - C2^{-p} \left( |g_1 - h_1|^p + NH(gh)^p \right) \right]^{1/p}.
  \end{align*}
  However, as graded nilpotent Lie groups of step greater than 1 are nonabelian, iterating midpoints do not produce the dyadic points between $g$ and $h$ with respect to $\delta$.  Thus, the group norm is not guaranteed to be a true norm, but only a quasinorm.
\end{remark}

\subsection{Carnot groups and Markov convexity}

We can show that Proposition \ref{carnot-convexity} is an actual convexity result in the following sense: graded nilpotent Lie groups with homogeneous semimetrics that satisfy \eqref{C-eqn} have nontrivial Markov convexity.  We recall the definition of Markov convexity which was introduced in \cite{LNP}.

Let $\{X_t\}_{t \in \Z}$ be a Markov chain on a state space $\Omega$.  Given some integer $k \geq 0$, we denote $\{\tilde{X}_t(k)\}_{t \in \Z}$ to be the process that equals $X_t$ for $t \leq k$ and then evolves independently (with respect to the same transition probabilities as $X_t$) for $t > k$.  Let $p > 0$.  We then say that a metric space $(X,d_X)$ is Markov $p$-convex if there exists some constant $\Pi$ so that, for every Markov chain $\{X_t\}_{t \in \Z}$ on $\Omega$ and every $f : \Omega \to X$, we have that
\begin{align*}
  \sum_{k = 0}^\infty \sum_{t \in \Z} \frac{\mathbb{E} \left[ d_X\left( f(X_t),f\left( \tilde{X}_t(t-2^k) \right) \right)^p \right]}{2^{kp}} \leq \Pi^p \sum_{t \in \Z} \mathbb{E}\left[ d_X(f(X_{t-1}),f(X_t))^p \right].
\end{align*}
A metric space has nontrivial Markov convexity if it is Markov $p$-convex for some $p < \infty$.

It was proven in \cite{LNP,MN:08} that a Banach space $X$ is Markov $p$-convex if and only if it has an equivalent norm $\|\cdot\|$ so that $(X,\|\cdot\|)$ satisfies \eqref{UC-eqn} for the same $p$ and some $K > 0$ that can be controlled by $\Pi$.  Thus, it follows that the linear invariant of being isomorphic to a uniformly convex space can be expressed as the metric invariant of having nontrivial Markov convexity.  This can be thought of as a sharpening of Bourgain's metrical characterization of superreflexive spaces \cite{bourgain-superreflexivity}.  Both Markov convexity and Bourgain's characterization are parts of a larger research program, the Ribe program, that we now briefly describe.

Recall that a Banach space $X$ is said to be finitely representable in another Banach space $Y$ if there exists some $K \geq 1$ so that, for every finite dimensional subspace $Z \subset X$, there exists an isomorphic embedding $T : Z \to Y$ so that $\|T\|_{lip}\|T^{-1}\|_{lip} \leq K$.  In \cite{ribe}, Ribe proved that Banach spaces that are uniformly homeomorphic are also mutually finitely representable.  This says that a quantitative metric equivalence between Banach spaces (uniformly homeomorphism) induces a finite dimensional type of linear equivalence (finite representability).  Thus, it may be possible to characterize linear properties of Banach spaces that depend only on their finite dimensional substructure in purely metric terms.  This is the Ribe program, an active line of research that has seen many advances over the past three decades For more information on the Ribe program, see the surveys \cite{ball,naor}.  It should be noted that Bourgain's discretization theorem is a quantitative reformulation of Ribe's theorem.

We can prove that all graded nilpotent Lie groups have are Markov $p$-convex for some $p < \infty$.

\begin{theorem} \label{markov-convex}
  Let $(H,d)$ be an $s$-step graded nilpotent Lie group that satisfies \eqref{C-eqn} for some $p \in [1,\infty)$ and $K \geq 1$.   Then $H$ is Markov $(p \cdot s!)$-convex.
\end{theorem}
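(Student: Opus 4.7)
The plan is to adapt the standard derivation of Markov $p$-convexity from $p$-uniform convexity in Banach spaces (cf. [LNP], [MN:08]) to this setting, with the key novelty being that \eqref{C-eqn} delivers a four-point convexity deficit that bounds horizontal midpoint error plus non-horizontality rather than a single $d_H$-midpoint distance. The proof splits naturally into three parts: (i) extracting the four-point inequality from \eqref{C-eqn}, (ii) reconstructing $d_H(y,\cdot)$ from the error terms at the anisotropically rescaled exponent $ps!$, and (iii) running the standard Markov chain iteration.

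Step (i) is direct: for all $x,y,z\in H$,
\[
\left|\tfrac{x_1+z_1}{2}-y_1\right|^p + NH(x^{-1}z)^p \leq K\left(\tfrac{d_H(x,y)^p+d_H(y,z)^p}{2} - \tfrac{d_H(x,z)^p}{2^p}\right).
\]
For step (ii), letting $m(x,z):=x\cdot\delta_{1/2}(\tilde\pi(x^{-1}z))$ denote the horizontal midpoint and $R$ a scale bounding $d_H(x,y)+d_H(y,z)$, we aim to show
\[
d_H(y,m(x,z))^{p\cdot s!} \leq C\!\left(\left|\tfrac{x_1+z_1}{2}-y_1\right|^p+NH(x^{-1}z)^p\right)R^{p(s!-1)}.
\]
The reasoning here replays the BCH/anisotropic estimates from Proposition \ref{carnot-convexity} in reverse: higher layer components of $y^{-1}m(x,z)$ arise from iterated Lie brackets of first-layer data, so by a Lemma \ref{horizontal-comparison}-type argument they are controlled by the non-horizontality and horizontal error, with powers dictated by the $1/i$ anisotropic scaling of the $i$-th layer. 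The exponent $s!$ arises as the least common multiple of the denominators $1,2,\dots,s$, i.e.\ the smallest integer rendering every anisotropic power an integer.

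Step (iii) is the classical Markov iteration powered by the composition of (i) and (ii). Given a Markov chain $\{X_t\}$ on $\Omega$ and $f:\Omega\to H$ with $Y_t:=f(X_t)$, for each scale $k\ge 1$ apply the (i)+(ii) estimate to the triple $(Y_{t-2^k},Y_{t-2^{k-1}},Y_t)$, conditioning on the chain up to time $t-2^{k-1}$ so that the segments $Y_{t-2^k}\to Y_{t-2^{k-1}}$ and $Y_{t-2^{k-1}}\to Y_t$ become conditionally independent at a common scale; taking expectations and summing over $t$ with weights $2^{-kps!}$, the midpoint-distance terms on the left telescope into the Markov $ps!$-convexity sum $\sum_{k,t}E[d_H(Y_t,\tilde Y_t(t-2^k))^{ps!}]/2^{kps!}$ (using the coupling definition of $\tilde Y_t$), while the convexity deficits on the right collapse into the total chain energy $\sum_t E[d_H(Y_{t-1},Y_t)^{ps!}]$; the auxiliary $R^{p(s!-1)}$ factor is absorbed by bounding $R$ in terms of the chain diameter and re-invoking the total energy via a standard truncation. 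The main obstacle is step (ii): the convexity deficit sees only first-layer and non-horizontal information at exponent $p$, so bootstrapping it into $d_H$ at exponent $ps!$ requires carefully tracking how anisotropic layers of $y^{-1}m(x,z)$ inherit smallness through BCH, a computation parallel to but more delicate than the inductive construction of $N$ in Proposition \ref{carnot-convexity}.
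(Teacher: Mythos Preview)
Your step (ii) is false. In the Heisenberg group take $x=0$, $z=(2,0,0)$, $y=(1,0,1)$: then $(x_1+z_1)/2=(1,0)=y_1$ and $x^{-1}z=(2,0,0)$ is horizontal, so your right-hand side vanishes; yet $m(x,z)=(1,0,0)$ and $d_H(y,m(x,z))=N_H((0,0,-1))\asymp 1$. The failure is structural: the deficit in \eqref{C-eqn} sees only the first-layer coordinate $y_1$ of the middle point, so no amount of BCH bookkeeping can recover the higher-layer components of $y^{-1}m(x,z)$ from it. Your step (iii) inherits this gap, and independently of it the assertion that midpoint-deviation terms telescope into $\sum_{k,t}E[d_H(Y_t,\tilde Y_t(t-2^k))^{ps!}]/2^{kps!}$ is not right either: the Markov sum measures distances between two \emph{branches} of the chain, not between a chain value and a geometric midpoint, and there is no mechanism converting one into the other. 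The vague appeal to ``chain diameter'' for absorbing $R^{p(s!-1)}$ would also introduce an uncontrolled global factor.

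The paper sidesteps both issues by invoking the reduction of \cite{MN:08} (Proposition~2.1 there), which says that Markov $q$-convexity follows from the four-point inequality
\[
\frac{d(x,w)^{q}+d(x,z)^{q}}{2^{q-1}}+\frac{d(z,w)^{q}}{C^{q}}\le d(y,w)^{q}+d(y,z)^{q}+2d(y,x)^{q},\qquad q=ps!.
\]
One applies \eqref{C-eqn} \emph{twice}, once with the pair $(x,w)$ and once with $(x,z)$, raises each to the $s!$-th power and expands; after translating $x=0$ the task reduces to bounding $d(z,w)^{ps!}$ by the resulting cross terms $N(w)^{p(s!-j)}(\text{deficit}_w)^j$ and their $z$-counterparts. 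This works precisely because the two deficits together carry $NH(w)^p$ and $NH(z)^p$, so after the quasi-triangle splitting $d(z,w)\lesssim NH(z)+d(\tilde{\pi}(z),\tilde{\pi}(w))+NH(w)$ only the purely horizontal piece $d(\tilde{\pi}(z),\tilde{\pi}(w))$ remains, and its higher layers are BCH polynomials in $z_1,w_1$ alone---exactly the regime where bounds of Lemma~\ref{horizontal-comparison} type apply. The idea you were missing is that one must pass \emph{both} future endpoints through \eqref{C-eqn} so that their non-horizontality becomes visible; the three-point midpoint route cannot do this because $y$'s higher layers never enter the deficit.
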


\begin{proof}
  By the proof of Proposition 2.1 of \cite{MN:08}, it suffices to prove that there exists some constant $C \geq 1$ so that
  \begin{align}
    \frac{d(x,w)^{ps!} + d(x,z)^{ps!}}{2^{{ps!}-1}} + \frac{d(z,w)^{ps!}}{C^{ps!}} \leq d(y,w)^{ps!} + d(z,y)^{ps!} + 2d(y,x)^{ps!}. \label{markov-4pt-ineq}
  \end{align}
  Raising \eqref{C-eqn} to the power $s!$ and using Jensen's inequality, we get that there exist some constant $C_0 > 0$ so that
  \begin{align*}
    d(x,y)^{ps!} + d(y,w)^{ps!} \geq \frac{d(x,w)^{ps!}}{2^{ps!-1}} + C_0 \sum_{j=1}^{s!} d(x,w)^{p(s!-j)} \left(\left| \frac{x_1 + w_1}{2} - y_1\right|^p + NH(x^{-1} w)^p\right)^j.
  \end{align*}
  Doing the same thing with $z$ in place of $w$, setting $x = 0$, and adding the two inequalities together, we see that it suffices to show that there exists some $C_1 > 0$ so that for any $y \in H$ we have
  \begin{multline}
     d(z,w)^{ps!} \leq C_1 \sum_{j=1}^{s!} N(w)^{p(s!-j)} \left( \left| \frac{w_1}{2} - y_1\right|^p + NH(w)^p \right)^j \\
     + C_1 \sum_{j=1}^{s!} N(z)^{p(s!-j)} \left( \left| \frac{z_1}{2} - y_1\right|^p + NH(z)^p \right)^j. \label{convex-reduction1}
  \end{multline}
  By the quasi-triangle inequality and Jensen's inequality, we know that there exists some constant $C_2 > 0$ so that
  \begin{align}
    d(z,w)^{ps!} \leq C_2(NH(z)^{ps!} + d(\tilde{\pi}(z),\tilde{\pi}(w))^{ps!} + NH(w)^{ps!}). \label{convex-upper}
  \end{align}
  The terms in summations in \eqref{convex-reduction1} corresponding to the index $j = s!$ take care of the $NH$ terms in \eqref{convex-upper}.  Dropping these $NH$ terms from \eqref{convex-reduction1} and using the fact that there exists some $C_3 > 0$ so that $N(z) \geq C_3 |z_1|$ and $N(w) \geq C_3|w_1|$, we see that we only need to find a $C_4 > 0$ so that for any $y \in H$ we have
  \begin{align}
    d(\tilde{\pi}(z),\tilde{\pi}(w))^{ps!} \leq C_4 \sum_{j=1}^{s!} \left( |w_1|^{p(s!-j)} \left| \frac{w_1}{2} - y_1 \right|^{jp} + |z_1|^{p(s!-j)} \left| \frac{z_1}{2} - y_1 \right|^{jp} \right). \label{convex-reduction2}
  \end{align}
  Note that we have that there exists some $C_5 > 0$ so that
  \begin{align*}
    d(\tilde{\pi}(z),\tilde{\pi}(w))^{ps!} = d(0,(-z_1,0,...,0)(w_1,0,...,0))^{ps!} \leq C_5 \left[ |w_1 - z_1|^{ps!} \vee \left( \max_{k \in \{2,...,s\}} |P_k|^{\frac{ps!}{k}} \right) \right].
  \end{align*}
  where $P_k$ is the BCH polynomial of level $k$.  It is clear by the BCH polynomial that each nested Lie bracket of $P_k$ are of depth $k$ and compose only of $w_1$ or $z_1$.  By setting $j = s!$ on the right hand side of \eqref{convex-reduction2}, we see that the summation contains the summand
  \begin{align*}
    \left| \frac{w_1}{2} - y_1 \right|^{ps!} + \left|y_1 - \frac{z_1}{2} \right|^{ps!} \geq 2^{1-ps!} \left(\left| \frac{w_1}{2} - y_1\right| + \left|y_1 - \frac{z_1}{2} \right|\right)^{ps!} \geq 2^{1-2ps!} |w_1-z_1|^{ps!}.
  \end{align*}
  Thus, it suffices to bound each $|P_k|^{1/k}$ by some multiple of the right hand side of \eqref{convex-reduction2}.  As mentioned many times now, it suffices to bound each nested Lie bracket $[x_1,[x_2,...,[x_{k-1},x_k]...]]$ where each $x_j$ is either $w_1$ or $z_1$.  We can also suppose $[x_{k-1},x_k] = [w_1,z_1] = [w_1,z_1 - w_1]$ and so there exists some $C_6 > 0$ so that
  \begin{align*}
    |[x_1,[x_2,...,[x_{k-1},x_k]...]]|^{ps!/k} \leq C_6 |z_1 - w_1|^{ps!/k} (|w_1| \vee |z_1|)^{\frac{k-1}{k}ps!}.
  \end{align*}
  Suppose $\frac{1}{2}|z_1| \leq |w_1| \leq 2|z_1|$.  Then we have that there exists a $C_7 > 0$ so that
  \begin{align}
    \sum_{j=1}^{s!} &\left( |w_1|^{p(s!-j)} \left| \frac{w_1}{2} - y_1 \right|^{jp} + |z_1|^{p(s!-j)} \left| \frac{z_1}{2} - y_1 \right|^{jp} \right) \\
    &\geq C_7 \sum_{j=1}^{s!} (|w_1| \vee |z_1|)^{p(s!-j)} \left( \left| \frac{w_1}{2} - y_1\right|^{jp} + \left| \frac{z_1}{2} - y_1 \right|^{jp} \right) \notag \\
    &\geq C_7 2^{1-2ps!} \sum_{j=1}^{s!} (|w_1| \vee |z_1|)^{p(s!-j)} |w_1-z_1|^{jp}. \label{alternative-1}
  \end{align}
  Take the summand corresponding to the index $j = s!/k$.  Then we have that the summation of \eqref{alternative-1} has the summand
  \begin{align*}
    (|w_1| \vee |z_1|)^{\frac{k-1}{k} ps!} |w_1-z_1|^{\frac{ps!}{k}},
  \end{align*}
  which finishes the proof for the case $\frac{1}{2}|z_1| \leq |w_1| \leq 2|z_1|$.  Now suppose that this is not the case, and without loss of generality assume that $|w_1| > 2|z_1|$.  Then there exists some constant $C_8 > 0$ so that
  \begin{align}
    |[x_1,[x_2,...,[x_{k-1},x_k]...]]|^{ps!} \leq C_8 |w_1|^{\frac{k-1}{k}ps!} |z_1|^{\frac{ps!}{k}}, \label{max-w}
  \end{align}
  as we have that $|x_j| \leq |w_1|$ for every $j$, although either $x_{k-1}$ or $x_k$ has to be $z_1$ so that the Lie bracket is nonzero.  Thus, for any $y \in H$ we have by the triangle inequality that
  \begin{align*}
    \left| \frac{w_1}{2} - y_1\right| + \left| \frac{z_1}{2} - y_1\right| \geq \frac{1}{2}|w_1 - z_1| \geq \frac{1}{4} |w_1|,
  \end{align*}
  and so
  \begin{align*}
    \left| \frac{w_1}{2} - y_1 \right|^{jp} \vee \left| \frac{z_1}{2} - y_1 \right|^{jp} \geq 8^{-jp} |w_1|^{jp}.
  \end{align*}
  Now we get that there exists some $C_9 > 0$ so that
  \begin{align}
    \sum_{j=1}^{s!} \left( |w_1|^{p(s!-j)} \left| \frac{w_1}{2} - y_1 \right|^{jp} + |z_1|^{p(s!-j)} \left| \frac{z_1}{2} - y_1 \right|^{jp} \right) &\geq C_9 \sum_{j=1}^{s!} |z_1|^{p(s!-j)} |w_1|^{jp}. \label{alternative-2}
  \end{align}
  Looking at the summand corresponding to $j = \frac{k-1}{k} s!$ in \eqref{alternative-2}, we get that the summation is greater than a multiple of $|w_1|^{\frac{k-1}{k}ps!} |z_1|^{ps!}$, as required by \eqref{max-w}.
\end{proof}

As each Carnot group satisfies \eqref{C-eqn} for some $p < \infty$ by Proposition \ref{carnot-convexity}, we get that all Carnot groups have nontrivial Markov convexity.  Note that Carnot groups do not biLipschitzly embed into uniformly convex Banach spaces  and thus any Banach space with nontrivial Markov convexity.  Indeed, this follows from Theorem \ref{full-nonembed} (which will be proven in the next section) or from \cite{CK3,LN:06} where it was observed that Pansu differentiation, and thus Semmes's argument, extends easily to the case of uniformly convex targets.  Thus, we have proven the following theorem:

\begin{theorem}
  There exists a metric space of nontrivial Markov convexity that does not biLipschitzly embed into any Banach space of nontrivial Markov convexity.
\end{theorem}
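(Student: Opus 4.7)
The plan is to take any nonabelian Carnot group $G$ (for concreteness, the Heisenberg group $\mathbb{H}$) equipped with the homogeneous (quasi)metric $d$ constructed in Proposition \ref{carnot-convexity}, and verify the two required properties.

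First I would establish nontrivial Markov convexity of $(G,d)$. Proposition \ref{carnot-convexity} shows that, after the inductive choice of parameters $\lambda_2,\ldots,\lambda_s$, the norm $N$ satisfies the four-point inequality \eqref{C-eqn} with some $p\in[1,\infty)$ and constant $K$ determined by the structure of $G$. Theorem \ref{markov-convex} then immediately upgrades this to Markov $(p\cdot s!)$-convexity of $(G,d)$. Since any homogeneous norm on $G$ is biLipschitz equivalent to the Carnot-Carathéodory metric, Markov convexity is inherited by $(G,\dcc)$ (Markov $q$-convexity is clearly a biLipschitz invariant).

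Next I would rule out biLipschitz embeddings of $G$ into any Banach space $Y$ of nontrivial Markov convexity. By the results of \cite{LNP,MN:08} cited just before the statement, a Banach space has nontrivial Markov convexity if and only if it admits an equivalent uniformly convex norm, i.e.\ is superreflexive. Thus it suffices to show that $G$ does not biLipschitzly embed into any superreflexive Banach space $Y$. Two options are available: either invoke Theorem \ref{full-nonembed} (with $X=Y$ a superreflexive Banach space), which forces the distortion of any $1$-Lipschitz $f:B_G\to Y$ to decay at pairs of points of arbitrarily small distance and hence precludes biLipschitz behavior, or invoke Pansu-style differentiation for uniformly convex targets as in \cite{CK3,LN:06}: a hypothetical biLipschitz embedding would differentiate almost everywhere to a biLipschitz homomorphism $G\to Y$, contradicting the fact that the nonabelian $G$ has no biLipschitz homomorphism into any abelian group such as $Y$.

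Combining the two steps yields the theorem: $(G,\dcc)$ has nontrivial Markov convexity but does not biLipschitzly embed into any Banach space of nontrivial Markov convexity. The only genuinely nontrivial ingredient is the nonembeddability into superreflexive Banach spaces, which is an external input (either Theorem \ref{full-nonembed} or Pansu differentiation with uniformly convex targets); the Markov convexity of $G$ itself is essentially a packaging of Proposition \ref{carnot-convexity} and Theorem \ref{markov-convex}, both already proved.
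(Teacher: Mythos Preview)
Your proposal is correct and follows essentially the same approach as the paper: combine Proposition \ref{carnot-convexity} with Theorem \ref{markov-convex} to get nontrivial Markov convexity of any nonabelian Carnot group, then use the equivalence of nontrivial Markov convexity with superreflexivity for Banach spaces together with either Theorem \ref{full-nonembed} or Pansu differentiation for uniformly convex targets (\cite{CK3,LN:06}) to rule out biLipschitz embeddings. Your observation that Markov convexity is a biLipschitz invariant, allowing passage from the possibly quasimetric $d$ of Proposition \ref{carnot-convexity} to the genuine metric $\dcc$, is a useful clarification that the paper leaves implicit.
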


This corollary can be contrasted with \cite{LNP} where it was shown that trees with nontrivial Markov convexity can always be embedded into some uniformly convex $L_p$ (and thus have nontrivial Markov convexity).

\section{Some applications}
We can now use coarse differentiation to prove some results for quantitative nonembeddability and an analogue of Bourgain's discretization theorem.  We start with the simplest case of the nonembeddability results as a warmup.

\subsection{Nonembeddability into $p$-convex spaces}

\begin{theorem} \label{UC-nonembed}
  Let $(X,\|\cdot\|)$ be a uniformly convex Banach space.  Then there exist $c,C > 0$ such that for every $f : B_G \to X$ which is 1-Lipschitz with respect to the Carnot-Carath\'{e}odory metric there exist $x,y \in G$ with $\dcc(x,y)$ arbitrarily small so that
  \begin{align*}
    \frac{\|f(x) - f(y)\|}{\dcc(x,y)} \leq C\left( \log \frac{1}{\dcc(x,y)} \right)^{-c}.
  \end{align*}
\end{theorem}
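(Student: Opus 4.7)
The approach mirrors Semmes' classical argument but quantifies it via the UAAP. The scheme is: (i) rescale $f$ to a smaller ball so the resulting pair can be forced to have small $\dcc$-distance; (ii) apply the coarse differentiation theorem (Theorem \ref{UAAP-rest}) on the rescaled map to produce a Lipschitz homomorphism $T:G\to X$ that approximates $f$ on a subball of controlled radius; (iii) exploit the fact that $X$ is abelian to locate two points in that subball which are macroscopically separated in $\dcc$ but collapsed by $T$.

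Concretely, fix $\epsilon\in(0,\epsilon_0)$ and set $\delta_0:=e^{-\epsilon^{-\alpha}}$, where $\alpha$ is the constant from Theorem \ref{UAAP-rest}. Define the rescaled map $g:B_G\to X$ by $g(w)=\delta_0^{-1}\bigl(f(\delta_{\delta_0}(w))-f(0)\bigr)$, which is $1$-Lipschitz with respect to $\dcc$. Applying Theorem \ref{UAAP-rest} to $g$ produces a Lipschitz homomorphism $T:G\to X$ and a subball $B(w_0,r)\subseteq B_G$ with $r\geq e^{-\epsilon^{-\alpha}}$ such that $\|g(w)-T(w)\|\leq \epsilon r$ for all $w\in B(w_0,r)$.

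Next, nonabelianness is used. Since $X$ is abelian, the induced Lie algebra map $dT:\g\to X$ is a Lie algebra homomorphism into an abelian target, hence vanishes on $[\g,\g]\supseteq \V_2$; consequently $T(e^{v})=0$ for every $v\in\V_2$. Because $G$ is a nonabelian Carnot group, $\V_2\neq 0$, so we may pick a nonzero $v_2\in\V_2$ and rescale it so that $\Ncc(e^{v_2})=r/3$. Note that by equivalence of homogeneous norms this forces $|v_2|\asymp r^2$: directions in $\V_2$ have $\dcc$-size like the square root of their Euclidean size, which is precisely the anisotropy that will produce the gap between $\dcc$ and the pseudometric induced by $T$. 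Setting $w_1=w_0$ and $w_2=w_0\cdot e^{v_2}$, both points lie in $B(w_0,r)$, $\dcc(w_1,w_2)=r/3$, and $T(w_1)=T(w_2)$, so
\begin{equation*}
\|g(w_1)-g(w_2)\|\leq \|g(w_1)-T(w_1)\|+\|g(w_2)-T(w_2)\|\leq 2\epsilon r.
\end{equation*}

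Unwinding the rescaling, set $x=\delta_{\delta_0}(w_1)$ and $y=\delta_{\delta_0}(w_2)$. Then $\dcc(x,y)=\delta_0 r/3$ and $\|f(x)-f(y)\|\leq 2\delta_0\epsilon r$, so the ratio is at most $6\epsilon$. Moreover $\dcc(x,y)\leq \delta_0/3=e^{-\epsilon^{-\alpha}}/3\to 0$ as $\epsilon\to 0$, yielding pairs at arbitrarily small $\dcc$-distance, while $\dcc(x,y)\geq e^{-2\epsilon^{-\alpha}}/3$ gives $\log(1/\dcc(x,y))\leq 2\epsilon^{-\alpha}+O(1)$; rearranging, $\epsilon\leq C_1(\log(1/\dcc(x,y)))^{-1/\alpha}$. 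Taking $c=1/\alpha$ concludes the proof. The main technical input (coarse differentiation itself) is already contained in Theorem \ref{UAAP-rest}; the only genuine design choice is the rescaling by $\delta_0=e^{-\epsilon^{-\alpha}}$, which balances the need to keep $\delta_{\delta_0}(B_G)\subseteq B_G$ against the need to force $\dcc(x,y)$ to be small enough that the logarithmic decay rate is nontrivial.
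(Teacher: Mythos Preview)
Your proof is correct and follows essentially the same strategy as the paper: apply coarse differentiation to obtain a homomorphism $T:G\to X$ approximating $f$ on a subball, then exploit that $T$ kills the commutator subgroup (equivalently, $\pi(g)=0$ implies $T(g)=0$) to find two points collapsed by $T$ but separated by $\dcc$. The only packaging difference is that you invoke the already-derived UAAP statement (Theorem~\ref{UAAP-rest}) and precompose with a dilation $\delta_{\delta_0}$ to force $\dcc(x,y)$ to be small, whereas the paper works directly with the Carleson-type bound (Theorem~\ref{UC-UAAP}) and pigeonholes over the scale range $k\in\{m,\dots,2m\}$ to achieve the same control on $\dcc(x,y)$; both mechanisms pin $\dcc(x,y)$ between roughly $e^{-2\epsilon^{-\alpha}}$ and $e^{-\epsilon^{-\alpha}}$, yielding the same exponent $c=1/\alpha$.
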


\begin{proof}
  Construct the Christ cubes of $G$ take a cube $S \in \{Q \in \Delta : Q \subset B_G\}$ so that $\ell(S)$ is maximal.  Thus, there is some constant $C_0 > 0$ depending only on $G$ so that
  \begin{align}
    \frac{1}{C_0} \leq \ell(S) \leq C_0. \label{maximal-cube}
  \end{align}
  As $f$ is Lipschitz, it is $0$-LLD and so the condition on the size of $S$ in Theorem \ref{UC-UAAP} is empty.  The same theorem then gives that there exist $\zeta,\alpha > 0$ depending only on $G$ so that for $\epsilon \in (0,1/2)$, we get
  \begin{align}
    \sum \left\{|Q| : Q \in \Delta, Q \subseteq S, \qd_f^{UC}(Q,\zeta \epsilon^r) > \epsilon \right\} \leq \epsilon^{-\alpha} |S|. \label{UC-carleson-bnd}
  \end{align}
  Let $m = \lceil \epsilon^{-\alpha} \rceil$.  There exists some $C_1 > 0$ depending only on $G$ so that
  \begin{align}
     \zeta a_0 \ell(S) \tau^{3C_1 \epsilon^{-\alpha}} \leq \zeta \epsilon^r \tau^{2m} a_0 \ell(S) \leq \zeta \epsilon^r \tau^m a_0 \ell(S) \leq \zeta a_0 \ell(S) \tau^{C_1 \epsilon^{-\alpha}}. \label{dist-bound}
  \end{align}
  Here, we needed to specify that $\epsilon$ be smaller than some constant depending only on $r$, $\alpha$, and $\tau$ so that $\epsilon^r \geq \tau^{C_1 \epsilon^{-\alpha}}$.

  Let $A = \{Q \in \Delta : Q \subseteq S, \qd_f^{UC}(Q,\zeta \epsilon^r) > \epsilon \}$ and $\Delta_k \cap S = \{Q \in \Delta_k : Q \subseteq S\}$.  Suppose $\bigcup_{k=m}^{2m} (\Delta_k \cap S) \subseteq A$.  By the partitioning property of $\Delta$, we get for any $k \geq 0$ that
  \begin{align*}
    \sum_{Q \in \Delta_k \cap S} |Q| = |S|.
  \end{align*}
  Thus, we have
  \begin{align*}
    \sum_{k=m}^{2m} \sum_{Q \in (\Delta_k \cap S) \cap A} |Q| = (m+1)|S|.
  \end{align*}
  We get a contradiction of \eqref{UC-carleson-bnd} from the definition of $m$.  Thus, we have proven that there exists some $k \in \{m,...,2m\}$, $Q \in \Delta_k \cap S$, $v \in X$, and homomorphism $A : G \to Y$ so that
  \begin{align*}
    \frac{1}{\zeta \epsilon^r a_0\ell(Q)} \sup \{\|f(x) - A(x) - v\| : x \in \zeta \epsilon^r B_Q \} \leq \epsilon.
  \end{align*}
  Remembering that $a_0\ell(Q)$ is the radius of $B_Q$, we see that if we choose $x = z_Q$ and $y$ to be $xg \in \partial (\zeta \epsilon^r B_Q)$ where $\pi(g) = 0$, then   \begin{align*}
    \frac{\|f(x) - f(y)\|}{\dcc(x,xg)} \leq \frac{\|f(x) - A(x) - v\|}{\dcc(x,xg)} + \frac{\|A(x) - A(xg)\|}{\dcc(x,xg)} + \frac{\|f(y) - A(x) - v\|}{\dcc(x,xg)}.
  \end{align*}
  As $A : G \to X$ is a homomorphism from a nonabelian group to an abelian group, we get that it has a kernel, which is easily seen to be the exponential image of the subspace orthogonal to the horizontal subspace.  Thus, $\|A(x) - A(xg)\| = 0$ and so
  \begin{align}
    \frac{\|f(x) - f(xg)\|}{\dcc(x,xg)} \leq 2\epsilon. \label{UC-embed-bad}
  \end{align}
  Note that
  \begin{align*}
    \dcc(x,xg) \in \left[ \zeta \epsilon^r a_0 \tau^{2m} \ell(S), \zeta \epsilon^r a_0 \tau^m \ell(S) \right] \overset{\eqref{maximal-cube} \wedge \eqref{dist-bound}}{\subseteq} \left[ \frac{1}{C_0} \zeta a_0 \tau^{3C_1 \epsilon^{-\alpha}}, C_0 \zeta a_0 \tau^{C_1 \epsilon^{-\alpha}}\right].
  \end{align*}
  Thus, taking \eqref{UC-embed-bad} into account, we have that
  \begin{align}
    \frac{\|f(x) - f(xg)\|}{\dcc(x,xg)} \leq 2\left( 3C_1 \log \frac{1}{\tau} \right)^{1/\alpha} \left( \log \frac{1}{\dcc(x,y)} - \log \frac{C_0}{\zeta a_0}\right)^{-1/\alpha}.
  \end{align}
  As $\epsilon$ can be made arbitrarily small, we can then make $\dcc(x,y)$ arbitrarily small by the upper bound \eqref{dist-bound}.  Thus, $\log \frac{1}{\dcc(x,y)}$ becomes much larger than the constant $\log \frac{C_0}{\zeta a_0}$, which proves the statement of the theorem.
\end{proof}

\begin{remark}
  The same argument can be used to prove Theorem \ref{full-nonembed} in the case of embeddings into Carnot groups with obvious modifications.  The double log rate comes from the double exponential rate of Theorem \ref{UAAP-rest}.

  In this context, it should be noted that not every homomorphism between Carnot groups is Lipschitz.  For example, the homomorphism that maps $\R$ to the Heisenberg group via the $z$-axis is not Lipschitz, but $\frac{1}{2}$-H\"{o}lder continuous.  Thus, that $H$ does not admit a biLipschitz homomorphic embedding of $G$ can be a stronger condition than $H$ not admitting a homomorphic embedding of $G$.
\end{remark}

\subsection{Nonembeddability into $CBB(0)$ spaces}
We now move to Alexandrov spaces.  The arguments for the next two sections are inspired from those in \cite{Pauls}, but we have to make all the infinitesimal arguments quantitative (and in the case of $CBB(0)$ spaces use a slightly different argument).

\begin{theorem} \label{CBB-nonembed}
  Let $(X,d_X)$ be a $CBB(0)$ space.  Then there exist $c,C > 0$ such that for every $f : B_G \to X$ which is 1-Lipschitz with respect to the Carnot-Carath\'{e}odory metric there exist $x,y \in G$ with $\dcc(x,y)$ arbitrarily small so that
  \begin{align*}
    \frac{d_X(f(x),f(y))}{\dcc(x,y)} \leq C\left( \log \frac{1}{\dcc(x,y)} \right)^{-c}.
  \end{align*}
\end{theorem}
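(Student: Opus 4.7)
The proof will closely parallel that of Theorem \ref{UC-nonembed}, replacing Theorem \ref{UC-UAAP} by the general-target coarse differentiation result Theorem \ref{M-UAAP}, and replacing the linear-algebra contradiction (kernel of a Lipschitz homomorphism from the nonabelian $G$ to an abelian Banach space) by an Alexandrov-geometric analogue in the spirit of \cite{Pauls}. Since $f$ is $1$-Lipschitz it is $0$-LLD, so the size restriction in Theorem \ref{M-UAAP} is vacuous. Running the pigeonhole argument from the proof of Theorem \ref{UC-nonembed} with $m := \lceil \epsilon^{-\alpha} \rceil$ produces a cube $Q \in \Delta_k(S)$ with $k \in \{m,\dots,2m\}$, a function $w : S^{n-1} \to \R^+$ with $w(v) \leq 1$, and a subball $B \subset Q$ of radius $\rho$ satisfying $\rho \geq e^{-C\epsilon^{-\alpha}}$ and
\begin{align*}
\sup\left\{\bigl|d_X(f(xe^{sv}),f(xe^{tv})) - (t-s)w(v)\bigr| : x\in B,\ v\in S^{n-1},\ -3\rho \leq s \leq t \leq 3\rho\right\} \leq \epsilon\rho.
\end{align*}

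I then split into two cases. If $\min_{v\in S^{n-1}} w(v) \leq \sqrt{\epsilon}$, attained by some $v_0$, then $(x,y) := (z_Q, z_Q e^{\rho v_0})$ satisfies $\dcc(x,y)=\rho$ and $d_X(f(x),f(y)) \leq \rho w(v_0) + \epsilon\rho \leq 2\sqrt{\epsilon}\rho$. Solving $\epsilon$ in terms of $\dcc(x,y)$ via the bound $\rho \geq e^{-C\epsilon^{-\alpha}}$ gives the claimed rate. Otherwise, $w(v) \geq \sqrt{\epsilon}$ for every $v \in S^{n-1}$, and we must use the noncommutativity of $G$. Pick orthonormal horizontal $u,v \in S^{n-1}$ with $[u,v] \neq 0$ (they exist because $G$ is nonabelian), fix $x_0 \in B$, and for $t := c_0\rho$ with $c_0 > 0$ small form the commutator loop
\begin{align*}
x_1 := x_0 e^{tu}, \quad x_2 := x_1 e^{tv}, \quad x_3 := x_2 e^{-tu}, \quad x_4 := x_3 e^{-tv}.
\end{align*}
By Baker--Campbell--Hausdorff, $x_0^{-1}x_4 = \exp(t^2[u,v] + O(t^3))$, so $\dcc(x_0, x_4) \geq c_1 t$ for a constant $c_1(G) > 0$, and the five points remain in $B$ for $c_0$ small.

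The heart of the matter is to show, from the CBB$(0)$ structure of $X$, that $f$ closes this loop up to error $O(\epsilon\,t)$, i.e.\ $d_X(f(x_0),f(x_4)) \leq C_2 \sqrt{\epsilon}\, t$, which combined with the lower bound $\dcc(x_0,x_4) \geq c_1 t$ gives a ratio of size $O(\sqrt{\epsilon})$ at scale $\dcc(x,y) \asymp \rho \geq e^{-C\epsilon^{-\alpha}}$, yielding the desired rate. The four segments $f(x_i)f(x_{i+1})$ are, by the M-UAAP estimate, near-geodesics of length $t\, w(\cdot) \pm \epsilon\rho$, and the incoming/outgoing tangent directions at each intermediate image $f(x_i)$ correspond to opposite horizontal unit vectors. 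Applying the monotonicity of Alexandrov angles and the Toponogov comparison to the triangles $f(x_0)f(x_i)f(x_{i+1})$ ($i=1,2,3$) cascaded through the loop, the fatness of triangles in a CBB$(0)$ space forces the four comparison triangles in $\R^2$ to assemble into an almost-parallelogram, so that the Euclidean comparison point for $f(x_4)$ is within $O(\epsilon t)$ of $f(x_0)$. The CBB$(0)$ inequality $d_X(\cdot,\cdot) \geq d_{\R^2}(\cdot,\cdot)$ applied in reverse (using that $f$ is $1$-Lipschitz) then transfers this Euclidean closure estimate back to a genuine bound on $d_X(f(x_0),f(x_4))$.

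The main obstacle is precisely this last step: converting the infinitesimal Pauls-type rigidity (that the Pansu derivative of $f$ into the Euclidean tangent cone of $X$ must factor through the abelianization of $G$, so is degenerate) into a quantitative closure estimate for a commutator loop at finite scale, with errors that add up controllably over the four Toponogov comparisons. I expect this to require careful bookkeeping of the angle excesses at $f(x_1)$, $f(x_2)$, $f(x_3)$, using that the M-UAAP gives not only length control but directional control along horizontal lines, and that the constant-speed function $w$ is the same at every base point in $B$, which pins down the angles up to the approximation $\epsilon$. The parallel argument for CAT$(0)$ targets (treated in the next section) is conceptually simpler because comparison triangles become thinner, so the commutator loop closes even more tightly.
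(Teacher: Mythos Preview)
Your commutator-loop strategy has a genuine gap in the CBB$(0)$ case, and the paper proceeds by an entirely different mechanism.

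The central problem is that the M-UAAP output gives you only \emph{speed} information: for each $v\in S^{n-1}$ the map $t\mapsto f(xe^{tv})$ is an $\epsilon$-approximate constant-speed geodesic of speed $w(v)$, uniformly in $x\in B$. It gives \emph{no} control on the angle at $f(x_1)$ between the incoming near-geodesic from $f(x_0)$ (direction $-u$) and the outgoing near-geodesic to $f(x_2)$ (direction $v$). Without that, the diagonals $d_X(f(x_0),f(x_2))$ and $d_X(f(x_0),f(x_3))$ are unconstrained, so the glued comparison triangles in $\R^2$ can take essentially any shape; there is nothing forcing them to assemble into an almost-parallelogram. Your sentence ``the M-UAAP gives not only length control but directional control'' is exactly the missing ingredient, and it is not true. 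Furthermore, even if the comparison picture did close, the CBB$(0)$ inequality $d_X\ge d_{\R^2}$ on triangle sides goes the wrong way to transfer a Euclidean closure bound back to $d_X(f(x_0),f(x_4))$; fatness of triangles works \emph{against} closure of the loop, not for it. (Your parenthetical that CAT$(0)$ is ``conceptually simpler'' is a hint that the sign is wrong here.)

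The paper's argument avoids angles altogether. Instead of a commutator loop it uses two \emph{parallel} horizontal lines $\gamma_0(t)=e^{tv}$ and $\gamma_1(t)=e^{\lambda u}e^{tv}$, which by Lemma~\ref{sublinear-diverge} diverge sublinearly in $G$: $d_{cc}(\gamma_0(t),\gamma_1(t))\asymp \lambda^{1-\beta}|t|^{\beta}$ for some $\beta\in(0,1)$. Under the distortion hypothesis $d_X(f(x),f(y))\ge d_{cc}(x,y)/D$ on the relevant scale, each $w(v)\gtrsim 1/D$, and Lemma~\ref{CBB-close-geodesic} upgrades the $\epsilon$-approximate geodesics $f\circ\gamma_i$ to genuine minimizing geodesics $\tilde\gamma_i$ in $X$, up to error $O(\sqrt{\epsilon}\,r)$. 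The key CBB$(0)$ input is then Lemma~\ref{CBB-linear-diverge}: two unit-speed minimizing geodesics that start $\epsilon L$-close satisfy $|d_X(\tilde\gamma_0(t),\tilde\gamma_1(t))-\alpha t|\le C\epsilon L$ on $[0,\epsilon^{1/2}L]$ for some $\alpha\ge 0$, i.e.\ they diverge \emph{linearly} up to a small additive error. Since $f$ is $1$-Lipschitz, $d_X(f\gamma_0(t),f\gamma_1(t))$ is also bounded above by the sublinear quantity $C\lambda^{1-\beta_2}t^{\beta_2}$. Matching the affine lower envelope against the sublinear upper envelope at two well-chosen times $t$ produces a contradiction unless $D$ is large. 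This only uses the distance data that M-UAAP actually supplies.
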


We will need the following lemma, which is essentially Lemma 7.1 of \cite{Pauls}.

\begin{lemma} \label{sublinear-diverge}
  Let $\lambda > 0$.  There exist constant $C > 1$, $0 < \beta_1 < \beta_2 < 1$, and $u,v \in S^{n-1}$ depending only on $G$ so that if we define the two lines
  \begin{align*}
    \gamma_0(t) &= e^{tv}, \\
    \gamma_1(t) &= e^{\lambda u} e^{tv}.
  \end{align*}
  then we have for all $|t| > \lambda$ that
  \begin{align}
    \frac{\lambda^{1-\beta_1}}{C} |t|^{\beta_1} < \dcc(\gamma_0(t),\gamma_1(t)) < \lambda^{1-\beta_2} C|t|^{\beta_2}. \label{sublinear-diverge-eqn}
  \end{align}
\end{lemma}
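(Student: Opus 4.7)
The plan is to reduce the problem to a computation in the Lie algebra via the conjugation identity and then extract layerwise estimates that the homogeneous norm $N_\infty$ makes easy. By left invariance of $\dcc$,
\[
\dcc(\gamma_0(t),\gamma_1(t)) = \Ncc\bigl(\gamma_0(t)^{-1}\gamma_1(t)\bigr) = \Ncc\bigl(e^{-tv}e^{\lambda u}e^{tv}\bigr).
\]
Since $G$ is (implicitly) nonabelian, $[\V_1,\V_1]=\V_2\ne\{0\}$, so I can fix once and for all $u,v\in S^{n-1}$ with $[v,u]\ne 0$. Using the conjugation formula in a Lie group together with the identity $\mathrm{Ad}(e^{Y})=e^{\mathrm{ad}(Y)}$, I get
\[
e^{-tv}e^{\lambda u}e^{tv} \;=\; \exp\bigl(\mathrm{Ad}(e^{-tv})(\lambda u)\bigr) \;=\; \exp\!\left(\lambda\sum_{k=0}^{r-1}\frac{(-t)^{k}}{k!}\,\mathrm{ad}(v)^{k}u\right).
\]
Since $\mathrm{ad}(v)^{k}u\in\V_{k+1}$, this identifies the $\V_k$-component of the Lie-algebra element $X:=\mathrm{Ad}(e^{-tv})(\lambda u)$ as $X_k = \tfrac{\lambda(-t)^{k-1}}{(k-1)!}\,\mathrm{ad}(v)^{k-1}u$, with $X_1=\lambda u$ and, crucially, $X_2=-\lambda t\,[v,u]\ne 0$.

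For the estimates I will use the equivalence of homogeneous norms $\dcc\asymp N_\infty$ proved in Section~\ref{preliminaries}, so that $\Ncc(e^X)\asymp \max_k |X_k|^{1/k}$. For the \emph{upper bound}, each $|X_k|\lesssim \lambda |t|^{k-1}$, hence $|X_k|^{1/k}\lesssim \lambda^{1/k}|t|^{1-1/k}$; let $k_0\le r$ be the largest index with $\mathrm{ad}(v)^{k_0-1}u\ne 0$ (so $k_0\ge 2$). For $|t|>\lambda$, the maximum over $k\le k_0$ is attained at $k=k_0$, so $\dcc(\gamma_0(t),\gamma_1(t))\le C\,\lambda^{1/k_0}|t|^{1-1/k_0}$, which is the desired upper bound with $\beta_2:=1-1/k_0<1$. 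For the \emph{lower bound}, $N_\infty(e^X)\ge |X_2|^{1/2}=(\lambda|t|\,|[v,u]|)^{1/2}$, giving $\dcc(\gamma_0(t),\gamma_1(t))\ge c\sqrt{\lambda|t|}$.

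Finally, I convert $\sqrt{\lambda|t|}$ into the asymmetric form $\lambda^{1-\beta_1}|t|^{\beta_1}$ using $|t|>\lambda$: picking any $\beta_1\in(0,1/2)$ (thus $\beta_1<1/2\le\beta_2<1$, so the required strict ordering $\beta_1<\beta_2<1$ holds), one has $\lambda^{1-\beta_1}|t|^{\beta_1} = \sqrt{\lambda|t|}\cdot(\lambda/|t|)^{1/2-\beta_1} \le \sqrt{\lambda|t|}$, so the lower bound $c\sqrt{\lambda|t|}$ absorbs into $\tfrac{\lambda^{1-\beta_1}}{C}|t|^{\beta_1}$ after enlarging $C$. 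There are essentially no deep obstacles here; the only thing one must be slightly attentive to is matching the two different exponents $\beta_1,\beta_2$ to a single constant $C$ that works for all $|t|>\lambda$, but the ratio $|t|/\lambda\ge 1$ together with $\beta_1<\tfrac12\le\beta_2$ makes both inequalities robust. The case $t<0$ is symmetric because $|X_2|^{1/2}$ and the upper layer bounds depend only on $|t|$.
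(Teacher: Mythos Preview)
Your argument is correct. The conjugation identity $e^{-tv}e^{\lambda u}e^{tv}=\exp\bigl(\mathrm{Ad}(e^{-tv})(\lambda u)\bigr)$ together with $\mathrm{Ad}(e^{-tv})=e^{-t\,\mathrm{ad}(v)}$ gives exactly the layerwise expansion you wrote, and the equivalence $\dcc\asymp N_\infty$ then reduces everything to the elementary estimates you carry out; the choice $\beta_2=1-1/k_0$ and any $\beta_1\in(0,1/2)$ works, and all constants depend only on the group data (the structure constants, $|[v,u]|$, and the norm equivalence), not on $\lambda$.

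The paper does not prove this lemma at all: it simply cites it as ``essentially Lemma~7.1 of \cite{Pauls}''. Your argument is in fact precisely the computation underlying Pauls' lemma, so there is no methodological difference to discuss---you have supplied a self-contained proof where the paper defers to the literature. Two cosmetic points: the strict inequalities in \eqref{sublinear-diverge-eqn} follow once you take $C$ strictly larger than the implicit constants, and you are tacitly assuming $G$ is nonabelian (so that $u,v$ with $[v,u]\neq 0$ exist), which is the only case in which the lemma is used in Section~8.
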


For $CBB(0)$ spaces, we can prove that almost minimizing curves are close to actual minimizing geodesics on a subinterval.  To do so, we first need the following theorem, which is a special case of Theorem 3.2 of \cite{BGP}.

\begin{theorem} \label{global-4pts}
  Let $X$ be a $CBB(0)$ space.  Then for any four points $a,b,c,d \in X$, we have the inequality
  \begin{align*}
    \widetilde{\angle} bac + \widetilde{\angle} bad + \widetilde{\angle}cad \leq 2\pi.
  \end{align*}
\end{theorem}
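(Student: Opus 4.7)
The plan is to pass to the space of directions at $a$, reduce the four-point inequality to a perimeter bound for three points in that space, and then invoke the angle comparison $\widetilde{\angle}\leq \angle$ for $CBB(0)$ spaces that is recorded in the Preliminaries.

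First I would recall the basic structural theorem of Alexandrov geometry: for each point $a$ in a $CBB(0)$ space $X$, the space of directions $\Sigma_a$ at $a$---the completion of the set of equivalence classes of minimizing geodesics emanating from $a$, where two geodesics are identified when their Alexandrov angle is zero, equipped with the angle metric---is itself a complete length space of curvature bounded below by $1$, i.e.\ a $CBB(1)$ space, and in particular $\diam(\Sigma_a)\leq \pi$. Under this identification, the three segments $[a,b]$, $[a,c]$, $[a,d]$ determine three points $\xi_b,\xi_c,\xi_d\in\Sigma_a$ whose pairwise $\Sigma_a$-distances are exactly the actual Alexandrov angles $\angle bac$, $\angle bad$, and $\angle cad$ at the vertex $a$.

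The core step is the spherical perimeter inequality
\[
d_{\Sigma_a}(\xi_b,\xi_c)+d_{\Sigma_a}(\xi_b,\xi_d)+d_{\Sigma_a}(\xi_c,\xi_d)\;\leq\; 2\pi
\]
for any three points in a $CBB(1)$ space. I would derive this via the Toponogov globalization for $CBB(1)$ spaces: each pairwise distance is at most $\pi$ by the diameter bound, so one can assemble the three minimizing arcs into a genuine geodesic triangle in $\Sigma_a$; if the perimeter exceeded $2\pi$ the triangle would fail to admit a spherical comparison triangle in $S^2$, contradicting the defining $CBB(1)$ comparison. Combined with the angle comparison for $CBB(0)$ recalled in the Preliminaries, namely $\widetilde{\angle} xpy \leq \angle xpy$, applied to each of the three pairs at $a$, this yields
\[
\widetilde{\angle} bac+\widetilde{\angle} bad+\widetilde{\angle} cad \;\leq\; \angle bac+\angle bad+\angle cad \;=\; d_{\Sigma_a}(\xi_b,\xi_c)+d_{\Sigma_a}(\xi_b,\xi_d)+d_{\Sigma_a}(\xi_c,\xi_d)\;\leq\; 2\pi,
\]
which is the desired inequality.

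The principal obstacle is the spherical perimeter bound in $CBB(1)$ spaces together with the structural fact that $\Sigma_a$ is $CBB(1)$. These are nontrivial results in Alexandrov geometry that require the full Toponogov globalization machinery rather than just the local comparison definitions; in particular, ruling out perimeter-exceeding configurations cannot be done purely infinitesimally at $a$, and one has to argue via chains of short geodesic segments. In practice one simply cites \cite{BGP} directly---as the paper does---since the present statement appears there as a special case of Theorem 3.2.
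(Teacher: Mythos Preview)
The paper gives no proof of this statement; it simply records it as a special case of Theorem~3.2 of \cite{BGP} and uses it as a black box. Your final sentence already anticipates this, so in that sense your proposal and the paper agree.

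Two remarks on the sketch you give before that. First, your justification of the perimeter bound in $\Sigma_a$ has a gap: the $CBB(1)$ comparison axiom is imposed only on geodesic triangles with perimeter strictly less than $2\diam(1)=2\pi$, so a triangle with perimeter at least $2\pi$ does not contradict the definition---it simply falls outside its scope. The perimeter bound in $CBB(1)$ spaces requires an additional argument (for instance via Alexandrov's gluing lemma, or via the rigidity of pairs at distance $\pi$), not merely the observation that no spherical comparison triangle exists. Second, the logical order in \cite{BGP} is the reverse of what you propose: the four-point inequality is established directly from the triangle comparison using Alexandrov's lemma, and the structural fact that $\Sigma_a$ is a $CBB(1)$ space is a considerably deeper theorem proved later, with the four-point condition among its inputs. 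So while your outline is internally consistent once all the cited machinery is granted, it invokes strictly heavier tools than needed and inverts the standard dependency chain. The direct route---glue the two comparison triangles $\widetilde\triangle bac$ and $\widetilde\triangle bad$ along the side $\overline{ab}$ in $\R^2$, use the $CBB(0)$ comparison for a point on $[a,b]$ to compare $d_X(c,d)$ with $|\overline c\,\overline d|$, and read off the angle inequality---is both shorter and self-contained.
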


\begin{lemma} \label{CBB-close-geodesic}
  Let $(X,d_X)$ be a $CBB(0)$ space.  Suppose $\gamma : [a,b] \to X$ is Lipschitz and there exists some $L \geq L' > 0$ so that
  \begin{align*}
    \left| d_X(\gamma(s),\gamma(t)) - |t - s| L' \right| \leq \epsilon (b-a) L, \qquad \forall s,t \in [a,b].
  \end{align*}
  Here, $\epsilon \in (0,L'/L)$.  Let $\tilde{\gamma} : \left[a + \frac{b-a}{4},b\right] \to X$ denote the constant speed minimal geodesic from $\gamma\left(a + \frac{1}{4}(b-a)\right)$ to $\gamma(b)$.  Then there exists some universal constant $C \geq 1$ so that
  \begin{align*}
    \sup_{\lambda \in \left[a + \frac{1}{3}(b-a), a + \frac{2}{3}(b-a)\right]} d_X(\gamma(\lambda),\tilde{\gamma}(\lambda)) \leq C \epsilon^{1/2} (b-a)L.
  \end{align*}
\end{lemma}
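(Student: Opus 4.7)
After rescaling we may take $[a,b] = [0,1]$; set $t_0 = 1/4$, $p = \gamma(t_0)$, $r = \gamma(1)$, and for $\lambda \in [1/3, 2/3]$ denote $q = \gamma(\lambda)$, $q' = \widetilde{\gamma}(\lambda)$. Write $A = d_X(p,q')$, $B = d_X(q',r)$, $C = d_X(p,q)$, $D = d_X(q,r)$, $E = d_X(q,q')$. The almost-isometry hypothesis, together with the fact that $\widetilde{\gamma}$ is a constant-speed minimizing geodesic from $p$ to $r$, immediately gives that $A, B, C, D$ all agree with their naive values $(\lambda - t_0) L'$ and $(1-\lambda) L'$ up to an additive error of $O(\epsilon L)$; in particular $|A - C| + |B - D|$ and the triangle defect $C + D - (A+B) \geq 0$ are each $O(\epsilon L)$.

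The plan is to first analyze the Euclidean comparison triangle $\widetilde{\Delta}\widetilde{p}\widetilde{q}\widetilde{r} \subset \mathbb{R}^2$ of $\Delta pqr$. Letting $\widetilde{q}'$ be the point on the segment $[\widetilde{p}, \widetilde{r}]$ at distance $A$ from $\widetilde{p}$, Stewart's identity gives
\[
d(\widetilde{q}, \widetilde{q}')^2 \;=\; \frac{BC^2 + AD^2}{A+B} \;-\; AB.
\]
Substituting the sidelength estimates and expanding, the right-hand side is $O(\epsilon L L') \leq O(\epsilon L^2)$. By the CBB(0) triangle comparison this already gives the lower bound $E \geq d(\widetilde{q}, \widetilde{q}')$, so the whole task is to produce a matching upper bound.

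For the upper bound I would pin down the three comparison angles at $q$. Law of cosines gives $\cos \widetilde{\angle}\, pqr = -1 + O(\epsilon L/L')$, so $\widetilde{\angle}\, pqr = \pi - \theta$ with $\theta = O(\sqrt{\epsilon L/L'})$. Applying Theorem \ref{global-4pts} at $q$ to the quadruple $(p,q,q',r)$ then yields $\widetilde{\angle}\, pqq' + \widetilde{\angle}\, q'qr \leq \pi + \theta$. For the matching lower bound I would combine the triangle inequality for actual angles at $q$ (giving $\angle pqq' + \angle q'qr \geq \angle pqr$) with the Toponogov hinge comparison applied to $\Delta pqr$ (which forces $\angle pqr \geq \pi - \theta$), and then transfer to comparison angles using that in the near-degenerate regime actual and comparison angles at $q$ track each other to first order. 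Once $\alpha + \beta \in [\pi - O(\theta), \pi + O(\theta)]$ is established for $\alpha = \widetilde{\angle}\, pqq'$ and $\beta = \widetilde{\angle}\, q'qr$, monotonicity of cosine yields $|\cos \alpha + \cos \beta| = O(\theta)$. Plugging in the law-of-cosines expressions $\cos\alpha = (A^2 + E^2 - C^2)/(2AE)$ and $\cos\beta = (B^2 + E^2 - D^2)/(2BE)$ and clearing denominators produces
\[
(A+B)\,E^2 \;\leq\; BC^2 + AD^2 - AB(A+B) + O(\theta) \cdot AB\,E,
\]
whose first three terms equal $(A+B)\cdot d(\widetilde{q}, \widetilde{q}')^2 = O(\epsilon L^2 L')$. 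Dividing by $A+B \asymp L'$ reduces the inequality to $E^2 \leq O(\epsilon L^2) + O(\sqrt{\epsilon L L'})\cdot E$, which solves to $E \leq C \epsilon^{1/2} L$.

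The main technical obstacle is the matching lower bound on $\widetilde{\angle}\, pqq' + \widetilde{\angle}\, q'qr$: Theorem \ref{global-4pts} only supplies the upper direction, and the CBB(0) comparison between actual and comparison angles goes the wrong way for a naive transfer. I expect this step to require the angle monotonicity property of CBB(0), applied in conjunction with the near-degeneracy of $\Delta pqr$, to certify that the three comparison angles around $q$ partition the directions at $q$ as in Euclidean geometry up to an error of order $\theta$.
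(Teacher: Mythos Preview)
You correctly identified the gap in your own approach, and it is a real one: placing the vertex of the quadruple comparison at $q=\gamma(\lambda)$ requires a lower bound on $\widetilde{\angle}\,pqq'+\widetilde{\angle}\,q'qr$, but in $CBB(0)$ one only has $\widetilde{\angle}\le\angle$, so the chain $\widetilde{\angle}\,pqq'+\widetilde{\angle}\,q'qr\le\angle\,pqq'+\angle\,q'qr\ge\angle\,pqr\ge\widetilde{\angle}\,pqr$ breaks at exactly the place you need it. Your suggestion to repair this with ``angle monotonicity in conjunction with near-degeneracy'' is not a workable plan: the triangles $pqq'$ and $q'qr$ are only near-degenerate once you already know $E=d_X(q,q')$ is small, which is precisely what you are trying to prove. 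There is no general mechanism in $CBB(0)$ that forces the three comparison angles at a point to be close to additive.

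The paper circumvents this completely by changing both the vertex and the fourth point. It works at $p=\gamma(1/4)$ rather than at $q$, and it brings in the external point $\gamma(0)$ (this is the reason the lemma is stated with the geodesic $\tilde\gamma$ starting at $a+\tfrac14(b-a)$ rather than at $a$). The near-isometry hypothesis gives, via the law of cosines, $\widetilde{\angle}\,\gamma(0)\,p\,\gamma(\lambda)\ge\pi-O(\epsilon^{1/2})$. For the second angle, $\tilde\gamma(\lambda)$ lies on the minimal geodesic from $p$ to $\gamma(1)$, so the $CBB(0)$ monotonicity property yields $\widetilde{\angle}\,\gamma(0)\,p\,\tilde\gamma(\lambda)\ge\widetilde{\angle}\,\gamma(0)\,p\,\gamma(1)\ge\pi-O(\epsilon^{1/2})$, again by the law of cosines. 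Now Theorem~\ref{global-4pts} at $p$ with the quadruple $\gamma(0),\gamma(\lambda),\tilde\gamma(\lambda)$ forces $\widetilde{\angle}\,\gamma(\lambda)\,p\,\tilde\gamma(\lambda)\le O(\epsilon^{1/2})$ directly, with no matching lower bound needed; one more law of cosines gives the distance estimate. The role of $\gamma(0)$ is exactly to produce two angles that are each individually close to $\pi$, so that the four-point inequality does all the work by itself.
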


\begin{proof}
  We will first suppose that $L' = L$.  We may assume without loss of generality that $[a,b] = [0,1]$.  Let $\lambda \in [1/3,2/3]$.  By hypothesis, we have that
  \begin{align*}
    d_X(\gamma(0),\gamma(1/4)) &\leq \frac{L}{4} + \epsilon L, \\
    d_X(\gamma(1/4),\gamma(\lambda)) &\leq \left( \lambda - \frac{1}{4} \right)L + \epsilon L, \\
    d_X(\gamma(1/4),\tilde{\gamma}(\lambda)) &\leq \left( \frac{4}{3} \lambda - \frac{1}{3} \right) \left( \frac{3}{4} L + \epsilon L \right), \\
    d_X(\gamma(1/4),\gamma(1)) &\leq \frac{3}{4} L + \epsilon L, \\
    d_X(\gamma(0),\gamma(\lambda)) &\geq \lambda L - \epsilon L, \\
    d_X(\gamma(0),\gamma(1)) &\geq L - \epsilon L.
  \end{align*}
  From these bounds, we can use the law of cosines to bound the comparison angles
  \begin{align*}
    \widetilde{\angle} \gamma(0)\gamma(1/4)\gamma(\lambda) &\geq \pi - (240\epsilon)^{1/2}, \\
    \widetilde{\angle} \gamma(0)\gamma(1/4)\tilde{\gamma}(\lambda) &\geq \widetilde{\angle} \gamma(0)\gamma(1/4)\gamma(1) \geq \pi - (32\epsilon)^{1/2}.
  \end{align*}
  The last inequality comes from the monotonicity of angles condition for $CBB(0)$ spaces.  Letting $a = \gamma(1/4)$, $b = \gamma(0)$, $c = \gamma(\lambda)$ and $d = \tilde{\gamma}(\lambda)$, we get, by Theorem \ref{global-4pts}, that
  \begin{align*}
    \widetilde{\angle} \gamma(\lambda) \gamma(1/4) \tilde{\gamma}(\lambda) \leq (240 \epsilon)^{1/2} + (32 \epsilon)^{1/2} \leq 24 \epsilon^{1/2}.
  \end{align*}
  Applying the law of cosines once more, we get that there exists some universal constant $C > 0$ so that
  \begin{align*}
    d_X(\gamma(\lambda),\tilde{\gamma}(\lambda)) \leq C \epsilon^{1/2} L.
  \end{align*}

  Now suppose $L' < L$.  Then we have for all $s,t \in [a,b]$ that
  \begin{align*}
    \left| d_X(\gamma(s),\gamma(t)) - |t - s| L' \right| \leq \epsilon (b-a) L = \epsilon \frac{L}{L'} (b-a) L'.
  \end{align*}
  As $\epsilon \frac{L}{L'} < 1$, applying the lemma gives us that 
  \begin{align*}
    \sup_{\lambda \in \left[a + \frac{1}{3}(b-a), a + \frac{2}{3}(b-a)\right]} d_X(\gamma(\lambda),\tilde{\gamma}(\lambda)) \leq C \epsilon^{1/2} (b-a) \left( \frac{L}{L'} \right)^{1/2} L' \leq C \epsilon^{1/2} (b-a) L.
  \end{align*}
\end{proof}

We now prove a quantitative version of Lemma 10.5.4 of \cite{BBI} for line segments.

\begin{lemma} \label{CBB-angle-control}
  Let $\epsilon > 0$ and $\gamma : [-L,L] \to X$ be a unit speed minimal geodesic.  Let $x \in X$ so that $d_X(x,\gamma(0)) \leq L \cos\left( \frac{\pi}{2} - \frac{\epsilon}{2} \right)$.  Then $\angle x \gamma(0) \gamma(L) \leq \widetilde{\angle} x \gamma(0) \gamma(L) + \epsilon$
\end{lemma}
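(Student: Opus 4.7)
The plan is to prove the lemma in three stages. First, I would apply Theorem~\ref{global-4pts} to the quadruple $\gamma(0), x, \gamma(L), \gamma(-L)$ and use $\widetilde{\angle}\gamma(L)\gamma(0)\gamma(-L) = \pi$ (since $\gamma$ is a minimal geodesic through $\gamma(0)$) to obtain
\[
\widetilde{\angle} x \gamma(0)\gamma(L) + \widetilde{\angle} x\gamma(0)\gamma(-L) \leq \pi.
\]
Letting $\sigma$ be a minimal geodesic from $\gamma(0)$ to $x$, I would apply the same inequality at each $\sigma(s)$ for $s \in (0, d_X(\gamma(0),x)]$ and pass to the limit $s \to 0^+$ using the $CBB(0)$ monotonicity of comparison angles (which makes $\widetilde{\angle}(\sigma(s)\gamma(0)\gamma(\pm L))$ increase to $\angle x\gamma(0)\gamma(\pm L)$), yielding $\angle x \gamma(0)\gamma(L) + \angle x\gamma(0)\gamma(-L) \leq \pi$. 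Combined with the $CBB(0)$ inequality $\angle x\gamma(0)\gamma(-L) \geq \widetilde{\angle} x\gamma(0)\gamma(-L)$, this gives $\angle x\gamma(0)\gamma(L) \leq \pi - \widetilde{\angle} x\gamma(0)\gamma(-L)$. Setting $\alpha := \widetilde{\angle} x\gamma(0)\gamma(L)$ and $\beta := \widetilde{\angle} x\gamma(0)\gamma(-L)$, the lemma reduces to showing $\delta := \pi - \alpha - \beta \leq \epsilon$.

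Next, I would bound $\delta$ using the constraints on $r_\pm := d_X(x,\gamma(\pm L))$: namely $r_\pm \in [L-d, L+d]$, $r_+ + r_- \geq 2L$ (triangle inequality, since $d_X(\gamma(L),\gamma(-L)) = 2L$), and $r_+^2 + r_-^2 \leq 2d^2 + 2L^2$ (the $CBB(0)$ midpoint inequality for $\gamma(0)$ as midpoint of $\gamma(\pm L)$), where $d := d_X(x,\gamma(0))$. Since $\partial\delta/\partial r_+ = -r_+/(dL\sin\alpha) \neq 0$ and analogously for $r_-$, the maximum of $\delta$ over the feasible region is attained on its boundary. A direct case analysis shows: on the extremal faces $r_\pm = L\pm d$, the triangle and CBB constraints collapse the feasible set to $(\alpha,\beta) \in \{(0,\pi),(\pi,0)\}$ with $\delta = 0$; on the CBB equality boundary one has $\cos\alpha + \cos\beta = 0$, forcing $\alpha+\beta = \pi$ and $\delta = 0$; and on the triangle equality boundary $r_+ + r_- = 2L$ (parametrized by $r_\pm = L \pm t$, $t \in [-d, d]$), the function $\delta(t)$ vanishes at the endpoints and has its unique critical point at $t = 0$, where $r_+ = r_- = L$ yields $\alpha = \beta = \arccos(d/(2L))$ and $\delta = 2\arcsin(d/(2L))$. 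Hence $\delta \leq 2\arcsin(d/(2L))$.

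Finally, since $\arcsin$ is convex on $[0,1]$ with $\arcsin(0) = 0$, one has $\arcsin(y/2) \leq \tfrac{1}{2}\arcsin(y)$ for $y \in [0,1]$. Using the hypothesis $d \leq L\cos(\pi/2 - \epsilon/2) = L\sin(\epsilon/2)$:
\[
2\arcsin\!\left(\frac{d}{2L}\right) \leq \arcsin\!\left(\frac{d}{L}\right) \leq \arcsin\bigl(\sin(\epsilon/2)\bigr) = \frac{\epsilon}{2} \leq \epsilon.
\]
The principal obstacle will be the boundary analysis of the middle stage: although each boundary component is elementary on its own, confirming that the defect $\delta$ is genuinely maximized at the symmetric configuration $r_+ = r_- = L$ rather than elsewhere in the feasible region requires a careful case-by-case check and the observation that the naive abstract bound (using only $\cos\alpha + \cos\beta \leq d/L$ together with $\alpha + \beta \leq \pi$) gives only $\delta = O(\sqrt{d/L})$, which is not sharp enough; the realizability constraint is essential to close the gap to the linear bound $\delta = O(d/L)$.
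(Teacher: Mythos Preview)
Your proof is correct, and it shares the same opening reduction as the paper: both arguments use that adjacent angles along a geodesic sum to (at most) $\pi$ in a $CBB(0)$ space, together with the $CBB(0)$ angle inequality $\angle \geq \widetilde\angle$ on the $\gamma(-L)$ side, to reduce the lemma to bounding the defect $\delta = \pi - \widetilde\angle\, x\gamma(0)\gamma(L) - \widetilde\angle\, x\gamma(0)\gamma(-L)$. (A small technical point: in your limiting argument via Theorem~\ref{global-4pts} you should also shrink the $\gamma(\pm L)$ side, not just $\sigma(s)$, so that the comparison angles converge to the Alexandrov angles; as written you only get a lower bound on the angle.)

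The two proofs diverge in how they bound $\delta$. The paper argues by contradiction: assuming $\delta \geq \epsilon$, it writes $r_\pm^2$ via the law of cosines in the glued comparison picture, applies Cauchy--Schwarz $r_+ + r_- \leq \sqrt{2(r_+^2 + r_-^2)}$, maximizes the right-hand side over the angle split $\theta$ (the maximum occurring at $\theta = (\pi-\delta)/2$), and observes that the hypothesis $d \leq L\sin(\epsilon/2) \leq L\sin(\delta/2)$ then forces $r_+ + r_- < 2L$, contradicting the triangle inequality. Your approach instead runs a direct constrained optimization of $\delta$ over the feasible $(r_+,r_-)$ region and locates the maximum at the symmetric point on the face $r_+ + r_- = 2L$, yielding the explicit bound $\delta \leq 2\arcsin(d/(2L))$. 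The paper's Cauchy--Schwarz trick is shorter and avoids the boundary case analysis entirely; your route is more computational but gives a slightly sharper constant (you end with $\delta \leq \epsilon/2$). Note also that the $CBB(0)$ midpoint inequality you invoke as a constraint is not actually needed: on the faces $r_\pm = L\pm d$ one already has $\alpha \in \{0,\pi\}$ and hence $\delta \leq 0$ without collapsing the feasible set, and the paper's argument makes no use of it either.
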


\begin{proof}
  If $x = \gamma(0)$, then the statement is trivial.  Thus, we may assume $x \neq \gamma(0)$.  Suppose that
  \begin{align}
    \theta := \widetilde{\angle} x\gamma(0)\gamma(L) < \angle x \gamma(0) \gamma(L) - \epsilon. \label{CBB-angle-small}
  \end{align}
  Construct comparison triangles in $\R^2$ of $\widetilde{\triangle} x \gamma(0) \gamma(L)$ and $\widetilde{\triangle} x \gamma(0) \gamma(-L)$ so that $\overline{\gamma(L)}$ and $\overline{\gamma(-L)}$ are on opposite sides of the line spanned by $x\gamma(0)$.  Note that the distance between $\overline{\gamma(L)}$ and $\overline{\gamma(-L)}$ in $\R^2$ may not necessarily be $d_X(\gamma(0),\gamma(-L))$.  We have that
  \begin{align*}
    \angle_{\R^2} \gamma(-L)\gamma(0)\gamma(L) &= \widetilde{\angle} \gamma(-L)\gamma(0)x + \widetilde{\angle} x\gamma(0)\gamma(L) \\
    &\overset{\eqref{CBB-angle-small}}{\leq} \angle \gamma(-L)\gamma(0)x + \angle x \gamma(0)\gamma(L) - \epsilon \\
    &= \pi - \epsilon.
  \end{align*}
  Here, $\angle_{\R^2} \gamma(-L)\gamma(0)\gamma(L)$ denotes the angle of the hinge in the above two-triangle construction, \textit{not the comparison triangle}.  Setting $\angle {\R^2} \gamma(-L)\gamma(0)\gamma(L) = \pi - \delta$, by the law of cosines, we then get that
  \begin{align}
    d_X(x,\gamma(L))^2 &= L^2 + d_X(x,\gamma(0))^2 - 2Ld_X(x,\gamma(0)) \cos \theta, \label{cos-dist-1} \\
    d_X(x,\gamma(-L))^2 &= L^2 + d_X(x,\gamma(0))^2 - 2Ld_X(x,\gamma(0)) \cos (\pi - \delta - \theta). \label{cos-dist-2}
  \end{align}
  As $\gamma$ is a minimizing geodesic between its endspoints, we must have that
  \begin{align}
    d_X(\gamma(L),x) + d_X(x,\gamma(-L)) \geq d_X(\gamma(L),\gamma(0)) + d_X(\gamma(0),\gamma(-L)) = 2L. \label{line-triangle-ineq}
  \end{align}
  To derive a contradiction, we bound from above by Cauchy-Schwarz
  \begin{align*}
    d_X(\gamma(L),x) + d(x,\gamma(-L)) \leq \left( 2 d_X(\gamma(L),x)^2 + 2 d_X(x,\gamma(-L))^2 \right)^{1/2}.
  \end{align*}
  Thus, it suffices to maximize the term inside the square root with respect to $\theta$.  Taking \eqref{cos-dist-1} and \eqref{cos-dist-2} into account, we get by calculus that the maximum is achieved when $\theta = \frac{\pi - \delta}{2}$.  Plugging this in, we get
  \begin{align}
    d_X(\gamma(L),x) + d(x,\gamma(-L)) \leq 2\left( L^2 + d_X(x,\gamma(0))^2 - 2 Ld_X(x,\gamma(0)) \cos \left( \frac{\pi}{2} - \frac{\delta}{2} \right) \right)^{1/2}. \label{ineq-aab}
  \end{align}
  Remembering that $0 < d_X(x,\gamma(0)) < L \cos\left( \frac{\pi}{2} - \frac{\epsilon}{2} \right) \leq L \cos\left( \frac{\pi}{2} - \frac{\delta}{2} \right)$, we have that
  \begin{align}
    d_X(x,\gamma(0))^2 - 2Ld_X(x,\gamma(0)) \cos \left( \frac{\pi}{2} - \frac{\delta}{2} \right) < 0. \label{ineq-aaa}
  \end{align}
  giving us
  \begin{align*}
    d_X(\gamma(L),x) + d(x,\gamma(-L)) \overset{\eqref{ineq-aab} \wedge \eqref{ineq-aaa}}{<} 2L,
  \end{align*}
  a contradiction of \eqref{line-triangle-ineq}.
\end{proof}

We can now prove that geodesic segments that start off close either diverge linearly or stay bounded for some time.

\begin{lemma} \label{CBB-linear-diverge}
  Let $\gamma_0 : [-L,L] \to X$ and $\gamma_1 : [-L,L] \to X$ be unit speed minimal geodesics.  If $d_X(\gamma_0(0),\gamma_1(0)) \leq \epsilon L$ then there is a constant $\alpha \geq 0$ so that
  \begin{align*}
    | d_X(\gamma_0(x),\gamma_1(x)) - \alpha x | \leq 30 \epsilon L, \qquad \forall x \in [0,\epsilon^{1/2}L].
  \end{align*}
\end{lemma}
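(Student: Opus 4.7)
The plan is to approximate $\phi(x) := d_X(\gamma_0(x),\gamma_1(x))$ by its Euclidean analogue, using Lemma \ref{CBB-angle-control} for quantitative angle control together with the CBB(0) angle property $\widetilde{\angle} \le \angle$. Set $p = \gamma_0(0)$, $q = \gamma_1(0)$, $d_0 = d_X(p,q) \le \epsilon L$; if $d_0 = 0$, the identity $\phi(x) = 2x\sin(\widetilde{\angle}\gamma_0(x)p\gamma_1(x)/2)$ combined with monotonicity of comparison angles yields the lemma with $\alpha$ the secant slope, so assume $d_0 > 0$. Join $p,q$ by a minimizing geodesic and set $\theta_0 = \angle \gamma_0(L)pq$, $\theta_1 = \angle \gamma_1(L)qp$, then define
$$\alpha := \sqrt{(\cos\theta_0+\cos\theta_1)^2 + (\sin\theta_0-\sin\theta_1)^2} \in [0,2],$$
which is the asymptotic divergence rate in the Euclidean model where $\bar{p}=0$, $\bar{q}=(d_0,0)$, and $\bar\gamma_0, \bar\gamma_1$ are the two rays at angles $\theta_0, \theta_1$ to the segment $\bar{p}\bar{q}$.

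The crucial step is quantitative angle comparison. Applying Lemma \ref{CBB-angle-control} to $\gamma_1|_{[-L,L]}$ at $p$ (valid since $d_0 \le L\sin(\pi\epsilon/2)$) gives $\widetilde{\angle} pq\gamma_1(L) \ge \theta_1 - \pi\epsilon$; combined with $\widetilde{\angle} \le \theta_1$ and the CBB(0) monotonicity of $\widetilde{\angle}(s,t)$ in $s,t$, the comparison angle at $q$ in $\triangle pq\gamma_1(x)$ lies in $[\theta_1 - \pi\epsilon, \theta_1]$ for every $x \in [0,\epsilon^{1/2}L]$. The law of cosines then yields $|\psi(x)^2 - (d_0^2 + x^2 - 2xd_0\cos\theta_1)| \le 2\pi\epsilon\cdot xd_0$, where $\psi(x) := d_X(p,\gamma_1(x))$; a symmetric estimate bounds $\rho(x) := d_X(q,\gamma_0(x))$. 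A further application of Lemma \ref{CBB-angle-control} to $\gamma_0|_{[-L,L]}$ at $\gamma_1(x)$, using $\psi(x) \le d_0 + x \le 2\epsilon^{1/2}L$, controls the comparison angle at $p$ in $\triangle p\gamma_0(x)\gamma_1(x)$ up to $O(\epsilon^{1/2})$.

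Chaining these via the law of cosines in $\triangle p\gamma_0(x)\gamma_1(x)$ and matching with the Euclidean analogue yields $\phi(x)^2 = \alpha^2 x^2 - 2xd_0(\cos\theta_0+\cos\theta_1) + d_0^2 + R(x)$ with $|R(x)| \le C\epsilon^{3/2}L^2$ for a universal $C$. I would then take square roots and split by regime: when $\alpha x \ge 2d_0$, the Euclidean side expands as $\alpha x + O(d_0) = \alpha x + O(\epsilon L)$ (using $|\cos\theta_0+\cos\theta_1|/\alpha \le 1$), and the $R$-correction propagates to $|\phi - \bar\phi| = O(\epsilon L)$ since $\phi + \bar\phi \gtrsim \alpha x \ge \epsilon L$; in the complementary regime $\alpha x \le 2d_0$, both $\alpha x \le 2\epsilon L$ and, by triangle inequality, $\phi(x) \le d_0 + 2x \le 3\epsilon^{1/2}L$, and a more refined Euclidean analysis shows $\bar\phi \approx d_0$ in the degenerate case $\alpha \ll 1$ (which forces $\theta_0 \approx \theta_1 \approx \pi/2$), so $\phi(x) = d_0 + O(\epsilon L)$ and all estimates fit within the $30\epsilon L$ budget. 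The main obstacle is this bookkeeping, particularly the square-root passage from $\phi^2$ to $\phi$ in the degenerate regime where one must use the structural constraint on the angles rather than a naive Taylor expansion.
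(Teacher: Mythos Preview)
Your approach has a genuine gap at the ``chaining'' step. You control the comparison angle $\widetilde{\angle}\gamma_0(x)p\gamma_1(x)$ to within $O(\epsilon^{1/2})$ of the true angle $\angle \gamma_0(L)p\gamma_1(x)$ via Lemma~\ref{CBB-angle-control}, but you never establish that this true angle is close to the angle in your Euclidean model. Knowing $\theta_0 = \angle \gamma_0(L)pq$ and $\theta_1 = \angle \gamma_1(L)qp$ does not determine $\angle \gamma_0(L)p\gamma_1(x)$: the angle triangle inequality at $p$ only gives $|\theta_0 - \angle qp\gamma_1(x)| \le \angle \gamma_0(L)p\gamma_1(x) \le \theta_0 + \angle qp\gamma_1(x)$, an interval of width that is not small. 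Concretely, your formula for $\alpha$ already encodes an unjustified choice: in the planar model, the two geodesics could lie on the same or opposite sides of $\overline{pq}$, giving either $(\sin\theta_0 - \sin\theta_1)^2$ or $(\sin\theta_0 + \sin\theta_1)^2$ under the square root, and in a general $CBB(0)$ space there is no canonical ``side'' to select. Nothing in your argument rules out the wrong branch, and the two values of $\alpha$ can differ by a constant, which would swamp the $30\epsilon L$ error budget. The law of cosines at $p$ expresses $\phi(x)^2$ in terms of $x$, $\psi(x)$, and the comparison angle at $p$, but the comparison angle at $p$ is itself determined by $\phi(x)$, so the chain does not close without independent information about the angle.

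The paper avoids this circularity by introducing an auxiliary geodesic $\tilde\gamma$ from $\gamma_1(0)$ to $\gamma_0(\epsilon^{1/2}L)$. Because $\tilde\gamma$ shares one endpoint with $\gamma_0$ and the other with $\gamma_1$, each of the two comparisons ($\gamma_0$ vs.\ $\tilde\gamma$, then $\tilde\gamma$ vs.\ $\gamma_1$) involves geodesics emanating from a common vertex, where the $CBB(0)$ triangle comparison and Lemma~\ref{CBB-angle-control} apply directly without any ``side'' ambiguity. The first comparison shows $\gamma_0$ and $\tilde\gamma$ stay $O(\epsilon L)$-close on $[0,\epsilon^{1/2}L]$; the second shows $\tilde\gamma$ and $\gamma_1$ diverge linearly (lower bound from the comparison property, upper bound from Lemma~\ref{CBB-angle-control} at $\gamma_1(0)$). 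The slope $\alpha$ is then simply the secant $d_X(\gamma_0(\epsilon^{1/2}L),\gamma_1(\epsilon^{1/2}L))/(\epsilon^{1/2}L)$, which is read off from the data rather than from a model.
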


\begin{proof}
  Let $\tilde{\gamma} : [0,\epsilon^{1/2} L] \to X$ be the constant speed minimizing geodesic from $\gamma_1(0)$ to $\gamma_0(\epsilon^{1/2} L)$.  By the triangle inequality, we have that
  \begin{align}
    \delta L := d_X(\gamma_0(\epsilon^{1/2} L),\gamma_1(0)) \in [(\epsilon^{1/2} - \epsilon)L,(\epsilon^{1/2} + \epsilon)L]. \label{thetaL-range}
  \end{align}
  Let $\theta_0 = \widetilde{\angle} \gamma_0(0)\gamma_0(\epsilon^{1/2}L)\gamma_1(0)$.  We have that
  \begin{align*}
    d_X(\gamma_0(\epsilon^{1/2} L),\gamma_1(0)) \leq (\epsilon^{1/2} + \epsilon) L \leq \cos\left(\frac{\pi}{2} - 4\epsilon^{1/2} \right) \frac{L}{2}.
  \end{align*}
  Thus, Lemma \ref{CBB-angle-control} tells us that
  \begin{align}
    \angle \gamma_0(0) \gamma_0(\epsilon^{1/2} L) \gamma_1(0) \leq \widetilde{\angle} \gamma_0(0) \gamma_0(\epsilon^{1/2} L) \gamma_1(0) + 8\epsilon^{1/2}. \label{angle-bound0}
  \end{align}
  Let $x \in [0,\epsilon^{1/2} L]$.  As $\gamma_0$ and $\tilde{\gamma}$ are both constant speed geodesics, we get by the law of cosines that
  \begin{align*}
    d_X(\gamma_0(x),\tilde{\gamma}(x))^2 &\leq \left(\frac{\epsilon^{1/2}L - x}{\epsilon^{1/2}L}\right)^2 \left(\epsilon L^2 + \delta^2 L^2 - 2L^2 \epsilon^{1/2} \delta \cos \angle \gamma_0(0) \gamma_0(\epsilon^{1/2} L) \gamma_1(0)\right) \\
    &\overset{\eqref{angle-bound0}}{\leq} \epsilon L^2 + \delta^2 L^2 - 2L^2 \epsilon^{1/2} \delta \cos (\theta_0 + 8 \epsilon^{1/2}) \\
    &\leq \epsilon L^2 + \delta^2 L^2 - 2L^2 \epsilon^{1/2} \delta \cos \theta_0 + 20 L^2 \epsilon \delta \sin \theta_0  \\
    &\leq \epsilon^2 L^2 + 40 L^2 \epsilon^2.
  \end{align*}
  For the last inequality, we have used the fact that $\theta_0$ is the comparison angle $\widetilde{\angle} \gamma_0(0) \gamma_0(\epsilon^{1/2}L) \gamma_1(0)$, which we already knows has side lengths $\epsilon^{1/2}L$, $\delta L$ and $\epsilon L$.  We also used the fact that $\delta \leq 2\epsilon^{1/2}$ to get
  \begin{align*}
    \epsilon L = d_X(\gamma_0(0),\gamma_1(0)) \geq \min \{ \delta L \sin \theta_0, \epsilon^{1/2} L \sin \theta_0\} \geq \frac{1}{2} \delta L \sin \theta_0,
  \end{align*}
  Thus, 
  \begin{align}
    \sup_{x \in [0,\epsilon^{1/2} L]} d_X(\gamma_0(x),\tilde{\gamma}(x)) \leq 7 \epsilon L. \label{close-tilde-geodesic}
  \end{align}
  We now use the technique above to show that $\tilde{\gamma}$ and $\gamma_1$ diverge linearly.  As before, Lemma \ref{CBB-angle-control} gives that
  \begin{align}
    \angle \gamma_0(\epsilon^{1/2} L) \gamma_1(0) \gamma_1(\epsilon^{1/2}L) \leq \widetilde{\angle} \gamma_0(\epsilon^{1/2} L) \gamma_1(0) \gamma_1(\epsilon^{1/2} L) + 8\epsilon^{1/2}. \label{angle-bound1}
  \end{align}
  Let $x \in [0,\epsilon^{1/2} L]$.  Then remembering that $\delta L = d_X(\tilde{\gamma}(\epsilon^{1/2} L),\gamma_0(0))$ and setting $\theta_1 := \widetilde{\angle} \gamma_0(\epsilon^{1/2} L) \gamma_1(0) \gamma_1(\epsilon^{1/2} L)$, we get
  \begin{align}
    d_X(\tilde{\gamma}(x),\gamma_0(x))^2 &\leq \left( \frac{x}{\epsilon^{1/2}L} \right)^2 \left( \epsilon L^2 + \delta^2 L^2 - 2L^2 \delta \epsilon^{1/2} \cos \angle \gamma_0(\epsilon^{1/2} L) \gamma_1(0) \gamma_1(\epsilon^{1/2}L) \right) \notag \\
    &\overset{\eqref{angle-bound1}}{\leq} \left( \frac{x}{\epsilon^{1/2}L} \right)^2 \left( \epsilon L^2 + \delta^2 L^2 - 2L^2 \delta \epsilon^{1/2} \cos (\theta_1 + 8\epsilon^{1/2}) \right) \notag \\
    &\leq \left( \frac{x}{\epsilon^{1/2}L} \right)^2 \left( d_X(\gamma_0(\epsilon^{1/2}L),\gamma_1(\epsilon^{1/2}L))^2 + 20 L^2 \delta \epsilon \sin \theta_1 \right). \label{2lines-gap-bound}
  \end{align}
  As $\theta_1$ is the comparison angle $\widetilde{\angle} \gamma_0(\epsilon^{1/2} L) \gamma_1(0) \gamma_1(\epsilon^{1/2} L)$, it follows from Euclidean geometry that
  \begin{align}
    d_X(\gamma_1(\epsilon^{1/2} L), \gamma_0(\epsilon^{1/2}L)) &\geq \min \left\{ d_X(\gamma_1(0),\gamma_1(\epsilon^{1/2}L)) \sin \theta_1, d_X(\gamma_1(0),\gamma_0(\epsilon^{1/2}L)) \sin\theta_1 \right\} \notag \\
    &\overset{\eqref{thetaL-range}}{\geq} \frac{1}{2} \delta L \sin \theta_1. \label{2lines-end-gap}
  \end{align}
  Now we can bound
  \begin{align}
    d_X(\tilde{\gamma}(x),\gamma_0(x)) &\overset{\eqref{2lines-gap-bound}}{\leq} \left( \frac{x}{\epsilon^{1/2}L} \right) \left( d_X(\gamma_0(\epsilon^{1/2}L),\gamma_1(\epsilon^{1/2}L)) + \frac{10 L^2 \delta \epsilon \sin \theta_1}{d_X(\gamma_0(\epsilon^{1/2} L),\gamma_1(\epsilon^{1/2}L))} \right) \notag \\
    &\overset{\eqref{2lines-end-gap}}{\leq} \frac{x}{\epsilon^{1/2}L} d_X(\gamma_0(\epsilon^{1/2}L),\gamma_1(\epsilon^{1/2}L)) + 20 L \epsilon. \label{two-lines-lower-bound}
  \end{align}
  where we used \eqref{thetaL-range} to give that $\delta \leq 2\epsilon^{1/2}$ in the last inequality.  By the triangle comparison condition of $CBB(0)$ spaces, we also have
  \begin{align}
    d_X(\tilde{\gamma}(x),\gamma_0(x)) &\geq \frac{x}{\epsilon^{1/2}L} d_X(\gamma_0(\epsilon^{1/2}L),\gamma_1(\epsilon^{1/2}L)). \label{two-lines-upper-bound}
  \end{align}
  Combining \eqref{close-tilde-geodesic}, \eqref{two-lines-lower-bound}, and \eqref{two-lines-upper-bound}, we get
  \begin{align*}
    \left| d_X(\gamma_0(x),\gamma_1(x)) - x \frac{d_X(\gamma_0(\epsilon^{1/2}L),\gamma_1(\epsilon^{1/2}L))}{\epsilon^{1/2}L} \right| \leq 28 \epsilon L
  \end{align*}
\end{proof}

\begin{proof}[Proof of Theorem \ref{CBB-nonembed}]
  We, as in the proof of Theorem \ref{UC-nonembed}, take a cube $S \in \Delta$ so that $S \subseteq B_G$ and $\ell(S)$ is maximal among all such cubes.  We get that there exist some $C_0 > 0$ depending only on $G$ so that
  \begin{align*}
    \frac{1}{C_0} \leq \ell(S) \leq C_0.
  \end{align*}
  Let $\zeta$ and $\alpha$ be the constants from Theorem \ref{M-UAAP}.  Let $\epsilon \in (0,1/2)$ and choose $D > 1$ so that
  \begin{align}
    \epsilon := \left[ C(124+C) D \right]^{\frac{2}{\beta_1(\beta_2-1)}}, \label{CBB-eps-defn}
  \end{align}
  where $C > 1$, $\beta_1$, and $\beta_2$ are the constants in Lemma \ref{sublinear-diverge}.  Note then that $\epsilon \leq D^{-2}$.  Let $C_1 \geq 1$ be the constant of Lemma \ref{CBB-close-geodesic} and set $m = \lceil (\epsilon/6C_1)^{-2\alpha} \rceil$.  We can define some $C_2 > 0$ depending only on $G$ so that
  \begin{align*}
    \zeta a_0 \ell(S) \tau^{3C_2 \epsilon^{-2\alpha}} \leq \zeta \epsilon^r a_0 \tau^{2m} \ell(S) \leq \zeta \epsilon^r a_0 \tau^m \ell(S) \leq \zeta a_0 \ell(S) \tau^{C_2 \epsilon^{-2\alpha}}.
  \end{align*}
  Here, we needed to specify that $\epsilon$ be smaller than some constant depending only on $r$, $\alpha$, and $\tau$ so that $\epsilon^r \geq \tau^{C_2m}$.  For convenience, we define
  \begin{align}
    \eta := \tau^{C_2 \epsilon^{-2\alpha}}. \label{CBB-eta-defn}
  \end{align}
  
  We can use Theorem \ref{M-UAAP} as in the proof of Theorem \ref{UC-nonembed} to find a ball $B = \zeta\epsilon^rB_Q$ of radius $r = \zeta \epsilon^r a_0 \tau^k \ell(S)$ for some $k \in \{m,...,2m\}$ and a function $w : S^{n-1} \to \R^+$ so that
  \begin{align}
    \sup_{-3r \leq s \leq t \leq 3r} \left| d_X(h(xe^{sv}),h(xe^{tv})) - |t-s| w(v) \right| \leq \left( \frac{\epsilon}{6^{3/2} C_1} \right)^2 6r, \qquad \forall x \in B. \label{CBB-coarse-diff}
  \end{align}
  Note that $r \in [\zeta a_0 \ell(S) \eta^3, \zeta a_0 \ell(S) \eta]$.
  
  Suppose for all $x,y \in G$ so that $\frac{\zeta a_0 \ell(S)}{C} \epsilon \eta^3 \leq \dcc(x,y) \leq C \zeta a_0 \ell(S) \eta$, we have
  \begin{align}
    \frac{\dcc(x,y)}{D} \leq d_X(h(x),h(y)). \label{distortion-assumption}
  \end{align}
  Then we must have for all $v \in S^{n-1}$ that
  \begin{align*}
    w(v) \geq \frac{1}{2D}.
  \end{align*}
  Indeed, otherwise we can choose any $x \in B$ and get from \eqref{distortion-assumption} that
  \begin{align*}
    d_X(h(xe^{-rv}),h(xe^{rv})) \geq \frac{\dcc(xe^{-rv},xe^{rv})}{D} \geq \frac{2r}{D},
  \end{align*}
  and so
  \begin{align*}
    d_X(h(xe^{-rv}),h(xe^{rv})) - 2r w(v) \geq \frac{r}{D},
  \end{align*}
  which contradicts \eqref{CBB-coarse-diff} as $\epsilon \leq D^{-2}$.
  
  As $(\epsilon/6C_1)^2 \leq 1/2D$, applying Lemma \ref{CBB-close-geodesic} to \eqref{CBB-coarse-diff} with $L = 1$ and $L' = w(v)$ shows that, for each $x \in B$, $v \in S^{n-1}$, there exists a constant speed geodesic $\gamma : [-r,r] \to X$ so that
  \begin{align}
    d_X(h(xe^{tv}),\gamma(t)) \leq C_1 \left( \frac{\epsilon}{6^{3/2} C_1} \right) 6 r \leq \frac{1}{2} \epsilon r, \qquad \forall t \in [-r,r]. \label{CBB-almost-geodesic}
  \end{align}
  By Lemma \ref{sublinear-diverge}, there exist $u,v \in S^{n-1}$ with the following property.  Setting $g = z_Q e^{\epsilon r u}$, we have for $|t| > \epsilon r$ 
  \begin{align*}
    \frac{(\epsilon r)^{1-\beta_1}}{C} |t|^{\beta_1} \leq \dcc(z_Qe^{tv},ge^{tv}) \leq C (\epsilon r)^{1-\beta_2} |t|^{\beta_2}.
  \end{align*}
  For $|t| \in [\epsilon r, \epsilon^{1/2} r]$, we then have that
  \begin{align*}
    \dcc(z_Q e^{tv},ge^{tv}) \in \left[ \frac{1}{C} \epsilon r, C \epsilon^{1 - \frac{\beta_2}{2}} r \right] \subseteq \left[ \frac{1}{C} \zeta a_0 \ell(S) \eta^4, C \zeta a_0 \ell(S) \eta\right].
  \end{align*}
  Then by \eqref{distortion-assumption} and the fact that $h$ is 1-Lipschitz, for $t \in [\epsilon r, \epsilon^{1/2} r]$, we get that
  \begin{align}
    L(t) := \frac{(\epsilon r)^{1-\beta_1}}{CD} t^{\beta_1} \leq d_X(h(ge^{tv}),h(z_Qe^{tv})) \leq C (\epsilon r)^{1-\beta_2} t^{\beta_2} =: U(t). \label{CBB-sublinear-bound}
  \end{align}
  Let $\gamma_0$ and $\gamma_1$ denote the constant speed geodesics of $X$ associated with $h(z_Qe^{tv})$ and $h(ge^{tv})$ on the domain $[-r,r]$ using Lemma \ref{CBB-close-geodesic}, respectively.  Note that
  \begin{align*}
    d_X(\gamma_0(0),\gamma_1(0)) \leq d_X(\gamma_0(0),h(z_Q)) + d_X(h(z_Q),h(g)) + d_X(h(g),\gamma_1(0)) \overset{\eqref{CBB-almost-geodesic}}{\leq} 2\epsilon r.
  \end{align*}
  Thus, Lemma \ref{CBB-linear-diverge} gives us that there exist some affine function $A : \R \to \R$ so that
  \begin{align*}
    | d_X(\gamma_0(t),\gamma_1(t)) - A(t) | \leq 60 \epsilon r, \qquad \forall t \in [0,\epsilon^{1/2}r],
  \end{align*}
  which, by \eqref{CBB-almost-geodesic}, further gives
  \begin{align}
    | d_X(h(z_Qe^{tv}),h(ge^{tv})) - A(t) | \leq 61 \epsilon r, \qquad \forall t \in [0,\epsilon^{1/2}r]. \label{CBB-linear-bound}
  \end{align}
  Let $f(t) = d_X(h(z_Qe^{tv}),h(ge^{tv}))$.  As
  \begin{align*}
    U(\epsilon r) &= C\epsilon r, \\
    U(\epsilon^{1/2} r) &= C\epsilon^{1-\frac{\beta_2}{2}} r,
  \end{align*}
  by \eqref{CBB-sublinear-bound}, we get from \eqref{CBB-linear-bound} that
  \begin{align*}
    A(\epsilon r) &\leq U(\epsilon r) + 61\epsilon r = (61 + C)\epsilon r, \\
    A(\epsilon^{1/2} r) &\leq U(\epsilon^{1/2} r) + 61\epsilon r = 61\epsilon r + C\epsilon^{1-\frac{\beta_2}{2}} r.
  \end{align*}
  Thus, as $A$ is affine, we get (using just a slightly worse bound) that
  \begin{align*}
    A(t) \leq (61 + C)\epsilon r + C\epsilon^{\frac{1-\beta_2}{2}} t, \qquad \forall t \in [\epsilon r,\epsilon^{1/2} r].
  \end{align*}
  This then, by \eqref{CBB-linear-bound} gives that
  \begin{align*}
    f(t) \leq (122 + C)\epsilon r + C\epsilon^{\frac{1-\beta_2}{2}} t, \qquad \forall t \in [\epsilon r,\epsilon^{1/2} r].
  \end{align*}
  Let $s = [C(124 + C)D]^{1/\beta_1} \epsilon r$.  One can see that \eqref{CBB-eps-defn} gives that $s \in [\epsilon r,\epsilon^{1/2}r]$.  Then we have
  \begin{align*}
    f(s) &\leq (122 + C) \epsilon r + [CD(124 + C)]^{1/\beta_1} \epsilon^{\frac{3-\beta_2}{2}} r \overset{\eqref{CBB-eps-defn}}{\leq} (123+C)\epsilon r, \\
    L(s) &= (124 + C) \epsilon r.
  \end{align*}
  This is a contradiction of the fact that $L(t) \leq f(t)$ on $[\epsilon r,\epsilon^{1/2}r]$.  Thus, \eqref{distortion-assumption} is false and so there exists some $x,y \in G$ with $\frac{1}{C} \zeta a_0 \ell(S) \eta^4 \leq \dcc(x,y) \leq C \zeta a_0 \ell(S) \eta$ so that
  \begin{align*}
    d_X(f(x),f(y)) \leq \frac{\dcc(x,y)}{D}.
  \end{align*}
  Recalling the definition of $D$ and $\eta$, this gives us that
  \begin{align*}
    \frac{d_X(f(x),f(y))}{\dcc(x,y)} &\overset{\eqref{CBB-eps-defn}}{\leq} C(124 + C)\epsilon^{\frac{\beta_1(1-\beta_2)}{2}} \\
    &\overset{\eqref{CBB-eta-defn}}{\leq} C(124+C) \left[ \left( 4C_2 \log \frac{1}{\tau} \right)^{\frac{1}{2\alpha}} \left( \log \frac{1}{\dcc(x,y)} - \log \frac{C_0 C}{\zeta a_0} \right)^{-\frac{1}{2\alpha}} \right]^{\frac{\beta_1(1-\beta_2)}{2}}.
  \end{align*}
  As $\epsilon$ can be made arbitrarily small by making $D$ arbitrarily big, we get by the fact that $\dcc(x,y) \leq \eta$ and \eqref{CBB-eta-defn} that $\dcc(x,y)$ can be made arbitrarily small.
\end{proof}

\subsection{Nonembeddability into $CAT(0)$ spaces}

\begin{theorem} \label{CAT-nonembed}
  Let $(X,d_X)$ be a $CAT(0)$ space.  Then there exist $c,C > 0$ such that for every $f : B_G \to X$ which is 1-Lipschitz with respect to the Carnot-Carath\'{e}odory metric there exist $x,y \in B_G$ where $\dcc(x,y)$ is arbitrarily small so that
  \begin{align*}
    \frac{d_X(f(x),f(y))}{\dcc(x,y)} \leq C\left( \log \frac{1}{\dcc(x,y)} \right)^{-c}.
  \end{align*}
\end{theorem}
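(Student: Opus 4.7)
The proof follows the outline of Theorem \ref{CBB-nonembed} almost verbatim, substituting for Lemma \ref{CBB-linear-diverge} the $CAT(0)$ property that $t\mapsto d_X(\bar\gamma_0(t),\bar\gamma_1(t))$ is a convex function whenever $\bar\gamma_0,\bar\gamma_1$ are geodesic segments. I would begin exactly as in the proof of Theorem \ref{CBB-nonembed}: construct the Christ cubes, select a maximal cube $S\subseteq B_G$ with $\tfrac{1}{C_0}\leq\ell(S)\leq C_0$, and apply Theorem \ref{M-UAAP} with the same choices of $\epsilon\in(0,1/2)$, $D>1$, $m=\lceil(\epsilon/6C_1)^{-2\alpha}\rceil$, and $\eta=\tau^{C_2\epsilon^{-2\alpha}}$ to extract a ball $B=\zeta\epsilon^r B_Q$ of some radius $r$ and a direction function $w:S^{n-1}\to\R^+$ satisfying the coarse differentiation estimate \eqref{CBB-coarse-diff}. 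Assuming the distortion hypothesis \eqref{distortion-assumption} forces $w(v)\geq 1/(2D)$, and choosing $u,v\in S^{n-1}$ and $g=z_Q e^{\epsilon r u}$ from Lemma \ref{sublinear-diverge} reproduces the sandwich $L(t)\leq d_X(f(z_Q e^{tv}),f(g e^{tv}))\leq U(t)$ for $|t|\in[\epsilon r,\epsilon^{1/2} r]$ with $L(t)=(\epsilon r)^{1-\beta_1}t^{\beta_1}/(CD)$ and $U(t)=C(\epsilon r)^{1-\beta_2}t^{\beta_2}$.

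Only the step invoking Lemma \ref{CBB-linear-diverge} must be modified. I would first prove a $CAT(0)$ analogue of Lemma \ref{CBB-close-geodesic}: any Lipschitz curve $\gamma:[a,b]\to X$ satisfying the almost-isometry bound $|d_X(\gamma(s),\gamma(t))-|t-s|L'|\leq\epsilon(b-a)L$ stays within $O(\epsilon^{1/2}(b-a)L)$ of the minimizing geodesic $\tilde\gamma$ joining its endpoints; this should follow from the uniqueness of geodesics in $CAT(0)$ together with the hinge inequality applied to the triangle with vertices $\gamma(a),\tilde\gamma(\lambda),\gamma(b)$, using that the almost-isometry forces the comparison angle at $\tilde\gamma(\lambda)$ to be close to $\pi$. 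Applying this lemma to $t\mapsto f(z_Q e^{tv})$ and $t\mapsto f(g e^{tv})$ yields actual geodesics $\bar\gamma_0,\bar\gamma_1:[-r,r]\to X$ shadowing these curves to within $O(\epsilon r)$. Since $X$ is $CAT(0)$, the function $\tilde g(t):=d_X(\bar\gamma_0(t),\bar\gamma_1(t))$ is convex in $t$; together with the endpoint estimates $\tilde g(0)\leq\dcc(z_Q,g)+O(\epsilon r)=O(\epsilon r)$ and $\tilde g(T)\leq U(T)+O(\epsilon r)=C\epsilon^{1-\beta_2/2}r$, the chord inequality on $[0,T]$ with $T=\epsilon^{1/2}r$ delivers the bound
\[
\tilde g(t)\leq(\textup{const})\,\epsilon r+C\epsilon^{(1-\beta_2)/2}\,t,\qquad t\in[\epsilon r,\epsilon^{1/2} r].
\]

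This is precisely the affine upper bound that the function $A(t)$ obeyed in the proof of Theorem \ref{CBB-nonembed}. Hence the remainder of that proof carries over unchanged: setting $\epsilon:=[C(124+C)D]^{2/(\beta_1(\beta_2-1))}$ and $s:=[C(124+C)D]^{1/\beta_1}\epsilon r$, the convexity bound combined with the $O(\epsilon r)$ shadowing error yields $d_X(f(z_Q e^{sv}),f(g e^{sv}))\leq(123+C)\epsilon r$, contradicting the biLipschitz lower bound $L(s)=(124+C)\epsilon r$; converting $\epsilon$ to the scale $\dcc(x,y)$ via $\eta=\tau^{C_2\epsilon^{-2\alpha}}$ then produces the stated logarithmic estimate exactly as in the $CBB(0)$ case. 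The principal obstacle is the $CAT(0)$ version of Lemma \ref{CBB-close-geodesic}: the $CBB(0)$ argument relied on Theorem \ref{global-4pts}, a $CBB$-specific angle-sum inequality whose direction of comparison reverses in $CAT(0)$, so one must instead reason directly via the hinge inequality and uniqueness of geodesics. Reassuringly, the weakening from the near-affine estimate of Lemma \ref{CBB-linear-diverge} to the mere upper bound provided by $CAT(0)$ convexity poses no new difficulty, since the $CBB(0)$ proof only ever used the upper bound on $A(t)$ in producing its contradiction.
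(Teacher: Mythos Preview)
Your proposal is correct and follows essentially the same route as the paper: reduce to the $CBB(0)$ argument, replace Lemma~\ref{CBB-linear-diverge} by the convexity of $t\mapsto d_X(\bar\gamma_0(t),\bar\gamma_1(t))$ in $CAT(0)$, and observe that only the affine \emph{upper} bound on the divergence was ever used. The paper's proof is literally one sentence to this effect.

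The one substantive point of comparison is the $CAT(0)$ analogue of Lemma~\ref{CBB-close-geodesic}. The paper proves this explicitly (Lemma~\ref{CAT-close-geodesic}) via an iterated broken-geodesic construction: it builds piecewise geodesics $\gamma_k$ through the dyadic points $\gamma(j2^{-k})$ and controls $d_X(\gamma_k,\gamma_{k+1})$ using the $CAT(0)$ midpoint inequality (Lemma~\ref{CAT-midpoint-ineq}), summing the resulting geometric series. Your suggested approach---deduce closeness directly from the thin-triangle comparison applied to $\triangle\gamma(a)\gamma(\lambda)\gamma(b)$, whose comparison triangle in $\R^2$ is nearly degenerate---is actually simpler and yields the same $O(\sqrt{\epsilon}L)$ bound in one step; just be careful that the relevant triangle is the one with vertex $\gamma(\lambda)$ (not $\tilde\gamma(\lambda)$, which lies on the opposite side and makes the triangle degenerate). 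Either argument suffices, and the remainder of your write-up matches the paper.
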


For $CAT(0)$ spaces, we can show that almost minimizing curves are close to a minimizing geodesic.  We first need the following lemma, which can be found as Proposition 5.1 of \cite{Ballman}.

\begin{lemma} \label{CAT-midpoint-ineq}
  Let $x,y \in X$, a $CAT(0)$ space, and suppose $m$ is the midpoint.  Then
  \begin{align*}
    d_X(z,m)^2 \leq \frac{d_X(x,z)^2 + d_X(z,y)^2}{2} + \frac{d_X(x,y)^2}{4}.
  \end{align*}
\end{lemma}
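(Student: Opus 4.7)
The plan is to prove the lemma by the standard comparison-triangle argument that yields the Bruhat--Tits (CN) inequality; the stated form then follows by a trivial sign weakening. First, I would form the comparison triangle $\widetilde{\triangle}\widetilde{x}\widetilde{z}\widetilde{y}$ in the model plane $\R^2$ with side lengths matching those of the geodesic triangle $\triangle xzy \subset X$, which exists since $X$ is a $CAT(0)$ space so any pair of points can be joined by a geodesic and all side lengths are finite. Let $\widetilde{m}$ be the midpoint of the segment $\overline{\widetilde{x}\widetilde{y}}$; because the $X$-midpoint $m$ lies on a geodesic between $x$ and $y$, $\widetilde{m}$ is the corresponding point on the comparison triangle.

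Next, I would apply the $CAT(0)$ triangle comparison property to the pair $z$, $m$ (viewed as interior points of sides of $\triangle xzy$) to conclude
\begin{equation*}
  d_X(z,m) \leq d_{\R^2}(\widetilde{z},\widetilde{m}).
\end{equation*}
In the Euclidean plane, the parallelogram identity applied to the triangle with vertices $\widetilde{x},\widetilde{y},\widetilde{z}$ and median $\widetilde{z}\widetilde{m}$ gives the exact equality
\begin{equation*}
  d_{\R^2}(\widetilde{z},\widetilde{m})^2 = \frac{d_{\R^2}(\widetilde{x},\widetilde{z})^2 + d_{\R^2}(\widetilde{z},\widetilde{y})^2}{2} - \frac{d_{\R^2}(\widetilde{x},\widetilde{y})^2}{4}.
\end{equation*}
Since the comparison triangle has the same side lengths as $\triangle xzy$, replacing the right-hand side distances by their $X$-counterparts and combining with the CAT(0) bound above yields the Bruhat--Tits inequality
\begin{equation*}
  d_X(z,m)^2 \leq \frac{d_X(x,z)^2 + d_X(z,y)^2}{2} - \frac{d_X(x,y)^2}{4}.
\end{equation*}

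Finally, since $-\frac{1}{4}d_X(x,y)^2 \leq +\frac{1}{4}d_X(x,y)^2$, the stated inequality follows immediately. There is no serious obstacle here: the only non-routine input is the CAT(0) triangle comparison, which is the defining property of such spaces as recalled in Section~2 of the paper. The main step to verify cleanly is the identification of $\widetilde{m}$ as the midpoint corresponding to $m$, so that one may apply the comparison inequality to the pair $(z,m)$ of points lying on sides of $\triangle xzy$ (with $z$ a vertex, trivially on two sides, and $m$ on the side $\overline{xy}$).
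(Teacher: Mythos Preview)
Your argument is correct and is the standard comparison-triangle proof of the Bruhat--Tits (CN) inequality. The paper itself does not give a proof of this lemma; it simply cites Proposition~5.1 of Ballmann, so there is no alternative approach to compare against.

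One remark worth making: the inequality as written in the paper has the term $+\frac{1}{4}d_X(x,y)^2$, whereas the sharp CN inequality you derive has $-\frac{1}{4}d_X(x,y)^2$. You handle this correctly by weakening the sign at the end. In fact, if you look at how the lemma is \emph{applied} in the proof of Lemma~\ref{CAT-close-geodesic}, the computation there uses the minus sign, so the $+$ in the statement is almost certainly a typographical error in the paper. Your proof gives the version that is actually needed.
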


\begin{lemma} \label{CAT-close-geodesic}
  Let $\epsilon \in (0,1)$ and $(X,d_X)$ be a $CAT(0)$ space.  Suppose $\gamma : [0,1] \to X$ is Lipschitz and there exists some $L > 0$ so that
  \begin{align*}
    \left| d_X(\gamma(s),\gamma(t)) - |t - s| L \right| \leq \epsilon L, \qquad \forall s,t \in [0,L].
  \end{align*}
  Let $\gamma_0 : [0,1] \to X$ be the constant speed minimal geodesic from $\gamma(0)$ to $\gamma(1)$.  There exists some universal constant $C > 0$ so that
  \begin{align*}
    \sup_{t \in [0,1]} d_X(\gamma(t),\gamma_0(t)) \leq C\sqrt{\epsilon} L.
  \end{align*}
\end{lemma}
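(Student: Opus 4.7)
The plan is to establish the bound first at dyadic points of $[0,1]$ by induction on the subdivision level, and then extend to arbitrary $t$ using Lipschitz control. Define $\delta_n := \max_{0 \le j \le 2^n} d_X(\gamma(j/2^n), \gamma_0(j/2^n))$, and observe $\delta_0 = 0$ since the endpoints of $\gamma$ and $\gamma_0$ coincide. To pass from level $n$ to level $n+1$, only odd indices $2j+1$ need treatment. I apply Lemma \ref{CAT-midpoint-ineq}---read with the standard $CAT(0)$ semi-parallelogram sign (i.e., with $-\tfrac{1}{4}d_X(x,y)^2$ on the right-hand side, as in Proposition 5.1 of \cite{Ballman})---to $x = \gamma(j/2^n)$, $y = \gamma((j+1)/2^n)$, $z = \gamma((2j+1)/2^{n+1})$, and $m$ the midpoint of $x$ and $y$ in $X$. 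The near-isometry hypothesis bounds $d_X(x,z), d_X(z,y) \le L/2^{n+1} + \epsilon L$ and $d_X(x,y) \ge L/2^n - \epsilon L$, and a direct computation yields
\[
d_X(z,m)^2 \;\le\; \left(\tfrac{L}{2^{n+1}} + \epsilon L\right)^2 - \tfrac{1}{4}\left(\tfrac{L}{2^n} - \epsilon L\right)^2 \;\le\; \tfrac{3\epsilon L^2}{2^{n+1}} + \tfrac{3}{4}\epsilon^2 L^2.
\]

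Next, I compare $m$ to $\tilde m := \gamma_0((2j+1)/2^{n+1})$: since $\gamma_0$ is a constant-speed minimal geodesic in the uniquely geodesic $CAT(0)$ space $X$, $\tilde m$ is exactly the midpoint of $\gamma_0(j/2^n)$ and $\gamma_0((j+1)/2^n)$. Midpoints in $CAT(0)$ depend $\tfrac{1}{2}$-Lipschitzly on each endpoint (a standard consequence of the convexity of the distance between two affinely parametrized geodesics), so
\[
d_X(m, \tilde m) \;\le\; \tfrac{1}{2}\bigl(d_X(\gamma(j/2^n), \gamma_0(j/2^n)) + d_X(\gamma((j+1)/2^n), \gamma_0((j+1)/2^n))\bigr) \;\le\; \delta_n.
\]
Combining via the triangle inequality yields $\delta_{n+1} \le \delta_n + C\sqrt{\epsilon}\, L \cdot 2^{-n/2} + C\epsilon L$ for a universal constant $C$, and iterating (summing the geometric series $\sum 2^{-k/2}$) gives $\delta_n \le C\sqrt{\epsilon}\, L + C n \epsilon L$ for all $n \ge 0$.

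To finish, stop at $n_0 := \lceil \log_2(1/\epsilon) \rceil$, so that $n_0 \epsilon L \lesssim \sqrt{\epsilon}\, L$ for $\epsilon$ smaller than some absolute constant, yielding $\delta_{n_0} \le C\sqrt{\epsilon}\, L$. For an arbitrary $t \in [0,1]$, approximate by the nearest dyadic point $j/2^{n_0}$, which lies within $2^{-n_0} \le 2\epsilon$ of $t$. Since $\gamma$ is essentially $L$-Lipschitz at scales above $\epsilon$ by hypothesis, and $\gamma_0$ has speed $d_X(\gamma(0), \gamma(1)) \in [L - \epsilon L, L + \epsilon L]$, the sub-dyadic excursion contributes at most $O(\epsilon L) = O(\sqrt{\epsilon}\, L)$, giving $d_X(\gamma(t), \gamma_0(t)) \le C\sqrt{\epsilon}\, L$ as claimed.

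The main obstacle to manage is the logarithmic accumulation of the additive $\epsilon L$ error in the recursion: without the stopping rule at $n_0 \sim \log(1/\epsilon)$, the bound $\delta_n$ would eventually overshoot $\sqrt{\epsilon}\, L$. A subsidiary but essential point is the sign of the last term in the semi-parallelogram inequality: it is precisely the subtraction of $\tfrac{1}{4}d_X(x,y)^2$---and not an addition---that delivers the crucial $O(\epsilon/2^n)$ cancellation at each dyadic level and distinguishes this argument from the weaker bound one would obtain from the triangle inequality alone.
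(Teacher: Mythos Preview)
Your proof is correct and follows essentially the same approach as the paper: both iterate the $CAT(0)$ semi-parallelogram inequality (Lemma~\ref{CAT-midpoint-ineq}, with the correct minus sign that you rightly flag) down dyadic scales, obtain a recursion of the form $\sqrt{\epsilon/2^k}+O(\epsilon)$, stop at depth comparable to $\log(1/\epsilon)$, and handle the sub-dyadic remainder by the Lipschitz-at-large-scale hypothesis. The only organizational difference is that the paper telescopes through the broken geodesics $\gamma_k$ (and uses triangle comparison to cover all $t$ at once), whereas you recurse directly on $\delta_n$ at dyadic points via the $\tfrac12$-Lipschitz dependence of midpoints on endpoints; both are standard $CAT(0)$ facts and the arguments are interchangeable.
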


\begin{proof}
  For each $k \in \N$, let $\gamma_k$ be the broken geodesic going through $\gamma(j2^{-k})$ for $j \in \{0,...,2^k\}$.  Given any $k$, we have that
  \begin{align*}
    \gamma_{k+1}(j2^{-k}) = \gamma_k(j2^{-k}) = \gamma(j2^{-k}), \qquad \forall j \in \{0,...,2^k\}.
  \end{align*}
  This gives that
  \begin{align*}
    d_X(\gamma_{k+1}(2j2^{-k-1}),\gamma_{k+1}((2j+1)2^{-k-1})) &\leq 2^{-k-1} L + \epsilon L, \\ 
    d_X(\gamma_{k+1}(j2^{-k}),\gamma_{k+1}(j2^{-k})) &\geq 2^{-k} L - \epsilon L.
  \end{align*}
  By setting $x = \gamma_k(j2^{-k})$, $y = \gamma_k((j+1)2^{-k})$, $m = \gamma_k((2j+1)2^{-k-1})$, and $z = \gamma_{k+1}((2j+1)2^{-k-1})$, Lemma \ref{CAT-midpoint-ineq} gives for all $j \in \{0,...,2^k-1\}$ that
  \begin{align*}
    d_X(\gamma_k((2j+1)2^{-k-1}), &\gamma_{k+1}((2j+1)2^{-k-1}))^2 \\
    &\leq \frac{\left(2^{-k-1}L + \epsilon L\right)^2 + \left(2^{-k-1}L + \epsilon L\right)^2}{2} - \frac{\left( 2^{-k}L - \epsilon L\right)^2}{4} \\
    &\leq \frac{\epsilon}{2^{k-1}} + \frac{3}{4} \epsilon^2 L^2.
  \end{align*}
  By the $CAT(0)$ triangle comparison property, we then have that
  \begin{align}
    \sup_{t \in [0,1]} d_X(\gamma_k(t), \gamma_{k+1}(t)) \leq \sqrt{\frac{\epsilon}{2^{k-1}}} + \sqrt{\frac{3}{4}} \epsilon L. \label{gamma-k}
  \end{align}
  Set $k = \log \frac{1}{\sqrt{\epsilon}}$.  Then given any $t \in [0,1]$, there exists some $j \in \{0,...,2^k\}$ so that $j2^{-k} - t \leq 2^{-k} \leq \sqrt{\epsilon}$.  Thus, we have
  \begin{align}
    d_X(\gamma(t),\gamma_k(t)) &\leq d_X(\gamma(t),\gamma(j2^{-k})) + d_X(\gamma_k(j2^{-k}),\gamma_k(t)) \leq 2\sqrt{\epsilon} L + 2\epsilon L. \label{gamma-0}
  \end{align}
  We can now bound
  \begin{align*}
    d_X(\gamma(t),\gamma_0(t)) &\leq d_X(\gamma(t),\gamma_k(t)) + \sum_{j=0}^{k-1} d_X(\gamma_j(t),\gamma_{j+1}(t)) \\
    &\overset{\eqref{gamma-k} \wedge \eqref{gamma-0}}{\leq} 2\sqrt{\epsilon} L + 2\epsilon L + k \sqrt{\frac{3}{4}} \epsilon L + \sqrt{2\epsilon} L \sum_{j=0}^{k-1} 2^{\frac{-k}{2}}.
  \end{align*}
  As $k = \log \frac{1}{\sqrt{\epsilon}} \leq \frac{1}{\sqrt{\epsilon}}$, we get that there exists some universal constant $C > 0$ so that
  \begin{align*}
    d_X(\gamma(t),\gamma_0(t)) \leq C\sqrt{\epsilon} L.
  \end{align*}
\end{proof}

Our final ingredient is the following lemma, which can be found as Proposition 5.4 of \cite{Ballman}.

\begin{lemma}
  Let $I$ be an interval and let $\gamma_1,\gamma_2 : I \to X$ be two geodesics in a $CAT(0)$ space $X$.  Then $d(\gamma_1(t),\gamma_2(t))$ is convex in $t$.
\end{lemma}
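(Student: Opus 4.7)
The lemma is a classical fact about $CAT(0)$ spaces, so the plan is to give the standard proof via midpoint convexity. By continuity of $\gamma_1, \gamma_2$, it suffices to show that for any $s_0 < s_1$ in $I$ with midpoint $t = \tfrac{1}{2}(s_0 + s_1)$,
\[
d(\gamma_1(t), \gamma_2(t)) \;\le\; \tfrac{1}{2}\bigl(d(\gamma_1(s_0), \gamma_2(s_0)) + d(\gamma_1(s_1), \gamma_2(s_1))\bigr). \qquad (\star)
\]
Abbreviate $a_i := \gamma_i(s_0)$, $b_i := \gamma_i(s_1)$, $m_i := \gamma_i(t)$; each $m_i$ is then the midpoint of $a_i$ and $b_i$ in $X$.

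A useful preliminary, which I would establish first as a direct consequence of the $CAT(0)$ triangle comparison, is that for any single geodesic $\sigma : [0,1] \to X$ and any $z \in X$, the map $\lambda \mapsto d(\sigma(\lambda), z)$ is convex. Indeed, if $\bar z, \bar\sigma(0), \bar\sigma(1) \in \R^2$ are the vertices of the Euclidean comparison triangle and $\bar\sigma(\lambda) := (1-\lambda)\bar\sigma(0) + \lambda\bar\sigma(1)$, then the $CAT(0)$ inequality gives $d(\sigma(\lambda), z) \le |\bar\sigma(\lambda) - \bar z|$, and convexity of the Euclidean norm along $\bar\sigma$ finishes the bound.

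The main step for $(\star)$ is Reshetnyak's subembedding theorem applied to the cyclic four-tuple $(a_1, b_1, b_2, a_2)$: one produces a planar comparison quadrilateral $\bar a_1 \bar b_1 \bar b_2 \bar a_2 \subset \R^2$ whose four side lengths match the corresponding distances in $X$, with the stronger property that $d(p, q) \le |\bar p - \bar q|$ whenever $p, q$ lie on sides of the original quadrilateral and $\bar p, \bar q$ are the corresponding Euclidean positions. The subembedding is constructed by splitting the quadrilateral along the diagonal $\overline{a_1 b_2}$, replacing each of the two resulting triangles with its Euclidean comparison triangle, and gluing these triangles along that common diagonal; the $CAT(0)$ angle-comparison inequality ensures the glued figure unfolds into a planar (rather than conical) quadrilateral without shortening the remaining diagonal or interior distances. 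Applying the subembedding with $p = m_1$ and $q = m_2$ yields $d(m_1, m_2) \le |\bar m_1 - \bar m_2|$, where $\bar m_i$ is the Euclidean midpoint of $\bar a_i \bar b_i$. Finally, the identity
\[
\bar m_1 - \bar m_2 \;=\; \tfrac{1}{2}\bigl((\bar a_1 - \bar a_2) + (\bar b_1 - \bar b_2)\bigr)
\]
combined with the triangle inequality in $\R^2$ gives $|\bar m_1 - \bar m_2| \le \tfrac{1}{2}(d(a_1, a_2) + d(b_1, b_2))$, which is $(\star)$.

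The principal obstacle is the subembedding step itself: one must verify that gluing the two Euclidean comparison triangles along the common diagonal produces a true planar quadrilateral dominating all interior distances in $X$, not merely the four side lengths and single diagonal. This relies on showing that the sum of the two comparison angles at each endpoint of the gluing diagonal does not exceed $\pi$ by too much, a fact that follows from the $CAT(0)$ angle comparison but is not entirely automatic. A self-contained alternative that avoids Reshetnyak is to attempt to iterate Lemma~\ref{CAT-midpoint-ineq} (CN) at the midpoints $m_1$ and $m_2$ together with the one-geodesic convexity above; however, such an iteration produces cross-terms $d(a_1, b_2)$ and $d(a_2, b_1)$ that cannot be controlled without, in effect, re-deriving the quadrilateral subembedding.
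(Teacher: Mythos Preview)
The paper does not actually prove this lemma; it simply cites it as Proposition~5.4 of \cite{Ballman}. So there is no in-paper argument to compare against.

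Your proposal supplies a genuine proof where the paper gives none, and the route you take---reduce to midpoint convexity, then invoke a Reshetnyak-type quadrilateral subembedding of the four endpoints into $\R^2$---is a standard and correct one. You have also correctly isolated where the real work lies: the gluing of the two comparison triangles along the diagonal $\overline{a_1 b_2}$ must produce a planar quadrilateral that majorizes \emph{all} point-to-point distances across the quadrilateral, not just the vertex distances, and this is precisely the content of Reshetnyak's majorization theorem (or, in the piecewise form you describe, Alexandrov's lemma applied to the glued configuration). Your remark that a direct attack via the CN inequality (Lemma~\ref{CAT-midpoint-ineq}) alone runs into uncontrolled cross-terms is accurate; that approach can be made to work, but only after one has essentially re-derived the quadrilateral comparison. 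In short, your argument is sound and more informative than the paper's bare citation, though in a paper of this kind the citation is entirely appropriate since the result is classical.
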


\begin{proof}[Proof of Theorem \ref{CAT-nonembed}]
  The proof is the same as the proof of Theorem \ref{CBB-nonembed} as the fact that geodesics diverge convexly can only help.
\end{proof}

\subsection{Nonembeddability for finitely generated torsion-free nilpotent groups}
In this subsection, we will prove Theorem \ref{fg-nonembed}.  We will first need the following lemma relating Gromov-Hausdorff distances to quasi-isometries.

\begin{lemma} \label{GH-quasi}
  Suppose $d_{GH}((X,d_X),(Y,d_Y)) \leq \epsilon$.  Then there exists a quasi-isometry $f : X \to Y$ so that
  \begin{align*}
    d_X(x,y) - 6\epsilon \leq d_Y(f(x),f(y)) \leq d_X(x,y) + 6\epsilon.
  \end{align*}
\end{lemma}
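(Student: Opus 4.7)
The plan is to unwind the Gromov--Hausdorff hypothesis by passing to a common ambient metric space. By the definition of $d_{GH}$, for every $\delta > 0$ there exist a metric space $(Z, d_Z)$ and isometric embeddings $i : X \hookrightarrow Z$, $j : Y \hookrightarrow Z$ whose images satisfy $d_H^Z(i(X), j(Y)) \leq \epsilon + \delta$. I would fix $\delta = 2\epsilon$ to produce a specific $Z, i, j$ with $d_H^Z(i(X), j(Y)) \leq 3\epsilon$; this deliberate slack is what yields $6\epsilon$ in the final bound rather than $2\epsilon$, and sidesteps the mild subtlety that the Gromov--Hausdorff infimum need not be attained.

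With $Z, i, j$ in hand, I would define $f : X \to Y$ by selection: for each $x \in X$, use the axiom of choice to pick some $f(x) \in Y$ with $d_Z(i(x), j(f(x))) \leq 3\epsilon$, which is possible by the Hausdorff bound. The resulting $f$ need not be continuous or in any way canonical, but this is fine since the conclusion only demands the additive quasi-isometry inequality.

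The key estimate is then a one-line triangle inequality in $Z$: for any $x, x' \in X$,
\[
  d_Y(f(x), f(x')) = d_Z(j(f(x)), j(f(x'))),
\]
and inserting $i(x), i(x')$ in the middle, using $d_Z(i(x), i(x')) = d_X(x, x')$ and the selection bound at both endpoints, yields $|d_Y(f(x), f(x')) - d_X(x, x')| \leq 6\epsilon$ in both directions. A quick check that $f(X)$ is $6\epsilon$-dense in $Y$ (every $y \in Y$ lies within $3\epsilon$ of some $i(x)$, hence within an additional $3\epsilon$ of $j(f(x))$ by the choice of $f$) confirms that $f$ is a genuine quasi-isometry in the additive sense. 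There is essentially no obstacle here: the entire argument amounts to unpacking the definition of $d_{GH}$ and applying the triangle inequality once.
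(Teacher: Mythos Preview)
Your proposal is correct and follows essentially the same route as the paper: embed both spaces isometrically into a common $Z$ with Hausdorff distance a small multiple of $\epsilon$, define $f$ by nearest-point selection, and read off the additive $6\epsilon$ bound from the triangle inequality in $Z$. You are in fact slightly more careful than the paper in handling the non-attainment of the Gromov--Hausdorff infimum and in verifying the coarse surjectivity of $f$, but otherwise the arguments coincide.
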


\begin{proof}
  By the definition of quasi-isometry, there exists a metric space $(Z,d_Z)$ and isometric embeddings $i_X : X \to Z$ and $i_Y : Y \to Z$ so that the Hausdorff distance of $i_X(X)$ and $i_Y(Y)$ in $Z$ is at most $2\epsilon$.  Thus, for $x \in X$, we let $f(x) \in Y$ be chosen so that $d_Z(i(x),i(f(x))) \leq 3\epsilon$.  The upper and lower bounds are now easily verified by the triangle inequality of $d_Z$.
\end{proof}

In this section, $G$ will be an infinite torsion-free group of nilpotency step $r$ that is generated by the finite symmetric set $S \subset G$.  We will then let $d_S$ be the word metric assocated to $S$ on $G$.  We have the following theorem of \cite{BLD}.

\begin{theorem} \label{GH-convergence}
  There exists positive constants $C,\gamma > 0$ and a Carnot group $\Gamma$, both depending only on $G$ and $S$ such that, as $n \to \infty$,
  \begin{align*}
    d_{GH}\left( (B_S(n),d_S), \left(B_\Gamma(n),d_\Gamma\right)\right) \leq Cn^{1-\gamma}.
  \end{align*}
  Here, $d_\Gamma$ is a subFinsler metric on $\Gamma$, $B_\Gamma(n)$ is the ball of radius $n$ around the identify of $\Gamma$, and $B_S(n)$ is the ball of radius $n$ around the identity of $G$.  Additionally, if $G$ is nonabelian, then so is $\Gamma$.
\end{theorem}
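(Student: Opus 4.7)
The plan is to follow the Pansu--Breuillard--Le Donne strategy: realize $G$ as a cocompact lattice in its Malcev completion $G_\R$, pass to the associated graded Lie algebra to produce the Carnot group $\Gamma$, and then construct an explicit near-isometry between the word ball in $G$ and the subFinsler ball in $\Gamma$ with polynomial error. First I would recall the Malcev completion: the torsion-free nilpotent group $G$ embeds canonically and cocompactly into a simply connected nilpotent Lie group $G_\R$ of the same step $r$, with Lie algebra $\g$. Choose a Malcev basis adapted to the lower central series $\g = \g^{(1)} \supseteq \g^{(2)} \supseteq \cdots \supseteq \g^{(r+1)} = 0$. The associated graded Lie algebra is
\begin{equation*}
    \g_\infty := \bigoplus_{j=1}^r \g^{(j)}/\g^{(j+1)},
\end{equation*}
with bracket induced from $\g$; it is manifestly stratified, and since $\g$ is finitely generated, $\g_\infty$ is generated by its first layer. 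Then $\Gamma$ is defined as the simply connected Lie group of $\g_\infty$. Nonabelianness of $G$ forces $\g^{(2)} \neq 0$, so $\g_\infty$ is nonabelian, giving the last sentence of the theorem.

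Next, equip $\g_\infty$ with the subFinsler metric $d_\Gamma$ whose horizontal unit ball is the convex hull of the projections of the generators $S \subset G \subset G_\R \cong \g$ to the first layer $\g^{(1)}/\g^{(2)}$. I would define the comparison map $F : G_\R \to \Gamma$ to be the identity in the chosen exponential coordinates. The heart of the argument is the quantitative estimate
\begin{equation*}
    |d_S(g,h) - d_\Gamma(F(g), F(h))| \leq C\,\max(d_S(g,h),1)^{1-\gamma}, \qquad \forall g,h \in G,
\end{equation*}
for some $\gamma > 0$ depending on $r$ and $S$. Gromov-Hausdorff closeness of the balls of radius $n$ follows by pairing each lattice point with its image under $F$ and using cocompactness of $G$ in $G_\R$ to fill in a Hausdorff-close copy of $B_\Gamma(n)$.

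The quantitative bound has two symmetric halves. \emph{Upper bound for $d_S$ via $d_\Gamma$}: given a horizontal curve in $\Gamma$ realizing $d_\Gamma(F(g),F(h))$, discretize it into $\sim d_\Gamma$-many elementary horizontal steps, each of which lifts to a generator in $S$ up to an error caused by the discrepancy between the graded bracket in $\g_\infty$ and the true bracket in $\g$. By BCH, this discrepancy consists of terms strictly lower in the filtration than the leading bracket, so under the Carnot dilation $\delta_{1/n}$ each elementary discrepancy is $O(n^{-\gamma_0})$ for some $\gamma_0 > 0$ determined by the depth difference. Summing over $O(n)$ steps and reinserting the rescaling gives the claimed error $O(n^{1-\gamma})$. \emph{Lower bound}: any word in $S$ of length $\leq n$ traces out a piecewise-horizontal path in $G_\R$ whose $d_\Gamma$-length differs from its word length by accumulated BCH corrections of the same polynomial order. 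A standard telescoping argument converts the pointwise bound into the metric comparison after controlling the drift in the non-horizontal layers through Lemma~\ref{close-parallel}-type estimates.

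The main obstacle will be obtaining the \emph{polynomial} rate $\gamma > 0$ rather than just $o(1)$ convergence, which is the classical Pansu statement. The subtlety is that naive iteration of the elementary BCH error over $O(n)$ steps could a priori give only sublinear savings per step but linear total error, erasing the rescaling gain. Overcoming this requires the observation that the non-graded correction in each bracket lives in $\g^{(j+1)}$ rather than merely $\g^{(j)}$, so it scales by an extra factor of $1/n$ under $\delta_{1/n}$; combined with a careful choice of how finely to discretize the horizontal curve (choosing mesh $n^{1-\gamma}$ rather than $1$), one balances the discretization error against the BCH error to produce a clean rate depending only on $r$ and the structure of $S$. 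The preservation of the nonabelian property is then essentially bookkeeping, as noted above.
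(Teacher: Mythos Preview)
The paper does not prove this statement at all: it is quoted verbatim as a theorem of Breuillard and Le Donne \cite{BLD} and then used as a black box in the proof of Theorem~\ref{fg-nonembed}. So there is no ``paper's own proof'' to compare against.

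That said, your outline is a reasonable sketch of the actual argument in \cite{BLD}. The Malcev completion, the passage to the associated graded Lie algebra $\g_\infty$, the choice of subFinsler norm on the first layer as the polytope spanned by the projections of $S$, and the nonabelian observation are all correct and standard. The delicate part---and you identify it correctly as the main obstacle---is upgrading Pansu's qualitative $o(n)$ to a polynomial rate $O(n^{1-\gamma})$. Your heuristic that the non-graded BCH corrections live one layer deeper and therefore gain an extra power of $1/n$ under dilation is the right intuition, but the actual execution in \cite{BLD} is more involved than a single discretization-versus-BCH balance: one has to control the accumulated drift in \emph{all} nonhorizontal layers simultaneously, and the exponent $\gamma$ they obtain depends in a somewhat implicit way on the step $r$ through an induction on the lower central series. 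Your sketch would need substantial work to become a proof, but it is pointed in the right direction; for the purposes of this paper, citing \cite{BLD} is the correct move.
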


Although the statement shows convergence to a subFinsler metric instead of a Carnot-Carath\'{e}odory metric ({\it i.e.} the horizontal subbundle is equipped with a Finsler norm instead of a scalar product), all the previous differentiability results still apply as the two metrics are biLipschitz equivalent.  This is because both metrics are homogeneous metrics.

We will take advantage of the fact that coarse differentiation holds for maps that are Lipschitz in the large.  Specifically, we will precompose a given Lipschitz map $f : G \to X$ with a quasi-isometric embedding $g : \Gamma \to G$ to get a map $F = f \circ g : \Gamma \to X$ that is $\psi$-LLD.  Using the same reasoning as we did in the previous subsection about quantitative nonembeddability, we will show that $F$ must collapse points.  This will prove that $f$ must also collapse points as $g$ is quasi-isometric.

We will prove Theorem \ref{fg-nonembed} only for embeddings into uniformly convex spaces.  It will be obvious from reading the proof, how to modify it using the proofs of Theorems \ref{CBB-nonembed} and \ref{CAT-nonembed}, for $CAT(0)$ and $CBB(0)$ targets.

\begin{proof}[Proof of Theorem \ref{fg-nonembed} for uniformly convex targets]
  Let $n \in \N$ be prescribed.  Theorem \ref{GH-convergence}, in conjunction with Lemma \ref{GH-quasi}, gives us that there exists a nonabelian Carnot group $\Gamma$ and a quasi-isometry $g : B_\Gamma(n) \to B_S(n)$ so that
  \begin{align}
    d_\Gamma(x,y) - C_0n^{1-\gamma} \leq d_S(g(x),g(y)) \leq d_\Gamma(x,y) + C_0n^{1-\gamma}, \qquad \forall x,y \in B_\Gamma(n). \label{quasi-bounds}
  \end{align}
  Here, $\gamma$ and $C_0$ are constants that depend only on $G$.  Let $f : B_S(n) \to (X,\|\cdot\|)$ be a Lipschitz embedding.  By rescaling the image, we may suppose that $f$ is 1-Lipschitz.  Then it can be easily verified from \eqref{quasi-bounds} that $F = f \circ g : B_\Gamma(n) \to X$ is $C_0n^{1-\gamma}$-LLD with $\Lip_F(C_0n^{1-\gamma}) \leq 2$.  We will suppose that there exists some $D > 1$ so that for all $x,y \in B_S(n)$, we have
  \begin{align}
    \|f(x) - f(y)\| \geq \frac{1}{D} d_S(x,y). \label{discrete-distortion}
  \end{align}
  Set $\epsilon := \frac{1}{4D}$.  Suppose that
  \begin{align}
    n^\gamma \geq \tau^{-C_1 D^\alpha} \label{large-n-assumption}
  \end{align}
  for some sufficiently large $C_1 > 0$ that we will fix later.  Here, $\alpha$ is the same constant as that in Theorem \ref{UC-UAAP}.
  
  By translation and scaling of $\Gamma$, we may suppose by Theorem \ref{christ-cubes} that we have a $T \in \Delta$ so that
  \begin{align*}
    B_\Gamma\left( \frac{n}{C_2} \right) \subseteq T \subseteq B_\Gamma(n)
  \end{align*}
  where $C_2 \geq 1$ is some constant dependent only on $\Gamma$.  Thus, $\ell(T)$ is comparable to $n$, and so Theorem \ref{UC-UAAP} says that there exists some $\beta > 0$, $C_3 > 0$, and $\zeta > 0$ so that if
  \begin{align}
    n \geq C_3 \epsilon^{-\beta} \tau^{-m} n^{1-\gamma}, \label{UAAP-condition}
  \end{align}
  then
  \begin{align*}
    \sum_{k=0}^m \sum_{Q \in \Delta_k(S)} \left\{ |Q| : \qd_h^{UC}(Q,\zeta \epsilon^r) > \epsilon \Lip_F(C_0n^{1-\gamma}) \right\} \leq \epsilon^{-\alpha} |T|.
  \end{align*}
  Here, $r$ is the nilpotency degree of $\Gamma$.  We can choose $C_1$ to be large enough so that \eqref{UAAP-condition} is satisfied for $m = \lceil \epsilon^{-\alpha} \rceil + 1$.  Then we see as we did in the proof of Theorem \ref{UC-nonembed} that there must exist some $Q \in \bigcup_{k=0}^m \Delta_k(T)$ so that
  \begin{align*}
    \sup_{z \in \zeta \epsilon^r B_Q} \frac{\|F(z) - T(z) - v\|}{\zeta \epsilon^r a_0 \ell(Q)} \leq \epsilon \Lip_F(C_0 n^{1-\gamma}) \leq 2 \epsilon.
  \end{align*}
  As $G$ is nonabelian, so is $\Gamma$.  Thus, as in the proof of Theorem \ref{UC-nonembed}, we get that there exists two point $x,y \in \zeta \epsilon^r B_Q$ so that $d_\Gamma(x,y) = \zeta\epsilon^r a_0\ell(Q)$ and
  \begin{align}
    \|F(x)-F(y)\| \leq 2\epsilon d_\Gamma(x,y). \label{composition-upper-bound}
  \end{align}
  On the otherhand, as $F = f \circ g$, we get that
  \begin{align}
    \|f \circ g(x) - f \circ g(y)\| \overset{\eqref{discrete-distortion}}{\geq} 4\epsilon d_S(g(x),g(y)) \overset{\eqref{quasi-bounds}}{\geq} 4\epsilon d_\Gamma(x,y) - 4C_0\epsilon n^{1-\gamma}. \label{composition-lower-bound}
  \end{align}
  In the first inequality, we used the distortion bound for $f$ and the definition of $\epsilon$.  Now assume we've taken $C_1$ to be large enough again so that
  \begin{align}
    4C_0 n^{1-\gamma} \leq \zeta \epsilon^r a_0 \tau^m \ell(T) \leq d_\Gamma(x,y). \label{quasi-distort-small}
  \end{align}
  Then we get that
  \begin{align*}
    \|F(x) - F(y)\| \overset{\eqref{composition-lower-bound} \wedge \eqref{quasi-distort-small}}{\geq} 3\epsilon d_\Gamma(x,y).
  \end{align*}
  This together with \eqref{composition-upper-bound} gives a contradiction.  Thus, \eqref{large-n-assumption} is false for sufficiently large $C_1$:
  \begin{align*}
    n^\gamma < \tau^{-C_1 D^\alpha}.
  \end{align*}
  This then gives that
  \begin{align*}
    D \geq \left( \frac{\gamma}{C_1 \log 1/\tau} \log n\right)^{1/\alpha}.
  \end{align*}
\end{proof}

\subsection{Discretization}
In this section, we prove an analogue of Bourgain's discretization theorem in the setting of Carnot groups.  One of the steps in the proof of discretization in \cite{LN} was the invocation of the Lipschitz extension theorem of \cite{JLS}.  While there are Lipschitz extension theorems between Carnot groups for restricted cases \cite{WY}, there is no general theorem.  Indeed, it was shown in \cite{BF,RW} that there exists a Lipschitz map from $S^3 \subset \R^3$ to the three dimensional Heisenberg group that has no Lipschitz extension.  A Lipschitz extension theorem for maps from a discrete net into Carnot groups may still be possible, and is left as an interesting open problem.  Instead, we will take advantage of the fact that the coarse differentiation technique only requires maps that are Lipschitz at large distances.  Thus, we will use a noncontinuous piecewise constant extension based on the Voronoi cell decomposition of metric spaces to get our needed extension.

Recall that, given a discrete set of a metric space $Z \subseteq (X,d_X)$, the Voronoi cell decomposition of $X$ is simply the partition $\{P_z\}_{z \in Z}$ where $x \in P_y$ if $y \in Z$ is the closest point of $Z$ to $x$, that is $\inf_{z \in Z} d_X(z,x) = d_X(y,x)$.  Ties are arbitrarily broken.  In this section, $\alpha,\beta,\zeta > 0$ will be the constants from Theorem \ref{C-UAAP}.

\begin{theorem}
  Given two Carnot groups $G,H$, there exists a $c > 0$ so that for every $\epsilon \in (0,1/2)$, we have
  \begin{align}
    \delta_{G \hookrightarrow H}(\epsilon) \geq \exp \left[ -e^{(c_H(G)/\epsilon)^c} \right]. \label{discretization-bound}
  \end{align}
\end{theorem}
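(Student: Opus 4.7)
The plan is to argue by contradiction. Suppose that for some $\epsilon\in(0,1/2)$ there is a $\delta$-net $\Ndelta\subseteq B_G$ with $c_H(\Ndelta)<(1-\epsilon)c_H(G)$, where $\delta$ violates the inequality \eqref{discretization-bound}. A near-optimal embedding of $\Ndelta$, after rescaling, yields a map $\phi:\Ndelta\to H$ that is $1$-Lipschitz (up to the quasi-triangle constant of $d_H$) and satisfies $d_H(\phi(x),\phi(y))\leq (1-\epsilon/2)c_H(G)\cdot\dcc(x,y)$ for all $x,y\in\Ndelta$. I would then extend $\phi$ to a map $f:B_G\to H$ via the Voronoi cell decomposition of $B_G$ with respect to $\Ndelta$: set $f(x)=\phi(\nu(x))$ where $\nu(x)\in\Ndelta$ is a closest net point to $x$. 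Although $f$ is not continuous, a direct check shows that it is $\delta$-LLD with $\Lip_f(\delta)$ bounded by a constant multiple of $c_H(G)$, and that its deviation from $\phi\circ\nu$ on scales exceeding $\delta$ contributes only an additive error of order $\delta$.

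Next I would apply Theorem \ref{C-UAAP} to $f$, choosing the accuracy parameter $\tilde\epsilon\asymp\epsilon/c_H(G)$ so that the ensuing approximation beats the putative distortion. Taking $m=\lceil e^{\tilde\epsilon^{-\alpha}}\rceil$ and $S$ a Christ cube of definite size inside $B_G$, the Carleson packing estimate \eqref{C-UAAP-eqn} forces the existence of at least one cube $Q\in\bigcup_{k=0}^m\Delta_k(S)$ with $\qd_f^C(Q,\zeta\tilde\epsilon^\beta)\leq\tilde\epsilon\Lip_f(\delta)$. Unpacking the definition, this produces a Lipschitz homomorphism $T:G\to H$ and a translate $g\in H$ with $d_H(f(z),g\cdot T(z))\leq\tilde\epsilon r\Lip_f(\delta)$ for every $z$ in a subball of radius $r\asymp\tilde\epsilon^\beta a_0\tau^m\ell(S)$.

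The final step is to read off the distortion of $T$. For any $x,y$ in this subball, combining the coarse-differentiation estimate with the distortion bound of $\phi$ and the Voronoi-extension error of order $\delta$ places $d_H(T(x)^{-1}T(y))/\dcc(x,y)$ in an interval of the form $[1-C\tilde\epsilon c_H(G)-C\delta/r,\ (1-\epsilon/2)c_H(G)+C\tilde\epsilon c_H(G)+C\delta/r]$. Because $T$ commutes with the dilations of $G$ and $H$, its distortion on the whole of $G$ equals its distortion on any ball on which it is already biLipschitz. If $\tilde\epsilon$ is proportional to $\epsilon/c_H(G)$ with a small enough constant, and the size constraint in Theorem \ref{C-UAAP} is sharp enough to force $\delta\lesssim\tau^{e^{\tilde\epsilon^{-\alpha}}}\ll r$, then $T$ realizes a distortion strictly below $c_H(G)$, contradicting the definition of $c_H(G)$ as an infimum.

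The main obstacle is the bookkeeping of three competing scales: the accuracy $\tilde\epsilon$ of coarse differentiation, the threshold $\delta$ below which $f$ ceases to be Lipschitz, and the radius $r\asymp\tilde\epsilon^\beta a_0\tau^m$ of the differentiation ball. One needs to verify that the size constraint in Theorem \ref{C-UAAP} automatically supplies $\delta\ll r$, so that the $C\delta/r$ term in the distortion interval is absorbed by the $C\tilde\epsilon c_H(G)$ term rather than competing with it. Once this is done, inverting $\delta\lesssim\tau^{e^{\tilde\epsilon^{-\alpha}}}$ with $\tilde\epsilon\asymp\epsilon/c_H(G)$ yields exactly the double-exponential lower bound $\delta\geq\exp\bigl(-e^{(c_H(G)/\epsilon)^c}\bigr)$ claimed in \eqref{discretization-bound}.
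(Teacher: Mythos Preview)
Your proposal is correct and follows essentially the same route as the paper: extend a near-optimal net embedding to all of $B_G$ by the Voronoi piecewise-constant map, observe this is $O(\delta)$-LLD with Lipschitz constant $O(c_H(G))$, invoke Theorem~\ref{C-UAAP} at accuracy $\tilde\epsilon\asymp\epsilon/c_H(G)$ to locate a subball of radius $r\gtrsim\tilde\epsilon^\beta\tau^m$ on which a Lipschitz homomorphism $T$ approximates the extended map, then read off that $T$ has distortion close to $c_H(\Ndelta)$ and hence below $c_H(G)$. The paper carries out exactly this plan, with the final distortion estimate for $T$ obtained by testing $T$ on pairs of net points near $g$ and $g\delta_{R/2}(x)$ for arbitrary unit $x\in G$; your scale analysis (the need for $\delta\ll r$, supplied by the size constraint in Theorem~\ref{C-UAAP}) is precisely the point handled by the paper's inequality $R\geq 64c_H(G)\delta/\epsilon$. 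One small slip: in your first paragraph the normalization is garbled (you say $\phi$ is $1$-Lipschitz and then give an upper bound of $(1-\epsilon/2)c_H(G)$; you presumably meant the \emph{lower} distortion bound is $1$), but this does not affect the argument.
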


\begin{proof}
  If $c_H(G) = \infty$, then the statement holds vacuously, so we assume this is not the case.  By reading the proof of Proposition \ref{carleson-cubes} and Theorem \ref{C-UAAP}, we see that, to use Theorem \ref{C-UAAP} on a dyadic cube $S \in \Delta$, it suffices to have the function $h$ defined only on $6S$.

  We set
  \begin{align}
    m &= \left\lceil \exp \left( \left( \frac{256c_H(G)}{\epsilon} \right)^\alpha \right) \right\rceil, \notag \\
    \delta &= \tau^{C_0m}. \label{discretization-delta}
  \end{align}
  Here, $C_0 > 0$ will be a sufficiently large constant to be chosen.  Let $\Ndelta$ be a $\delta$-net of the unit ball of $G$ and write $D = c_H(\Ndelta) \leq c_H(G)$.  Take $f : \Ndelta \to H$ satisfying
  \begin{align}
    d_G(x,y) \leq d_H(f(x),f(y)) \leq \left(1+ \frac{\epsilon}{16} \right) D d_G(x,y), \qquad \forall x,y \in \Ndelta. \label{net-bounds}
  \end{align}
  We then define the function $F : B_G \to H$ where $F(x) = f(x)$ for $x \in \Ndelta$ and $F$ is constant on the Voronoi cells of $B_G$ as determined by $\Ndelta$.  Notice that $F$ is $2\delta$-LLD with $\Lip_F(2\delta) \leq 2(1+\epsilon/16) D \leq 4D$.  Indeed, pick $x,y \in B_G$ so that $d_G(x,y) \geq 2\delta$.  Then there exist two elements $u,v \in \Ndelta$ so that $x \in P_u$ and $y \in P_v$.  Note then that $d_G(x,u) \leq \delta$ and $d_G(y,v) \leq \delta$ and so
  \begin{align*}
    \Lip_F(2\delta) &\leq \frac{d_H(f(x),f(y))}{d_G(x,y)} = \frac{d_H(f(u),f(v))}{d_G(x,y)} \leq \left( 1+ \frac{\epsilon}{16} \right) D \frac{d_G(u,v)}{d_G(x,y)} \leq 2\left( 1 + \frac{\epsilon}{16} \right) D.
  \end{align*}

  Choose a maximal $S \in \Delta$ so that $6S \subseteq B_G$.  By the properties of the Christ cube, there exists some constant $C_1 > 0$ so that $\ell(S) \geq C_1$.  As $2\delta \leq m^{-1} \tau^m$ for sufficiently large $C_0$, we can, after translating the image of $F$, use Theorem \ref{C-UAAP} to get that there exists some subball $g \cdot RB_G\subseteq Q \in \bigcup_{k=0}^m \Delta_k(S)$ of radius
  \begin{align}
    R \geq \zeta\left( \frac{\epsilon}{256c_H(G)} \right)^{\beta r} \tau^m a_0\ell(S), \label{discretization-radius}
  \end{align}
  and some homomorphism $T : G \to H$ so that
  \begin{align}
    \sup_{x \in B} d_H(F(x),T(x)) \leq \frac{\epsilon R}{32}. \label{discretization-UAAP}
  \end{align}
  Choosing $C_0$ sufficiently large again, we can obtain the bound
  \begin{align}
    R \geq \frac{64 c_H(G) \delta}{\epsilon}. \label{R-delta}
  \end{align}
  The proof can now follow exactly the same proof of discretization in \cite{LN}.  We will reproduce it here, nearly word for word, for convenience.
  
  Choose $x \in G$ with $d_G(0,x) = 1$ and $u,v \in \Ndelta \cap (g \cdot RB_G)$ so that $d_G(g,u) \leq \delta$ and $d_G(v,g\delta_{R/2}(x)) \leq \delta$.  Note then that
  \begin{align*}
    d_G(u,v) \leq d_G(u,g) + d_G(g,g\delta_{R/2}(x)) + d_G(g\delta_{R/2}(x),v) \leq \frac{R}{2} + 2\delta.
  \end{align*}
  Similarly $d_G(u,v) \geq \frac{R}{2} - 2\delta$.  Using the fact that $F$ extends $f$, we get
  \begin{multline*}
    d_H(T(u),T(v)) \overset{\eqref{discretization-UAAP}}{\leq} \frac{\epsilon}{16} R + d_H(f(u),f(v)) \overset{\eqref{net-bounds}}{\leq} \frac{\epsilon}{16} R + \left(1+\frac{\epsilon}{16}\right)D d_G(u,v) \\
    \leq \frac{\epsilon}{16} R + \left(1 + \frac{\epsilon}{16} \right) D \left(\frac{R}{2} + 2\delta \right) \overset{\eqref{R-delta}}{\leq} \left( 1 + \frac{\epsilon}{4} \right) \frac{R}{2} D.
  \end{multline*}
  Hence,
  \begin{align*}
    d_H(T(x),T(0)) &\leq \frac{2}{R} \left( d_H(T(g\delta_{R/2}(x)),T(v)) + d_H(T(v),T(u)) + d_H(T(u),T(g)) \right) \\
    &\leq \left(1 + \frac{\epsilon}{4} \right) D + \frac{4\delta \|T\|_{lip}}{R}.
  \end{align*}
  As this holds for each $x$ with unit norm, we get that
  \begin{align}
    \|T\|_{lip} \leq \frac{1+\epsilon/4}{1-4\delta/R} D \leq \left(1 + \frac{\epsilon}{2} \right)D \leq 2 c_H(G). \label{T-upper-bound}
  \end{align}
  Now,
  \begin{align*}
    d_H(T(u),T(v)) &\overset{\eqref{discretization-UAAP}}{\geq} d_H(f(u),f(v)) - \frac{\epsilon R}{16} \geq d_G(u,v) - \frac{\epsilon R}{16} \\
    &\geq \frac{R}{2} - 2\delta - \frac{\epsilon R}{16} \overset{\eqref{R-delta}}{\geq} \left( 1-  \frac{\epsilon}{4} \right) \frac{R}{2}.
  \end{align*}
  Hence,
  \begin{multline*}
    d_H(T(x),0) \geq \frac{2}{R} \left( d_H(T(v),T(u)) - d_H(T(g\delta_{R/2}(x)),T(v)) - d_H(T(u),T(g))\right) \\
    \geq 1 - \frac{\epsilon}{4} - \frac{4\delta \|T\|_{lip}}{R} \overset{\eqref{T-upper-bound}}{\geq} 1 - \frac{\epsilon}{4} - \frac{8c_H(G)\delta}{R} \overset{\eqref{R-delta}}{\geq} 1 - \frac{\epsilon}{2}.
  \end{multline*}
  Thus, we have proven that $c_H(G) \leq \frac{1-\epsilon/2}{1-\epsilon/2} D = \frac{1+\epsilon/2}{1-\epsilon/2} c_H(\Ndelta) \leq \frac{1}{1-\epsilon} c_H(\Ndelta)$.  Thus, recalling the choice of $\delta$ in \eqref{discretization-delta}, we get
  \begin{align*}
    \delta_{G \hookrightarrow H}(\epsilon) \geq \tau^{C_0m}.
  \end{align*}
  Notice that, for sufficiently large $c$, this $\delta$ satisfies the lower bound of \eqref{discretization-bound}.
\end{proof}

\bibliographystyle{abbrv}
\bibliography{carnot}

\begin{thebibliography}{10}

\bibitem{AS:12}
J.~Azzam and R.~Schul.
\newblock A quantitative metric differentiation theorem.
\newblock {\em Proc. Amer. Math. Soc.}
\newblock To appear.

\bibitem{ball}
K.~Ball.
\newblock The {R}ibe programme.
\newblock In {\em S\'{e}minaire Bourbaki}, expos\'{e} 1047, 2012.

\bibitem{Ballman}
W.~Ballman.
\newblock {\em Lectures on spaces of nonpositive curvature}.
\newblock DMV Seminar. Birkh\"{a}user, 1995.

\bibitem{BF}
Z.~Balogh and K.~F\"{a}ssler.
\newblock Rectifiability and {L}ipschitz extensions into the {H}eisenberg
  group.
\newblock {\em Math Z.}, 263(3):673--683, 2009.

\bibitem{BJLPS}
S.~Bates, W.~B. Johnson, J.~Lindenstrauss, D.~Preiss, and G.~Schechtman.
\newblock Affine approximation of {L}ipschitz functions and nonlinear
  quotients.
\newblock {\em Geom. Funct. Anal.}, 9:1092--1127, 1999.

\bibitem{bourgain-superreflexivity}
J.~Bourgain.
\newblock The metrical interpretation of superreflexivity in {B}anach spaces.
\newblock {\em Israel J. Math.}, 56(2):222--230, 1986.

\bibitem{bourgain-discretization}
J.~Bourgain.
\newblock Remarks on the extension of {L}ipschitz maps defined on discrete sets
  and uniform homeomorphisms.
\newblock In {\em Geometric aspects of functional analysis (1985/86)}, volume
  1267 of {\em Lecture Notes in Math.}, pages 157--167. Springer, 1987.

\bibitem{breuillard}
E.~Breuillard.
\newblock Geometry of locally compact groups of polynomial growth and shape of
  large balls.
\newblock \texttt{arXiv:0704.0095}.

\bibitem{BLD}
E.~Breuillard and E.~Le~Donne.
\newblock On the rate of convergence to the asymptotic cone for nilpotent
  groups and sub{F}insler geometry.
\newblock {\em Proc. Natl. Acad. Sci. USA}.
\newblock To appear.

\bibitem{BBI}
D.~Burago, Y.~Burago, and S.~Ivanov.
\newblock {\em A course in metric geometry}, volume~33 of {\em Graduate Studies
  in Mathematics}.
\newblock American Mathematical Society, 2001.

\bibitem{BGP}
Y.~Burago, M.~Gromov, and G.~Perelman.
\newblock Aleksandrov spaces with curvatures bounded below.
\newblock {\em Russ. Math. Surv.}, 47(2):1--58, 1992.

\bibitem{CK3}
J.~Cheeger and B.~Kleiner.
\newblock On the differentiability of {L}ipschitz maps from metric measure
  spaces to {B}anach spaces.
\newblock In {\em Inspired by S.S. Chern}, volume~11 of {\em Nankai Tracts
  Math.}, pages 129--152. World Sci. Publ., 2006.

\bibitem{CK1}
J.~Cheeger and B.~Kleiner.
\newblock Differentiating maps into ${L}^1$ and the geometry of {BV} functions.
\newblock {\em Ann. Math.}, 161(2):1347--1385, 2010.

\bibitem{CK2}
J.~Cheeger and B.~Kleiner.
\newblock Metric differentiation, monotonicity, and maps to ${L}^1$.
\newblock {\em Invent. Math.}, 182(2):335--370, 2010.

\bibitem{CKN}
J.~Cheeger, B.~Kleiner, and A.~Naor.
\newblock Compression bounds for {L}ipschitz maps from the {H}eisenberg group
  into ${L}_1$.
\newblock {\em Acta Math.}, 207(2):291--373, 2011.

\bibitem{Christ}
M.~Christ.
\newblock A ${T}(b)$ theorem with remarks on analytic capacity and the {C}auchy
  integral.
\newblock {\em Colloq. Math.}, 60/61(2):601--628, 1990.

\bibitem{Dynkin}
E.~Dynkin.
\newblock Calculation of the coefficients in the {C}ampbell-{H}ausdorff
  formula.
\newblock {\em Doklady Akademii Nauk SSSR}, 57:323--326, 1947.

\bibitem{enflo}
P.~Enflo.
\newblock Banach spaces which can be given an equivalent uniformly convex norm.
\newblock In {\em Proceedings of the International Symposium on Partial
  Differential Equations and the Geometry of Normed Linear Spaces (Jerusalem,
  1972)}, volume~13, pages 281--288, 1973.

\bibitem{EFW2}
A.~Eskin, D.~Fisher, and K.~Whyte.
\newblock Coarse differentiation of quasi-isometries {II}: {R}igidity for {S}ol
  and {L}amplighter groups.
\newblock {\em Ann. Math.}
\newblock To appear.

\bibitem{EFW1}
A.~Eskin, D.~Fisher, and K.~Whyte.
\newblock Coarse differentiation of quasi-isometries {I}: spaces not
  quasi-isometric to {C}ayley graphs.
\newblock {\em Ann. Math.}, 176(1):221--260, 2012.

\bibitem{figiel}
T.~Figiel.
\newblock On the moduli of convexity and smoothness.
\newblock {\em Studia Math.}, 56(2):121--155, 1976.

\bibitem{FS}
G.~B. Folland and E.~M. Stein.
\newblock {\em Hardy spaces on homogeneous groups}, volume~28 of {\em Math.
  Notes.}
\newblock Princeton, 1982.

\bibitem{FSS}
B.~Franchi, R.~Serapioni, and F.~Serra-Cassano.
\newblock Regular hypersurfaces, intrinsic perimeter and implicit function
  theorem in {C}arnot groups.
\newblock {\em Comm. Anal. Geom.}, 11(5):909--944, 2003.

\bibitem{GNS}
O.~Giladi, A.~Naor, and G.~Schechtman.
\newblock Bourgain's discretization theorem.
\newblock {\em Ann. Fac. Sci. Toulouse Math.}
\newblock To appear.

\bibitem{gro:96}
M.~Gromov.
\newblock {C}arnot-{C}arath\'{e}odory spaces seen from within.
\newblock In {\em Sub-Riemannian geometry}, volume 144 of {\em Progr. Math.},
  pages 79--323. Birkh\"{a}user, Basel, Boston, Berlin, 1996.

\bibitem{Guivarc'h}
Y.~Guivarc'h.
\newblock Croissance polyn\^{o}miale et p\'{e}riodes des fonctions harmoniques.
\newblock {\em Bull. Sc. Math.}, 101:353--379, 1973.

\bibitem{james64}
R.~C. James.
\newblock Uniformly non-square {B}anach spaces.
\newblock {\em Ann. Math.}, 80(3):542--550, 1964.

\bibitem{james72}
R.~C. James.
\newblock Super-reflexive {B}anach spaces.
\newblock {\em Canad. J. Math.}, 24:596--904, 1972.

\bibitem{JLS}
W.~B. Johnson, J.~Lindenstrauss, and G.~Schechtman.
\newblock Extensions of {L}ipschitz maps into {B}anach spaces.
\newblock {\em Israel J. Math.}, 54(2):129--138, 1986.

\bibitem{Kir}
B.~Kirchheim.
\newblock Rectifiable metric spaces: local structure and regularity of the
  {H}ausdorff measure.
\newblock {\em Proc. Amer. Math. Soc.}, 121(1):113--123, 1994.

\bibitem{Lafforgue-Naor}
V.~Lafforgue and A.~Naor.
\newblock Vertical versus horizontal {P}oincar\'{e} inequalities on the
  {H}eisenberg group.
\newblock Preprint.

\bibitem{LN:06}
J.~R. Lee and A.~Naor.
\newblock ${L}_p$ metrics and the {G}oemans-{L}inial conjecture.
\newblock In {\em Proc. 47th IEEE FOCS}, pages 99--108, 2006.

\bibitem{LNP}
J.~R. Lee, A.~Naor, and Y.~Peres.
\newblock Trees and {M}arkov convexity.
\newblock {\em Geom. Funct. Anal.}, 18(5):1609--1659, 2009.

\bibitem{LP}
J.~R. Lee and P.~Raghavendra.
\newblock Coarse differentiation and multi-flow in planar graphs.
\newblock {\em Discrete Comput. Geom.}, 43(2):346--362, 2010.

\bibitem{LN}
S.~Li and A.~Naor.
\newblock Discretization and affine approximation in high dimensions.
\newblock {\em Israel J. Math.}
\newblock To appear.

\bibitem{Lojasiewicz}
S.~\L{}ojasiewicz.
\newblock Sur le probl\'{e}me de la division.
\newblock {\em Studia Math.}, 18:87--136, 1959.

\bibitem{malgrange}
B.~Malgrange.
\newblock {\em Ideals of differentiable functions}.
\newblock Oxford Univ. Press, 1966.

\bibitem{MN:08}
M.~Mendel and A.~Naor.
\newblock Markov convexity and local rigidity of distorted metrics.
\newblock {\em J. Eur. Math. Soc.}
\newblock To appear.

\bibitem{montgomery}
R.~Montgomery.
\newblock {\em A tour of sub-{R}iemannian geometrics, their geodesics and
  applications}, volume~91 of {\em Mathematical Surveys and Monographs}.
\newblock American Mathematical Society, 2002.

\bibitem{naor}
A.~Naor.
\newblock An introduction to the {R}ibe program.
\newblock {\em Jap. J. Math.}
\newblock To appear.

\bibitem{Okikiolu}
K.~Okikiolu.
\newblock Characterization of subsets of rectifiable curves in $\mathbb{R}^n$.
\newblock {\em J. London Math. Soc.}, 46(2):336--348, 1992.

\bibitem{Pansu}
P.~Pansu.
\newblock M\'{e}triques de {C}arnot-{C}arath\'{e}odory et quasiisom\'{e}tries
  des espaces sym\'{e}triques de rang un.
\newblock {\em Ann. Math.}, 129(1):1--60, 1989.

\bibitem{Pauls}
S.~Pauls.
\newblock The large scale geometry of nilpotent {L}ie groups.
\newblock {\em Comm. Anal. Geom.}, 9(5):951--982, 2001.

\bibitem{pisier}
G.~Pisier.
\newblock Martingales with values in uniformly convex spaces.
\newblock {\em Israel J. Math.}, 20(3-4):326--350, 1975.

\bibitem{ribe}
M.~Ribe.
\newblock On uniformly homeomorphic normed spaces.
\newblock {\em Ark. Mat.}, 14(1-2):237--244, 1976.

\bibitem{RW}
S.~Rigot and S.~Wenger.
\newblock Lipschitz non-extension theorems into jet space {C}arnot groups.
\newblock {\em Internat. Math. Res. Notices}, 18:3633--3648, 2010.

\bibitem{schul}
R.~Schul.
\newblock Bi-{L}ipschitz decomposition of {L}ipschitz functions into a metric
  space.
\newblock {\em Rev. Mat. Iberoam.}, 25(2):521--531, 2009.

\bibitem{Semmes}
S.~Semmes.
\newblock On the nonexistence of bi-{L}ipschitz parameterizations and geometric
  problems about ${A}_\infty$-weights.
\newblock {\em Rev. Mat. Iberoam.}, 12(2):337--410, 1996.

\bibitem{varad}
V.~S. Varadarajan.
\newblock {\em Lie groups, {L}ie algebras and their representations}.
\newblock Springer-Verlag, 1984.

\bibitem{WY}
S.~Wenger and R.~Young.
\newblock Lipschitz extensions into jet space {C}arnot groups.
\newblock {\em Math. Res. Letters}, 17(6):1137--1149, 2010.

\end{thebibliography}

\end{document}